\documentclass[11pt]{amsart}

\usepackage[a4paper, margin=2.5cm]{geometry}
\usepackage{setspace}		
\usepackage{parskip}
\usepackage[inline,final]{showlabels}		% SHOW LABELS

\usepackage{hyperref}
\usepackage{xcolor}
\definecolor{linksc}{RGB}{0,90,120}
\definecolor{citec}{RGB}{40,120,20}
\definecolor{urlc}{RGB}{100,150,80}
\hypersetup{
    colorlinks,
    linkcolor={linksc},
    filecolor=black,
    citecolor={citec},
    urlcolor={urlc}
}
\usepackage[style=alphabetic,backend=bibtex]{biblatex}	%style=verbose-ibid,
\usepackage{amssymb}				% Includes \mathbb
\usepackage{upgreek}
\usepackage{mathrsfs}				% Includes \mathscr
\usepackage{dsfont}					% For double-line numbers
\newcommand{\mc}[1]{\mathcal{ #1 }}

\newcommand{\mb}[1]{\mathbb{ #1 }}

\newcommand{\mbf}[1]{\mathbf{ #1 }}
\newcommand{\RR}{\mathbb{R}}

\usepackage{tikz}
\usetikzlibrary{matrix,arrows,graphs}
\usepackage{tikz-cd} 	% Package for commutative diagrams
\usepackage{graphicx}
\usepackage{subcaption}
\usepackage{adjustbox}
\usetikzlibrary{decorations.pathmorphing,decorations.markings,arrows.meta}
\usetikzlibrary{calc,fit,backgrounds}

\usepackage{amsmath} 	% AMS Math Package
\usepackage{amsthm} 	% Theorem Formatting
\usepackage{mathtools}	% For arrow
\usepackage{array}
\usepackage{cleveref}
\usepackage{enumitem}
\def\beq{\begin{equation}}
\def\eeq{\end{equation}}
\newtheorem{thm}{Theorem}[section]
\newtheorem{cor}[thm]{Corollary}
\newtheorem{lem}[thm]{Lemma}
\newtheorem{prop}[thm]{Proposition}

\newtheorem*{thm*}{Theorem}
\newtheorem*{prop*}{Proposition}

\theoremstyle{definition}
\newtheorem{ex}[thm]{Example}
\newtheorem{dfn}[thm]{Definition}
\newtheorem*{dfn*}{Definition}

\theoremstyle{remark}
\newtheorem{rmk}[thm]{Remark}

\newcommand{\p}{\partial}

\newcommand{\Res}{\operatorname*{Res}}

\newcommand{\cone}{\text{Span}_{\mathbb{R}_+}}
\usepackage{shuffle}
\newcommand{\dstream}[1]{\lceil #1 \rceil}
\newcommand{\ustream}[1]{\lfloor #1 \rfloor}

\newcommand{\dbl}[1]{2#1}
\newcommand{\dblfan}[1]{\Sigma^{\dbl{#1}}}

\title{The Universe Fan}

\author[H. Frost]{Hadleigh Frost}
\address[Institute for Advanced Study, Princeton, 08540 NJ, U.S.A.]
{H. Frost}
\email{frost@ias.edu}
\urladdr{}

\author[F. Lotter]{Felix Lotter}
\address[Max Planck Institute for Mathematics in the Sciences, Leipzig, Germany]
{F. Lotter}
\email{felix.lotter@mis.mpg.de}
\urladdr{}

\begin{document}
\begin{abstract}
\noindent
The wavefunction of the universe, as studied in perturbative quantum field theory, is a rational function whose singularities and factorization properties encode a rich underlying combinatorial structure. We define and study a broad generalization of such wavefunctions that can be associated to any lattice. We obtain these wavefunctions as the Laplace transform of a polyhedral fan, the \emph{universe fan}, whose cones are defined by positivity conditions reflecting a notion of causality in the lattice, and we describe its face lattice.

In the matroid case, the universe fan projects to the nested set fan, and the wavefunctions we define recover the matroid amplitudes introduced by Lam as residues. Moreover, in the case relevant for physics, the positivity conditions give a novel way to study the wavefunction, and we show how it is related to the cosmological polytopes of Arkani-Hamed, Benincasa, Postnikov.

Finally, we study refinements of the universe fan induced by piecewise linear (tropical) functions. The resulting subdivisions project to refinements of the nested set fan and correspond dually to blow-ups of matroid polytopes, generalizing the `cosmohedron' polytope.
\end{abstract}

\maketitle
\setcounter{tocdepth}{1}
\tableofcontents
%%%%%%%%%%%%%%%%%%%%%%
%%%%%%%%%%%%%%%%%%%%%%
%%%%%%%%%%%%%%%%%%%%%%
%%%%%%%%%%%%%%%%%%%%%%
\section{Introduction}
The Bergman fan (or tropical linear space) associated to a matroid $M$ encodes the combinatorics of its flats via the tropicalization of a linear space \cite{FeiStu04,Spe08}.  More prosaically, the Bergman fan of a matroid with ground set $E$ is a fan in $\mathbb{R}^E$. Building sets in the lattice of flats define nested set fans, which are simplicial refinements of the Bergman fan \cite{Pos09,FeiMue05}. The Bergman fan and its refinements play an important role in tropical and discrete geometry. In this paper, we introduce a new family of fans that realize higher-order combinatorial information in the complex of nested sets: the universe fan.

Our motivation to study these fans is the cosmological wavefunction from perturbative quantum field theory. The tree-level cosmological wavefunction $\Psi_n$ is a rational function that describes the density fluctuations produced by quantum fields in the very early universe during the big bang \cite{ArkaniHamed17,Arkani18}. From a combinatorial perspective, the singularity structure of the wavefunction is controlled by partially ordered collections of subgraphs. These features place the cosmological wavefunction naturally alongside objects from tropical and matroidal geometry, including Bergman fans and their refinements. However, while the relation of (tree-level $\phi^3$) amplitudes to matroids and the combinatorics of nested sets has long been understood (see \cite{ABHY} and more recently \cite{Lam24}), a similar description of the cosmological wavefunction has still been missing. This motivates our viewpoint in this paper. Our results are also of interest for the original physical problem. We identify a remarkably simple system of linear positivity conditions whose solution set cuts out the full wavefunction. In particular, our results give partial answers to questions raised in \cite[Section 8]{ArkaniHamed17} and \cite[Section 11]{ArkFigVaz24}.

\subsection{Nested sets, nestable sets, and causal regions}

Fix some lattice $L$ with maximal element $\hat{1}$ and minimal element $\hat{0}$. We consider sets $\mc{G} \subseteq L_{> \hat{0}}$ that satisfy
\[
\mathcal G\text{ contains }\hat 1,\ \text{and for incomparable }f,g\in\mathcal G,\ (f\vee g\notin\mathcal G)\Rightarrow(f\wedge g=\hat 0).
\]
We call $\mc{G}$ a \emph{nestable set}, and study subsets $N \subset \mc{G}$ called \emph{nested sets}. These subsets form a simplicial complex, called the \textit{nested set complex}. We are interested in the cases where this complex (and its restrictions to intervals $[\hat{0},f]$) are pure. When this holds, we call the pair $(L,\mathcal G)$ a \textit{nestoid} (\Cref{dfn:nestoid}).

Nestoids are a relaxation of the notion of building sets. Building sets arose in the study of compactifications of subspace arrangements where they provide a combinatorial framework for organizing iterated blow-ups \cite{conciniprocesi95}.  In a purely combinatorial formulation due to Feichtner and Yuzvinsky, a building set $\mc{G} \subseteq L_{\geq \hat{0}}$ is defined for an arbitrary finite lattice $L$ \cite{feichtner2004chow}. Building sets give rise to nested set complexes and nested set fans, which refine Bergman fans in the case of matroid lattices and sometimes admit polytopal realizations \cite{Pos09,FeiMue05}.

Our main motivation for the introduction of nestoids is that, unlike building sets, this class is `stable' under taking links of the associated nested set complexes, a property made precise in \Cref{lem:nest-factors}.

In this paper, we introduce two rational functions for every nestoid: an \textit{amplitude} and a \textit{cosmological wavefunction}. These functions are obtained as the Laplace transform of unimodular polyhedral fans: the \emph{free nested set fan} and the \emph{universe fan}. Lam \cite{Lam24} defined a class of amplitudes arising from oriented matroids in a similar way, and this can be viewed as a special case of our definition (\Cref{prop:nestoids and matroids}). See also \cite{telen2026toric}.

The \emph{nested sets} $N \subset \mc{G}$ of a nestable set $\mc{G}$ are defined as for building sets. Namely, for any incomparable $f_1,f_2,\ldots \in N$, one requires that their join is not in $\mc{G}$: $f_1 \vee f_2 \vee \cdots \not\in\mc{G}$.
Then the set of nested sets of $\mc{G}$ defines a simplicial complex called the \emph{nested set complex}.
This complex is realized by the \textit{free nested set fan} $\Sigma^\mc{G}$ in $\mathbb{R}^\mc{G}$, the fan with cones $\langle f\,|\, f\in N\rangle$ for each nested set $N$ (Definition \ref{dfn:free nested fans}). 
These cones span unimodular subspaces and so determine a rational function via a Laplace transform (as explained in e.g.\ \cite[Section 10.2]{Lam24}). We define the \emph{amplitude} $A(L,\mc{G})$ associated to $L$ and $\mc{G}$ as the Laplace transform of $\Sigma^\mc{G} / \langle \hat{1} \rangle$.

\begin{figure}
  \centering
  \begin{subfigure}{0.45\textwidth}
    \centering
  	\begin{tikzpicture}[scale=0.9]
  \node (v4) at (0,0) {$\bullet$};
  \node (v1) at (90:2)  {$\bullet$};  % angle:distance
  \node (v2) at (210:2) {$\bullet$};
  \node (v3) at (330:2) {$\bullet$};
  \draw (v4) -- node[midway,left]{$a$} (v1);
  \draw (v4) -- node[midway,above left]{$b$} (v2);
  \draw (v4) -- node[midway,above right]{$c$} (v3);
\end{tikzpicture}
    \caption{}
    \label{intro:star}
  \end{subfigure}
  \begin{subfigure}{0.45\textwidth}
    \centering
    	\begin{tikzpicture}[scale=0.8,node distance=20mm]
  	\node (a) at (0,0)   {a};
  	\node (b) at (2.5,0) {b};
  	\node (c) at (5,0)   {c};
  	\node (d) at (1.25,1.8)   {ab};
  	\node (e) at (3.75,1.8) {bc};
  	\node (f) at (6.25,1.8)   {ac};
  	\node (t) at (2.5,3.4) {$\hat{1}$};
	  \draw (d) -- (a);
	  \draw (d) -- (b);
	  \draw (e) -- (b);
	  \draw (e) -- (c);
	  \draw (f) -- (c);
	  \draw[black!40] (f) -- (a);
	  \draw (t) -- (d);
 	  \draw (t) -- (e);
  	  \draw (t) -- (f);
	\end{tikzpicture}
    \caption{}
    \label{intro:starbuild}
  \end{subfigure}
\caption{The star graph (A), and the associated building set (B) given by the graph's connected subgraphs, where $\hat{1} = 1234$ denotes the whole graph.}
\label{intro:starfig}
\end{figure}
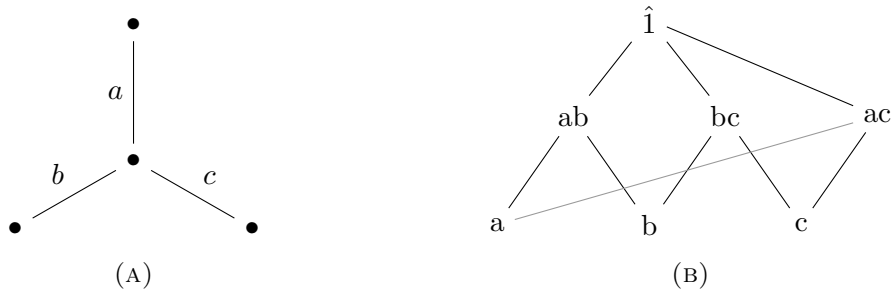

\begin{ex}
Consider the star graph $G$ (Figure \ref{intro:star}) with three edges, $a$, $b$, and $c$. We take the lattice $L$ to be the \emph{lattice of flats} of the associated matroid, which has one flat $f$ for each subgraph of $G$ (see Figure \ref{intro:starbuild}), with maximal element $\hat{1}$ for the whole graph. Write $\mc{G} = L \backslash \hat{0}$ for the \emph{maximal building set}, containing every non-empty flat. The 6 maximal nested sets of $\mc{G}$ take the form $N = \{f,g,\hat{1}\}$, with $f < g < \hat{1}$. In fact, $\mc G$ is an example of a \textit{graphical building set}, and its nested sets are equivalently \emph{tubings} of $G$ that do not use vertex tubes (Figure \ref{intro:start})\footnote{Throughout the paper, by a \textit{tubing} we mean a laminar family of vertex sets of connected subgraphs; in particular, there is no non-adjacency condition for tubes in a tubing.}.
The six maximal cones of $\Sigma^\mc{G}$ are
\[
\langle a, ab, \hat{1} \rangle,~~~ \langle b, ab, \hat{1} \rangle,~~~\langle b, bc, \hat{1} \rangle,~~~ \langle c, bc, \hat{1} \rangle,~~~\langle c,ac, \hat{1} \rangle,~~~ \langle a,ac, \hat{1} \rangle,
\]
and the associated amplitude is
\[
A(L,\mc{G}) = \frac{1}{s_a s_{ab} } + \frac{1}{s_b s_{ab}} + \frac{1}{s_b s_{bc} } + \frac{1}{s_c s_{bc}} + \frac{1}{s_c s_{ac} } + \frac{1}{s_a s_{ac}}.
\]
This is a matroid amplitude in the sense of \cite{Lam24} under the substitution $s_f := \sum_{e \in f} a_e$. This substitution corresponds to a projection $\mathbb R^{\mathcal G} \to \mathbb R^E$ which identifies $\Sigma^{\mc{G}}$ with the nested set fan $\Sigma_{\mathcal G}$ from \cite{feichtner2004chow} (\Cref{prop:nestoids and matroids}).
\end{ex}

\begin{figure}
  \centering
  \begin{subfigure}{0.45\textwidth}
    \centering
\includegraphics[width=0.5\textwidth]{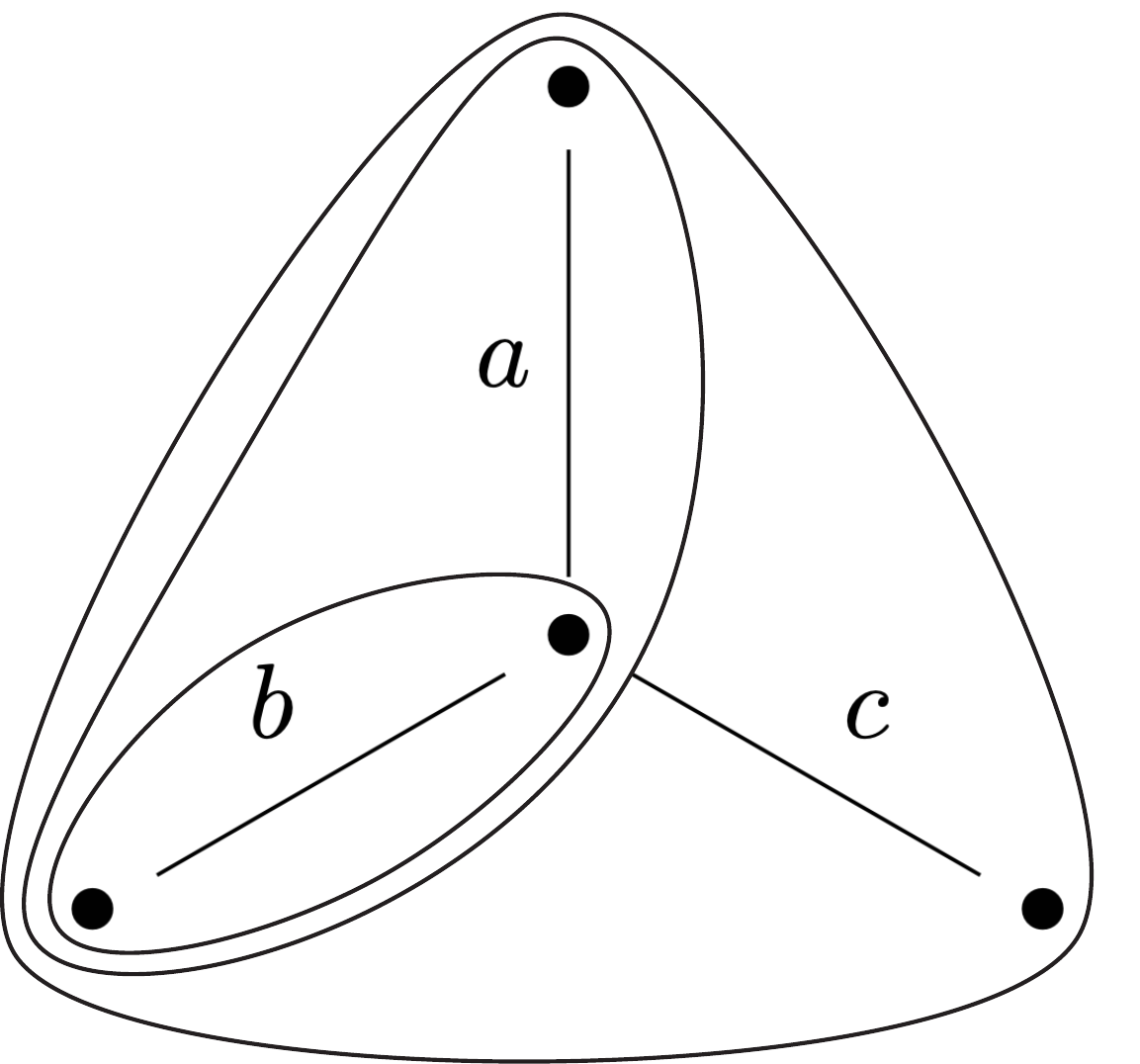}
    \caption{}
    \label{intro:startube}
  \end{subfigure}
  \begin{subfigure}{0.45\textwidth}
    \centering
    	\begin{tikzpicture}[scale=0.8,node distance=20mm]
  	\node (a) at (0,0)   {a};
  	\node[red] (b) at (2.5,0) {b};
  	\node (c) at (5,0)   {c};
  	\node[red] (d) at (1.25,1.8)   {ab};
  	\node (e) at (3.75,1.8) {bc};
  	\node (f) at (6.25,1.8)   {ac};
  	\node[red] (t) at (2.5,3.4) {$\hat{1}$};
	  \draw (d) -- (a);
	  \draw (d) -- (b);
	  \draw (e) -- (b);
	  \draw (e) -- (c);
	  \draw (f) -- (c);
	  \draw[black!40] (f) -- (a);
	  \draw (t) -- (d);
 	  \draw (t) -- (e);
  	  \draw (t) -- (f);
	\end{tikzpicture}
    \caption{}
    \label{intro:startubenest}
  \end{subfigure}
\caption{A nested set $\{b,ab,\hat{1}\}$ of the building set $\mc{G}$ (B) corresponds to a tubing of the graph (A).}
\label{intro:start}
\end{figure}

In additional to the free nested set fan, each nestoid $(L,\mathcal G)$ defines a \emph{universe fan} $\mc{U}_\mc{G}$, which encodes more combinatorial data. Consider the vector space $\mathbb{R}^{2\mc{G}}$ with two generators, $f^+$ and $f^-$, for each $f \in \mc{G}$. Then $\mc{U}_\mc{G}$ is a polyhedral fan in $\mathbb{R}^{2\mc{G}}$. In Section \ref{sec:universe}, we derive the generators and face lattice of $\mc{U}_\mc{G}$, which can be taken as a definition of $\mc{U}_\mc{G}$ for the purposes of this introduction. The generators of $\mc{U}_\mc{G}$ are given by nested sets $R \subset \mc{G}$ with one maximal element and all other elements incomparable. We call such a nested set a \emph{causal region}. For each causal region $R=\{r_*,r_1,\dots,r_k\}$, the vector
\[
w_R \;:=\; r_*^+ + \sum_i r_i^- \ \in\ \mathbb R^{\dbl\mathcal G}
\]
is a generator of $\mc{U}_\mc{G}$. Theorem \ref{prop:universe-rays} then shows that the maximal cones of $\mc{U}_\mc{G}$ are
\[
U_N=\cone\bigl(w_R \mid R\subseteq N\text{ a causal region}\bigr),
\]
for each maximal nested set $N$. The face lattice of $\mc{U}_\mc{G}$ is given by Theorem \ref{thm:universe-cones-and-markings}. The cones of $\mc{U}_\mc{G}$ span unimodular subspaces (\Cref{prop:max-lightcone-refine}). By taking the Laplace transform we obtain a rational function $\Psi(L,\mathcal G)\;:=\;\mathcal L(\mathcal U_{\mathcal G})$, the \emph{wavefunction} associated to $(L,\mathcal G)$.

\begin{ex}
For the star graph, as above, the universe fan $\mc{U}_\mc{G}$ in $\mathbb{R}^{2\mc{G}}$ has generators $w_f = f^+$ for each flat $f$ and $w_{\{f,g\}} = f^+ + g^-$ for each nested set $\{f,g\}$ with $g < f$. The 6 maximal cones of $\mc{U}_\mc{G}$ are
\[
U_N = \langle \hat 1^+, f^+, g^+, \hat 1^+ + f^-,  \hat 1^+ + g^-, g^+ + f^- \rangle
\]
for each nested set $N = (f < g < \hat{1})$. The Laplace transform of $\mc{U}_\mc{G}$ defines the \emph{wavefunction}
\[
\Psi(L,\mathcal G) = \sum_{f < g < \hat 1} \frac {E_{\hat 1}^+ + E_{f}^- + E_g^+} {E_{\hat 1}^+  E_f^+ E_g^+ (E_{\hat 1}^+ + E_{f}^-) (E_{\hat 1}^+ + E_g^-)  (E_{g}^+ + E_f^-)},
\]
where we introduce variables $E_f^\pm$ for each $f^\pm$. The wavefunction in this example agrees with a function associated to the triangle graph (the line graph of the star) in the construction of \cite{Glew2025AmplitubesGC}. Moreover, as Figure \ref{intro:regions} illustrates, generators of $\mc{U}_\mc{G}$ can be viewed as regions cut out by tubes. The notion of a causal region generalizes this beyond the graphical case.
\end{ex}

\begin{figure}
  \centering
    \begin{subfigure}{0.32\textwidth}
    \centering
\includegraphics[width=0.6\textwidth]{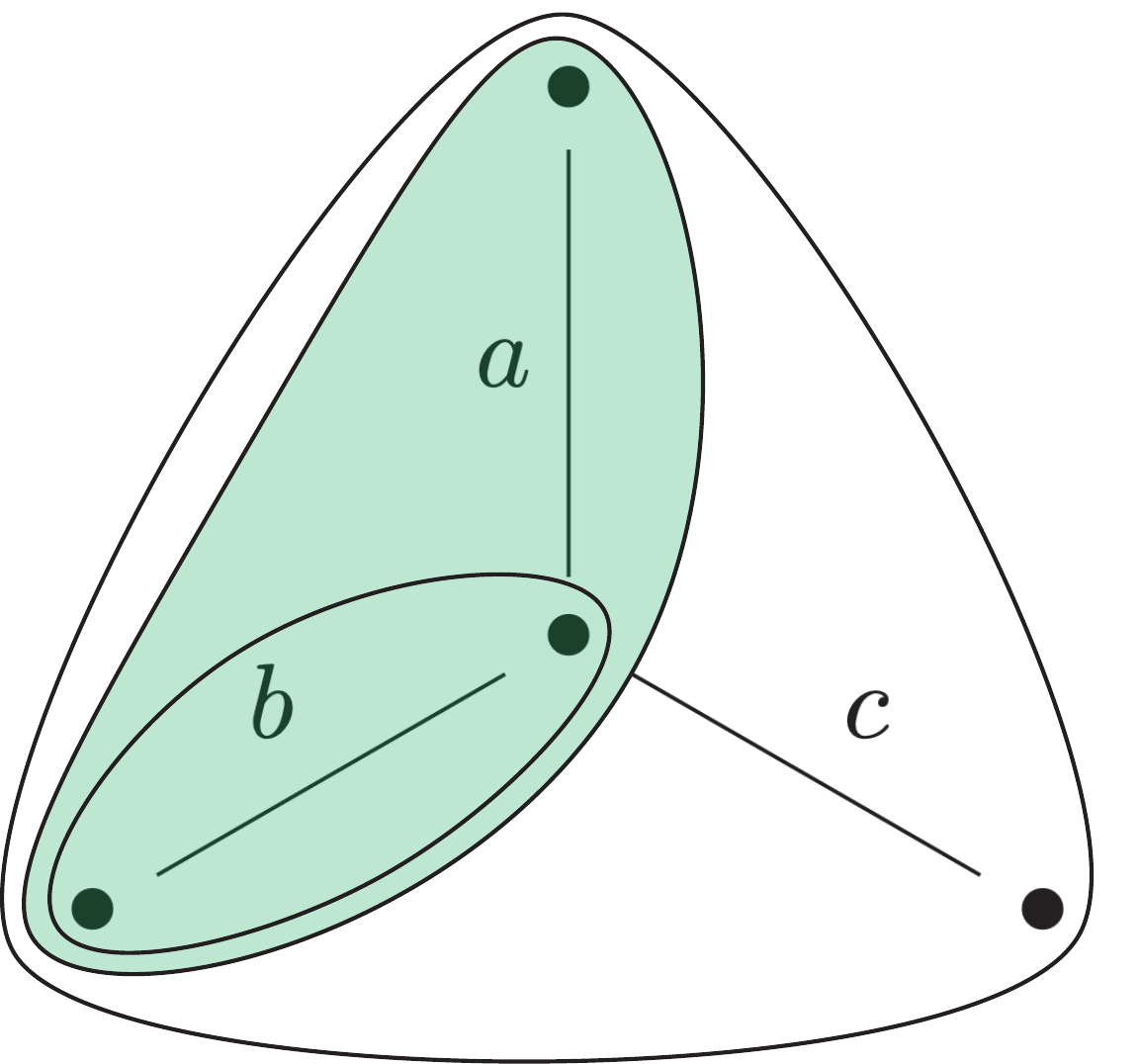}
    \caption{}
    \label{intro:regions2}
  \end{subfigure}
  \begin{subfigure}{0.32\textwidth}
    \centering
\includegraphics[width=0.6\textwidth]{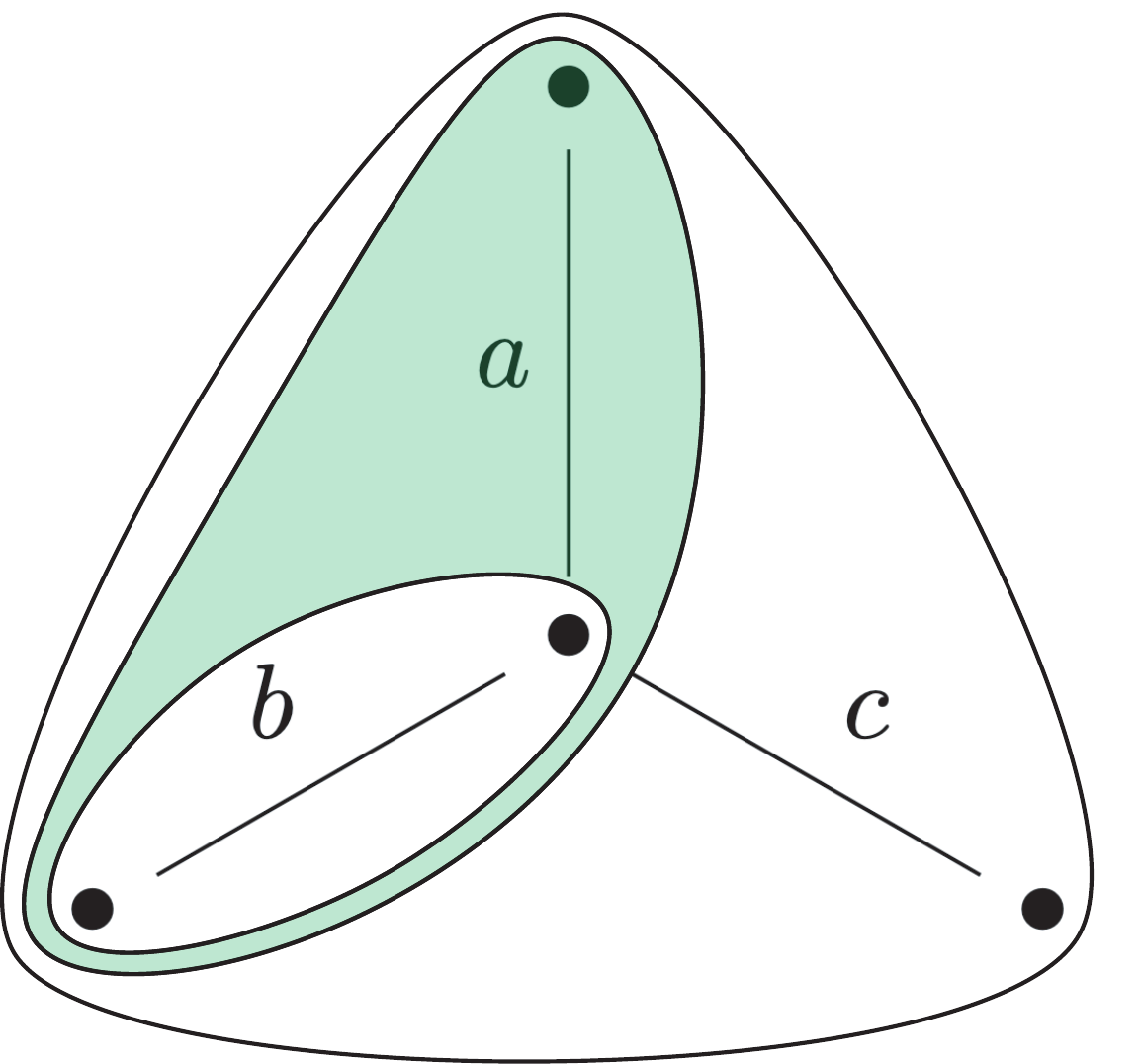}
    \caption{}
    \label{intro:regions1}
  \end{subfigure}
    \begin{subfigure}{0.32\textwidth}
    \centering
\includegraphics[width=0.6\textwidth]{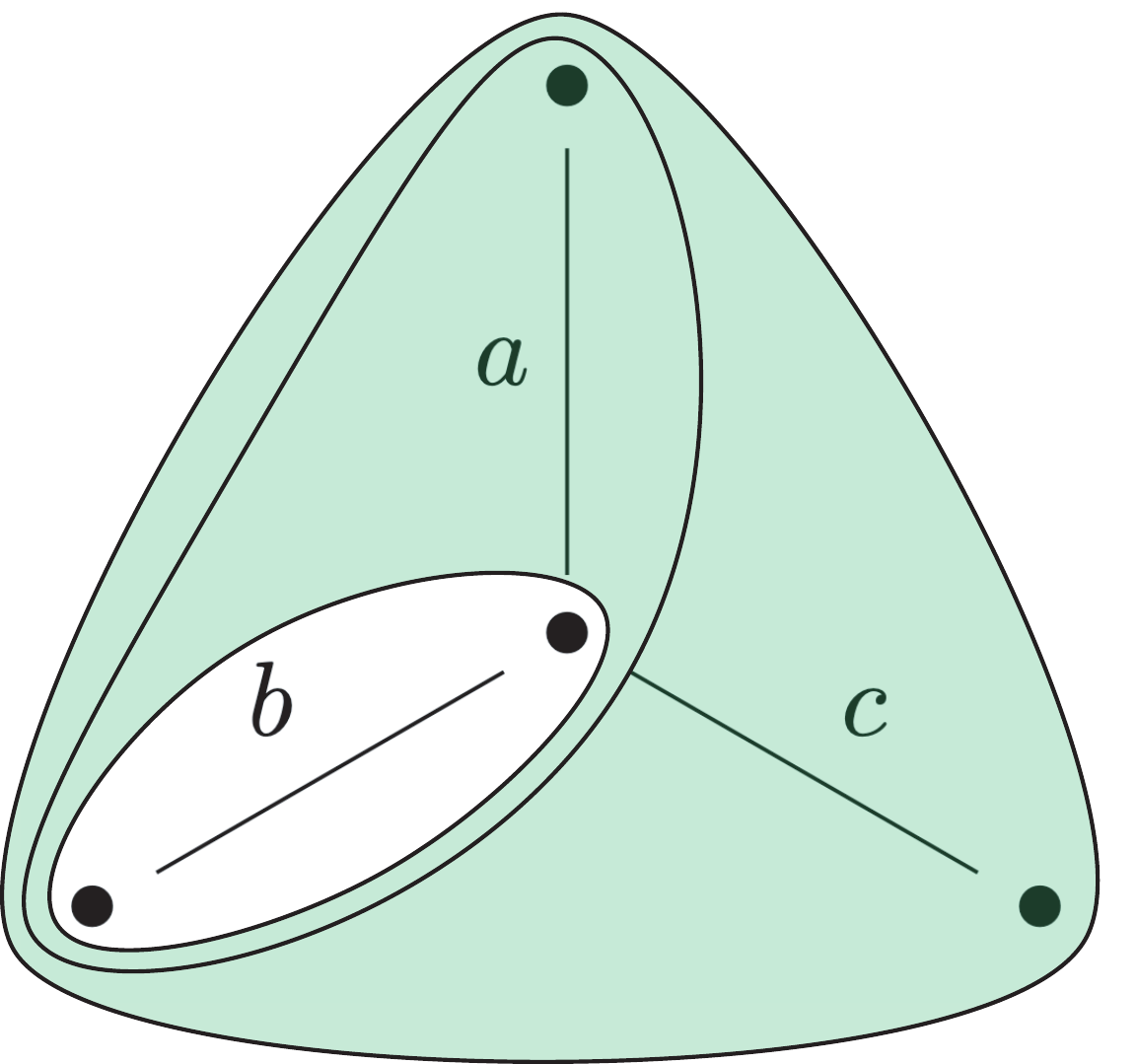}
    \caption{}
    \label{intro:regions3}
  \end{subfigure}
\caption{The generators of the universe fan $\mc{U}_\mc{G}$ correspond to regions cut out by tubes on the graph. (A), (B), (C) show the regions for $ab^+$, $ab^+ + b^-$, and $\hat{1}^+ + b^-$, respectively.}
\label{intro:regions}
\end{figure}

\subsection{Causal paths and properties of $\Psi$}
Despite the straightforward ray description of $\mathcal U_{\mathcal G}$ given above, we begin our study of the universe fan with a remarkably simple dual description. Let us write $p_f, m_f \in (\mathbb{R}^{2\mc{G}})^\vee$ for the basis of dual linear forms that is dual to the basis $f^+, f^-$ of $\mathbb{R}^{2\mc{G}}$. Then for each $g \in \mc{G}$, consider the linear forms
\begin{equation}\label{intro:Fg}
F_g \;:=\; p_{\hat 1}+\sum_{g<f<\hat 1}(p_f-m_f)-m_g.
\end{equation}
We define the \emph{causal cone} $C$ in $\mathbb R^{\dbl\mathcal G}$ by the positivity conditions (Definition \ref{dfn:causal-cone})
\[
m_{\hat 1}=0,\qquad p_g\ge 0,\quad m_g\ge 0,\quad F_g\ge 0\qquad (g\in\mathcal G).
\]

\begin{dfn*}[Definition~\ref{dfn:UG}]
Let $\dblfan{\mathcal G}\subset \mathbb R^{\dbl\mathcal G}$ be the \emph{doubled nested set fan}, with maximal cones to be $\cone(f^+, f^- \,|\, f \in N)$ for maximal nested sets $N$. The \emph{universe fan} $\mc{U}_\mc{G}$ is the intersection
\[
\mathcal U_{\mathcal G} \;:=\; \dblfan{\mathcal G}\ \cap C.
\]
\end{dfn*}

\noindent
This definition makes it clear that $\mc{U}_\mc{G}$ is a fan. It has a maximal cone $U_N$ for each maximal nested set $N$. Moreover, the duals of the maximal cones are
\[
U_N^\vee = \langle p_f, m_f, F_f \, | \, f \in N \backslash \{\hat 1\} \rangle.
\]
In Proposition \ref{prop:cosmological-polytopes}, we show that $U_N^\vee$ is in fact (the cone over) the \emph{cosmological polytope} of the Hasse diagram of $N$, in the sense of \cite{ArkaniHamed17}.

Moreover, we study the residues of $\Psi(L,\mc{G})$ using this definition of $\mc{U}_\mc{G}$. The poles of $\Psi(L,\mc{G})$ are given by $E_R = 0$, where
\[
E_R = E_{r_*}^+ + \sum_{i} E_{r_i}^-,
\]
for each causal region $R = \{r_*, r_1,\ldots, r_k\}$. We prove a factorization property of the residues of $\Psi(L,\mc{G})$ from studying the links of the generators of $\mc{U}_\mc{G}$. An essential step is to understand the behaviour of the functions $F_g$, equation \eqref{intro:Fg}, under the quotient by the generators of the fan.

\begin{thm*}[Theorem \ref{thm:universe-links}, Corollary \ref{cor:wavefunction-res}, Corollary \ref{cor:tot-energy-res}]
The residues of $\Psi(L,\mc{G})$ on its poles are given by
\begin{equation}\label{intro:resER}
\frac{1}{E_R} \Res_{E_R = 0} \Psi(L,\mc{G}) = A^\text{sh}([\hat{0},r_*],\mc{G}_1) \times  \Psi(L,\mc{G}_2),
\end{equation}
for two nestable sets $\mc{G}_1 \subset [\hat{0},r_*]$ and $\mc{G}_2 \subset L$, defined by $R$. Here, $A^\text{sh}([\hat{0},r_*],\mc{G}_1)$ is the amplitude $A([\hat{0},r_*],\mc{G}_1)$ after a redefinition of variables.
As an important special case, $\Psi(L,\mc{G})$ has a pole at $E_{\hat{1}}^+ = 0$, and
\begin{equation}\label{intro:Etot}
\Res_{E_{\hat{1}}^+=0} \Psi(L,\mc{G}) = A(L,\mc{G}).
\end{equation}
\end{thm*}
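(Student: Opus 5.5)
The plan is to derive everything from the general fact that residues of the Laplace transform of a unimodular fan along a ray are Laplace transforms of the link (star quotient) of that ray. Concretely, if $\Sigma$ is a simplicial, unimodular fan and $w$ is a ray, then $\frac{1}{E_w}\Res_{E_w=0}\mathcal L(\Sigma)=\mathcal L(\mathrm{star}_\Sigma(w)/\langle w\rangle)$, where $E_w$ is the linear form paired with $w$. Only the maximal cones containing $w$ contribute a pole along $E_w=0$. For each such cone $U$, unimodularity lets us write $\mathcal L(U)$ as a sum over a unimodular triangulation. Each simplex containing $w$ contributes $\frac{1}{E_w}$ times the Laplace transform of its image in the quotient. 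I would assume this standard statement, as in \cite[Section 10.2]{Lam24}.

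With it, the first formula reduces to identifying the star of $w_R$ in $\mathcal U_{\mathcal G}$, modulo $\langle w_R\rangle$, as a product of two fans.

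\textbf{Step 1.} By Theorem~\ref{prop:universe-rays}, the maximal cones containing $w_R$ are the $U_N$ with $N\supseteq R$ a maximal nested set.
\begin{itemize}
\item Using the link stability of nestoids (\Cref{lem:nest-factors}), such $N$ split as $N=N_1\sqcup N_2$. Here $N_1$ is a maximal nested set of a nestable set $\mathcal G_1\subset[\hat 0,r_*]$, consisting of the elements lying in $[\hat0,r_*]$ but below no $r_i$ (the ``region'' part). $N_2$ is a maximal nested set of a contracted nestable set $\mathcal G_2$, in which the region $R$ is collapsed to a single element.
\item I would define $\mathcal G_1,\mathcal G_2$ exactly as in that lemma and check that both are nestoids.
\end{itemize}

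\textbf{Step 2.} This is the key step and the expected main obstacle: show that the quotient map $\mathbb R^{2\mathcal G}/\langle w_R\rangle$ carries $\mathrm{star}(w_R)$ isomorphically onto the product of a free nested set fan $\Sigma^{\mathcal G_1}/\langle\hat1\rangle$ and $\mathcal U_{\mathcal G_2}$, in suitable coordinates. I would work on the dual side using the causal cone description $\mathcal U_{\mathcal G}=\dblfan{\mathcal G}\cap C$, and track how the inequalities $p_g,m_g,F_g\ge0$ behave on the hyperplane $E_R=0$ dual to the quotient.
\begin{itemize}
\item For $g\in N_1$, the forms $F_g$ should become redundant or coincide with a combination of $p$'s. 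This makes the region part a plain doubled nested cone with no causality constraint. After pairing $f^+$ and $f^-$ through the shifted variables, it gives the free nested fan, hence the amplitude with redefined variables $A^{\rm sh}$.
\item For $g\in N_2$, the sum in $F_g$ over $g<f<\hat1$ should telescope once $w_R$ is quotiented. This is the computation of how the $F_g$ behave under quotients, and it yields precisely the $F$-forms of $\mathcal G_2$, so that part is $\mathcal U_{\mathcal G_2}$.
\end{itemize}
Verifying this telescoping, and that cross terms between $N_1$ and $N_2$ vanish, is where I expect the real work to lie; I would first try it on the star-graph example to guess the variable shift. The Laplace transform of a product fan is the product of the transforms, which gives \eqref{intro:resER}.

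\textbf{Step 3.} For \eqref{intro:Etot}, specialize to $R=\{\hat1\}$, so $w_R=\hat1^+$ and $r_*=\hat1$. Every maximal nested set contains $\hat1$, so the star is all of $\mathcal U_{\mathcal G}$ and $\mathcal G_2$ is trivial, giving $\Psi(L,\mathcal G_2)=1$. Then check directly that on $p_{\hat1}=0$ the forms become $F_g=\sum_{g<f<\hat1}(p_f-m_f)-m_g$. Changing coordinates to $q_g:=p_g-m_g$ cumulatively, as in the shifted variables, removes all constraints except those of the doubled nested fan. This shows the quotient is unimodularly equivalent to $\Sigma^{\mathcal G}/\langle\hat1\rangle$ times a factor whose transform cancels the prefactor $1/E_{\hat1}^+$. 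So no variable shift survives, and the residue equals $A(L,\mathcal G)$.
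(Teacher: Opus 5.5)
Your overall strategy matches the paper's: identify the link of $w_R$ in $\mathcal U_{\mathcal G}$ as a product of two fans and Laplace transform it. However, the step that carries all the content is left as an expectation, and as you describe it, it would not go through. Quotienting by $w_R$ does not decouple the constraints. Write $f=r_*$ and $g_i=r_i$. For $g\le g_i$, the form $F_g$ runs along the chain through $g_i$, then through the region elements $g_i<h<f$, then through $f$. So it still contains $p_h-m_h$ for region elements $h$, and these cross terms do not vanish. Already for a chain $a<b<\hat 1$ with $R=\{\hat 1,a\}$, the link is cut out by $p_a,p_b,m_b\ge 0$ and $(p_{\hat 1}-m_a)+(p_b-m_b)\ge 0$. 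The last inequality mixes the region coordinates with the outer ones, and nothing telescopes. The missing idea is the paper's explicit shear
$\sigma\colon\mathbb R^{\dbl{\mathcal G^{\dstream R}}}\to\mathbb R^{\dbl{\mathcal G}}/\langle w_R\rangle$, given by $h^+\mapsto h^++\sum_{g_i\le h}g_i^-$ and $h^-\mapsto h^-+f^++\sum_{g_i\lor h\notin\mathcal G}g_i^-$.
It has three properties that make the proof work:
\begin{itemize}
\item Its kernel is $\langle f^+,f^-\rangle$.
\item Every $F_g$ that is tight at $w_R$ vanishes on its image, while on the region side only $p_h,m_h\ge 0$ survive. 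Hence $\sigma\times\mathrm{id}$ splits the link as $\dblfan{\mathcal G^{\dstream R}}/\langle f^+,f^-\rangle\times\mathcal U_{\mathcal H}/\langle w_R\rangle$.
\item Since $f^+\equiv-\sum_i g_i^-$ modulo $w_R$, its dual is exactly the shift $E_h^\pm\mapsto E_h^\pm\pm\sum_{g_i\le h}E_{g_i}^-$ defining $A^{\mathrm{sh}}$.
\end{itemize}
Without it you get neither the product decomposition nor the shift.

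The factors are also not the ones you name. The first is the \emph{doubled} fan, whose transform is the amplitude with $s_h=E_h^+E_h^-$; the free fan $\Sigma^{\mathcal G_1}/\langle\hat 1\rangle$ has half the dimension. The second uses $\mathcal H=\mathcal G^{\ustream R}\cup\{f\}$, which keeps $f,g_1,\dots,g_k$ rather than collapsing $R$. Every maximal cone of $\mathcal U_{\mathcal H}$ contains $w_R$, so $\frac{1}{E_R}\mathcal L(\mathcal U_{\mathcal H}/\langle w_R\rangle)=\Psi(L,\mathcal H)$; this is where the $\frac{1}{E_R}$ on the left goes. The correct normalization is $\Res_{E_w=0}\mathcal L(\Sigma)=\mathcal L(\mathrm{link})$, not $\frac{1}{E_w}\Res$. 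Finally, \Cref{lem:nest-factors} and its iterate \Cref{lem:adv-nest-factors}, which you need when $k\ge 1$, require $(L,\mathcal G)$ to be stable.

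Your Step 3 computes the wrong object. The link of $\hat 1^+$ is the image of $U_N$ under projection along $\hat 1^+$, not the slice $p_{\hat 1}=0$. On that slice the causal inequalities survive and cut out a lower-dimensional face: for a child $g$ of $\hat 1$, $F_g=-m_g\ge 0$ forces $m_g=0$. No linear change of coordinates can remove inequalities. The correct argument is one line. Every $F_g$ has coefficient $+1$ on $p_{\hat 1}$, so after projecting along $\hat 1^+$ these constraints become vacuous and $\mathcal U_{\mathcal G}/\langle\hat 1^+\rangle\cong\dblfan{\mathcal G}/\langle\hat 1^+,\hat 1^-\rangle$. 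Its Laplace transform is $A(L,\mathcal G)$ under the substitution $s_f=E_f^+E_f^-$.
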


\noindent
We emphasize that, even if $\mc{G}$ is a building set of $L$ in the sense of \cite{feichtner2004chow}, $\mc{G}_2$ may not necessarily be a building set. This is why it is natural to instead work with nestoids. Given a nestoid $(L,\mc{G})$, the statement of the theorem defines two new nestoids, $([\hat{0},r_*],\mc{G}_1)$ and $(L,\mc{G}_2)$.

In the physical case, \eqref{intro:Etot} describes the limit that recovers scattering amplitudes, and \eqref{intro:resER} gives the factorization properties of the wavefunction, which is related to the principle of unitarity. It is striking that these properties follow from the simple definition of $\Psi$ using the positivity conditions, above. Section \ref{sec:physics} reviews the physical wavefunction from this point of view.

\subsection{Refinements and polytopes}
In Section \ref{sec:park}, we set out how vectors in the doubled nested set fan $\Sigma^{2\mc{G}} \subset \mathbb{R}^{2\mc{G}}$ can be naturally identified with \emph{cuts} or \emph{subgraphs}. In the graphical case, these are the tubing regions considered in the example above (e.g. Figure \ref{intro:regions}). For a given nested set, $N$, we identify $f^+$ and $f^-$ with two disjoint subsets of $N$. Then, for each $f \in N$, we can consider the linear forms
\[
\lambda_f = \sum_{\substack{g \in N \\ f \in g^+}} p_g + \sum_{\substack{g \in N \\ f \in g^-}} m_g.
\]
In Section \ref{sec:lightcone}, we show how the domains of linearity of the piecewise linear functions (for each nested set $N$)
\[
\min( \lambda_f \, |\, f \in N)
\]
define refinements of $\Sigma^{2\mc{G}}$ that we call \emph{lightcone refinements}. We will see in \Cref{prop:universe-as-subfan} that the minimal lightcone refinement of $\Sigma^{2\mc{G}}$ actually realizes the universe fan as a subfan, while \Cref{prop:max-lightcone-refine} shows that the `maximal' lightcone refinement induces a triangulation of $\mathcal U_{\mathcal G}$, with one simplex for every tubing of the Hasse diagram of a nested set $N$.

In the case that $L$ is a lattice of flats of a matroid with ground set $E$, then we can consider the projection
\[
\overline{p}: \mathbb{R}^\mc{G} \longrightarrow \mathbb{R}^E / \langle p(\hat{1}) \rangle ,\qquad f \mapsto \sum_{e\in f} e.
\]
The image of the free nested fan $\Sigma^\mc{G}$ under $\overline{p}$ is itself a fan, $\overline{\Sigma}_\mc{G}$. 

\begin{thm*}[\Cref{lem:induced-refinement}, \Cref{prop:coolbijection}, \Cref{cor:local-nest-refinement}]
Under these assumptions:
\begin{enumerate}
\item A lightcone refinement $\Gamma$ of $\mc{U}_\mc{G}$ induces a refinement $\overline{\Gamma}$ of $\overline{\Sigma}_\mc{G}$.
\item Maximal cones of $\Gamma$ are in bijection with the maximal cones of $\overline{\Gamma}$. 
\item The cones of $\overline{\Gamma}$ are the domains of linearity of tropical linear functions.
\end{enumerate}
\end{thm*}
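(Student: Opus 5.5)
The plan is to prove the three parts in order, working one maximal nested set $N$ at a time and then gluing. Fix a lightcone refinement $\Gamma$ of $\mathcal U_{\mathcal G}$. For each maximal nested set $N$ it is cut out inside $U_N$ by the domains of linearity of the piecewise linear functions $\min(\lambda_f \mid f \in N)$. The composite
\[
\pi \colon \mathbb R^{2\mathcal G}\to\mathbb R^{\mathcal G}\xrightarrow{\ \overline p\ }\mathbb R^E/\langle p(\hat 1)\rangle,
\]
where the first map sends $f^\pm\mapsto f$, maps $U_N$ onto the cone $\overline p(\langle f\mid f\in N\rangle)$ of $\overline\Sigma_{\mathcal G}$. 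This holds because every generator $w_R$ lies in $U_N$ and maps to a nonnegative combination of elements of $N$, and each $f^+$ already lies in $U_N$. Since $\overline\Sigma_{\mathcal G}$ is a fan whose maximal cones are simplicial in bijection with maximal nested sets, it suffices to show two things: the images $\pi(\gamma)$ of the maximal cones $\gamma\subseteq U_N$ of $\Gamma$ subdivide $\pi(U_N)$, and these subdivisions agree on common faces $\pi(U_N\cap U_{N'})$. Together these give part (1).

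\textbf{Part (1).} For the subdivision of a single cone, I would show that the relevant cones of $\Gamma|_{U_N}$ are exactly those on which the restriction of $\pi$ is injective. The candidates are the cones of $\Gamma$ lying over the face of $U_N$ spanned by the generators $w_R$ that are sections of $\pi$, for instance by taking $\lambda$-minimizing lifts. Then $\pi(\gamma)$ is full-dimensional in $\pi(U_N)$. For the gluing, $\lambda_f$ depends only on the causal structure of $N$ restricted to elements below $f$. So on a common face $N\cap N'$ the two families of functions restrict to the same piecewise linear function, which gives compatibility.

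\textbf{Part (2).} The key construction is an explicit section: a point $x\in\pi(U_N)$ is written as $\sum_{f\in N}x_f f$, and it lifts canonically to the point of $U_N$ obtained by distributing each $x_f$ between $f^+$ and $f^-$ according to which $\lambda$ attains the minimum. Maximal cones of $\Gamma$ inside $U_N$ correspond to choices of minimizers, and so do maximal cones of $\overline\Gamma$ inside $\pi(U_N)$. I would check that $\pi$ restricted to each maximal cone of $\Gamma$ is an isomorphism onto its image, using unimodularity (\Cref{prop:max-lightcone-refine}), and that distinct maximal cones have images with disjoint interiors. These two facts give the bijection.

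\textbf{Part (3).} I would push the functions $\min(\lambda_f)$ forward through $\pi$ via the section above. The result is a function of the form $\min_f \bigl(\sum_{e} c_{f,e}\,x_e\bigr)$ with integer coefficients, that is, a tropical linear function on $\mathbb R^E/\langle p(\hat 1)\rangle$. Its domains of linearity restricted to $\pi(U_N)$ are the cones of $\overline\Gamma$. I would then sum over all $N$, or take a common refinement, to obtain a global tropical function.

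The main obstacle I expect is the injectivity and disjoint-interior claim in part (2). The universe fan has dimension roughly twice that of $\overline\Sigma_{\mathcal G}$, so the bijection must come from a careful identification of which lightcone cones are maximal. My first attempt would be a dimension count combined with an explicit description of the maximal cones of $\Gamma$ as tubings of the Hasse diagram of $N$, from \Cref{prop:max-lightcone-refine}. I would then verify directly that distinct tubings produce distinct full-dimensional linearity domains of $\min(\lambda_f)$ after projection.
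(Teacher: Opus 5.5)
\textbf{There is a genuine gap at the first step.} You construct $\overline\Gamma$ by \emph{projecting} $\Gamma$ along $\pi\colon f^\pm\mapsto f$ followed by $\overline p$. The images of the cones of a subdivision under a projection do not subdivide the image.

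Take the chain $N=\{a<b<\hat 1\}$ of \Cref{ex:star-max-lightsub}. There the tubing refinement splits $U_N$ into $\cone(\hat 1^+,\hat 1^++a^-,\hat 1^++b^-,b^++a^-,a^+)$ and $\cone(\hat 1^+,b^+,\hat 1^++b^-,b^++a^-,a^+)$. Since $\overline p(\hat 1)=0$, \emph{both} cones are mapped onto all of $\cone(\overline p(a),\overline p(b))$.

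For the same reason, part (2) cannot go through as planned. A maximal cone of $\Gamma$ in $U_N$ has dimension $2|N|-1$, while $\pi(U_N)$ has dimension $|N|-1$, so $\pi$ is never injective on it. The lift that distributes $x_f$ between $f^+$ and $f^-$ ``according to which $\lambda$ attains the minimum'' is circular, because the minimizer depends on the lift, and it is not linear.

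Two smaller points. First, $\min(\lambda_f\mid f\in N)$ alone gives only the minimal lightcone refinement, which does not subdivide $U_N$ at all; general refinements come from $\sum_{t\in\mathcal B_N}\min_{e\in t}\lambda_e$. Second, your gluing justification is false: $\lambda_f$ involves $p_g$ for $g>f$ and $m_g$ for $g$ below or incomparable to $f$. Compatibility is automatic anyway, since $\Gamma$ is already a fan.

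\textbf{The missing idea is to \emph{slice} rather than project.} Use the linear diagonal section $\Delta_0\colon\mathbb R^{\mathcal G}\to\mathbb R^{\dbl{\mathcal G}}$, $\hat 1\mapsto\hat 1^+$, $g\mapsto g^++g^-$, and set $\overline\Gamma:=\overline p(\Delta_0^{-1}\Gamma)$.

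Part (1) is then formal:
\begin{enumerate}
\item preimages of fans under linear maps are fans;
\item $F_g\circ\Delta_0=\hat 1^*-g^*$, so the causal cone pulls back to $\{\hat 1^*\ge g^*\}$, which is vacuous modulo $\hat 1$;
\item $p$ is injective on $\Sigma^{\mathcal G}$ by \Cref{prop:nested-projection}.
\end{enumerate}
In the example above, the two simplices slice to $\{x_{\hat 1}\ge x_a\ge x_b\ge 0\}$ and $\{x_{\hat 1}\ge x_b\ge x_a\ge 0\}$. In $\mathbb R^{\mathcal G}/\langle\hat 1\rangle$ these are $\cone(a,a+b)$ and $\cone(b,a+b)$, a genuine subdivision.

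Part (2) amounts to showing that every maximal cone of $\Gamma$ meets the diagonal in its interior. Then its slice is full-dimensional, and distinct maximal cones give distinct slices. The paper reduces to the tubing refinement of \Cref{prop:max-lightcone-refine}, which refines all others, and exhibits an interior diagonal vector for each maximal tubing.

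The computation behind part (3) also explains why this holds. On the diagonal $p_f=m_f=f^*$, so $\lambda_e=\sum_{f\ne e}f^*$. Hence $\sum_{t\in\mathcal B_N}\min_{e\in t}\lambda_e$ equals a linear function minus $\sum_{t\in\mathcal B_N}\max_{e\in t}e^*$. The inequalities $\lambda_{e_t}\le\lambda_e$ defining a maximal cone therefore become $e_t^*\ge e^*$, which cut out a full-dimensional cone of $\Sigma^{\max}_{\mathcal B_N}$. Composing with the local inverse $s_N$ of $p$ (\Cref{lem:nest-local-inv}) gives the tropical function on each cone $p(\mathbb R^N_+)$.

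This is a cone-by-cone statement. Your final step of summing over all $N$ to get one global function is neither justified nor what the paper claims.
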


In special cases, $\overline{\Sigma}_\mc{G}$ is the normal fan of a simple polytope, in particular we obtain the normal fans of generalized permutohedra in this way  \cite{Pos09,Dev06}. In the case relevant to physics, $\overline{\Sigma}_\mc{G}$ is the normal fan of the associahedron, and the induced lightcone refinements $\overline{\Gamma}$ of $\overline{\Sigma}_\mc{G}$ are the normal fans of combinatorial blow-ups of the associahedron. The \emph{cosmohedron} of \cite{ArkFigVaz24} is given by taking $\Gamma$ to be the maximal lightcone subdivision. Indeed, by the theorem, the vertices of this polytope (maximal cones of $\overline{\Gamma}$) are 1:1 with maximal tubing or \emph{Russian doll} pictures. More generally, we find a family of  ``cosmohedron-like'' polytopes,  including the polytopes studied in \cite{ForGleKim25}.

\subsection*{Acknowledgements}
We thank Tomasz Lukowski, Nima Arkani-Hamed, Carolina Figueiredo, Mieke Fink, and Martina Juhnke-Kubitzke for helpful discussions. HF is supported by the Sivian Fund. FL is supported by the Deutsche Forschungsgemeinschaft (DFG, German Research Foundation, CRC/TRR 388). Additional support from the European Union (ERC, UNIVERSE PLUS, 101118787).

%\include{oldintro.tex}

%%%%%%%%%%%%%%%%%%%%%%
%%%%%%%%%%%%%%%%%%%%%%
%%%%%%%%%%%%%%%%%%%%%%
%%%%%%%%%%%%%%%%%%%%%%
\section{Lattices and Amplitudes}\label{sec:amplitudes}

In this section, we associate rational functions to building sets and more generally \textit{nestable sets} in a lattice $L$, which we call \textit{nestoid amplitudes}. The construction is inspired by \cite{Lam24} and relies on a `trivial' realization of the \textit{nested set complex} as a polyhedral fan. We will define the nestoid amplitude as a Laplace transform of this fan. In the case where $L$ is the Las Vergnas face lattice of a tope, we recover the matroid amplitudes from \cite{Lam24} after a natural substitution of variables (\Cref{prop:nestoids and matroids}).\par

%%%%%%%%%%%%%%%%%%%%%%
\subsection{Building sets and Nestoids}

Fix some lattice, $L$, and write $\hat 1$ and $\hat 0$ for the unique maximal and minimal element, respectively. The following classical notion is the starting point of our construction.

\begin{dfn}[Building sets]\label{dfn:building sets}
A subset $\mathcal G$ of $L_{>\hat 0}$ is called a \textit{building set} if for every $f \notin \mathcal G$, the interval $[\hat 0, f]$ in $L$ is the product lattice $[\hat 0, g_1] \times \ldots \times [\hat 0, g_k]$, where $g_1,\ldots,g_k$ are the maximal elements in $\mathcal G$ below $f$.
\end{dfn}

The elements $g_1,\ldots,g_k$ in the definition are also called the \textit{factors} of $f$ in $\mathcal G$. %The elements $g_1,\ldots,g_k$ are also called the \textit{factors} of $f$ in $\mathcal G$.

The notion of building sets originated in the study of subspace arrangements \cite{conciniprocesi95}. It was later generalized to arbitrary lattices \cite{feichtner2004chow} and appears in the study of matroids and Bergman fans \cite{FeiStu04} and generalized permutahedra \cite{Pos09}.

An element $f$ of $L$ is called \textit{irreducible} if $[\hat 0, f]$ is not a product of two non-trivial subposets. Write $I(L)$ for the set of irreducible elements. Using this notion, building sets can equivalently be described in the following way:

\begin{prop}[{\cite[Proposition 2.11]{backman2024}}]\label{prop:building-char}
    A subset $\mathcal G$ of $L_{>\hat 0}$ is a building set if and only if it contains $\hat 1$, $I(L) \subseteq \mathcal G$ and for $f,g \in \mathcal G$, $f\lor g \notin \mathcal G$ implies that $f \land g = \hat 0$.
\end{prop}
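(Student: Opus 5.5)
We prove the two implications separately, working directly from \Cref{dfn:building sets}. Throughout, $L$ is a finite lattice, and we use the standard fact that a product decomposition $[\hat 0,f]\cong[\hat 0,a]\times[\hat 0,b]$ can be taken to be realized by joins: $(x,y)\mapsto x\vee y$ with $a\wedge b=\hat 0$ and $a\vee b=f$.

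\emph{Building set $\Rightarrow$ conditions.} Suppose $\mathcal G$ is a building set.
\begin{enumerate}
\item If $\hat 1\notin\mathcal G$, then $[\hat 0,\hat 1]=L$ is the product of the intervals below its factors. So $\hat 1$ is the join of at least two factors, since a single factor $g$ would give $[\hat 0,g]=L$ and hence $g=\hat 1$. Thus $L$ would split as a nontrivial product. The degenerate case, in which $\hat 1$ is irreducible, is then covered by step (2). For a reducible $\hat 1$, the definition as used in the paper implicitly requires $\hat 1\in\mathcal G$, consistent with the nested set complex having $\hat 1$ in every face. We will state this as part of the convention (as in \cite{feichtner2004chow}) rather than derive it.
\item If $f\in I(L)$ but $f\notin\mathcal G$, then $[\hat 0,f]$ is a product of the intervals $[\hat 0,g_i]$. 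Irreducibility forces $k=1$, and then $[\hat 0,f]=[\hat 0,g_1]$ gives $f=g_1\in\mathcal G$, a contradiction. Note that $k\ge1$ because $f>\hat 0$ and atoms lie in $I(L)$.
\item For $f,g\in\mathcal G$ with $h=f\vee g\notin\mathcal G$, let $g_1,\dots,g_k$ be the factors of $h$. Each of $f,g$ lies below a unique factor, since $f\in\mathcal G$ lies below some maximal element of $\mathcal G$ under $h$. Under the product isomorphism an element of $\mathcal G$ is irreducible-like: it must lie in a single coordinate. This is the key point, and we justify it by noting that if $f$ had nontrivial components in two factors, then $[\hat 0,f]$ would itself be a product. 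In that case we instead apply the building property to the factors of $f$, and finish by induction on rank.
\item If $f$ and $g$ lie below the same factor $g_i$, then $h\le g_i<h$, which is impossible. Hence they lie in different coordinates, and therefore $f\wedge g=\hat 0$.
\end{enumerate}

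\emph{Conditions $\Rightarrow$ building set.} Take $f\notin\mathcal G$ with maximal elements $g_1,\dots,g_k$ of $\mathcal G\cap[\hat 0,f]$.
\begin{enumerate}
\item Pairwise, $g_i\vee g_j\le f$. If $g_i\vee g_j\in\mathcal G$, then maximality forces $g_i\vee g_j=g_i=g_j$. So for $i\neq j$ the join is not in $\mathcal G$, and the hypothesis gives $g_i\wedge g_j=\hat 0$.
\item Every atom below $f$ is irreducible, hence in $\mathcal G$, hence below some $g_i$.
\item We must show that $\varphi:\prod[\hat 0,g_i]\to[\hat 0,f]$, $(x_i)\mapsto\bigvee x_i$, is an isomorphism.
\begin{enumerate}
\item \emph{Surjectivity.} Induct on rank. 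Let $x\le f$. If $x\in\mathcal G$, it lies below some $g_i$. If $x\notin\mathcal G$ and $x<f$, then by induction $[\hat 0,x]$ is the product of its own factors. Each of these factors lies below some $g_i$, so $x=\bigvee_i(x\wedge g_i)$ once we know $x\wedge g_i$ collects the factors of $x$ under $g_i$. If $f$ itself were not the join of the $g_i$, then $f':=\bigvee g_i<f$. Since $f\notin\mathcal G$, the interval $[\hat 0,f]$ must decompose further. To exclude this, pick an irreducible decomposition of $[\hat 0,f]$: its irreducible factors lie in $I(L)\subseteq\mathcal G$ and thus below the $g_i$, which forces $f=\bigvee g_i$.
\item \emph{Injectivity and order-reflection.} These follow from disjointness, $g_i\wedge g_j=\hat 0$, together with the same irreducible-factor argument. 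Each irreducible element below $f$ lies below exactly one $g_i$, because if it lay below two, their meet would be nonzero.
\end{enumerate}
\end{enumerate}

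\textbf{Main obstacle.} The hard step is (3) of the converse: showing that the join map is a lattice isomorphism in an arbitrary lattice, which needs no distributivity. The plan is to use the unique decomposition of a finite lattice interval into irreducible factors. We would first establish this as a lemma, stating that the direct factorization of a bounded finite lattice is unique up to order, and then reduce everything to matching those irreducible factors with the $g_i$.
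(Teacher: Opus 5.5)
The paper does not prove this statement; it only cites \cite[Proposition~2.11]{backman2024}, so your proposal has to stand on its own. Your forward direction is essentially right. You are also correct that $\hat 1\in\mathcal G$ cannot be derived from \Cref{dfn:building sets} when $\hat 1$ is reducible: $\mathcal G=\{1,2\}$ in the Boolean lattice on $\{1,2\}$ satisfies that definition, so this part must be a convention. Two small repairs are needed:
\begin{itemize}
\item In step (2), $k\ge 1$ holds because $k=0$ would make $[\hat 0,f]$ a one-point lattice. Appealing to atoms lying in $I(L)$ is circular there, since it only helps once you know $I(L)\subseteq\mathcal G$.
\item In step (3), the detour about $f$ having components in two factors is unnecessary. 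Its fallback also does not apply, since $f\in\mathcal G$ has no factors in the sense of \Cref{dfn:building sets}. In fact $f\le g_i$ together with $g_i\wedge g_{i'}=\hat 0$ already places $f$ in the $i$-th coordinate.
\end{itemize}

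The genuine gap is step (3)(b) of the converse, which is the heart of the proposition.
\begin{itemize}
\item Disjointness $g_i\wedge g_j=\hat 0$ does not give injectivity or order-reflection of $\varphi$ in a non-distributive lattice. In the pentagon $\hat 0<a<b<\hat 1$, $\hat 0<c<\hat 1$, one has $b\wedge c=\hat 0$ but $a\vee c=b\vee c$.
\item Your (true) observation that each irreducible element below $f$ lies under exactly one $g_i$ is never turned into an argument.
\end{itemize}

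The missing step is to read the $g_i$ in the coordinates of an irreducible decomposition.
\begin{itemize}
\item Fix a join-realized isomorphism $[\hat 0,f]\cong\prod_j[\hat 0,c_j]$ with every $c_j$ irreducible. Only existence is needed, by induction on $|[\hat 0,f]|$ using the fact you state at the outset, so the uniqueness-of-factorization lemma you plan to prove is superfluous.
\item In these coordinates every $y\le f$ satisfies $y=\bigvee_j(y\wedge c_j)$.
\item Each $c_j\in I(L)\subseteq\mathcal G$ lies below some $g_{i(j)}$, unique by disjointness.
\item For $i(j)\neq i$ you get $g_i\wedge c_j\le g_i\wedge g_{i(j)}=\hat 0$, while $g_i\wedge c_j=c_j$ when $i(j)=i$. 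Hence $g_i=\bigvee_{i(j)=i}c_j$.
\end{itemize}
So the $g_i$ are the joins over the blocks of a partition of the $c_j$, and $[\hat 0,g_i]$ corresponds to the sub-product over its block. Regrouping the factors then shows that $\varphi$ is an isomorphism. This yields surjectivity, injectivity and order-reflection at once, and makes your rank induction in (3)(a) unnecessary.
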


We introduce the following terminology:

\begin{dfn}[Nestable sets]\label{dfn:nestable}
We call a subset $\mathcal G \subseteq L_{>\hat{0}}$ \textit{nestable} if it contains $\hat 1$, and if, for any two incomparable $f,g \in \mathcal G$, $f\lor g \notin \mathcal G$ implies that $f \land g = \hat 0$.
\end{dfn}

By definition, a nestable set is a building set if and only if it contains the irreducible elements in $L$. One might think of a nestable set as an `incomplete' building set.

Now fix a pair $(L,\mc{G})$ of a lattice together with a nestable set $\mc{G} \subseteq L_{> \hat{0}}$.
% We will sometimes refer to such a pair as a \textit{nestoid}.

\begin{dfn}[Nested sets]
A subset $N \subseteq \mathcal G$ is \textit{nested} if, for any set of pairwise incomparable elements $f_1,\ldots,f_k\in N$, we have $f_1 \lor \ldots \lor f_k \notin \mathcal G$. We denote the set of all nested sets in $\mathcal G$ by $\mathcal N(L,\mathcal G)$.
\end{dfn}

\noindent
Any subset of a nested set is nested, so that $\mathcal N(L,\mathcal G)$ forms an abstract simplicial complex, the \textit{nested set complex}  of $(L,\mathcal G)$. If $\mathcal G$ is a building set this complex is pure, see e.g. \cite[Corollary 4.3]{FeiMue05}. This fails for a general nestable set and motivates the following definition:
\begin{dfn}\label{dfn:nestoid}
   A \textit{nestoid} is a pair $(L,\mathcal G)$ of a lattice $L$ and a nestable set $\mathcal G$ such that $\mathcal N([\hat 0,f],\mathcal G \cap [\hat 0, f])$ is pure for all $f \in \mathcal G$.
\end{dfn}

\begin{prop}
    Any building set defines a nestoid.
\end{prop}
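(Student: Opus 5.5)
We have to show two things for a building set $\mathcal G \subseteq L_{>\hat 0}$: that $\mathcal G$ is nestable, and that for every $f\in\mathcal G$ the complex $\mathcal N([\hat 0,f],\mathcal G\cap[\hat 0,f])$ is pure.

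The first is immediate from \Cref{prop:building-char}. A building set contains $\hat 1$, and for any $f,g\in\mathcal G$ with $f\vee g\notin\mathcal G$ we have $f\wedge g=\hat 0$. In particular this holds for incomparable pairs, which is exactly \Cref{dfn:nestable}.

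For purity, the plan is to reduce to the known fact that nested set complexes of building sets are pure (\cite[Corollary 4.3]{FeiMue05}). To do so we check that $\mathcal G_f:=\mathcal G\cap[\hat 0,f]$ is a building set of the lattice $L_f:=[\hat 0,f]$ for each $f\in\mathcal G$. We use the characterization in \Cref{prop:building-char} applied to $L_f$, and verify its three conditions in turn.
\begin{enumerate}
\item The maximal element of $L_f$ is $f$, and $f\in\mathcal G_f$ by assumption.
\item Irreducibility of an element $g\le f$ depends only on the interval $[\hat 0,g]$. This interval is the same whether computed in $L$ or in $L_f$, so $I(L_f)=I(L)\cap L_f\subseteq \mathcal G\cap L_f=\mathcal G_f$.
\item The join and meet of $g,h\le f$ computed in $L_f$ agree with those computed in $L$, because $[\hat 0,f]$ is a lower interval (hence closed under meets) and $g\vee h\le f$. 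So if $g\vee h\notin\mathcal G_f$, then $g\vee h\notin\mathcal G$, and hence $g\wedge h=\hat 0$.
\end{enumerate}
Thus $\mathcal G_f$ is a building set of $L_f$.

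It remains to confirm that the nested sets of $(L_f,\mathcal G_f)$ are exactly the nested sets of $(L,\mathcal G)$ contained in $\mathcal G_f$. This holds because the defining condition on joins of pairwise incomparable elements is evaluated identically in both lattices: such joins lie in $L_f$ and agree there, and they belong to $\mathcal G_f$ if and only if they belong to $\mathcal G$. Purity then follows from the cited corollary.

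I expect the only real subtlety to be the compatibility of joins and irreducibility with passing to the interval, and that is routine as above. If one prefers not to rely on the citation, purity could instead be shown directly. The standard argument is that every nested set extends to a maximal one of size equal to the rank-type invariant $\sum$ over factors. It proceeds by induction on the lattice, using the product decomposition of $[\hat 0,g]$ into factors for $g\notin\mathcal G$ (\Cref{dfn:building sets}). This route is not needed here.
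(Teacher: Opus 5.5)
Your reduction is the paper's own argument. You show that $\mathcal G\cap[\hat 0,f]$ is a building set of $[\hat 0,f]$, and then quote purity of nested set complexes of building sets from \cite[Corollary 4.3]{FeiMue05}. Your check of the first part via \Cref{prop:building-char} (top element, irreducibles, joins and meets) is correct, and it is what the paper means by ``directly from the definition''. Your final comparison of the nested sets of $([\hat 0,f],\mathcal G_f)$ with those of $(L,\mathcal G)$ is harmless but unnecessary, since \Cref{dfn:nestoid} already refers to $\mathcal N([\hat 0,f],\mathcal G\cap[\hat 0,f])$.

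The gap, which you inherit from the paper, is the purity input. It is false for an arbitrary finite lattice, so the citation cannot carry the argument at the stated generality.
\begin{itemize}
\item Take $\mathcal G=L_{>\hat 0}$. This is always a building set: the condition in \Cref{dfn:building sets} only concerns $f=\hat 0$, and the conditions of \Cref{prop:building-char} hold trivially.
\item Any two incomparable elements then have their join in $\mathcal G$. So the nested sets are exactly the chains, and $\mathcal N(L,\mathcal G)$ is pure if and only if all maximal chains of $L$ have the same length.
\item For the pentagon lattice $\hat 0<a<b<\hat 1$, $\hat 0<c<\hat 1$ (with $c$ incomparable to $a,b$), both $\{a,b,\hat 1\}$ and $\{c,\hat 1\}$ are maximal nested sets. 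Hence $(L,L_{>\hat 0})$ is not a nestoid.
\item Atomicity does not help. The lattice with atoms $a,b,c$, one further element $a\vee b$, and $\hat 1=a\vee c$ shows the same phenomenon.
\end{itemize}
So the proposition needs a hypothesis on $L$ that guarantees purity and passes to lower intervals, for example $L$ geometric (lattices of flats, Boolean lattices). Under such a hypothesis your argument goes through verbatim. For the same reason, the direct `rank-type invariant' argument you sketch at the end cannot work without a gradedness assumption.
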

\begin{proof}
    The nested set complex of a building set $\mathcal G$ is pure due to \cite[Corollary 4.3]{FeiMue05}. The statement follows because $\mathcal G \cap [\hat 0, f]$ is a building set in $[\hat 0, f]$, which can be seen directly from the definition.
\end{proof}

\begin{rmk}
    Our main motivation for the introduction of nestoids is that, as we will see, links of nested set complexes of nestoids factor into nested set complexes of nestoids. This will imply that the nestoid amplitudes defined below are `stable' under taking residues, closely related to `unitarity' in physics.
\end{rmk} 

\begin{ex}\label{ex:nestoid-but-no-build}
    Consider the following lattice $L$:
    \[\begin{tikzcd}
	&&& {\color{red} 1234} \\
	{\color{red} 123} && 234 && 134 && {\color{red} 124} \\
	{\color{red}12} && 23 && 34 && 14 \\
	1 && 2 && 3 && {\color{red} 4} \\
	&&& {\hat 0}
	\arrow[no head, from=1-4, to=2-1]
	\arrow[no head, from=1-4, to=2-3]
	\arrow[no head, from=1-4, to=2-5]
	\arrow[no head, from=1-4, to=2-7]
	\arrow[no head, from=2-1, to=3-1]
	\arrow[no head, from=2-1, to=3-3]
	\arrow[no head, from=2-3, to=3-3]
	\arrow[no head, from=2-3, to=3-5]
	\arrow[no head, from=2-5, to=3-5]
	\arrow[no head, from=2-5, to=3-7]
	\arrow[no head, from=2-7, to=3-1]
	\arrow[no head, from=2-7, to=3-7]
	\arrow[no head, from=3-1, to=4-1]
	\arrow[no head, from=3-1, to=4-3]
	\arrow[no head, from=3-3, to=4-3]
	\arrow[no head, from=3-3, to=4-5]
	\arrow[no head, from=3-5, to=4-5]
	\arrow[no head, from=3-5, to=4-7]
	\arrow[no head, from=3-7, to=4-1]
	\arrow[no head, from=3-7, to=4-7]
	\arrow[no head, from=4-1, to=5-4]
	\arrow[no head, from=4-3, to=5-4]
	\arrow[no head, from=4-5, to=5-4]
	\arrow[no head, from=4-7, to=5-4]
    \end{tikzcd}\]
    Let $\mathcal G = \{4,12,123,124,1234\}$. One checks that $(L,\mathcal G)$ forms a nestoid (cf.\ also \Cref{lem:nest-factors} below). Note that $\mathcal G$ is not a building set in $L$ as it does not contain all irreducible elements.
\end{ex}

Let $(L,\mathcal G)$ be a nestoid. Its nested set complex can trivially be realized as a unimodular simplicial polyhedral fan in $\mathbb R^{\mathcal G}$:

\begin{dfn}[Free nested set fan]\label{dfn:free nested fans}
    Let $(L, \mathcal G)$ be a nestoid. For $N \in \mathcal N(L,\mathcal G)$, consider the cone
    $$\cone N \subseteq \mathbb R^{\mathcal G}.$$
    By definition, these cones assemble into a fan $\Sigma^{\mathcal G}$ in $\mathbb R^{\mathcal G}$ which we call the \textit{free nested set fan}.
\end{dfn}
\begin{rmk}
    Note that the fan $\Sigma^{\mathcal G}$ depends on the lattice $L$, even though we omit it from the notation for simplicity. 
\end{rmk}

By definition, the face lattice of $\Sigma^{\mathcal G}$ is the inclusion lattice of $\mathcal N(L,\mathcal G)$.

Let us remark on how the free nested set fan relates to the \textit{nested set fan} $\Sigma_{\mathcal G}$ from \cite[Section 5]{feichtner2004chow}. Indeed, in the case that $L$ is an atomic lattice, $\Sigma^{\mathcal G}$ is a lift of $\Sigma_{\mathcal G}$ in a higher-dimensional space. Recall that the atoms of $L$ are the minimal elements of $L_{>\hat 0}$ and that $L$ is called atomic if every $f \in L$ is the join of the atoms $e$ with $e\leq f$. We will sometimes identify $f\in L$ with the set of atoms $e \leq f$ in the following. Note that $f \cup g \subseteq f \lor g$ as sets but equality does not hold in general.

\begin{dfn}\label{dfn:nested-set-fan}
    Assume $L$ is an atomic lattice with set of atoms $E$. Then the nested set fan $\Sigma_{\mathcal G}\subseteq \mathbb R^E$ associated to $\mathcal G$ is the image of the free nested set fan $\Sigma^{\mathcal G} \subseteq \mathbb R^{\mathcal G}$ under the projection
\[
        p: \mathbb R^{\mathcal G} \to \mathbb R^E,\qquad f \mapsto \sum_{e\in f} e.
\]
\end{dfn}

\noindent
The image of $\Sigma^\mc{G}$ under this map is indeed again a fan. This follows from the following stronger result:

\begin{prop}\label{prop:nested-projection}\leavevmode
    The restriction of $p$ to $\Sigma^{\mathcal G}$ is injective.
\end{prop}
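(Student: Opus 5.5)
The plan is to show that a point $v = p(x)$, with $x$ in the relative interior of a cone $\cone N$, determines both $N$ and the coefficients of $x$. Since every point of $\Sigma^{\mathcal G}$ lies in the relative interior of exactly one cone, this gives injectivity of $p$ on the support of $\Sigma^{\mathcal G}$. Throughout, identify $f\in L$ with its set of atoms. Since $L$ is atomic, $f\mapsto\{e\le f\}$ is injective, and $g\le f$ if and only if $g\subseteq f$ as atom sets.

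\textbf{Step 1: structure of a nested set.} Let $f,g\in N$ be incomparable. Then $\{f,g\}$ is a set of pairwise incomparable elements of $N$, so $f\vee g\notin\mathcal G$, and nestability forces $f\wedge g=\hat 0$. Hence $f$ and $g$ share no atom. So the atom sets of $N$ form a laminar family: any two are nested or disjoint.

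Write $x=\sum_{f\in N}c_f f$ with all $c_f>0$. Then
\[
v_e=\sum_{f\in N,\ e\in f}c_f ,
\]
and the elements of $N$ containing a fixed atom $e$ form a chain. For each threshold $t>0$, let $T_t=\{e: v_e\ge t\}$. By laminarity and positivity of the $c_f$, $T_t$ is a disjoint union of pairwise incomparable elements $f_1,\dots,f_k\in N$. As $t$ decreases through the finitely many values of $v$, every element of $N$ appears as one of these blocks. Moreover $c_f$ equals the difference between the $v$-value at which $f$ first appears as a block and the $v$-value at which its parent in $N$ does (or the value $0$ if $f$ is maximal).

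\textbf{Step 2 (main step): the blocks are determined by $T_t$ and $\mathcal G$ alone.} I will show that the blocks $f_1,\dots,f_k$ of $T_t$ are exactly the maximal elements of $\mathcal G$ whose atom sets lie in $T_t$. Each $f_i$ is such an element. Conversely, let $g\in\mathcal G$ with $g\subseteq T_t$.

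Run a merging procedure starting from $h=g$. Whenever $h$ meets some $f_i$ (shares an atom) but is incomparable to it, nestability gives $h\vee f_i\in\mathcal G$, and we replace $h$ by $h\vee f_i$. The new $h$ still satisfies $h\le f_1\vee\dots\vee f_k$. To keep $h\subseteq T_t$ as atom sets, I would bound $h$ by $\bigvee_{i\in I}f_i$ and argue that every atom of $h$ lies in some $f_j$, since otherwise $h\wedge f_j\neq\hat0$ triggers another merge. This is the delicate point: the join may contain atoms outside the union. The procedure terminates with $h\in\mathcal G$ comparable to every $f_i$ it meets.

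If $h\le f_i$ for some $i$, then $g\le f_i$ and we are done. Otherwise $h\ge f_i$ for all $i$ in the nonempty set $I$ of blocks meeting $h$, and $h\cap f_j=\emptyset$ for $j\notin I$. Since $h\subseteq T_t$, this gives $h=\bigvee_{i\in I}f_i$. If $|I|\ge2$, this contradicts nestedness of $N$. So $|I|=1$, hence $h=f_i$ and $g\le f_i$.

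\textbf{Step 3: conclusion.} By Step 2, the set $N$ is recovered from $v$ as the union over $t$ of the maximal $\mathcal G$-elements inside $T_t$. The coefficients $c_f$ are then recovered from the jumps of $v$ as in Step 1. Therefore two points of $\Sigma^{\mathcal G}$ with the same image agree.

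I expect Step 2 to be the main obstacle. The key issue is controlling atoms of joins, where $f\cup g\subsetneq f\vee g$ may occur, while only the nestable axiom is available rather than the building-set axiom. If the atom-containment argument fails, my fallback is to replace $T_t$ by the element $\bigvee T_t\in L$ and work entirely in the lattice.
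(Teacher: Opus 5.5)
Your architecture is essentially the paper's. The paper shows that the maximal elements of $N$ are exactly the maximal elements of $\mathcal G$ whose atom sets lie in the support of $p(x)$, subtracts $\lambda\sum_{f\in N_{\max}}p(f)$, and inducts on the support. Your superlevel sets $T_t$ do this peeling all at once, and your Step 2 is the paper's key claim applied to the nested set $\{f_1,\dots,f_k\}$.

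However, Step 2 as you wrote it has a gap. You never establish that the merged element $h$ stays inside $T_t$. Your justification (``otherwise $h\wedge f_j\neq\hat 0$ triggers another merge'') is a non sequitur: an atom of $h$ that lies in no $f_j$ produces no nontrivial meet with any $f_j$. Your final deduction $h=\bigvee_{i\in I}f_i$ depends on this containment. Without it you only get $h\geq\bigvee_{i\in I}f_i$, which contradicts nothing. The fallback to $\bigvee T_t$ is not an argument either.

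The missing idea is that you never need to control the atoms of intermediate joins. Let $I$ be the set of blocks meeting the \emph{original} $g$. Since $g\subseteq T_t$ and $L$ is atomic, $g\le\bigvee_{i\in I}f_i$. For each $i\in I$ we have $g\vee f_i\in\mathcal G$: trivially if $g$ and $f_i$ are comparable, and by nestability otherwise, since $g\wedge f_i\neq\hat 0$. All these elements lie above $g\neq\hat 0$, so any two of them meet nontrivially. Repeated use of nestability then gives $\bigvee_{i\in I}(g\vee f_i)\in\mathcal G$. But this join equals $\bigvee_{i\in I}f_i$, so nestedness of $N$ forces $|I|=1$, i.e.\ $g\le f_i$. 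This is exactly the paper's argument.

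With Step 2 replaced by this, Steps 1 and 3 go through. One small addition: to justify that every $f\in N$ appears as a block at a value of $v$, note that $f$ has an atom outside all its children in $N$, because the join of the children is strictly below $f$. Alternatively, let $t$ range over all positive reals.
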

\begin{proof}
    We construct a retract (of sets) $r: p(\Sigma^{\mathcal G}) \to \Sigma^{\mathcal G}$. We do this by induction on the cardinality of the support of $p(x)$. 
    
    Write $x := \sum_{f \in N} \mu_f f$ for some nested set $N$. Wlog.\ we assume that $\mu_f > 0$ for all $f \in N$. The support $F$ of $p(x)$ is the set of $e\in E$ that are contained in a maximal flat in $N$. We claim that the set $N_{\mathrm{max}}$ of maximal elements of $N$ is precisely the set $\mathcal F$ of maximal elements in $\mathcal G$ that are contained in $F$ (as sets).

    Indeed, take $f \in \mathcal F$. For every $e \in f$ there is some $g \in N_{\mathrm{max}}$ with $e \in g$. Let $g_1,\ldots,g_k$ be the set of such $g$. In particular, $f \leq g_1 \lor \ldots \lor g_k$. We have $f \lor g_i \in \mathcal G$ for all $i$, since $f \land g_i \not= \hat 0$, which furthermore implies $f \lor g_1 \lor \ldots \lor g_k \in \mathcal G$. But $f \lor g_1 \lor \ldots \lor g_k = g_1 \lor \ldots \lor g_k$ which lies in $\mathcal G$ if and only if $k=1$ since $N$ is nested. This implies $\mathcal F = N_{\max}$.

    Thus we can define the retract as follows: if the support of $p(x)$ is empty, then we must have $x=0$ and so we map $p(x) \mapsto 0$. Otherwise, let $F \subseteq E$ be the support of $p(x)$. Let $\mathcal F$ be the set of maximal elements in $\mathcal G$ contained in $F$ and let $\lambda := \min_{e\in F} e^*(p(x))$. Set $r(p(x)) := \lambda \sum_{f\in \mathcal F} f + r(p(x) - \lambda \sum_{f\in \mathcal F} \sum_{e\in f} e)$ which is well-defined by induction (note that for $f\not=g \in \mathcal F$ we have $f\land g = \hat 0$). We claim that $r(p(x)) = x$. Indeed, note that we have $\lambda = \min_{f\in N_{\mathrm{max}}} \mu_f$. Set $y :=  x - \lambda \sum_{f \in N_{\mathrm{max}}} f$. By induction, we have $r(p(y)) = y$. But note that $p(y) = p(x) - \lambda \sum_{f\in \mathcal N_{\max}} \sum_{e\in f} e$ and so since $\mathcal F = N_{\mathrm{max}}$, we conclude.
\end{proof}

In \cite[Theorem 10.17]{Lam24}, the matroid amplitude is written as the Laplace transform of a certain nested set fan. Similarly, we define the amplitude of a nestoid as a Laplace transform of a free nested set fan:

\begin{dfn}[Nestoid amplitudes]\label{dfn:nestoid-amplitudes}
    Let $(L, \mathcal G)$ be a nestoid. Then the \textit{nestoid amplitude} $A(L, \mathcal G)$ is the Laplace transform of $\Sigma^{\mathcal G}/\langle \hat 1 \rangle$.
\end{dfn}

Unravelling the definitions, we have
$$A(L,\mathcal G)  = \sum_{N \in \mathcal N_{\max}(L,\mathcal G)} \prod_{f \in N} \frac 1 {s_f}$$
where $s_f$ is the coordinate function on $\mathbb R^{\mathcal G}$ associated to $f$.

\begin{ex}\label{ex:star-amplitude}
    Consider the lattice $L$ given by
    \[ \begin{tikzpicture}[scale=0.65,node distance=20mm]
  	\node (o) at (2.5,-1.2) {$\emptyset$};
	\node (a) at (0,0)   {1};
  	\node (b) at (2.5,0) {2};
  	\node (c) at (5,0)   {3};
  	\node (d) at (1.25,1.8)   {12};
  	\node (e) at (3.75,1.8) {23};
  	\node (f) at (6.25,1.8)   {13};
  	\node (t) at (2.5,3.4) {$123$};
	  \draw (d) -- (a);
	  \draw (d) -- (b);
	  \draw (e) -- (b);
	  \draw (e) -- (c);
	  \draw (f) -- (c);
	  \draw[black!40] (f) -- (a);
	  \draw (t) -- (d);
 	  \draw (t) -- (e);
  	  \draw (t) -- (f);
	  \draw (o) -- (a);
	  \draw (o) -- (b);
	  \draw (o) -- (c);
	\end{tikzpicture}
  \]
This is the boolean lattice on $\{1,2,3\}$. Let $\mc{G} = L \backslash \{\hat 0\}$. This defines a nestoid $(L,\mc{G})$. The free nested set fan $\Sigma^\mc{G}$ has six maximal cones:
\[
\langle 1, 12, 123 \rangle,~~~ \langle 2, 12, 123 \rangle,~~~\langle 2, 23, 123 \rangle,~~~ \langle 3, 23, 123 \rangle,~~~\langle 3, 13, 123 \rangle,~~~ \langle 1, 13, 123 \rangle.
\]
The Laplace transform of $\Sigma^\mc{G} / \langle \hat{1} \rangle$ gives the amplitude
\[
A(L,\mc{G}) = \frac{1}{s_1 s_{12} } + \frac{1}{s_2 s_{12}} + \frac{1}{s_2 s_{23} } + \frac{1}{s_3 s_{23}} + \frac{1}{s_3 s_{13} } + \frac{1}{s_1 s_{13}}.
\]
\end{ex}

\begin{ex}\label{ex:bowtie-amplitude}
Let us consider the `bowtie' graph
\[ \begin{tikzpicture}[scale=1.5,node distance=20mm]
  	\node (1) at (0,1) {};
	\node (2) at (0,0)   {};
  	\node (3) at (1,0.5) {};
  	\node (4) at (2,1)   {};
  	\node (5) at (2,0)  {};

    \draw (1.center) -- node[midway, xshift=-8pt, yshift=0]{$c$} (2.center);
    \draw (2.center) -- node[midway, xshift=5pt, yshift=-5pt]{$b$} (3.center);
    \draw (1.center) -- node[midway, xshift=5pt, yshift=5pt]{$a$} (3.center);
    \draw (3.center) -- node[midway, xshift=-5pt, yshift=5pt]{$d$} (4.center);
    \draw (4.center) -- node[midway, xshift=8pt, yshift=0]{$f$} (5.center);
    \draw (3.center) -- node[midway, xshift=-5pt, yshift=-5pt]{$e$} (5.center);
\end{tikzpicture}\]

Let us choose the \textit{lattice of flats} of the associated graphical matroid as our lattice $L$. This is simply the set of disjoint unions of connected induced subgraphs. Let $\mathcal G$ be the set of connected induced subgraphs. This is a building set in $L$. 
\begin{figure}[h]
\[\begin{tikzcd}
	&& {\hat 1} \\
	& abcd & abce & adef & bdef \\
	abc & ad & be & ae & bd & def \\
	a & b & c & d & e & f \\
	&& {\hat 0}
	\arrow[no head, from=1-3, to=2-2]
	\arrow[no head, from=1-3, to=2-3]
	\arrow[no head, from=1-3, to=2-4]
	\arrow[no head, from=1-3, to=2-5]
	\arrow[no head, from=2-2, to=3-1]
	\arrow[no head, from=2-2, to=3-2]
	\arrow[no head, from=2-2, to=3-5]
	\arrow[no head, from=2-3, to=3-1]
	\arrow[no head, from=2-3, to=3-3]
	\arrow[no head, from=2-3, to=3-4]
	\arrow[no head, from=2-4, to=3-2]
	\arrow[no head, from=2-4, to=3-4]
	\arrow[no head, from=2-4, to=3-6]
	\arrow[no head, from=2-5, to=3-3]
	\arrow[no head, from=2-5, to=3-5]
	\arrow[no head, from=2-5, to=3-6]
	\arrow[no head, from=3-2, to=4-4]
	\arrow[no head, from=3-3, to=4-2]
	\arrow[no head, from=3-3, to=4-5]
	\arrow[no head, from=3-4, to=4-1]
	\arrow[no head, from=3-4, to=4-5]
	\arrow[no head, from=3-5, to=4-2]
	\arrow[no head, from=3-5, to=4-4]
	\arrow[no head, from=3-6, to=4-4]
	\arrow[no head, from=3-6, to=4-5]
	\arrow[no head, from=3-6, to=4-6]
	\arrow[no head, from=4-1, to=3-1]
	\arrow[no head, from=4-1, to=3-2]
	\arrow[no head, from=4-1, to=5-3]
	\arrow[no head, from=4-2, to=3-1]
	\arrow[no head, from=4-2, to=5-3]
	\arrow[no head, from=4-3, to=3-1]
	\arrow[no head, from=4-3, to=5-3]
	\arrow[no head, from=4-4, to=5-3]
	\arrow[no head, from=4-5, to=5-3]
	\arrow[no head, from=4-6, to=5-3]
\end{tikzcd}\]
\caption{The poset $\mathcal G$ for the bowtie graph.}
\end{figure}
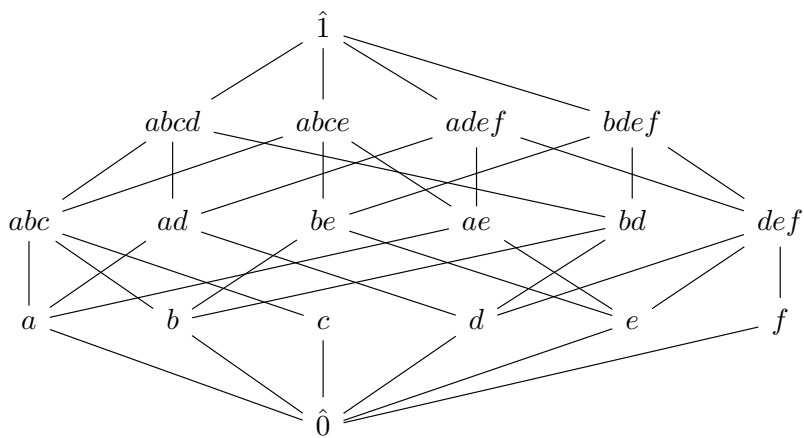
There are $38$ maximal nested sets in $\mathcal N(L,\mathcal G)$ and each maximal nested set has cardinality $4$. Accordingly, the amplitude $A(L,\mathcal G)$ is a sum of $38$ Laurent monomials, each of degree $-3$. The maximal nested sets are given by maximal singleton-free tubings of the graph. $28$ of these are simply chains in the lattice. The $10$ remaining sets are
\[
\begin{array}{ccccc}
\{a,abc,f,\hat 1\} &
\{b,abc,f,\hat 1\} &
\{d,def,c,\hat 1\} &
\{e,def,c,\hat 1\} &
\{c,abc,f,\hat 1\} \\[6pt]
\{c,def,f,\hat 1\} &
\{c,d,abcd,\hat 1\} &
\{c,e,abce,\hat 1\} &
\{f,a,adef,\hat 1\} &
\{f,b,bdef,\hat 1\}
\end{array}
\]
\end{ex}

%%%%%%%%%%%%%%%%%%%%%%
\subsection{Matroid amplitudes}\label{sec:matamp}
Let us now spell out the relation of nestoid amplitudes to the matroid amplitudes defined in \cite{Lam24}. For the convenience of the reader, we give a short overview in \Cref{app:matroid}. Recall that the \textit{positive nested set complex} associated to a tope $P$ of an oriented matroid $\mathcal M$ is the subset
$$\mathcal N(L(M), \mathcal G)_P := \{ N \in \mathcal N(L(M), \mathcal G) \ | \ N \subseteq L(P)\}$$
where $L(P) \subseteq L(M)$ is the Las Vergnas face lattice of $P$.

\begin{lem}\label{lem:tope-building}
   The set $\mathcal G \cap L(P)$ is a building set in $L(P)$ and
   $$\mathcal N(L(M), \mathcal G)_P  = \mathcal N(L(P),\mathcal G\cap L(P)).$$
\end{lem}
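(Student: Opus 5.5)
The plan is to prove both claims through the characterization of building sets in \Cref{prop:building-char}, applied inside the lattice $L(P)$. I will use the following standard facts about the Las Vergnas face lattice, recalled in \Cref{app:matroid}. First, $L(P)$ is a subposet of $L(M)$ containing $\hat 0$ and $\hat 1$ and closed under meets. Hence it is a lattice whose meet agrees with that of $L(M)$, and whose join satisfies $f\vee_P g\geq f\vee_M g$. Second, for $f\in L(P)$ the interval $[\hat 0,f]_{L(P)}$ is the face lattice of the tope $P|_f$ of the restriction $M|_f$. Third, a direct sum decomposition $M|_f=M_1\oplus M_2$ induces a product decomposition of this face lattice, and conversely.

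\emph{Step 1 (irreducibles).} Let $f\in I(L(P))$. If $M|_f$ were disconnected, $M|_f=M_1\oplus M_2$, then $[\hat 0,f]_{L(P)}$ would be the product of the face lattices of the induced topes of $M_1$ and $M_2$. This contradicts irreducibility, so $M|_f$ is connected. Thus $f$ is irreducible in $L(M)$, hence $f\in\mathcal G$, because $\mathcal G$ is a building set of $L(M)$ and so contains $I(L(M))$ by \Cref{prop:building-char}. Together with $\hat 1\in \mathcal G\cap L(P)$, this gives two of the three conditions of \Cref{prop:building-char}.

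\emph{Step 2 (the join condition), which I expect to be the main obstacle.} Take $f,g\in\mathcal G\cap L(P)$ with $f\wedge g\neq\hat 0$. Since meets agree, $f\wedge_M g\neq \hat 0$, so $h:=f\vee_M g\in\mathcal G$ because $\mathcal G$ is a building set of $L(M)$. It remains to show $h\in L(P)$, which forces $f\vee_P g=h\in\mathcal G$. Here I would use the combinatorial description of faces: $h$ is a face of $P$ if and only if the restriction of $P$ to the complement of $h$ is a single covector, i.e.\ $h$ is cut out by a supporting covector. The idea is to compose the supporting covectors of $f$ and $g$ and exploit the common nonempty face $f\wedge g$, with connectivity coming from $h\in\mathcal G$. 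This should show that the span of two overlapping faces is again a face. If this direct route fails, I would reduce to rank-two contractions $M/(f\wedge g)$, where $f$ and $g$ become disjoint faces of a tope and the statement can be checked by hand.

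\emph{Step 3 (nested sets).} Once $\mathcal G\cap L(P)$ is known to be a building set, I compare nested sets. Take $N\subseteq \mathcal G\cap L(P)$ and pairwise incomparable $f_1,\dots,f_k\in N$. I claim $\bigvee_M f_i\in\mathcal G$ if and only if $\bigvee_P f_i\in\mathcal G$. If $\bigvee_M f_i\in\mathcal G$, the $f_i$ can be connected through pairwise nontrivial meets, so iterating Step 2 gives $\bigvee_M f_i=\bigvee_P f_i$. Conversely, suppose $\bigvee_P f_i\in\mathcal G$ but $\bigvee_M f_i\notin\mathcal G$. Then, since $\mathcal G$ is a building set of $L(M)$, the $f_i$ split into blocks with trivial mutual meets whose $\vee_M$-joins are distinct factors in $\mathcal G$. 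The corresponding restrictions form a direct sum, and by the product property of face lattices from Step 1 the $\vee_P$-join is also a product, hence not irreducible. Therefore it is not in $\mathcal G$, which is a contradiction. This shows that the nested sets of $(L(M),\mathcal G)$ contained in $L(P)$ are exactly the nested sets of $(L(P),\mathcal G\cap L(P))$.
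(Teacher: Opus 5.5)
Step 1 is fine and matches the paper's argument for irreducibles. The gaps are in Steps 2 and 3. Your Step 2 is in fact a restricted form of the claim the paper's own proof rests on, namely that $L(P)$ is closed under joins of $L(M)$. Both forms of that claim are false.

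\textbf{Step 2.} You reduce the join condition to the claim that $h=f\vee_M g\in L(P)$ whenever $f\wedge g\neq\hat 0$ and $h\in\mathcal G$, i.e.\ that the span of two overlapping faces is a face. This fails.

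Take the hyperplanes $t\pm x=0$, $t\pm y=0$, $t\pm z=0$ in $\mathbb R^4$, the tope $P$ given by the cone over the cube $|x|,|y|,|z|\le t$, and $\mathcal G=L(M)\setminus\{\hat 0\}$. Let $f=\{t-x,\,t-y\}$ and $g=\{t-x,\,t+y\}$ be the flats of two opposite edges of the facet $x=t$. Then $f\wedge g=\{t-x\}\neq\hat 0$. But $f\vee_M g=\{t\pm x,\,t\pm y\}$ is the flat of the $z$-axis, which meets the cone only at the origin. So $f\vee_M g\notin L(P)$, whereas $f\vee_P g=\hat 1$. Neither composing supporting covectors nor reducing to rank-two contractions can prove a false claim.

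The conclusion of Step 2 is still true, and your Step 1 tools give it. Let $k=f\vee_P g$ and suppose $k\notin\mathcal G$.
\begin{enumerate}
\item The factors $k_1,\dots,k_m$ of $k$ in $\mathcal G$ give $M|_k=\bigoplus_j M|_{k_j}$, so $P|_k$ is a product tope.
\item Each $k_j$ is then the flat of the face of $P|_k$ that vanishes on $k_j$ and agrees with $P$ on $k\setminus k_j$. Hence $k_j\in[\hat 0,k]_{L(P)}$.
\item Since $f\wedge g\neq\hat 0$, $f$ and $g$ lie below the same factor $k_j$. Therefore $k=f\vee_P g\le k_j<k$, a contradiction.
\end{enumerate}
This proves the first assertion of the lemma.

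\textbf{Step 3.} The converse direction does not go through. The direct-sum structure you obtain lives below $\bigvee_M f_i$ and says nothing about the interval below the strictly larger element $\bigvee_P f_i$. Separately, a reducible element may still lie in $\mathcal G$, so ``not irreducible'' would not exclude it anyway.

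In fact the inclusion $\mathcal N(L(M),\mathcal G)_P\subseteq\mathcal N(L(P),\mathcal G\cap L(P))$ is false, so the stated equality fails with the definition in \Cref{sec:matamp}. Label the planes $z-x,\ z-y,\ z+x,\ z+y=0$ in $\mathbb R^3$ by $1,2,3,4$; the matroid is $U_{3,4}$. Let $P$ be the cone $|x|,|y|\le z$ over a square, and let $\mathcal G=\{\{1\},\{2\},\{3\},\{4\},\hat 1\}$ be the minimal building set.
\begin{itemize}
\item In $L(M)$, $\{1\}\vee_M\{3\}=\{1,3\}\notin\mathcal G$. So $\{\{1\},\{3\},\hat 1\}$ is $\mathcal G$-nested in $L(M)$, and it is contained in $L(P)$.
\item In $L(P)$, $\{1\}\vee_P\{3\}=\hat 1\in\mathcal G$. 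So the same set is not nested in $L(P)$.
\end{itemize}
This example also refutes the join-closure of $L(P)$ used in the paper's proof.

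What does hold is only $\mathcal N(L(P),\mathcal G\cap L(P))\subseteq\mathcal N(L(M),\mathcal G)_P$. It follows from the argument above applied to $k=\bigvee_P f_i$: if $\bigvee_M f_i\in\mathcal G$, it lies below a single factor of $k$, and that factor belongs to $L(P)$. Equality requires defining the positive nested set complex through $\vee_P$. That is also what the geometry demands: here the positive part is a square, and its vertices correspond only to pairs of adjacent facets.
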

\begin{proof}
    If we have $f \lor g \notin \mathcal G \cap L(P)$ for $f,g \in L(P)$, then already $f \lor g \notin \mathcal G$, since $L(P)$ is stable under joins in $L(M)$. Thus, $\mathcal G\cap L(P)$ is a nestable set. Since the irreducible elements in $L(P)$ are still irreducible in $L(M)$, $\mathcal G \cap L(P)$ is a building set by \Cref{prop:building-char}. The second assertion then follows again by stability of $L(P)$ under joins in $L(M)$.
\end{proof}

The matroid amplitude $\mathcal A(P)$ is defined as the Laplace transform of the nested set fan associated to $\mathcal N(L(M), \mathcal G)_P$. See \Cref{app:matroid:laplace}.

\begin{prop}\label{prop:nestoids and matroids}\leavevmode
    The matroid amplitude $\mathcal A(P)$ is obtained from the nestoid amplitude $$A(L(P), \mathcal G \cap L(P))$$ for an arbitrary building set $\mathcal G$ in the lattice $L(M)$ by the substitution $s_f = \sum_{e \in f} a_e$.
\end{prop}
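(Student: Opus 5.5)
By \Cref{lem:tope-building}, $\mathcal G_P := \mathcal G \cap L(P)$ is a building set in $L(P)$. In particular $(L(P),\mathcal G_P)$ is a nestoid, so the amplitude $A(L(P),\mathcal G_P)$ is defined. The same lemma gives
\[
\mathcal N(L(P),\mathcal G_P)=\mathcal N(L(M),\mathcal G)_P .
\]
Hence the two sides of the proposition are indexed by the same simplicial complex. My plan is to compare the two Laplace transforms cone by cone, using the projection $p\colon f\mapsto \sum_{e\in f} e$ of \Cref{dfn:nested-set-fan}.

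First I recall from \Cref{app:matroid:laplace} (following \cite[Theorem 10.17]{Lam24}) that $\mathcal A(P)$ is the Laplace transform, in the variables $a_e$ dual to the atoms $e\in E$, of the fan whose cones are $\cone\bigl(\sum_{e\in f} e \mid f\in N\bigr)$ for $N$ a positive nested set, taken modulo the lineality direction $\sum_{e\in\hat 1} e$. This fan is exactly the image under $p$ of $\Sigma^{\mathcal G_P}\subseteq \mathbb R^{\mathcal G_P}$, with $\langle\hat 1\rangle$ mapped to $\langle p(\hat 1)\rangle$. By \Cref{prop:nested-projection}, applied to the atomic lattice $L(P)$ (or directly to $L(M)$ with the subcomplex), $p$ is injective on $\Sigma^{\mathcal G_P}$. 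Therefore the image cones form a fan with the same combinatorics, and each maximal cone $\cone(N)$ maps bijectively and linearly onto $\cone\, p(N)$.

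Second, I compute the transform of a single cone. The Laplace transform of a unimodular simplicial cone $\cone(v_1,\dots,v_k)$ in a lattice, evaluated at a linear form $y$, is $\prod_i 1/\langle y, v_i\rangle$. This requires the $v_i$ to be a lattice basis of the lattice they span, which is what the normalization of \cite[Section 10.2]{Lam24} uses. Pulling back the form $y=\sum_e a_e e^*$ along $p$ gives $\langle y, p(f)\rangle=\sum_{e\in f} a_e$, which is precisely $s_f$ under the substitution. Summing over maximal positive nested sets, and dropping the factor for $\hat 1$ after quotienting by $\langle \hat 1\rangle$ on both sides, yields
\[
\mathcal A(P)=\sum_{N}\prod_{f\in N\setminus\{\hat 1\}}\frac1{s_f}\Big|_{s_f=\sum_{e\in f}a_e},
\]
which is the claim. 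Here one must also check that the Laplace transform is functorial under the linear map $p$ when $p$ is injective on each cone. This holds because the transform of a cone is intrinsic to the cone together with its lattice, and the pairing $\langle y, p(\cdot)\rangle=\langle p^*y,\cdot\rangle$.

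The main obstacle is the lattice normalization. I need $p(N)$ to be unimodular, that is, a basis of the lattice $\mathbb Z^E\cap \operatorname{span} p(N)$, so that no volume factor appears. I would first try to prove this by ordering $N$ compatibly with inclusion and exhibiting, for each $f\in N$, an atom $e_f\in f$ not contained in any smaller element of $N$ below $f$. Such an atom exists because $f$ is not the join of the maximal elements of $N$ strictly below it, by nestedness. The resulting matrix $(\langle e_f^*, p(g)\rangle)$ is then unitriangular. If instead Lam's definition already uses this normalization, I would simply cite it.
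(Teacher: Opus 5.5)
Your proposal is correct and is essentially the paper's proof, which just combines \Cref{lem:tope-building} with \Cref{prop:nested-projection}. Beyond that, you make explicit the cone-by-cone Laplace transform and the unimodularity of $p(N)$. Your choice of atoms $e_f$ is the same device as in \Cref{lem:nest-local-inv}; to close the triangularity argument, note that for incomparable $g\in N$ one gets $e_f\notin g$ from $f\wedge g=\hat 0$. Of your two options, use the projection for $L(M)$ with $E$ its ground set, restricted to the subcomplex. The reason is that the substitution $s_f=\sum_{e\in f}a_e$ involves elements of $f$ that are not atoms of $L(P)$ (e.g.\ $xz$ in the example of \Cref{sec:matamp}), so applying \Cref{prop:nested-projection} to $L(P)$ with its own atoms would give a different map.
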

\begin{proof}
    Combine \Cref{prop:nested-projection} and \Cref{lem:tope-building}.
\end{proof}
\begin{rmk}\label{rmk:nestoid building invariance}
    The matroid amplitude does not depend on the choice of building set in $L(M)$: by \cite[Theorem 4.2]{FeiMue05}, different building sets induce different refinements of the nested set fan associated to the minimal building set $\mathcal G^{\min}$, and so the Laplace transform does not change. Meanwhile, the associated nestoid amplitudes clearly differ. However, it is implicit in the proof of \cite[Theorem 4.2]{FeiMue05} that every free nested set fan \textit{projects} to a refinement of the free nested set fan associated to $\mathcal G^{\min}$. This means that after replacing $s_g$ for $g \in \mathcal G$ by $\sum_{g'\in F(g)} s_{g'}$ where $F(g)$ is the set of factors of $g$ in $\mathcal G^{\min}$ (see \Cref{dfn:building sets}), the different nestoid amplitudes are identified.
\end{rmk}

\begin{ex}
    Consider the positive tope $P$ of the graphical oriented matroid
    \[ \begin{tikzpicture}[scale=0.8,
        decoration = {markings,
                        mark=at position .5 with {\arrow{Stealth[length=2mm]}}},
        dot/.style = {circle, fill, inner sep=1.8pt, node contents={},
                        label=#1},
    every edge/.style = {draw, postaction=decorate}
                            ]
            \node (1) at (-1.5,-1) [dot = below:$x$];
            \node (2) at (1.5,-1) [dot = below:$y$];
            \node (3) at (0,-0.2) [dot = below:$w$];
            \node (4) at (0,1) [dot = above:$z$];

            \path (1) edge (3);
            \path (2) edge (3);
            \path (3) edge (4);
            \path (1) edge (4);
            \path (2) edge (4);
    \end{tikzpicture}\]
    Its Las Vergnas face lattice is isomorphic to the lattice in \Cref{ex:star-amplitude}. We can choose $\mathcal G = L(M) \backslash \{\hat 0\}$, so that $(L(P),\mathcal G \cap L(P))$ is the nestoid that we considered there. Accordingly, the nestoid amplitude is
     \begin{align*}
        \frac 1 {s_{xw} s_{xwz}} + \frac 1 {s_{wz} s_{xwz}} + \frac 1 {s_{yw} s_{ywz}} + \frac 1 {s_{wz} s_{ywz}} + \frac 1 {s_{xw} s_{xwy}} + \frac 1 {s_{yw} s_{xwy}}.
    \end{align*}
    According to \Cref{prop:nestoids and matroids} we obtain the matroid amplitude by substitution:
    \begin{align*}
        \frac 1 {a_{xw} (a_{xw} + a_{wz} + a_{xz})} + \frac 1 {a_{zw} (a_{xw} + a_{wz} + a_{xz})} + \frac 1 {a_{yw} (a_{yw} + a_{wz} + a_{yz})} + \\
        \frac 1 {a_{zw} (a_{yw} + a_{wz} + a_{yz})} + \frac 1 {a_{xw} (a_{xw} + a_{yw})} + \frac 1 {a_{yw} (a_{xw} + a_{yw})}
    \end{align*}
    Note that the sum of the last two terms simplifies to $\frac 1 {a_{xw} a_{yw}}$. This corresponds to the fact that the minimal building set on $L(M)$ does not include the flat $\{xw,yw\}$: its factors in $\mathcal G^{\min}$ are $\{xw\}$ and $\{yw\}$. On the level of nestoids, we can verify \Cref{rmk:nestoid building invariance}: replacing $s_{xwy}$ with $s_{xw} + s_{yw}$, $A(L(P),\mathcal G \cap L(P))$ indeed simplifies to
    \begin{align*}
        A(L(P),\mathcal G^{\min} \cap L(P)) = \frac 1 {s_{xw} s_{xwz}} + \frac 1 {s_{wz} s_{xwz}} + \frac 1 {s_{yw} s_{ywz}} + \frac 1 {s_{wz} s_{ywz}} + \frac 1 {s_{xw} s_{yw}}.
    \end{align*}
\end{ex}

%%%%%%%%%%%%%%%%%%%%%%
\subsection{Factorization}
Let $(L, \mathcal G)$ be a nestoid, for example given by a building set in the lattice. We are interested in the links of rays in $\Sigma^{\mathcal G}$ as they correspond to residues of the associated amplitude. We will see that if the nestoid is sufficiently nice, the links factor again as products of free nested set fans.

\begin{dfn}
    We call a nestoid $(L,\mathcal G)$ \textit{stable} if for $f, g_1,\ldots,g_k \in \mathcal G$ with $f\lor g_i \notin \mathcal G$ for all $i$ we have $f\lor g_1 \lor \ldots \lor g_k \notin \mathcal G$.
\end{dfn}

\begin{rmk}
    If $L$ is a boolean lattice and $\mathcal G$ a building set, then $(L,\mathcal G)$ is stable if and only if $\mathcal G$ is graphical, see \cite[Proposition 7.3]{Zel05}. However, in a boolean lattice factorization works without the stableness assumption, see \cite[Proposition 3.2]{Zel05}.
\end{rmk}

We first make a useful observation:

\begin{lem}\label{lem:upstream-lower-set}
    Let $(L,\mathcal G)$ be a nestoid, $x \in L$ and $y \in \mathcal G$ with $x \lor y \notin \mathcal G$. Then $x \lor z \notin \mathcal G$ for all $z \in [\hat 0, y]$.
\end{lem}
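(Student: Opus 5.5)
The plan is a direct proof by contradiction, using only the defining property of a nestable set (\Cref{dfn:nestable}): if two incomparable elements of $\mathcal G$ meet non-trivially, then their join lies in $\mathcal G$. Purity, the nestoid hypothesis proper, should not be needed. I read the statement for $z \in [\hat 0, y]$ with $z \neq \hat 0$. For $z = \hat 0$ we get $x \lor z = x$, and the claim fails whenever $x \in \mathcal G$; for example, $x=123$ and $y=4$ in \Cref{ex:nestoid-but-no-build} give $x\lor y=1234\in\mathcal G$, but trivially modified examples show the degenerate case must be excluded. So $z=\hat 0$ is either excluded implicitly or only relevant when $x \notin \mathcal G$.

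Fix $\hat 0 < z \le y$ and suppose, for contradiction, that $w := x \lor z \in \mathcal G$. The key observation is that $w \land y \ge z > \hat 0$, since $z \le w$ and $z \le y$. Also $w \lor y = x \lor z \lor y = x \lor y$, because $z \le y$. I then split according to how $w$ and $y$ compare in $L$.

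\emph{Case $y \le w$.} Then $x \lor y \le w = x \lor z \le x \lor y$, so $x \lor y = w \in \mathcal G$, a contradiction.

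\emph{Case $w \le y$.} Then $x \le y$, so $x \lor y = y \in \mathcal G$, again a contradiction.

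\emph{Case $w$ and $y$ incomparable.} Both lie in $\mathcal G$ and $w \land y \neq \hat 0$. The contrapositive of the nestable condition therefore gives $w \lor y \in \mathcal G$. But $w \lor y = x \lor y \notin \mathcal G$, a contradiction.

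In all cases the assumption $x \lor z \in \mathcal G$ fails, which proves the lemma. There is no real obstacle here; the only points needing care are the degenerate element $z = \hat 0$ and checking that the comparable cases do not slip past the nestable axiom, since that axiom only constrains incomparable pairs.
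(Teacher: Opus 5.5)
Your proof is correct and is essentially the paper's argument: the same three-way split according to whether $x\lor z$ lies below $y$, above $y$, or is incomparable to it, with the nestable axiom applied in the last case to the nontrivial meet $y\land(x\lor z)\ge z$. Your caveat that $z=\hat 0$ must be excluded is right, since the paper's proof also tacitly needs $z\neq\hat 0$ at exactly that step. However, your cited example does not witness the caveat, because there $x\lor y\in\mathcal G$; a genuine counterexample is the Boolean lattice on $\{1,2,3\}$ with $\mathcal G=\{1,2,3,12,23,123\}$, $x=1$, $y=3$, $z=\hat 0$.
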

\begin{proof}
    Assume $x \lor y \notin \mathcal G$ and let $z \leq y$. We can not have $x \lor z \leq y$, and $y \leq x \lor z$ implies $x \lor z = x \lor y \notin \mathcal G$. Otherwise, $y$ and $x \lor z$ are incomparable and so if $x \lor z \in \mathcal G$, $y \lor (x \lor z) = x \lor y \notin \mathcal G$ implies $y \land (x \lor z) = \hat 0$, since $\mathcal G$ is nestable. This is impossible since $z \leq y$ and $z \leq (x \lor z)$.
\end{proof}

% The lemma allows us to prove a factorization property of nested set complexes:

Using the lemma and the stableness assumption, we obtain a factorization property of nested set complexes:

\begin{lem}\label{lem:nest-factors}
    Let $(L,\mathcal G)$ be a stable nestoid and $f \in \mathcal G$ and consider the two sets
\[
\mathcal G^{\dstream f} := \{ g \in \mathcal G \ | \ g \leq f \} 
\qquad \text{and}\qquad 
\mathcal G^{\ustream f} := \{ g \in \mathcal G \ | \ g > f \text{ or } g \lor f \notin \mathcal G\}.
\]
Then both $([\hat 0,f],\mathcal G^{\dstream f})$ and $(L,\mathcal G^{\ustream f})$ are stable nestoids and
    $$\mathcal N(L,\mathcal G)_{\ni f} \simeq \mathcal N([0,f],\mathcal G^{\dstream f})_{\ni f} \times \mathcal N(L, \mathcal G^{\ustream f})$$
    where $\mathcal N(L,\mathcal G)_{\ni f}$ denotes the subset of nested sets containing $f$.
\end{lem}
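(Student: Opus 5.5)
Define the bijection by splitting. Given $N\in\mathcal N(L,\mathcal G)_{\ni f}$, set
\[
N_1 := \{g\in N : g\le f\},\qquad N_2 := N\setminus N_1 .
\]
The plan is to show that $N\mapsto (N_1,N_2)$ is the desired bijection, with inverse $(N_1,N_2)\mapsto N_1\cup N_2$. The proof has four parts: (a) both pairs are nestable; (b) $N_1$ and $N_2$ are nested in the respective complexes; (c) conversely, the union of such a pair is nested in $\mathcal G$ and contains $f$; (d) purity and stability follow.

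\textbf{Nestability.} For $([\hat 0,f],\mathcal G^{\dstream f})$ this is immediate: joins and meets of elements below $f$ are computed inside $[\hat0,f]$, and $f$ is the top element. For $\mathcal G^{\ustream f}$, clearly $\hat1\in\mathcal G^{\ustream f}$. Take incomparable $g,h\in\mathcal G^{\ustream f}$ with $g\vee h\notin\mathcal G^{\ustream f}$, and suppose $g\wedge h\neq\hat0$. Nestability of $\mathcal G$ gives $g\vee h\in\mathcal G$. Then $g\vee h\not>f$ and $(g\vee h)\vee f\in\mathcal G$. Now split by the type of $g$ and $h$.
\begin{itemize}
\item If both satisfy $g\vee f\notin\mathcal G$ and $h\vee f\notin\mathcal G$, stability forces $g\vee h\vee f\notin\mathcal G$, a contradiction.
\item If $g>f$, then $g\vee h>f$, a contradiction.
\end{itemize}
So every case is excluded, and $g\wedge h=\hat0$.

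\textbf{Nestedness of the pieces.} If $g\in N$ with $g\not\le f$, then $g$ must lie in $\mathcal G^{\ustream f}$. Either $g>f$, or $g$ and $f$ are incomparable, in which case $g\vee f\notin\mathcal G$ by nestedness of $N$. So $N_2\subseteq\mathcal G^{\ustream f}$. Nestedness of $N_1$ is inherited. For $N_2$: an antichain in $N_2$ whose join lies in $\mathcal G^{\ustream f}\subseteq\mathcal G$ already violates nestedness of $N$.

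\textbf{The converse.} Given $N_1\ni f$ nested below $f$ and $N_2$ nested in $\mathcal G^{\ustream f}$, take an antichain $A=A_1\cup A_2$ in $N_1\cup N_2$, split accordingly. We must show $\bigvee A\notin\mathcal G$.
\begin{itemize}
\item If some element of $A_2$ is $>f$, it dominates $A_1$, so $A_1=\emptyset$ and $A=A_2$.
\item If $A_1\neq\emptyset$, then every $g\in A_2$ satisfies $g\vee f\notin\mathcal G$. By \Cref{lem:upstream-lower-set} with $y=f$, also $g\vee z\notin\mathcal G$ for every $z\le f$. Combined with stability, this should give $\bigvee A\notin\mathcal G$ after replacing $\bigvee A_1$ by $f$.
\end{itemize}
In the second case the comparison is $\bigvee A\le \bigvee A_2\vee f$. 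Here I would argue that if $\bigvee A\in\mathcal G$, it meets $f$ nontrivially. This fits well with nestability only if the join with $f$ stays in $\mathcal G$; I expect this step to need care.

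The remaining case is $A\subseteq N_2$ with $\bigvee A\in\mathcal G\setminus\mathcal G^{\ustream f}$. Then $\bigvee A\not>f$ and $\bigvee A\vee f\in\mathcal G$. Since each $g\in A$ has $g>f$ or $g\vee f\notin\mathcal G$, and $A$ is an antichain, all $g\in A$ are of the second kind. Stability then gives $\bigvee A\vee f\notin\mathcal G$, a contradiction.

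\textbf{Purity, stability, and the main obstacle.} Purity of the two new nestoids, including on the intervals $[\hat0,g]$, follows from the bijection. Faces of a pure complex containing a fixed vertex form a pure link, and a product of complexes is pure exactly when each factor is pure. Applying this to $(L,\mathcal G)$ restricted to $[\hat0,g]$ gives purity of $\mathcal G^{\ustream f}$ for $g\in\mathcal G^{\ustream f}$. Stability of the new pairs is a direct check using stability of $\mathcal G$ and the same case split. The main obstacle is the mixed case $A_1,A_2\neq\emptyset$ of the converse, where \Cref{lem:upstream-lower-set} and stability must be combined carefully.
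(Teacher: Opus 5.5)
\textbf{Gap: the mixed case of the converse.} Most of your outline follows the paper's proof. This includes the splitting $N\mapsto(N\cap\mathcal G^{\dstream f},\,N\cap\mathcal G^{\ustream f})$. It also includes the stability case split, which shows that a join of elements of $\mathcal G^{\ustream f}$ lying in $\mathcal G$ already lies in $\mathcal G^{\ustream f}$; that gives nestability, stability, and your case $A\subseteq N_2$. Purity via the factorization is also as in the paper. The genuine gap is the mixed case $A_1,A_2\neq\emptyset$, which you explicitly leave open. It is the only place where the two factors interact, so it is the real content of the converse. Your sketch does not close it, for two reasons. First, stability needs a pivot element of $\mathcal G$, but $\bigvee A_1\notin\mathcal G$ whenever $|A_1|\ge 2$, by nestedness of $N_1$. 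Second, the inequality $\bigvee A\le \bigvee A_2\vee f$ yields nothing, because non-membership in $\mathcal G$ does not pass to smaller elements in general. That transfer is exactly what \Cref{lem:upstream-lower-set} provides, but you apply it to single elements of $A_2$ rather than to their join.

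\textbf{How to close it.} The missing step is short. Since $A_1\neq\emptyset$, no $h\in A_2$ is $>f$, so $h\vee f\notin\mathcal G$ for every $h\in A_2$. Stability with pivot $f$ then gives $f\vee\bigvee A_2\notin\mathcal G$. Now apply \Cref{lem:upstream-lower-set} with $x=\bigvee A_2$, $y=f$ and $z=\bigvee A_1\le f$; the lemma only requires $x\in L$. This yields $\bigvee A\notin\mathcal G$, and it is the paper's argument. Alternatively, your elementwise use of the lemma suffices if you pivot stability at a single $g_1\in A_1$ instead of at $\bigvee A_1$. Then $g_1\vee a\notin\mathcal G$ for $a\in A_2$, by the lemma with $z=g_1$. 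Also $g_1\vee a\notin\mathcal G$ for $a\in A_1\setminus\{g_1\}$, by nestedness of $N_1$. Stability then gives $\bigvee A\notin\mathcal G$; in a stable nestoid, nestedness is a pairwise condition.

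\textbf{Smaller points.}
\begin{enumerate}
\item The case $A\subseteq N_1$ must be handled separately, since stability with an empty $A_2$ says nothing. It follows directly from nestedness of $N_1$.
\item In your case $A\subseteq N_2$, every $g\in A$ is of the second kind because $\bigvee A\not>f$, not because $A$ is an antichain.
\item In the purity argument, take $g\in\mathcal G^{\ustream f}$ incomparable to $f$. Then $f\notin[\hat 0,g]$, so you cannot factor at $f$. Instead, \Cref{lem:upstream-lower-set} with $x=f$ and $y=g$ shows $\mathcal G^{\ustream f}\cap[\hat 0,g]=\mathcal G\cap[\hat 0,g]$, which is pure by hypothesis.
\end{enumerate}
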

\begin{proof}
    Clearly $\mathcal G^{\dstream f}$ is again nestable and stable since $[\hat 0, f]$ is closed under taking joins. If $b_1,\ldots,b_k \in \mathcal G^{\ustream f}$ with $b_1 \lor \ldots \lor b_k \in \mathcal G$ then by stableness also $b_1 \lor \ldots \lor b_k \in \mathcal G^{\ustream f}$. The contraposition implies that $\mathcal G^{\ustream f}$ is nestable and stable. The same reasoning shows that for every nested set $N \in \mathcal N(L,\mathcal G)_{\ni f}$ we have that $N \cap \mathcal G^{\dstream f}$ and $N \cap \mathcal G^{\ustream f}$ are again nested sets with respect to $\mathcal G^{\dstream f}$ and $\mathcal G^{\ustream f}$, and clearly $N = (N \cap \mathcal G^{\dstream f}) \cup (N \cap \mathcal G^{\ustream f})$. Conversely, take $N_1 \in \mathcal N([0,f],\mathcal G^{\dstream f})_{\ni f}$ and $N_2 \in \mathcal N(L, \mathcal G^{\ustream f})$. We claim that $N_1 \cup N_2$ is again nested. Indeed, let $g_1,\ldots,g_k \in N_1$ and $h_1 \lor \ldots \lor h_l \in N_2$ such that $g_1,\ldots,g_k,h_1,\ldots,h_l$ are pairwise incomparable. In particular, $g_1 \lor \ldots \lor g_k \notin \mathcal G$ and $h_1 \lor \ldots \lor h_l \notin \mathcal G$. Assume that $k,l \geq 1$. Let us check that $g_1 \lor \ldots \lor g_k \lor h_1 \lor \ldots \lor h_l \notin \mathcal G$. For this, note that by incomparability we must have $h_i \lor f \notin \mathcal G$ for all $i$. Now we can apply \Cref{lem:upstream-lower-set} with $x = h_1 \lor \ldots \lor h_l$ and $y = f$. By the stableness assumption $x \lor y \notin \mathcal G$ and thus $x \lor g_1 \lor \ldots \lor g_k \notin \mathcal G$.

    It remains to check the purity condition in \Cref{dfn:nestoid}. This is clear for $([\hat 0,f],\mathcal G^{\lceil f\rceil})$. To see this for $\mathcal G^{\ustream f}$, pick some $g \in \mathcal G^{\lfloor f \rfloor}$ and note that $\mathcal G^{\lfloor f\rfloor,\lceil g \rceil} = \mathcal G^{\lceil g \rceil,\lfloor f\rfloor}$. Since both $\mathcal G^{\lceil g \rceil}$ and $\mathcal G^{\lceil f \rceil}$ are nestoids, the factorization we already proved above implies that the nested set complex of $\mathcal G^{\lceil g \rceil,\lfloor f\rfloor}$ is again pure.
\end{proof}

\begin{rmk}
    If $\mathcal G$ is a stable building set, then $\mathcal G^{\dstream f}$ will again be a building set for the sublattice $[\hat 0,f]$ of $L$. However, it is clear that $\mathcal G^{\ustream f}$ will usually not be a building set in $L$ (see e.g.\ \Cref{ex:nestoid-but-no-build}).
\end{rmk}

\begin{cor}[Factorization of amplitudes]\label{cor:factor-amplitudes}
    Let $(L,\mathcal G)$ be a stable nestoid and let $f \in \mathcal G$. Then the link of $f$ in $\Sigma^{\mathcal G}$ is the product $\Sigma^{\mathcal G^{\dstream f}}/\langle f\rangle \times \Sigma^{\mathcal G^{\ustream f}}$. In particular, we have
    $$\Res_{s_f=0} \  A(L,\mathcal G) = A([0,f], \mathcal G^{\dstream f}) \cdot A(L, \mathcal G^{\ustream f}).$$
\end{cor}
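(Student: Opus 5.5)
The plan is to deduce the corollary directly from \Cref{lem:nest-factors}, translated from simplicial complexes to fans, together with the standard behaviour of Laplace transforms of unimodular simplicial fans under residues.

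\textbf{Step 1: the link of $f$.} The face lattice of $\Sigma^{\mathcal G}$ is the inclusion lattice of $\mathcal N(L,\mathcal G)$. Hence the star of the ray $\langle f\rangle$ consists of the cones $\cone N$ with $N \ni f$. Its image in $\mathbb R^{\mathcal G}/\langle f\rangle$ is the link, whose cones are $\cone(N\setminus\{f\})$.

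\textbf{Step 2: split the coordinates.} Write $\mathcal G = \mathcal G^{\dstream f} \sqcup \mathcal G^{\ustream f} \sqcup \mathcal G'$, where $\mathcal G'$ consists of the elements that cannot occur in a nested set together with $f$. These are the $g$ incomparable to $f$ with $f\vee g\in\mathcal G$, and they are disjoint from the first two sets. No cone of the star involves a coordinate of $\mathcal G'$. So the star lives in
\[
\mathbb R^{\mathcal G^{\dstream f}}\times\mathbb R^{\mathcal G^{\ustream f}}.
\]
Now apply the bijection $N\mapsto (N\cap\mathcal G^{\dstream f},\,N\cap\mathcal G^{\ustream f})$ of \Cref{lem:nest-factors}. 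It is compatible with inclusions, and the vectors of $N$ split accordingly. Therefore $\cone(N\setminus\{f\})$ equals
\[
\cone\bigl((N\cap\mathcal G^{\dstream f})\setminus\{f\}\bigr)\times \cone\bigl(N\cap\mathcal G^{\ustream f}\bigr),
\]
taken in $\bigl(\mathbb R^{\mathcal G^{\dstream f}}/\langle f\rangle\bigr)\times \mathbb R^{\mathcal G^{\ustream f}}$. Nested sets of $\mathcal G^{\dstream f}$ containing $f$ are exactly the cones of $\Sigma^{\mathcal G^{\dstream f}}$ through the ray $f$. Since $f$ is the maximum $\hat 1$ of $[\hat 0,f]$ and lies in every maximal nested set, the star of $f$ there is the whole fan. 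This gives the product decomposition $\Sigma^{\mathcal G^{\dstream f}}/\langle f\rangle\times\Sigma^{\mathcal G^{\ustream f}}$.

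\textbf{Step 3: residues.} Before the quotient by $\langle\hat 1\rangle$ we need a case distinction.
\begin{enumerate}
\item If $f\neq\hat 1$, then $f<\hat 1$, so $\hat 1\in\mathcal G^{\ustream f}$ and the quotient by $\langle\hat 1\rangle$ acts only on the second factor.
\item If $f=\hat 1$, then $\mathcal G^{\ustream f}$ is empty. That factor contributes $1$ (the empty nested set), and the claim reduces to a tautology.
\end{enumerate}
The amplitude is $\sum_N\prod_{g\in N\setminus\{\hat 1\}} s_g^{-1}$. The residue at $s_f=0$ keeps exactly the terms with $f\in N$ and deletes the factor $1/s_f$. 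By the bijection of Step 2, the surviving sum factors as a product of two independent sums: $\prod_{g\in N_1\setminus\{f\}} s_g^{-1}$ over maximal $N_1\ni f$, and $\prod_{g\in N_2\setminus\{\hat 1\}}s_g^{-1}$ over maximal $N_2$. These are $A([\hat 0,f],\mathcal G^{\dstream f})$ and $A(L,\mathcal G^{\ustream f})$ respectively. One has to check that maximal nested sets correspond to pairs of maximal ones; this follows from the bijection preserving inclusion.

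\textbf{Main obstacle.} The step needing most care is the disjointness of the coordinate blocks in Step 2. We must check that no element is both $\le f$ and in $\mathcal G^{\ustream f}$, which is immediate, and that elements of $\mathcal G'$ never appear in the star. The latter holds because $\{f,g\}$ is not nested when $f\vee g\in\mathcal G$ with $f,g$ incomparable. With this, the variables $s_g$ in the two factors are distinct, and the product formula is a genuine product of rational functions in disjoint variables.
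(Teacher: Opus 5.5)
Your proposal is correct and follows the paper's proof. The link of $f$ is identified as a product via \Cref{lem:nest-factors}, the residue is the Laplace transform of that link, the quotient by $\langle \hat 1\rangle$ acts only on the $\mathcal G^{\ustream f}$ factor, and $f$ serves as the top element of $[\hat 0,f]$; you merely compute the residue term by term from the monomial expansion and are more explicit about the disjoint coordinate blocks and about maximal nested sets corresponding to pairs of maximal ones. One small quibble: the case $f=\hat 1$ is degenerate rather than tautological, since $A(L,\mathcal G)$ does not involve $s_{\hat 1}$ and $\mathcal G^{\ustream{\hat 1}}=\emptyset$ is not a nestable set, so the statement should be read with $f\neq\hat 1$.
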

\begin{proof}
    The first statement follows from the definitions and \Cref{lem:nest-factors}. The residue of the Laplace transform of $\Sigma^{\mathcal G}/\langle \hat 1\rangle$ at $s_f = 0$ is the Laplace transform of the link $\Sigma^{\mathcal G} / \langle \hat 1, f \rangle$. By the first statement, this is the product fan $\Sigma^{\mathcal G^{\dstream f}}/\langle f\rangle \times \Sigma^{\mathcal G^{\ustream f}} / \langle \hat 1 \rangle$. Note that $f = \hat 1_{[0,f]}$, so that this indeed Laplace transforms to the product $A([0,f], \mathcal G^{\dstream f}) \cdot A(L, \mathcal G^{\ustream f})$.
\end{proof}

\begin{rmk}
    This is an analogue of \cite[Theorem 17.4]{Lam24}: if $\mathcal G$ is a stable building set then the factorization is the one from loc.\ cit.\ under the substitution in \Cref{prop:nestoids and matroids}.
\end{rmk}

\begin{ex}
Recall the bowtie graph from \Cref{ex:bowtie-amplitude}. Let us compute the residue of the associated amplitude $A(L,\mathcal G)$ at $s_{abc}=0$. The posets $\mathcal G^{\lfloor abc \rfloor}$ and $\mathcal G^{\lceil abc \rceil}$ are given by
\vskip-15pt
\[ \begin{tikzpicture}[scale=1,node distance=20mm,baseline=15pt]
  	\node (123) at (0,1) {$\hat 1$};
	\node (1) at (-1,0)   {$abcd$};
  	\node (2) at (0,0) {$abce$};
  	\node (3) at (1,0)   {$f$};

    \draw (123) -- (1);
    \draw (123) -- (2);
    \draw (123) -- (3);
\end{tikzpicture} \quad \text{ and } \quad \begin{tikzpicture}[scale=1,node distance=20mm,baseline=15pt]
  	\node (123) at (0,1) {$abc$};
	\node (1) at (-1,0)   {$a$};
  	\node (2) at (0,0) {$b$};
  	\node (3) at (1,0)   {$c$};

    \draw (123) -- (1);
    \draw (123) -- (2);
    \draw (123) -- (3);
\end{tikzpicture}.\]
The maximal nested sets for $\mathcal G^{\lfloor abc \rfloor}$ and $\mathcal G^{\lceil abc \rceil}$ are thus
$$\{\{abcd, \hat 1\}, \{abce, \hat 1\}, \{f, \hat 1\}\} \quad \text{ and } \quad \{\{a, abc\}, \{b, abc\}, \{c, abc\}\}$$
and so by \Cref{cor:factor-amplitudes} we have
$$\Res_{s_{abc}=0} A(L,\mathcal G) = \left(\frac 1 {s_{abcd}} + \frac 1 {s_{abce}} + \frac 1 {s_f}\right) \cdot \left(\frac 1 {s_a} + \frac 1 {s_b} + \frac 1 {s_c}\right).$$
For the residue at $s_{ae} = 0$ we compute
\vskip-15pt
\[ \mathcal G^{\lfloor ae\rfloor} = \begin{tikzpicture}[scale=1,node distance=20mm,baseline=15pt]
  	\node (123) at (0,1) {$\hat 1$};
	\node (1) at (-1,0)   {$abce$};
  	\node (3) at (1,0)   {$adef$};

    \draw (123) -- (1);
    \draw (123) -- (3);
\end{tikzpicture} \quad \text{ and } \quad \mathcal G^{\lceil ae\rceil} = \begin{tikzpicture}[scale=1,node distance=20mm,baseline=15pt]
  	\node (123) at (0,1) {$ae$};
	\node (1) at (-1,0)   {$a$};
  	\node (3) at (1,0)   {$e$};

    \draw (123) -- (1);
    \draw (123) -- (3);
\end{tikzpicture}\]
and thus
$$\Res_{s_{ae}=0} A(L,\mathcal G) = \left(\frac 1 {s_{abce}} + \frac 1 {s_{adef}}\right) \cdot \left(\frac 1 {s_a} + \frac 1 {s_e}\right).$$

\end{ex}

%%%%%%%%%%%%%%%%%%%%%%
%%%%%%%%%%%%%%%%%%%%%%
%%%%%%%%%%%%%%%%%%%%%%
%%%%%%%%%%%%%%%%%%%%%%
\section{The Universe Fan and Wavefunctions}\label{sec:universe}

In the previous section, we associated amplitudes to building sets (and more generally, nestoids) in a lattice, by taking the Laplace transform of the free nested set fan. Similarly, we will now define the cosmological wavefunction of a nestoid as the Laplace transform of the \textit{universe fan}, which we introduce next.

% The key idea is to construct a `cosmological' analogue of the nested set fan, which we call the \textit{universe fan}.

%%%%%%%%%%%%%%%%%%%%%%
\subsection{The universe fan}
Let $(L, \mathcal G)$ be a nestoid, for example a lattice with a building set. For $f \in \mathcal G$ we introduce symbols $f^+, f^-$. Let $\mathbb R^{\dbl \mathcal G}$ denote the vector space spanned by all $f^+,f^-$. For a nested set $N$ we will also write $\mathbb R^{\dbl N} \subseteq \mathbb R^{\dbl \mathcal G}$ for the subspace spanned by $f^+,f^-$ with $f \in N$. 

\begin{dfn}\label{dfn:doubled-fan}
    The \textit{doubled free nested set fan} is the fan $\dblfan{\mathcal G} \subseteq \mathbb R^{\dbl \mathcal G}$ whose maximal cones are $\mathbb R^{\dbl N}_+$ for $N \in \mathcal N(L,\mathcal G)$.
\end{dfn}

We denote the vectors dual to $f^+, f^-$ by $p_f,m_f$.

\begin{dfn}[The causal cone] \label{dfn:causal-cone}
    Let $g \in \mathcal G$ and define
    $$F_g := p_{\hat 1} + \sum_{g < f < \hat 1} (p_f - m_f) - m_g.$$
    The \textit{causal cone} in $\mathbb R^{\dbl{\mathcal G}}$ is the cone cut out by $m_{\hat 1} = 0$ and the inequalities $p_g, m_g, F_g \geq 0$ for $g \in \mathcal G$.
\end{dfn}

This is already all that is needed to define the universe fan and the wavefunction:

\begin{dfn}[The universe fan]\label{dfn:UG}
    The universe fan $\mathcal U_{\mathcal G}$ is obtained by intersecting the cones of the doubled nested set fan $\dblfan{\mathcal G}$ with the causal cone. The \textit{wavefunction} $\Psi(L,\mathcal G)$ is the Laplace transform of $\mathcal U_{\mathcal G}$.
\end{dfn}
\begin{rmk}
    We will see in \Cref{prop:max-lightcone-refine} that the maximal cones of $\mathcal U_{\mathcal G}$ span unimodular subspaces, so that the Laplace transform is indeed well-defined.
\end{rmk}

\noindent
In particular, for every nested set $N \in \mathcal N(L,\mathcal G)$ there is a cone $U_N$ in the universe fan that is the intersection of the causal cone with the subspace $\mathbb R^{\dbl N} \subseteq \mathbb R^{\dbl \mathcal G}$ (i.e. the coordinate subspace $p_f=m_f=0$ for $f\notin N$). Since $N$ is nested, for any $g\in N$ the elements of $N$ strictly above $g$ are totally ordered; in particular there is a unique maximal element $h\in N$ with $g\le h$. So, restricted to $\RR^{\dbl N}$, $F_g$ becomes a sum over the unique chain/path from $g$ to $h$:
\[
F_g \;=\; p_{h} + \sum_{g<f<h(g)} (p_f-m_f) - m_g.
\]
Then $U_N$ is cut out by $p_f, m_f, F_f \geq 0$ (for $f\in N$). 

The maximal cones of the universe fan are the $U_N$ for maximal nested sets $N$. They have dimension $2|N|-1$ and intersect in codimension 2.

We can dually describe $\mathcal U_{\mathcal G}$ in terms of its generators.

\begin{dfn}[Causal regions]
    A \textit{causal region} in $\mathcal G$ is a nested set $R \in \mathcal N(L,\mathcal G)$ with one maximal element and all other elements incomparable. Write $\mathcal R(L,\mathcal G) \subseteq \mathcal N(L,\mathcal G)$ for the set of causal regions.
\end{dfn}
% (In the physical case of $\mc{G}_n$, these are sub-polygons of the $n$-gon.)
To each causal region $R = \{ r_*, r_1, \ldots, r_k \}$ in $\mathcal R(L,\mathcal G)$, with maximal element $r_*$, associate a vector
\[
w_R := r_*^+ + \sum_{i} r_i^-
\]
in $\mathbb R^{\dbl{\mathcal G}}$. These vectors are the generators of $\mc{U}_\mc{G}$:

\begin{thm}\label{prop:universe-rays}
    The rays of $\mc{U}_\mc{G}$ are the vectors $w_R$ for $R \in \mathcal R(L,\mathcal G)$. The maximal cones are given by
\[
U_N = \cone(w_R \ | \ R \subseteq N \textit{ causal region})
\]
for maximal nested sets $N \in \mathcal N(L,\mathcal G)$.
\end{thm}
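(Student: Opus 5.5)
We argue cone by cone. Let $N$ be a maximal nested set; it contains $\hat 1$, since adding $\hat 1$ keeps a set nested. Restricted to $\mathbb R^{\dbl N}$, the Hasse diagram of $N$ is a rooted tree with root $\hat 1$. As noted after \Cref{dfn:UG}, $U_N$ is cut out in $\mathbb R^{\dbl N}$ by $m_{\hat 1}=0$ and $p_f,m_f,F_f\ge 0$. Here $F_g$ is $p_{\hat 1}$, plus $p_f-m_f$ summed over the strict ancestors $f\neq\hat 1$ of $g$, minus $m_g$.

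A causal region $R\subseteq N$ is a node $r_*\in N$ together with an antichain of descendants of $r_*$ in the tree. Every cone of $\mathcal U_{\mathcal G}$ is a face of some $U_N$, so its rays are among those of the $U_N$. Moreover every causal region lies in some maximal nested set. It therefore suffices to prove $U_N=\cone(w_R\mid R\subseteq N)$ and that each such $w_R$ spans an extreme ray.

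\emph{Step 1: $w_R\in U_N$.} Nonnegativity of $p$ and $m$ is clear, and $m_{\hat 1}(w_R)=0$ because $\hat 1$ is never a non-maximal element of $R$. For $F_g$ there are two cases.
\begin{itemize}
\item If $r_*$ is a strict ancestor of $g$, we pick up $+1$ from $p_{r_*}$. Since the $r_i$ form an antichain, at most one $r_i$ lies on the chain from $g$ up to $r_*$, so at most $-1$ is subtracted.
\item If $r_*$ is not a strict ancestor of $g$, no $p$-term contributes. No $r_i$ can be $\ge g$, since otherwise $r_*>r_i\ge g$. Hence $F_g(w_R)=0$.
\end{itemize}
In both cases $F_g(w_R)\ge 0$.

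\emph{Step 2: decomposition.} Take $x\in U_N$. I would read $p_a$ as a supply at node $a$ and $m_g$ as a demand at $g$ to be met by strict ancestors of $g$. The inequality $F_g\ge 0$ says that the supply on the strict ancestors of $g$, minus the demand already placed on those strict ancestors, is at least $m_g$. The aim is to produce, for each $a$, a measure $\mu_a$ on the strict descendants of $a$ such that:
\begin{itemize}
\item $\sum_a \mu_a(g)=m_g$ for every $g$;
\item for every chain $C$ below $a$, $\mu_a(C)\le p_a$.
\end{itemize}
Given this, Stanley's theorem that the chain polytope of a poset has the antichain indicator vectors as vertices writes $\mu_a$ as $p_a$ times a convex combination of antichain indicators; the empty antichain is allowed and gives the ray $a^+$. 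This expresses $x$ as a nonnegative combination of the $w_R$.

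To construct the $\mu_a$, I would process the tree top-down and serve each demand $m_g$ greedily from the \emph{nearest} ancestors with residual capacity. Here the residual capacity of $a$ along a chain is $p_a$ minus what $a$ has already sent to ancestors of $g$ on that chain. An exchange argument, or a max-flow/min-cut argument on the path, should show that the inequalities $F_g\ge 0$ are exactly the cut conditions, so greedy never fails. I expect this to be the main obstacle. If greedy is awkward, I would fall back on induction on $|N|$: peel off $\lambda\, w_R$ for a suitable region and maximal $\lambda$, and check that the remainder still satisfies all $F_g\ge 0$.

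\emph{Step 3: extremality.} For each $w_R$ I would exhibit $2|N|-2$ linearly independent inequalities among $p_f,m_f,F_f$ ($f\ne\hat 1$) that are tight at $w_R$. These are the $p_f,m_f$ vanishing off the support of $w_R$, together with the $F_g$ that vanish by Step 1, namely for $g$ not strictly below $r_*$ and for $g$ on chains through some $r_i$. Since $\dim U_N=2|N|-1$, this shows $w_R$ spans a ray; triangularity along the tree gives the independence. Alternatively, distinct causal regions give vectors with distinct supports, and each $w_R$ is the unique generator whose support sits inside $\{r_*^+\}\cup\{r_i^-\}$ with $p_{r_*}=1$. Either route gives the first claim of the theorem.
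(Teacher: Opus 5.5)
Your route is different from the paper's and it works. The step you flag as the main obstacle closes in one line, with no exchange or min-cut argument. Process $N$ top-down. When $g$ is reached, every strict ancestor $a$ of $g$ has residual $p_a-\sum_{a>h>g}\mu_a(h)\ge 0$. Every $h>g$ has already been fully served by its own ancestors, all of which lie above $g$, so (coordinates evaluated at $x$)
\[
\sum_{a>g}\Bigl(p_a-\sum_{a>h>g}\mu_a(h)\Bigr)=\sum_{a>g}p_a-\sum_{h>g}m_h=F_g+m_g .
\]
Thus $F_g\ge 0$ says exactly that the available residual covers $m_g$, and \emph{any} split of $m_g$ within the residuals works; choosing nearest ancestors first is irrelevant. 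The chain bounds for $\mu_a$ hold because each maximal chain below $a$ ends at a leaf $\ell$, and $\mu_a(\ell)$ was capped by the residual along that chain.

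Three small corrections:
\begin{enumerate}
\item For $g=r_*=\hat 1$ the $p$-term does contribute, so $F_{\hat 1}(w_R)=1$. This is harmless, since $F_{\hat 1}=p_{\hat 1}$ on $m_{\hat 1}=0$.
\item In Step 3, the tight $F_g$ with $g<r_*$ are exactly those with $g\le r_i$ for some $i$; elements strictly between $r_i$ and $r_*$ have $F_g=1$. Independence holds because, modulo the $2|N|-2-k$ tight coordinate functionals, $F_{r_i}$ restricts to $p_{r_*}-m_{r_i}$, and these $k$ forms are independent.
\item Your alternative extremality argument is misstated. Every $w_{\{r_*\}\cup A}$ with $A\subseteq\{r_1,\dots,r_k\}$ has support in that set. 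What forces $A=\{r_1,\dots,r_k\}$ is comparing the coefficients of $r_*^+$ and of the $r_i^-$.
\end{enumerate}

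The paper never decomposes points of $U_N$ by hand. It shows (\Cref{prop:cosmo-as-subcone}) that $U_N$ is a face of a maximal cone of the minimal lightcone subdivision of $\mathbb R_+^{2N}$. That subdivision is the pullback along the cut map $\phi$ of the nested set fan of the minimal boolean building set on $N$, and under it the inequalities $F_g\ge 0$ become $\lambda_{\hat 1}\le\lambda_g$. The paper then computes the rays of those cones through the edge/vertex isomorphism $\psi$: they are the vectors $w_s$ for connected subgraphs $s$ of $\overline T_N$ (\Cref{prop:lightcone-subdivision}, \Cref{prop:cones-of-min-lightsub}). Finally it keeps those lying in the causal cone, namely connected subgraphs of the Hasse diagram of $N$, which correspond exactly to causal regions (\Cref{cor:rays-of-cosmo}, \Cref{cor:rays-of-universe}).

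Your argument is self-contained and uses only the defining inequalities. It gives an explicit algorithm writing any $x\in U_N$ as a nonnegative combination of the $w_R$, and it exhibits the $F_g\ge 0$ as Hall-type supply/demand conditions along root paths. Its only outside input is Stanley's chain polytope theorem. For a forest you could even replace that by stacking, for $\nu=\mu_a/p_a$, the intervals $\bigl[\sum_{a>h'>h}\nu(h'),\,\sum_{a>h'\ge h}\nu(h')\bigr)$ along each path and slicing at every height $t\in[0,1)$.

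The paper's route is heavier, but it is the framework the paper needs anyway. The same computation yields all lightcone refinements, the unimodular tubing triangulation that makes the Laplace transform well defined, and the cosmohedron-type refinements of \Cref{sec:cosmohedra}. None of these fall out of your decomposition.
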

\begin{proof}
   We will see this in \Cref{cor:rays-of-universe}.
\end{proof}

Let us compute first examples of universe fans and wavefunctions.

\begin{ex}\label{ex:star-wavefunction}
    Recall Example \ref{ex:star-amplitude}: $L$ was the lattice of flats of the star graph with $3$ edges, and $\mathcal G$ the maximal building set.
  
  There are $6$ maximal nested sets, given by the six maximal chains in $\mathcal G$. Let us determine the cone $U_N$ for a general maximal chain $N = (a \subset b \subset \hat 1)$. There are $6$ causal regions in $N$. The associated vectors $w_R$ span $U_N$:
    $$U_N = \langle \hat 1^+, \hat 1^+ + b^-, \hat 1^+ + a^-, b^+, b^+ + a^-, a^+ \rangle$$
    Using a suitable triangulation (see \Cref{ex:star-max-lightsub} below) we can compute the wavefunction $\Psi(L,\mathcal G)$:
    $$\sum_{a\subset b \subset \hat 1} \frac 1 {E_{\hat 1}^+  (E_{\hat 1}^+ + E_b^-)  (E_{b}^+ + E_a^-)  E_a^+} \cdot  \left(\frac 1 {E_b^+} + \frac 1 {E_{\hat 1}^+ + E_{a}^-}\right)$$
\end{ex}
\begin{ex}\label{ex:bowtie-wavefunction}
    Now consider the bowtie from Example \ref{ex:bowtie-amplitude}. Recall that there were $38$ maximal nested sets for the graphical building set, and so the universe fan has $38$ maximal cones. There are three different types of nested sets in terms of the poset structure: $28$ chains, e.g.\ $\{a,abc,abcd,\hat 1\}$, $6$ of the same type as $\{a,abc,f,\hat 1\}$ and $4$ of the same type as $\{c,d,abcd,\hat 1\}$. See \Cref{fig:bowtie-hasses}.
    \begin{figure}[h]
       \begin{tikzpicture}[
        scale = 0.7,
       decoration = {markings,
                     mark=at position .7 with {\arrow{Stealth[length=2mm]}}},
       dot/.style = {circle, fill, inner sep=1.8pt, node contents={},
                     label=#1},
every edge/.style = {draw, postaction=decorate}
                        ]
        \node (1) at (0,4) [dot];
        \node (2) at (0,3) [dot];
        \node (3) at (0,2) [dot];
        \node (4) at (0,1) [dot];

        \path (1) edge (2);
        \path (2) edge (3);
        \path (3) edge (4);
\end{tikzpicture} \hspace{100pt} \begin{tikzpicture}[scale=0.7,
       decoration = {markings,
                     mark=at position .7 with {\arrow{Stealth[length=2mm]}}},
       dot/.style = {circle, fill, inner sep=1.8pt, node contents={},
                     label=#1},
every edge/.style = {draw, postaction=decorate}
                        ]
        \node (1) at (0,4) [dot];
        \node (2) at (-1,3) [dot];
        \node (3) at (1,3) [dot];
        \node (4) at (-1,2) [dot];

        \path (1) edge (2);
        \path (2) edge (4);
        \path (1) edge (3);
\end{tikzpicture} \hspace{100pt} \begin{tikzpicture}[scale=0.7,
       decoration = {markings,
                     mark=at position .7 with {\arrow{Stealth[length=2mm]}}},
       dot/.style = {circle, fill, inner sep=1.8pt, node contents={},
                     label=#1},
every edge/.style = {draw, postaction=decorate}
                        ]
        \node (1) at (0,4) [dot];
        \node (2) at (0,3) [dot];
        \node (3) at (-1,2) [dot];
        \node (4) at (1,2) [dot];

        \path (1) edge (2);
        \path (2) edge (4);
        \path (2) edge (3);
\end{tikzpicture}
        \caption{The three different types of Hasse diagrams for nested sets associated to the bowtie graph}\label{fig:bowtie-hasses}
    \end{figure}
    The structure of the poset determines the number of causal regions in the nested set. While the cones for the first two types have $10$ generators each, the cones for the third type are spanned by $11$ rays. For the three nested sets $N_1 = \{a,abc,abcd,\hat 1\}$, $N_2 = \{a,abc,f,\hat 1\}$ and $N_3 = \{c,d,abcd,\hat 1\}$ we obtain the cones
    \begin{small}
    \begin{align*}
        U_{N_1} &= \langle \hat 1^+, \hat 1^+ + abcd^-, \hat 1^+ + abc^-, \hat 1^+ + a^-, abcd^+, abcd^+ + abc^-, abcd^+ + a^-, abc^+, abc^+ + a^-, a^+\rangle \\
        U_{N_2} &= \langle \hat 1^+, \hat 1^+ + f^-, \hat 1^+ + abc^-, \hat 1^+ + abc^- + f^-, \hat 1^+ + a^-, \hat 1^+ + a^- + f^-, abc^+, abc^+ + a^-, a^+, f^+\rangle\\
        U_{N_3} &= \langle \hat 1^+, \hat 1^+ + abcd^-, \hat 1^+ + c^-, \hat 1^+ + d^-, \hat 1^+ + c^- + d^-, abcd^+,  abcd^+ + c^-, abcd^+ + d^-,\\
        & \hskip30em abcd^+ + c^- + d^-, c^+, d^+ \rangle
    \end{align*}
    \end{small}
\end{ex}

%%%%%%%%%%%%%%%%%%%%%%
\subsection{Links and factorization}

So far, it is unclear why our construction should have anything to do with the wavefunction of the universe. As a first indication, let us prove that the nestoid wavefunction simplifies to the amplitude on the `total energy residue'. In fact, in our construction this is a simple consequence of the definitions:
\begin{lem}
    We have $\mathcal U_{\mathcal G}/\langle{\hat 1^+}\rangle \simeq \dblfan{\mathcal G}/\langle\hat 1^+,\hat 1^-\rangle$.
\end{lem}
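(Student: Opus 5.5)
The plan is to compute the star of the ray $\hat 1^+$ in $\mathcal U_{\mathcal G}$ cone by cone, using only the inequality description in \Cref{dfn:UG}. The key observation is that $p_{\hat 1}$ appears with coefficient $+1$ in every $F_g$, and in no other defining inequality except $p_{\hat 1}\ge 0$. So moving in the direction $\hat 1^+$ eventually makes every causal inequality hold. What survives in the quotient is therefore only the orthant conditions on the remaining coordinates, which is exactly the doubled nested set fan modulo $\hat 1^\pm$.

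\textbf{Step 1 (which cones contain $\hat 1^+$).} I will first determine which cones of $\mathcal U_{\mathcal G}$ contain $\hat 1^+$. The cone $U_N$ lies in $\mathbb R^{\dbl N}$, so it can contain $\hat 1^+$ only if $\hat 1\in N$. Conversely, if $\hat 1 \in N$ then $\hat 1^+$ satisfies all the inequalities, since $p_{\hat 1}=1$, $F_g=1$, and the other coordinates vanish. Hence $\hat 1^+\in U_N$ exactly when $\hat 1\in N$. Since $\hat 1$ is comparable to everything, every maximal nested set contains $\hat 1$. The cones in the star of $\hat 1^+$ are then the faces containing $\hat 1^+$ of the $U_N$ with $\hat 1\in N$.

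\textbf{Step 2 (tangent cones).} Fix $N\ni \hat 1$. As noted after \Cref{dfn:UG}, the unique maximal element of $N$ is then $h=\hat 1$. So on $\mathbb R^{\dbl N}$ every $F_g$ has the form $p_{\hat 1}+(\text{terms in } p_f,m_f,\ f\neq \hat 1)$. I will show
\[
U_N+\mathbb R\,\hat 1^+ \;=\; \{x\in \mathbb R^{\dbl N} : m_{\hat 1}=0,\ p_f\ge 0,\ m_f\ge 0 \text{ for } f\in N\setminus\{\hat 1\}\}.
\]
The inclusion $\subseteq$ holds because adding multiples of $\hat 1^+$ changes only $p_{\hat 1}$. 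For $\supseteq$, given such an $x$, I add $t\hat 1^+$ with $t\gg 0$; then $p_{\hat 1}\ge 0$ and all $F_g\ge 0$ hold. Projecting along $\hat 1^+$, and using $m_{\hat 1}=0$ to identify with the quotient by $\langle \hat 1^+,\hat 1^-\rangle$, gives the image of the orthant $\mathbb R^{\dbl N}_+$ in $\mathbb R^{\dbl \mathcal G}/\langle \hat 1^+,\hat 1^-\rangle$. That image is precisely the cone of $\dblfan{\mathcal G}/\langle \hat 1^+,\hat 1^-\rangle$ indexed by $N$.

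\textbf{Step 3 (compatibility with faces).} Finally I must check that the identification respects faces, so that the link is isomorphic as a fan and not merely cone by cone. Faces of $U_N$ containing $\hat 1^+$ correspond bijectively to faces of the tangent cone $U_N+\mathbb R\hat 1^+$. Those faces are obtained by setting some of the coordinate inequalities $p_f\ge 0$, $m_f\ge0$ ($f\ne \hat 1$) to equalities. The intersections $U_N\cap U_{N'}=U_{N\cap N'}$ match the intersections in $\dblfan{\mathcal G}$, so the two fans agree.

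I expect Step 3 to be the main obstacle. It needs a small extra check: every face of the tangent cone, i.e.\ any choice of coordinates set to zero, has to be realised by some $U_{N'}$-type cone or one of its faces. This is a general fact about tangent cones of polyhedral cones, but it is where one must be careful that zero coordinates $p_f=0$ with $m_f\neq 0$ do not produce cones missing from $\dblfan{\mathcal G}$. Such cones are not a problem, because the faces of orthants are again subcones of $\dblfan{\mathcal G}$.
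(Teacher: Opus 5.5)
Your proposal is correct and is essentially the paper's argument: the paper's one-line proof is exactly your observation that every $F_g$ contains $p_{\hat 1}$ with coefficient $+1$, so the causal inequalities become vacuous in the quotient by $\hat 1^+$ and only the orthant conditions $p_f, m_f \ge 0$ for $f \neq \hat 1$ remain. Your Steps 1--3 spell out the cone-by-cone and face-compatibility bookkeeping that the paper leaves implicit; this bookkeeping is in fact automatic, because every cone is identified through the single global isomorphism $\{m_{\hat 1}=0\}/\langle \hat 1^+\rangle \cong \mathbb R^{2\mathcal G}/\langle \hat 1^+,\hat 1^-\rangle$.
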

\begin{proof}
    This can be seen directly from the definition since the inequalities cutting out the positive cone are trivialized in the quotient by $\hat 1^+$.
\end{proof}

It is clear from the definition that the Laplace transform $A_E(L,\mathcal G)$ of $\dblfan{\mathcal G}/\langle\hat 1^+,\hat 1^-\rangle$ agrees with the nestoid amplitude $A(L,\mathcal G)$ under the substitution $s_f = E_f^+ E_f^-$.

\begin{cor}[The total energy residue]\label{cor:tot-energy-res}
    We have
    $$\Res_{E_{\hat 1}^+ = 0} \Psi(L,\mathcal G) = A_E(L,\mathcal G)$$
\end{cor}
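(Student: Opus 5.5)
The plan is to combine the preceding lemma, $\mathcal U_{\mathcal G}/\langle \hat 1^+\rangle \simeq \dblfan{\mathcal G}/\langle \hat 1^+,\hat 1^-\rangle$, with the general principle (already used in the proof of \Cref{cor:factor-amplitudes}) that residues of Laplace transforms of unimodular fans are Laplace transforms of links. Concretely, $E_{\hat 1}^+$ is the linear form on the dual side attached to the ray $\hat 1^+$. This ray is a generator of $\mathcal U_{\mathcal G}$: it is $w_R$ for the causal region $R=\{\hat 1\}$, and by \Cref{prop:universe-rays} it lies in every maximal cone $U_N$, since $\hat 1\in N$ for every maximal nested set $N$.

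First I would fix a unimodular simplicial triangulation of each $U_N$ using only rays of $\mathcal U_{\mathcal G}$ (this exists by \Cref{prop:max-lightcone-refine}). I would pull it back so that every maximal simplex contains $\hat 1^+$; this is possible because $\hat 1^+$ is a ray of every maximal cone. Then $\Psi(L,\mathcal G)=\sum_\sigma \prod_{v\in\sigma}1/\langle E,v\rangle$, and every term contains the factor $1/E_{\hat 1}^+$ exactly once, since the other rays in each simplex are linearly independent from $\hat 1^+$.

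Taking the residue at $E_{\hat 1}^+=0$ then strips this factor. The result is the sum over simplices $\sigma$ of $\prod_{v\in\sigma\setminus\{\hat 1^+\}}1/\langle E,v\rangle$ restricted to $E_{\hat 1}^+=0$, which is the Laplace transform of the link fan $\mathcal U_{\mathcal G}/\langle\hat 1^+\rangle$ in $\mathbb R^{\dbl\mathcal G}/\langle \hat 1^+\rangle$. Here unimodularity of the quotient simplices follows from unimodularity of the original ones. Since the Laplace transform of a fan is independent of the triangulation, the residue is well defined.

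By the lemma, this link is $\dblfan{\mathcal G}/\langle\hat 1^+,\hat 1^-\rangle$ (the coordinate $\hat 1^-$ is killed because $m_{\hat 1}=0$ on the causal cone). Its Laplace transform is by definition $A_E(L,\mathcal G)=\sum_{N}\prod_{f\in N\setminus \hat 1}1/(E_f^+E_f^-)$.

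The main obstacle is the bookkeeping in the residue step: one must check that after restricting to $E_{\hat 1}^+=0$ the remaining linear forms $\langle E,w_R\rangle$ are exactly the coordinates on the quotient space, with $\hat 1^+$ set to zero. One must also check that simplices whose rays become dependent in the quotient do not occur. Both follow from the simplices being unimodular and containing $\hat 1^+$.
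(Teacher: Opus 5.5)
Your proposal is correct and follows the paper's route: the residue at $E_{\hat 1}^+=0$ is the Laplace transform of the link $\mathcal U_{\mathcal G}/\langle \hat 1^+\rangle$, which the preceding lemma identifies with $\Sigma^{2\mathcal G}/\langle \hat 1^+,\hat 1^-\rangle$, whose transform is $A_E(L,\mathcal G)$ by definition; your triangulation bookkeeping simply spells out the residue-equals-link step. The ``pulling'' step is unnecessary, and ``$\hat 1^+$ is a ray of every maximal cone'' would not by itself guarantee a unimodular triangulation with that property, but here every maximal tubing of $F_N$ already contains the tube $F_N$ itself, whose vector is $w_{F_N}=\hat 1^+$.
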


In light of \Cref{prop:nestoids and matroids}, this shows that every matroid amplitude appears as the total energy residue of some nestoid wavefunction.

\begin{ex}
Continuing with \Cref{ex:star-wavefunction}, we see that the residue of $\Psi(L,\mathcal G)$ at $E_{\hat 1^+} = 0$ is given by
$$\sum_{a\subset b \subset \hat 1} \frac 1 {E_b^-  (E_{b}^+ + E_a^-)  E_a^+} \cdot  \left(\frac 1 {E_b^+} + \frac 1 {E_{a}^-}\right) = \sum_{a\subset b \subset \hat 1} \frac 1 {{E_b^+} E_b^- E_a^+ {E_{a}^-}}$$
which is indeed the amplitude $A(L,\mathcal G)$ from \Cref{ex:star-amplitude} under the substitution $s_f = E_f^+ E_f^-$.
\end{ex}

If $(L,\mathcal G)$ is a stable nestoid, \Cref{cor:tot-energy-res} is an instance of a general factorization property. To state and prove this, we need the following lemma.

\begin{lem}\label{lem:adv-nest-factors}
    Assume $(L,\mathcal G)$ is stable and let $R \in \mathcal R(L,\mathcal G)$ be a causal region with maximal element $f$ and incomparable elements $g_1,\ldots, g_k$. Set $$\mathcal G^{\dstream R} := \mathcal G^{\dstream f, \ustream{g_1}, \ldots, \ustream{g_k}} \quad \text{and} \quad \mathcal G^{\ustream R} := \mathcal G^{\ustream f} \cup \mathcal G^{\dstream{g_1}} \cup \ldots \cup \mathcal G^{\dstream{g_k}}.$$
    Then both $([0,f],\mathcal G^{\dstream R})$ and $(L,\mathcal G^{\ustream R})$ are stable nestoids and we have
    $$\mathcal N(L,\mathcal G)_{\supseteq R} \cong \mathcal N([0,f], \mathcal G^{\dstream R})_{\ni f} \times \mathcal N(L, \mathcal G^{\ustream R})_{\ni g_1,\ldots,g_k}.$$
\end{lem}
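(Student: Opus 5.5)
The plan is to iterate \Cref{lem:nest-factors}, first at the top element $f$ of $R$ and then at each incomparable element $g_i$.

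\textbf{Step 1: split at $f$.} Since $f\in R$, every $N\in\mathcal N(L,\mathcal G)_{\supseteq R}$ contains $f$. \Cref{lem:nest-factors} then gives
\[
\mathcal N(L,\mathcal G)_{\ni f}\simeq \mathcal N([\hat 0,f],\mathcal G^{\dstream f})_{\ni f}\times\mathcal N(L,\mathcal G^{\ustream f}),\qquad N\mapsto (N\cap\mathcal G^{\dstream f},\, N\cap\mathcal G^{\ustream f}).
\]
The elements $g_i$ are below $f$, so they land in the first factor. The first factor is again a stable nestoid, namely $([\hat 0,f],\mathcal G^{\dstream f})$, and by nestedness $g_1,\dots,g_k$ are pairwise incomparable with join not in $\mathcal G$.

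\textbf{Step 2: split at each $g_i$ inside $[\hat 0,f]$.} Apply \Cref{lem:nest-factors} to $([\hat 0,f],\mathcal G^{\dstream f})$ at $g_1$. This splits the nested sets containing $g_1$ as
\[
\mathcal N([\hat 0,g_1],\mathcal G^{\dstream{g_1}})_{\ni g_1}\times \mathcal N([\hat 0,f],\mathcal G^{\dstream f,\ustream{g_1}}).
\]
Here $g_2,\dots,g_k$ lie in $\mathcal G^{\dstream f,\ustream{g_1}}$, since $g_j\vee g_1\notin\mathcal G$ when $j\neq 1$. We need that $g_i\vee g_j\notin\mathcal G$, and this holds because $R$ is nested. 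Iterating over $g_2,\dots,g_k$, each time inside the previous $\ustream{}$-factor (which is a stable nestoid by the lemma), gives
\[
\mathcal N([\hat 0,f],\mathcal G^{\dstream R})_{\ni f}\times\prod_i \mathcal N([\hat 0,g_i],\mathcal G^{\dstream{g_i}})_{\ni g_i}\times \mathcal N(L,\mathcal G^{\ustream f}).
\]
The $f$ stays in the last residual factor because $f>g_i$. A small check is needed here: the order of the iterated operations does not matter, i.e.\ $\mathcal G^{\dstream f,\ustream{g_1},\ldots,\ustream{g_k}}$ is well defined. This should follow as in the proof of purity in \Cref{lem:nest-factors}, where it was noted that $\mathcal G^{\lfloor f\rfloor,\lceil g\rceil}=\mathcal G^{\lceil g\rceil,\lfloor f\rfloor}$.

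\textbf{Step 3: regroup the last factors.} The remaining task is to identify
\[
\prod_i \mathcal N([\hat 0,g_i],\mathcal G^{\dstream{g_i}})_{\ni g_i}\times\mathcal N(L,\mathcal G^{\ustream f})\;\cong\;\mathcal N(L,\mathcal G^{\ustream R})_{\ni g_1,\ldots,g_k},
\]
with $\mathcal G^{\ustream R}=\mathcal G^{\ustream f}\cup\bigcup_i\mathcal G^{\dstream{g_i}}$. I would first show that these pieces are pairwise ``disjoint and noninteracting'':
\begin{itemize}
\item For $x\le g_i$ and $y\le g_j$ with $i\ne j$, we have $x\vee y\notin\mathcal G$ by \Cref{lem:upstream-lower-set} applied twice.
\item For $x\le g_i$ and $h\in\mathcal G^{\ustream f}$, either $h>f$, so $h>x$ and they are comparable, or $h\vee f\notin\mathcal G$, so $h\vee x\notin\mathcal G$ by \Cref{lem:upstream-lower-set}.
\end{itemize}
Stability then upgrades these pairwise statements to statements about arbitrary joins of incomparable elements taken from different pieces. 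This gives that unions of nested sets from the pieces are nested, and that $\mathcal G^{\ustream R}$ is nestable and stable.

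For the converse direction, I must check that any nested set of $\mathcal G^{\ustream R}$ containing all $g_i$ restricts to nested sets of each piece. This is immediate, since the pieces are subsets.

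\textbf{Step 4: purity, and the main obstacle.} Purity of the restrictions of $\mathcal G^{\ustream R}$ to intervals $[\hat 0,h]$ follows from this product decomposition together with the purity of each factor. I expect Step 3 to be the main obstacle, specifically the stability-based argument that joins mixing elements from several $g_i$-pieces and from $\mathcal G^{\ustream f}$ avoid $\mathcal G$. I would handle it as in the proof of \Cref{lem:nest-factors}: set $x=$ the join of the $\mathcal G^{\ustream f}$-part, check $x\vee g_i\notin\mathcal G$ for all $i$, use stability to get $x\vee g_1\vee\dots\vee g_k\notin\mathcal G$, and then descend to the actual elements via \Cref{lem:upstream-lower-set}.
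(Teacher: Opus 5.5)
Your proposal is correct and follows the paper's proof. You iterate \Cref{lem:nest-factors} at $f$ and then at each $g_i$ inside $[\hat 0,f]$, which reduces everything to the decomposition of $\mathcal N(L,\mathcal G^{\ustream{R}})_{\ni g_1,\ldots,g_k}$ into $\mathcal N(L,\mathcal G^{\ustream{f}})\times\prod_i\mathcal N([\hat 0,g_i],\mathcal G^{\dstream{g_i}})_{\ni g_i}$, and the paper likewise checks this with \Cref{lem:upstream-lower-set}.

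Two small tightenings. First, stability needs its distinguished element to lie in $\mathcal G$, so in Step~4 apply it at $f$ rather than at the join $x$: this gives $x\vee f\notin\mathcal G$, and then \Cref{lem:upstream-lower-set} gives $x\vee z\notin\mathcal G$ for all $\hat 0<z\le f$. Second, for purity, note that every nested set of $\mathcal G^{\ustream{R}}\cap[\hat 0,h]$ can be enlarged by the $g_i\le h$; otherwise your product decomposition does not yet account for all maximal faces.
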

\begin{proof}
    By \Cref{lem:nest-factors}, $([0,f],\mathcal G^{\dstream R})$ is again a stable nestoid and so is $\mathcal G^{\ustream f}$ and every $\mathcal G^{\dstream{g_i}}$. It is then easy to show that also $\mathcal G^{\ustream R}$ is a stable nestoid, using that $R$ is a causal region. Finally, by iteratively applying \Cref{lem:nest-factors}, the factorization statement reduces to the claim
    $$\mathcal N(L, \mathcal G^{\ustream R})_{\ni g_1,\ldots,g_k} \cong \mathcal N(L, \mathcal G^{\ustream f}) \times \mathcal N(L, \mathcal G^{\dstream{g_1}})_{\ni g_1} \times \ldots \times \mathcal N(L, \mathcal G^{\dstream{g_k}})_{\ni g_k}$$
    which is easily verified using \Cref{lem:upstream-lower-set}.
\end{proof}

\begin{thm}[Links of the universe fan]\label{thm:universe-links}
    Assume $(L,\mathcal G)$ is stable and let $R \in \mathcal R(L,\mathcal G)$ be a causal region with maximal element $f$ and incomparable elements $g_1,\ldots,g_k$. Set $\mathcal H := \mathcal G^{\ustream R} \cup \{f\}$. Then we have
    $$\mathcal U_\mathcal G / \langle w_R\rangle \cong \dblfan{\mathcal G^{\dstream R}}/\langle f^+,f^- \rangle \times \mathcal U_{\mathcal H}/\langle w_R\rangle.$$
\end{thm}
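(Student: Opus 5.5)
The plan is to compute the link of $w_R$ cone by cone, using the tangent-cone description of a link: $\mathcal U_{\mathcal G}/\langle w_R\rangle$ consists of the images of the cones $U_N$ with $w_R\in U_N$, and for each such cone the image of $U_N+\mathbb R w_R$ is cut out by exactly those defining inequalities of $U_N$ that are tight at $w_R$.

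\textbf{Step 1: which cones contain $w_R$.} Since the support of $w_R$ is $\{f^+,g_1^-,\dots,g_k^-\}$, we have $w_R\in\mathbb R^{\dbl N}$ if and only if $R\subseteq N$. One checks directly that $w_R$ satisfies all the causal inequalities. Hence the relevant cones are the $U_N$ with $N\in\mathcal N(L,\mathcal G)_{\supseteq R}$. By \Cref{lem:adv-nest-factors}, these $N$ are exactly the unions $N=N_1\cup N_2$ with $N_1\in\mathcal N([0,f],\mathcal G^{\dstream R})_{\ni f}$ and $N_2\in\mathcal N(L,\mathcal G^{\ustream R})_{\ni g_1,\dots,g_k}$. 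Moreover $N_2\cup\{f\}$ runs over the nested sets of $\mathcal H$ containing $R$. This already gives the product structure on the level of face posets. It remains to match the cones.

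\textbf{Step 2: tight inequalities at $w_R$.} Write $N_1^\circ:=N_1\setminus\{f\}$. Its elements $x$ lie strictly below $f$ and are not below any $g_i$; those with $x>g_i$ are allowed. Evaluating the forms at $w_R$ gives the following.
\begin{itemize}
\item All $p_x$ with $x\neq f$ and all $m_x$ with $x\notin\{g_1,\dots,g_k\}$ vanish at $w_R$, so they stay.
\item For $g\in N_1^\circ$, the path from $g$ to its maximal element in $N$ passes through $f$ and contributes $p_f=1$. So $F_g(w_R)=1$ and these inequalities are dropped.
\item For $g$ above $f$ or incomparable to $f$, $F_g(w_R)=0$.
\item For $g=g_i$, $F_{g_i}(w_R)=1-1=0$. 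For $g<g_i$, the term $(p_{g_i}-m_{g_i})$ cancels the $p_f$ contribution, so again $F_g(w_R)=0$.
\end{itemize}
Thus on the coordinates $x^\pm$, $x\in N_1^\circ$, the only remaining constraints are $p_x,m_x\ge 0$, which is the orthant giving $\dblfan{\mathcal G^{\dstream R}}/\langle f^+,f^-\rangle$. The remaining tight inequalities involve only the coordinates of $N_2\cup\{f\}$, and they are exactly the tight inequalities of $U_{N_2\cup\{f\}}\subseteq\mathbb R^{\dbl\mathcal H}$ at $w_R$, with one exception.

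\textbf{Step 3 (main obstacle): the exception.} For $g\le g_i$, the form $F_g$ on $\mathbb R^{\dbl N}$ contains the extra term
\[
S_i:=\sum_{x\in N_1^\circ,\ g_i<x<f}(p_x-m_x),
\]
coming from elements of $N_1$ lying between $g_i$ and $f$. These terms are absent in $\mathcal H$, so the link is not literally a product in the given coordinates.

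The key observation is that $m_{g_i}$ enters every such tight $F_g$ with coefficient $-1$. Furthermore, the inequality $m_{g_i}\ge 0$ is \emph{not} tight at $w_R$, since $m_{g_i}(w_R)=1$, and so it has been dropped. I would therefore apply the unimodular shear $m_{g_i}\mapsto m_{g_i}-S_i$ for each $i$. This map fixes $w_R$, so it descends to the quotient. After the shear the tight inequalities decouple: the orthant in the $N_1^\circ$ coordinates on one side, and the tight inequalities of $U_{N_2\cup\{f\}}$ at $w_R$ on the other.

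Finally I need to check that these shears are compatible across different $N$. They are defined by the same global formula on overlapping coordinate subspaces, because $S_i$ only involves elements strictly between $g_i$ and $f$. Hence they assemble into a single linear isomorphism of fans, which yields the claimed product decomposition.
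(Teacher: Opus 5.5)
Your proposal is correct and follows essentially the same route as the paper: both keep only the inequalities tight at $w_R$ and then apply a linear isomorphism $\sigma\times\mathrm{id}$ whose dual fixes every $p_g$ with $g\neq f$ and every $m_g$ with $g\notin\{g_1,\ldots,g_k\}$. Modulo $w_R$, the paper's $\sigma$ sends $h^\pm\mapsto h^\pm\pm\sum_{g_i<h}g_i^-$, which is exactly the inverse of your shear $m_{g_i}\mapsto m_{g_i}-S_i$, the shear that absorbs the contributions of the elements lying between $g_i$ and $f$.
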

\begin{proof}
    Consider the map
    \begin{align*}
        \sigma: \mathbb R^{\dbl{\mathcal G^{\dstream R}}} &\to \mathbb R^{\dbl{\mathcal G}} / \langle w_R\rangle\\
        h^+ &\mapsto h^+ + \sum_{g_i \leq h} g_i^-\\
        h^- &\mapsto h^- + f^+ + \sum_{g_i \lor h \notin \mathcal G} g_i^-
    \end{align*}
    Clearly, its kernel is $\langle f^-,f^+\rangle$. It follows that $\sigma$ induces a linear isomorphism 
    $$\sigma \times \mathrm{id}: \mathbb R^{\dbl{\mathcal G^{\dstream R}}}/\langle f^-,f^+\rangle \times \mathbb R^{\dbl{\mathcal H}}/\langle w_R\rangle \to \mathbb R^{\dbl{\mathcal G}}/\langle w_R\rangle.$$
    By definition, every cone in the link of $u_R$ is contained in some $\mathbb R^{\dbl N}$ for $N\in \mathcal N(L,\mathcal G)_{\supseteq R}$.
    The inequalities for each such cone are then those inequalities $p_g,m_g,F_g\geq 0$ for $g \in \mathcal G$ that are tight at $u_R$. These are the $p_g\geq 0$ for $g \not= f$, $m_g\geq 0$ for $g \notin \{g_1,\ldots,g_k\}$ and $F_g \geq 0$ for $g \not\leq f$ or $g\leq g_i$ for some $i$. The dual isomorphism $(\sigma \times \mathrm{id})^*$ leaves $p_g$ invariant for $g \not= f$ and $m_g$ invariant for $g \notin \{g_1,\ldots,g_k\}$. For $g \not\leq f$, it also sends $F_g$ to itself. However for $g \in \mathcal G^{\dstream R}\backslash \{f\}$, $F_g$ becomes
    $$p_{\hat 1} + \sum_{\hat 1 > h > f} (p_h - m_h) + p_f + \sum_{h \in \mathcal G^{\dstream R}} m_h - m_f + \sum_{f > h > g} (p_h - m_h) - m_g$$
    and so $F_g \geq 0$ is already implied by $F_f \geq 0$. Finally, for $g \leq g_i$, $F_g$ is mapped to
    $$p_{\hat 1} + \sum_{\hat 1 > h > f} (p_h - m_h) + p_f - m_f + \sum_{g_i > h > g} (p_h - m_h) - m_g.$$
    \Cref{lem:adv-nest-factors} implies that the nested sets containing $R$ are exactly the unions $N_1 \sqcup N_2$ such that $N_1 \in \mathcal N([0,f],\mathcal G^{\dstream R})$ does not contain $f$ and $N_2 \in \mathcal N(L,\mathcal H)$ contains $f,g_1,\ldots,g_k$. Moreover, we saw that the constraints on $\mathbb R^{\dbl {\mathcal G}}$ become independent constraints on $\mathbb R^{\dbl{\mathcal G^{\dstream R}}}/\langle f^-,f^+\rangle$ and $\mathbb R^{\dbl{\mathcal H}}/\langle u_R\rangle$ under the isomorphism $\sigma \times \mathrm{id}$. They are precisely the constraints for the cones in the doubled nested set fan $\dblfan{\mathcal G^{\dstream R}}/\langle f^+,f^- \rangle$ and the universe fan $\mathcal U_{\mathcal H}/\langle u_R\rangle$.
\end{proof}

\begin{cor}\label{cor:wavefunction-res}
    In the situation and notation of \Cref{thm:universe-links}, set $E_R:= E_f^+ + E_{g_1}^- + \ldots + E_{g_k}^-$. Then we have
\[
\frac{1}{E_R} \Res_{E_R=0} \Psi(L,\mathcal G) = A_E^{sh}([\hat 0,f],\mathcal G^{\dstream R}) \cdot {\Psi(L,\mathcal H)}
\]
    where $A_E^{sh}(\mathcal G^{\dstream R})$ is obtained from $A_E(\mathcal G^{\dstream R})$ by substituting
    \begin{align*}
        E_h^+ \mapsto E_h^+ + \sum_{g_i \leq h} E_{g_i}^-, \qquad
        E_h^- \mapsto E_h^- - \sum_{g_i \leq h} E_{g_i}^-.
    \end{align*}
\end{cor}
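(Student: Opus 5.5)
The plan is to deduce the corollary from \Cref{thm:universe-links} together with the general principle behind Laplace transforms of unimodular fans: if $\Phi$ is a fan whose cones span unimodular subspaces and $v$ is a primitive ray of $\Phi$ with dual coordinate $E_v$, then $\frac{1}{E_v}\Res_{E_v=0}\mathcal L(\Phi)$ equals the Laplace transform of the link $\Phi/\langle v\rangle$. The link is computed in the quotient lattice, and its dual variables are the restrictions of the $E$'s to the annihilator $v^\perp$, i.e.\ the hyperplane $E_v=0$. This is the same mechanism used in \Cref{cor:factor-amplitudes} and \Cref{cor:tot-energy-res}. The factor $1/E_R$ appears because, with the normalization of \Cref{cor:tot-energy-res}, the residue of a summand $\prod_i 1/\ell_i$ containing $1/E_R$ picks up the remaining factors. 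Applied to $v=w_R=f^+ + \sum_i g_i^-$ (a ray by \Cref{prop:universe-rays}), this identifies the residue with $\mathcal L(\mathcal U_{\mathcal G}/\langle w_R\rangle)$. The Laplace variable pairing with $w_R$ is $E_f^+ + \sum_i E_{g_i}^- = E_R$.

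Next I use the product decomposition of \Cref{thm:universe-links}. The Laplace transform of a product fan is the product of the Laplace transforms, provided the identification $\sigma\times\mathrm{id}$ is unimodular. That holds because $\sigma$ sends basis vectors to basis vectors plus lattice vectors, with kernel $\langle f^\pm\rangle$. The factor $\mathcal U_{\mathcal H}/\langle w_R\rangle$ transforms, by the same residue principle run backwards, to $\frac{1}{E_R}\Res_{E_R=0}\Psi(L,\mathcal H)$. Since $w_R$ lies in every maximal cone of $\mathcal U_{\mathcal H}$ (every maximal nested set of $\mathcal H$ contains $f,g_1,\dots,g_k$, by \Cref{lem:adv-nest-factors}), this is just $\Psi(L,\mathcal H)$ restricted to $E_R=0$. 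This restriction is what the statement abbreviates as $\Psi(L,\mathcal H)$ on the locus $E_R=0$. I would remark on this point carefully.

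The remaining factor is $\mathcal L(\dblfan{\mathcal G^{\dstream R}}/\langle f^+,f^-\rangle)$. Computed in its own coordinates it is $A_E([\hat 0,f],\mathcal G^{\dstream R})$, exactly as noted before \Cref{cor:tot-energy-res}. It is, however, embedded via $\sigma$, so its Laplace variables are the pullbacks $\sigma^*$ of the ambient $E$'s. Pulling back, $h^+$ pairs with $E_h^+ + \sum_{g_i\le h}E_{g_i}^-$, and $h^-$ pairs with $E_h^- + E_f^+ + \sum_{g_i\vee h\notin\mathcal G}E_{g_i}^-$.

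The main obstacle is reconciling this last expression with the stated substitution $E_h^-\mapsto E_h^- - \sum_{g_i\le h}E_{g_i}^-$. I would use $E_R=0$ to replace $E_f^+$ by $-\sum_i E_{g_i}^-$, which gives $E_h^- - \sum_{g_i\vee h\in\mathcal G}E_{g_i}^-$. For $h\in\mathcal G^{\dstream R}$ below $f$, I then need the condition $g_i\vee h\in\mathcal G$ to be equivalent to $g_i\le h$. Comparable pairs are handled directly: $h<g_i$ is excluded by definition of $\mathcal G^{\dstream R}$, since $h\in\mathcal G^{\ustream{g_i}}$. For incomparable $g_i,h$, membership $h\in\mathcal G^{\ustream{g_i}}$ forces $g_i\vee h\notin\mathcal G$. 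This check completes the matching of variables, and hence the corollary.
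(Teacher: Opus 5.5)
Your proposal is correct and follows the paper's proof. You Laplace-transform the product decomposition of \Cref{thm:universe-links}, use that $w_R$ lies in every maximal cone of $\mathcal U_{\mathcal H}$, and match variables through $\sigma^*$ using $E_R=0$ together with $g_i\vee h\in\mathcal G\iff g_i\le h$ for $h\in\mathcal G^{\dstream R}\setminus\{f\}$; this is exactly the paper's congruence $h^- + f^+ + \sum_{g_i\vee h\notin\mathcal G} g_i^- \equiv h^- - \sum_{g_i\le h} g_i^-$.

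One bookkeeping fix is needed. In the paper's convention the residue itself is the Laplace transform of the link (as in \Cref{cor:tot-energy-res}), so your stated principle carries a spurious $1/E_v$. Likewise, $\Psi(L,\mathcal H)$ cannot literally be restricted to $E_R=0$, since every term has a pole there. What your argument actually proves is $\Res_{E_R=0}\Psi(L,\mathcal G)=A_E^{sh}\cdot(E_R\Psi(L,\mathcal H))|_{E_R=0}$, and that is the precise meaning of the corollary's formal factor $1/E_R$.
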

\begin{proof}
    This follows by applying the Laplace transform to \Cref{thm:universe-links}, noting that in $\mathcal U_{\mathcal H}$ every maximal cone contains $w_R$ and that in the quotient $h^- + f^+ + \sum_{g_i \lor h \notin \mathcal G} g_i^- \equiv h^- - \sum_{g_i \leq h} g_i^-$.
\end{proof}

\begin{ex}
    Continuing with the bowtie in \Cref{ex:bowtie-amplitude} and \Cref{ex:bowtie-wavefunction}, let us compute some residues of $\Psi(L, \mathcal G)$. First, consider the residue at $E_R = 0$ for $R = \{\hat 1, abc\}$, i.e., $E_R = E_{\hat 1}^+ + E_{abc}^-$. According to \Cref{cor:wavefunction-res}, we first need to determine $\mathcal H = \mathcal G^{\lfloor R\rfloor} \cup \{\hat 1\}$ and $\mathcal G^{\lceil R \rceil}$. We have
    \vskip-15pt
\[ \mathcal H = \begin{tikzpicture}[scale=1,node distance=20mm,baseline=20pt]
  	\node (1hat) at (0,2) {$\hat 1$};
    \node (abc) at (0,1) {$abc$};
	\node (a) at (-1,0) {$a$};
  	\node (b) at (0,0) {$b$};
    \node (c) at (1,0)   {$c$};

    \draw (1hat) -- (abc);
    \draw (abc) -- (a);
    \draw (abc) -- (b);
    \draw (abc) -- (c);
\end{tikzpicture} \qquad \text{ and } \qquad \mathcal G^{\lceil R\rceil} = \begin{tikzpicture}[scale=1,node distance=20mm,baseline=15pt]
  	\node (1hat) at (0,1) {$\hat 1$};
	\node (abcd) at (-1,0)   {$abcd$};
  	\node (abce) at (0,0)   {$abce$};
    \node (f) at (1,0)   {$f$};

    \draw (1hat) -- (abcd);
    \draw (1hat) -- (abce);
    \draw (1hat) -- (f);
\end{tikzpicture}\]
    The amplitude $A_E(L,\mathcal G^{\lceil R \rceil})$ is the function 
    $\frac{1}{E_{abcd}^+ E_{abcd}^- E_f^+ E_f^-} + \frac{1}{E_{abce}^+ E_{abce}^- E_f^+ E_f^-}$. According to \Cref{cor:wavefunction-res} we obtain $A_E^{sh}(L, \mathcal G^{\lceil R \rceil})$ by substitution:
    \begin{small}
    $$\frac{1}{(E_{abcd}^+ + E_{abc}^-) (E_{abcd}^- - E_{abc}^-) E_f^+ E_f^-} + \frac{1}{(E_{abce}^+ + E_{abc}^-) (E_{abce}^- - E_{abc}^-) E_f^+ E_f^-}$$
    \end{small}
    Next, we need to compute the wavefunction $\Psi(L,\mathcal H)$. The maximal nested sets are the three chains in $\mathcal H$. As in \Cref{ex:star-wavefunction} we use the triangulation of the three maximal cones $U_N$ from \Cref{prop:max-lightcone-refine} to obtain
    \begin{align*}
        \Psi(L,\mathcal H) &= \frac 1 {E_{\hat 1}^+ (E_{\hat 1}^+ + E_{abc}^-) (E_{abc}^+ + E_a^-) E_a^+} \left(\frac 1 {E_{abc}^+} + \frac 1 {E_{\hat 1}^+ + E_a^-}\right) + \\
        &\hphantom{=} \frac 1 {E_{\hat 1}^+ (E_{\hat 1}^+ + E_{abc}^-) (E_{abc}^+ + E_b^-) E_b^+} \left(\frac 1 {E_{abc}^+} + \frac 1 {E_{\hat 1}^+ + E_b^-}\right) + \\
        &\hphantom{=}\frac 1 {E_{\hat 1}^+ (E_{\hat 1}^+ + E_{abc}^-) (E_{abc}^+ + E_c^-) E_c^+} \left(\frac 1 {E_{abc}^+} + \frac 1 {E_{\hat 1}^+ + E_c^-}\right).
    \end{align*}
    Note that $E_{\hat 1}^+ + E_{abc}^-$ appears indeed as a simple pole in all three terms.

    As a second example, consider $R = \{adef,a\}$. Then
    \[ \mathcal H = \begin{tikzpicture}[scale=1,node distance=20mm,baseline=20pt]
  	\node (1hat) at (0,2) {$\hat 1$};
    \node (adef) at (0,1) {$adef$};
	\node (a) at (0,0) {$a$};

    \draw (1hat) -- (adef);
    \draw (adef) -- (a);
\end{tikzpicture} \qquad \text{ and } \qquad \mathcal G^{\lceil R\rceil} = \begin{tikzpicture}[scale=1,node distance=20mm,baseline=15pt]
  	\node (adef) at (0,1) {$adef$};
	\node (ae) at (-1,0)   {$ae$};
  	\node (ad) at (0,0)   {$ad$};
    \node (f) at (1,0)   {$f$};

    \draw (adef) -- (ae);
    \draw (adef) -- (ad);
    \draw (adef) -- (f);
\end{tikzpicture}\]
and thus $A_E^{sh}([\hat 0,adef],\mathcal G^{\lceil R\rceil})$ is given by
\begin{small}
    $$\frac{1}{(E_{ae}^+ + E_a^-) (E_{ae}^- - E_a^-) E_f^+ E_f^-} + \frac{1}{(E_{ad}^+ + E_a^-) (E_{ad}^- - E_a^-) E_f^+ E_f^-}$$
\end{small}
and 
$$\Psi(L,\mathcal H) = \frac 1 {E_{\hat 1}^+ (E_{\hat 1}^+ + E_{adef}^-) (E_{adef}^+ + E_a^-) E_a^+} \left(\frac 1 {E_{adef}^+} + \frac 1 {E_{\hat 1}^+ + E_a^-}\right).$$
\end{ex}

%%%%%%%%%%%%%%%%%%%%%%
\subsection{The maximal cones and cosmological polytopes}\label{sec:cp}
Fix a maximal nested set $N$. By definition, the maximal cone $U_N$ in the universe fan is cut out by $p_g,m_g,F_g \geq 0$ for $g \in N$ and $p_g = m_g = 0$ for $g \notin N$. View $U_N$ as a full dimensional cone in the $2|N|-1$ dimensional coordinate hyperplane $m_{\hat 1} = 0$ in $\mathbb R^{\dbl N}$. Then $p_g,m_g,F_g$ (for $g \neq \hat{1}$) are the extremal rays of the \emph{dual cone} $U_N^\vee$. In particular, there are $3|N|-3$ of them. On the other hand, for incomparable $f,g \in N$ we have $f\land g = \hat 0$, and so the Hasse diagram of $N$ is a rooted tree $T_N$ with $|N|$ vertices and $|N|-1$ edges. The vertices of $T_N$, $V(T_N)$, are labelled by the $f \in N$, with $\hat{1}$ the top vertex or root. The edges of $T_N$, $E(T_N)$, are labelled by $f \in N$ with $f \neq \hat{1}$: i.e. each edge is labelled by the vertex at the bottom of the edge. See Figure \ref{fig:HasseTN}.

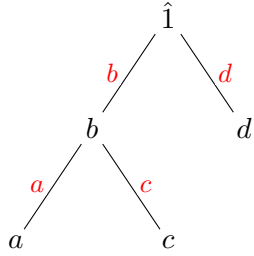
\begin{figure}
\begin{tikzpicture}[
  >=stealth,
  every node/.style={inner sep=2pt},
  edgelab/.style={midway,fill=white,inner sep=1pt,font=\small}
]
  \node (d) at (1,3) {$\hat{1}$};
  \node (b) at (0,1.5) {$b$};
  \node (a) at (-1,0) {$a$};
  \node (c) at ( 1,0) {$c$};
  \node (e) at (2,1.5) {$d$};

  \draw (a) -- (b) node[red, edgelab, left=2pt] {$a$};
  \draw (c) -- (b) node[red, edgelab, right=2pt] {$c$};
  \draw (b) -- (d) node[red, edgelab, left=3pt] {$b$};
  \draw (e) -- (d) node[red, edgelab, right=3pt] {$d$};
\end{tikzpicture}
\caption{The tree $T_N$ for a nested set $N = \{a,b,c,d,\hat{1}\}$ with $a<b$, $c<b$, $b< \hat{1}$ and $d<\hat{1}$. Edges are labelled by their lower vertices.}
\label{fig:HasseTN}
\end{figure}

The \emph{cosmological polytope} of a graph is defined by \cite{ArkaniHamed17}. In the case of a tree $T_N$, introduce a vector $\mbf{x}_f$ for every vertex in $V(T_N)$ and a vector $\mbf{y}_f$ (with $f\neq \hat{1}$) for every edge in $E(T_N)$. These vectors span a vector space $\mb{R}^{E(T_N) \cup V(T_N)}$. Then, for an edge $f<g$ of $T_N$, consider the three vectors
\[
\mbf{x_f} + \mbf{x}_g - \mbf{y}_f,\qquad \mbf{x}_f - \mbf{x}_g + \mbf{y}_f,\qquad -\mbf{x}_f + \mbf{x}_g - \mbf{y}_f.
\]
The \textit{cosmological polytope} of $T_N$ is the convex hull of these vectors for all edges of $T_N$.

\begin{prop}\label{prop:cosmological-polytopes}
    For a maximal nested set $N$, the cone dual to $U_N$ is the cone over the cosmological polytope of $T_N$ under a linear isomorphism. The isomorphism is given by
    \begin{align*}
        (\mathbb R^{\dbl N})^\vee/\langle m_{\hat 1}\rangle &\to \mathbb R^{E(T_N) \cup V(T_N)}\\
        p_{\hat 1} &\mapsto 2 \mathbf x_{\hat 1} \\
        p_g &\mapsto \mathbf x_g + \mathbf y_g - \mathbf x_f \\
        m_g &\mapsto \mathbf x_f + \mathbf y_g - \mathbf x_g
    \end{align*}
    where $g < \hat 1$ and $f$ is the unique element covering $g$ in $N$.
\end{prop}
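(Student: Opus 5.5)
The plan is to reduce the statement to a direct computation on generators. View $U_N$ as a full-dimensional cone in the hyperplane $\{m_{\hat 1}=0\}\subseteq \mathbb R^{\dbl N}$, whose dual space is $(\mathbb R^{\dbl N})^\vee/\langle m_{\hat 1}\rangle$. By definition $U_N$ is cut out there by $p_g,m_g,F_g\geq 0$ for $g\in N$. By Farkas' lemma (finite generation of the dual of an H-described cone), $U_N^\vee$ is the cone spanned by these forms.

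First I remove the redundant ones.
\begin{itemize}
\item Since $m_{\hat 1}=0$ in the quotient, $F_{\hat 1}=p_{\hat 1}$.
\item For any child $c$ of $\hat 1$ in $T_N$ we have $F_c=p_{\hat 1}-m_c$, so $p_{\hat 1}=F_c+m_c$.
\end{itemize}
Hence, as long as $N\neq\{\hat 1\}$, we get
\[
U_N^\vee=\cone(p_g,m_g,F_g \mid g\in N\setminus\{\hat 1\}).
\]
These are $3(|N|-1)$ forms, three per edge of $T_N$.

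Second, I check that the stated map is a linear isomorphism. Both sides have dimension $2|N|-1$: on the left there are $|N|$ forms $p_g$ and $|N|-1$ forms $m_g$; on the right there are $|N|$ vertex vectors and $|N|-1$ edge vectors. For an edge $g\lessdot f$, the images of $p_g$ and $m_g$ are $\mathbf x_g-\mathbf x_f+\mathbf y_g$ and $\mathbf x_f-\mathbf x_g+\mathbf y_g$. Their sum and difference recover $2\mathbf y_g$ and $2(\mathbf x_g-\mathbf x_f)$. Together with $p_{\hat 1}\mapsto 2\mathbf x_{\hat 1}$, an induction down the rooted tree $T_N$ recovers every $\mathbf x_g$ and $\mathbf y_g$. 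So the map is surjective, hence bijective.

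Third, the key computation is the image of $F_g$. Because $N$ is maximal it contains $\hat 1$, so on $\mathbb R^{\dbl N}$ the form $F_g$ is a sum over the chain
\[
g\lessdot f_1\lessdot\cdots\lessdot f_r\lessdot \hat 1 .
\]
Each term $p_{f_i}-m_{f_i}$ maps to $2\mathbf x_{f_i}-2\mathbf x_{f_{i+1}}$ (with $f_{r+1}=\hat 1$). These telescope, and together with $p_{\hat 1}\mapsto 2\mathbf x_{\hat 1}$ they give $2\mathbf x_{f_1}$. Adding the image of $-m_g$, which is $\mathbf x_g-\mathbf x_{f_1}-\mathbf y_g$, yields
\[
F_g\mapsto \mathbf x_g+\mathbf x_{f_1}-\mathbf y_g .
\]
If $g$ is a child of $\hat 1$ there are no middle terms and the same formula holds directly.

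So for each edge $g\lessdot f$ the three generators map to
\[
\mathbf x_f+\mathbf x_g-\mathbf y_g,\qquad \mathbf x_g-\mathbf x_f+\mathbf y_g,\qquad \mathbf x_f-\mathbf x_g+\mathbf y_g ,
\]
which are exactly the three cosmological vertex vectors of that edge. I would spend a moment fixing the sign convention on $\mathbf y$ against \cite{ArkaniHamed17}. If the convention in the displayed definition differs, the linear automorphism $\mathbf y\mapsto -\mathbf y$ composed with a relabelling fixes it; this is where I expect the only real care is needed.

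Finally, "cone over the polytope" needs all these vectors to lie on a common affine hyperplane not through $0$. The functional taking the value $1$ on every $\mathbf x_v$ and $1$ on every $\mathbf y_e$ takes the value $1$ on all three vectors of every edge. Hence $U_N^\vee$ is identified with the cone over the convex hull of these vectors, i.e.\ over the cosmological polytope of $T_N$.
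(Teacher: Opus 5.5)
Your proposal is correct and is essentially the paper's proof: bijectivity from $p_g\pm m_g$ together with $p_{\hat 1}\mapsto 2\mathbf x_{\hat 1}$ and the tree structure, then the telescoping computation $F_g\mapsto \mathbf x_f+\mathbf x_g-\mathbf y_g$. Your Farkas/redundancy step and the common affine hyperplane only make explicit what the paper leaves implicit.

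No sign fix is needed: your three images are exactly the standard vertices of \cite{ArkaniHamed17}, and the paper's displayed third vector $-\mathbf x_f+\mathbf x_g-\mathbf y_f$ is a slip for $-\mathbf x_f+\mathbf x_g+\mathbf y_f$. As printed it is the negative of the second vector, so no linear map (in particular not $\mathbf y\mapsto-\mathbf y$) could match it with the pointed cone $U_N^\vee$.
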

\begin{proof}
    To see that this maps defines an isomorphism, just note that $p_g + m_g \mapsto 2 \mathbf y_g$ and $p_g - m_g \mapsto 2 (\mathbf x_g - \mathbf x_f)$. Using that $p_{\hat 1} \mapsto 2 \mathbf x_{\hat 1}$ and that $N$ defines a tree this shows that the map is a bijection.\par
    Finally, we see that
    $$F_g = p_{\hat 1} + \sum_{\substack{f \in N \\ g < h < \hat 1}} (p_h - m_h) - m_g \mapsto 2 \mathbf x_f - (\mathbf x_f + \mathbf y_g - \mathbf x_g) = \mathbf x_f - \mathbf y_g + \mathbf x_g$$
    where $f$ is the unique element covering $g$, and conclude by definition of the cosmological polytope. Here the sum over $g<f<\hat{1}$ is taken along the unique chain in $N$ above $g$ (since $N$ is nested).
\end{proof}

\begin{rmk}
    The same proof works for any (not necessarily maximal) nested set $N$, replacing $\hat 1$ by the maximal elements of $N$. In particular, all the cones $U_N$ are dual to cosmological polytopes of forests.
\end{rmk}

\begin{rmk}[The universal cosmological polytope]
The dual of the causal cone in $\ker m_{\hat 1} \subseteq \mathbb R^{\dbl \mathcal G}$ is spanned by extremal rays $p_g,m_g,F_g$ for $g < \hat 1$ for all $g \in \mathcal G$. Projectively, the dual is a polytope $Q_{\mc{G}}$ with vertices $p_g,m_g,F_g$. Via \Cref{prop:cosmological-polytopes} it follows then that every cosmological polytope of a tree $T_N$ is a projection of this larger polytope $Q_{\mathcal G}$.
\end{rmk}

%%%%%%%%%%%%%%%%%%%%%%
\subsection{The cones of the universe fan}

By definition, a general cone of the universe fan is cut out by equalities $p_g,m_g = 0$ for $g$ in the complement of some nested set $N$, equalities $p_g = 0$, $m_g = 0$, $F_g = 0$ for some $g$ in $N$ and inequalities $p_g, m_g, F_g \geq 0$ for the remaining $g$. Inspired by \cite{ArkaniHamed17}, we can capture this data as follows:

\begin{dfn}
    A \textit{marking} of a nested set $N$ is a triple $(\beta_N^+,\beta_N^-,\beta_N^\bullet)$ of subsets $\beta_N^+,\beta_N^-,\beta_N^\bullet \subseteq N$ such that
    \begin{enumerate}
        \item $N = \beta_N^+ \cup \beta_N^-$
        \item $\max N \subseteq \beta_N^+ \backslash (\beta_N^- \cup \beta_N^\bullet)$
        \item if $f\in \beta_N^+$ covers $g$ in $N$ then $g \in \beta^-_N \cup \beta^\bullet_N$
        \item if $g\in \beta_N^-$ is covered by $f$ in $N$ then $f \in \beta^+_N \cup \beta^\bullet_N$
        \item if $g \in \beta_N^\bullet$, then $f \in \beta^+_N \cup \beta^\bullet_N$ for $f$ covering $g$ and $h \in \beta^-_N \cup \beta^\bullet_N$ for every $h$ covered by $g$.
    \end{enumerate}
\end{dfn}
\begin{ex}\label{ex:first-look-markings}
    Consider the boolean lattice on the set $\{1,2,3\}$ and take $\mathcal G$ to be all intervals $[i,j]$. The set $N=\{123,12,1\}$ is nested. All of the following are markings of $N$:
    \begin{itemize}
        \item $\{123_+,12_{+-\bullet},1_{+-\bullet}\}$ (i.e., $\beta_N^+ = \{123,12,1\}$, $\beta_N^- = \{12,1\}$, $\beta_N^\bullet = \{12,1\}$)
        \item $\{123_+,12_{+-},1_{\bullet-}\}$
        \item $\{12_{+},1_{-}\}$
    \end{itemize}
    The following are not markings of $N$:
    \begin{itemize}
        \item $\{123_+,12_{+\bullet},1_{\bullet}\}$ (violates $i$ in the definition)
        \item $\{123_-,12_{+-\bullet},1_{+-\bullet}\}$ (violates $ii$)
        \item $\{123_+,12_{+\bullet},1_{+}\}$ (violates $iii$)
    \end{itemize}
\end{ex}

\begin{prop}\label{prop:cone-to-marking}
    Let $C$ be a cone in the universe fan $\mathcal U_{\mathcal G}$ and set
    \begin{align*}
        \beta^+(C) &:= \{g \in \mathcal G \ | \ \exists \ x \in C : p_g(x) \not = 0 \}\\
        \beta^-(C) &:= \{g \in \mathcal G \ | \ \exists \ x \in C : m_g(x) \not = 0\}\\
        N(C) &:= \beta^+ \cup \beta^-\\
        \beta^\bullet(C) &:= \{g \in N \ | \ \exists \ x \in C : F_g(x) \not = 0\}
    \end{align*}
    Then $(N(C),\beta^+(C),\beta^-(C),\beta^\bullet(C))$ is a marked nested set.
\end{prop}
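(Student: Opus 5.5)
The plan is to work with a single point in the relative interior of $C$ and to reduce every condition to sign arguments for one short recursion satisfied by the forms $F_g$ along covering relations. Here is how I set it up.

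\textbf{Nestedness.} Every cone of $\mathcal U_{\mathcal G}$ has the form $C=\mathbb R^{\dbl N'}_+\cap(\text{causal cone})\cap\{\text{some defining forms}=0\}$ for some nested set $N'$. All coordinates $p_g,m_g$ with $g\notin N'$ vanish on $C$, so $N(C)\subseteq N'$. Since subsets of nested sets are nested, $N(C)$ is nested.

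\textbf{A generic point.} Choose $x$ in the relative interior of $C$. Each of the forms $p_g,m_g,F_g$ is $\ge 0$ on $C$. Such a form is not identically zero on $C$ exactly when it is strictly positive at $x$. Hence
\[
\beta^+=\{g : p_g(x)>0\},\qquad \beta^-=\{g : m_g(x)>0\},\qquad \beta^\bullet=\{g\in N(C) : F_g(x)>0\}.
\]
Write $N=N(C)$. Condition (i) of a marking holds by definition.

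\textbf{The recursion.} Coordinates indexed by elements outside $N$ vanish at $x$. Suppose $f\in N$ covers $g$ in $N$. Comparing the defining sums of $F_g$ and $F_f$ in \Cref{dfn:causal-cone}, only the term of $f$ itself is new, so at $x$
\[
F_g(x)=F_f(x)+p_f(x)-m_g(x).
\]
When $f=\hat 1$ this reads $F_g(x)=p_{\hat 1}(x)-m_g(x)$. In that case I use the convention that the "$F_f$'' term is absent. This matches the fact that only the $F_g$ with $g<\hat 1$ appear among the dual rays, and I would state this convention explicitly.

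Now suppose $h$ is maximal in $N$ and $h\neq\hat 1$. If $\hat1\in N$, then $h$ could not be maximal, so $\hat 1\notin N$ and $p_{\hat 1}(x)=0$. The same comparison then gives $F_h(x)=-m_h(x)$. Since both $F_h\ge 0$ and $m_h\ge 0$, we get $m_h(x)=F_h(x)=0$. By (i), $h\in\beta^+$, and this proves (ii). If instead $\hat 1\in N$, then $\max N=\{\hat 1\}$ and (ii) follows from $m_{\hat 1}=0$ together with the convention above.

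\textbf{Conditions (iii)--(v).} Each is a sign argument using the recursion and nonnegativity.
\begin{itemize}
\item (iii): Let $f\in\beta^+$ cover $g$. If $m_g(x)=0$, then $F_g(x)\ge p_f(x)>0$, so $g\in\beta^\bullet$. Hence $g\in\beta^-\cup\beta^\bullet$.
\item (iv): Let $g\in\beta^-$ be covered by $f$. From $F_g\ge 0$ we get $F_f(x)+p_f(x)\ge m_g(x)>0$. So $p_f(x)>0$ or $F_f(x)>0$, that is, $f\in\beta^+\cup\beta^\bullet$.
\item (v), upward: Let $g\in\beta^\bullet$ and let $f$ cover $g$. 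Then $F_f(x)+p_f(x)=F_g(x)+m_g(x)>0$, which gives the same conclusion as in (iv).
\item (v), downward: Let $g\in\beta^\bullet$ cover $h$. Then $F_h(x)=F_g(x)+p_g(x)-m_h(x)$. If $m_h(x)=0$, this gives $F_h(x)\ge F_g(x)>0$. Hence $h\in\beta^-\cup\beta^\bullet$.
\end{itemize}

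\textbf{Expected obstacle.} The only delicate point is the bookkeeping in the recursion. Intermediate elements of $\mathcal G$ lying between $g$ and $f$ but not in $N$ contribute nothing, precisely because their coordinates vanish at $x$. The boundary case at $\hat 1$, including the role of $m_{\hat 1}=0$ and the convention for $F_{\hat 1}$, must be handled consistently with how the marking conditions treat maximal elements.
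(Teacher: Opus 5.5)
Your proof is correct and is essentially the paper's argument: nestedness from $C\subseteq\mathbb R^{2N'}_+$, condition (ii) from $F_h=-m_h$ at a maximal element $h$, and (iii)--(v) from the identity $F_g=F_f+p_f-m_g$ for $f$ covering $g$ in $N$ together with nonnegativity. The differences are cosmetic. You fix a relative-interior point so that the existential definitions of $\beta^\pm,\beta^\bullet$ become strict positivity at that point, whereas the paper states the contrapositive vanishing implications on all of $C$. You also make explicit the convention $F_{\hat 1}=0$, which the paper's proof of (ii) tacitly needs, since the literal formula gives $F_{\hat 1}=p_{\hat 1}$.
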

\begin{proof}
    First, note that $N$ must be a nested set by definition of the universe fan. Next, we need to check $(i)-(v)$ in the definition of a marking. $(i)$ holds by definition of $N$. For $ii)$, take a maximal $g$ in $N$. Then on $C$ we have $p_f = 0$ for $f>g$. This implies that $F_g = - m_g$ on $C$, and since $C$ lies in the positive cone, $F_g = m_g = 0$, i.e., $g \notin \beta^-,\beta^\bullet$. For $iii)$, $iv)$ and $v)$, take $f,g \in N$ such that $f$ covers $g$. Note that $F_g = F_f + p_f - m_g$, obtained by comparing adjacent terms in the chain expression for $F_g$. Thus, if $m_g = F_g = 0$ on $C$ we must have $p_f = F_f = 0$. Similarly, if $p_f = F_f = 0$, we must have $F_g = m_g = 0$.
\end{proof}

Conversely, a marking determines a cone of the universe fan by imposing $p_g = 0$ for $g \notin \beta_N^+$, $m_g = 0$ for $g \notin \beta_N^-$, and $F_g = 0$ for $g \in N \backslash \beta_N^\bullet$. To see that the two constructions are inverse to each other, let us determine the rays of the associated cone. Recall from \Cref{prop:universe-rays} that the rays of the universe fan are the vectors $w_R$ for causal regions $R$. Let us call a region $R = \{r_*,r_1,\ldots,r_k\}$ \textit{adapted} to $\beta_N$ if $r_* \in \beta_N^+$, $r_1,\ldots,r_k \in \beta_N^-$ and for every $g \in N$ with $g \leq r_*$, $g \not\leq r_1,\ldots,r_k$, we have $g \in \beta_N^\bullet$.
\begin{prop}\label{prop:universe-cone-rays}
    The cone $C$ associated to a marked nested set $(N,\beta_N)$ is spanned by the rays $w_R$ for all causal regions $R$ adapted to $\beta_N$.
\end{prop}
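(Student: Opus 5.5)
The plan is to prove both inclusions directly from the inequality description of $C$. I will not use \Cref{prop:universe-rays}, since that theorem is deduced later (\Cref{cor:rays-of-universe}), presumably from the present statement. Throughout, $C\subseteq\mathbb R^{\dbl N}$ is cut out by $p_g,m_g,F_g\ge 0$ for $g\in N$, together with the equalities $p_g=0$ ($g\notin\beta_N^+$), $m_g=0$ ($g\notin\beta_N^-$) and $F_g=0$ ($g\in N\setminus\beta_N^\bullet$). On $\mathbb R^{\dbl N}$, $F_g$ is given by the chain formula along the unique chain from $g$ to its maximal element $h$. This gives the recursion
\[
F_g=F_f+p_f-m_g \quad (f \text{ covering } g \text{ in } N), \qquad F_h=p_h-m_h \quad (h \text{ maximal}),
\]
already used in \Cref{prop:cone-to-marking}.

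\emph{Step 1: adapted rays lie in $C$.} For a causal region $R=\{r_*,r_1,\dots,r_k\}\subseteq N$, I evaluate the functionals on $w_R$ by following the chain formula:
\begin{itemize}
\item $p_g(w_R)=[g=r_*]$ and $m_g(w_R)=[g\in\{r_1,\dots,r_k\}]$;
\item $F_g(w_R)=1$ exactly when $g$ lies strictly below $r_*$ and $g\not\le r_i$ for all $i$, or when $g=r_*$ and $r_*$ is maximal in $N$;
\item $F_g(w_R)=0$ otherwise. For $g\le r_i$ the contributions $+1$ from $r_*$ and $-1$ from $r_i$ cancel; for $g=r_*$ non-maximal nothing contributes; for $g\not\le r_*$ nothing lies on its chain.
\end{itemize}
All values are nonnegative, so $w_R$ lies in $C$ exactly when the support conditions hold. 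These are precisely the adaptedness conditions: $r_*\in\beta^+_N$, $r_i\in\beta^-_N$, and the $F$-support contained in $\beta^\bullet_N$. (If the definition of adapted includes $g=r_*$ non-maximal, the extra condition $r_*\in\beta^\bullet$ there is harmless, because it only enlarges the set of required markings, and I will check that it does not exclude any ray needed in Step 2.) Hence $\cone(w_R)\subseteq C$.

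\emph{Step 2: every $x\in C$ is a nonnegative combination of adapted $w_R$.} I argue by induction on the number of nonzero values among $p_g(x),m_g(x),F_g(x)$ for $g\in N$. If all $p_g(x)=0$, the recursion gives $F_g=-(\text{sum of }m\text{'s along the chain})\le 0$, so all $m_g=0$ and $x=0$. Otherwise I run a greedy peeling step:
\begin{enumerate}
\item Choose $f\in N$ with $p_f(x)>0$ that is maximal among such elements, i.e.\ all strict ancestors of $f$ in $N$ have $p=0$.
\item For each branch below $f$ in the tree $T_N$, let $A$ contain the topmost $g<f$ with $m_g(x)>0$, if one exists. The elements of $A$ are pairwise incomparable, and $f$ is above all of them.
\item Set $R=\{f\}\cup A$. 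Since $R\subseteq N$ it is nested, so $R$ is a causal region.
\item Put $\lambda=\min\bigl(p_f(x),\ m_g(x)\ (g\in A),\ F_h(x)\ (h \text{ with } F_h(w_R)=1)\bigr)$.
\end{enumerate}

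The main obstacle is to show that $\lambda>0$; the rest is bookkeeping. The key fact is that $F_h(x)>0$ for every $h$ with $F_h(w_R)=1$.
\begin{itemize}
\item If $f$ is maximal in $N$, then $F_f=p_f>0$.
\item If $h$ is strictly below $f$ and not below $A$, then every element on the chain from $h$ up to $f$ (excluding $f$) has $m=0$, by the choice of $A$ as topmost elements with $m>0$. Iterating the recursion gives $F_h\ge F_f+p_f\ge p_f>0$.
\end{itemize}
This is exactly why $f$ must be chosen topmost: for a non-maximal $f$ one has $F_f(w_R)=0$, so the possibly tight value $F_f(x)=0$ never enters the minimum.

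With $\lambda>0$, the point $x'=x-\lambda w_R$ still satisfies all the inequalities and all the equalities of $C$. Indeed, the value of each functional on $w_R$ is zero wherever the marking forces that functional to vanish on $x$, and nonnegativity is preserved by the choice of $\lambda$. So $x'\in C$. Moreover, $x'$ has no new nonzero coordinates, and at least one coordinate attaining the minimum becomes zero. Induction then finishes the decomposition.

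Finally, $R$ is adapted to $\beta_N$. The functionals $p_f$, $m_g$ ($g\in A$) and $F_h$ (for $h$ in the $F$-support of $w_R$) are all nonzero at $x\in C$. Since every point of $C$ satisfies the marking's equalities, these elements must lie in $\beta^+_N$, $\beta^-_N$ and $\beta^\bullet_N$ respectively. This combines Step 1 with the decomposition and proves the statement.
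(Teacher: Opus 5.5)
Your two-inclusion strategy works, and once a base-case error is fixed it gives a correct proof by a different route from the paper's. As written, Step 1 and part of Step 2 do not go through.

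\textbf{The base case.} You take $F_h=p_h-m_h$ for $h$ maximal in $N$.
\begin{itemize}
\item For $h\neq\hat 1$ this is simply false. Here $\hat 1\notin N$, and on $\mathbb R^{2N}$ the definition restricts to $F_h=-m_h$.
\item For $h=\hat 1$ it matches the literal formula, but it contradicts the recursion you also use. For $g$ covered by $\hat 1$ the chain formula gives $F_g=p_{\hat 1}-m_g$, not $F_{\hat 1}+p_{\hat 1}-m_g=2p_{\hat 1}-m_g$.
\item It also contradicts marking axiom (ii). That axiom imposes $F_{\hat 1}=0$ on $C$, which would force $p_{\hat 1}=0$ although $\hat 1\in\beta^+_N$.
\end{itemize}
The convention that makes markings work is $F_h=-m_h$ for every maximal $h$. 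It is the one used in the proof of \Cref{prop:cone-to-marking}, which says ``$F_g=-m_g$ on $C$'' for maximal $g$.

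\textbf{Consequences.} Your claim $F_{r_*}(w_R)=1$ for maximal $r_*$ is false; the value is $0$. Your criterion therefore demands $r_*\in\beta^\bullet_N$ whenever $r_*$ is maximal, which (ii) never allows. It thus discards every region whose top is maximal in $N$. It would exclude $\hat 1^+$ from $U_N$. For the marking $\{12_+,1_-\}$ of \Cref{ex:first-look-markings} it gives the zero cone, although $C=\langle 12^++1^-\rangle$.

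\textbf{The repair.}
\begin{itemize}
\item With $F_h=-m_h$, the $F$-support of $w_R$ is exactly $\{g\in N: g<r_*,\ g\not\le r_i \text{ for all } i\}$. This is precisely adaptedness.
\item In Step 2, delete the bullet ``if $f$ is maximal then $F_f=p_f>0$''. On $C$ one has $F_f=-m_f=0$, and the bullet is no longer needed since $F_f(w_R)=0$.
\item The rest of Step 2 survives, because $F_h(x)\ge F_f(x)+p_f(x)\ge p_f(x)>0$ uses only $F_f(x)\ge 0$.
\item Define $A$ as the set of all maximal elements of $\{g\in N: g<f,\ m_g(x)>0\}$. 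Your word ``branch'' is ambiguous, and this is what the estimate actually uses.
\end{itemize}

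\textbf{The parenthetical hedge.} It should be dropped. Reading adaptedness with $g\le r_*$, i.e.\ also requiring $r_*\in\beta^\bullet_N$, is not harmless: under that reading the statement is false. Take the marking $\{123_+,12_{+-},1_{\bullet-}\}$. Then $C=\langle 123^++12^-,\,12^+,\,12^++1^-\rangle$, and $12^+$ is an extremal ray although $12\notin\beta^\bullet_N$. Your own Step 2 produces exactly this case: for $x=12^+$ the peeled region is $\{12\}$, and $F_{12}(x)=0$. The intended condition is $g<r_*$, as the paper's examples confirm. The check you promise would have to fail.

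\textbf{Comparison with the paper.} The paper's proof is your Step 1 plus one reduction. $C$ is a cone of $\mathcal U_{\mathcal G}$, so its extremal rays are rays of the fan, which are the $w_R$ by \Cref{prop:universe-rays}. It then only remains to decide when $w_R\in C$.

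This is not circular, contrary to your guess. \Cref{prop:universe-rays} is established in \Cref{cor:rays-of-universe} from the minimal lightcone subdivision (\Cref{prop:cosmo-as-subcone}, \Cref{prop:cones-of-min-lightsub}, \Cref{cor:rays-of-cosmo}), without markings.

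Your greedy peeling, in the spirit of the proof of \Cref{prop:nested-projection}, replaces that forward reference and makes the proposition self-contained. Applied to the full marking of a maximal $N$, it reproves that $U_N$ is generated by the $w_R$ with $R\subseteq N$ causal, without the lightcone machinery. The paper's route additionally shows that each adapted $w_R$ is an extremal ray of $C$. Yours gives only spanning, which is all the statement asks for.
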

\begin{proof}
    It suffices to check which rays are contained in $C$. It is easy to see that a ray $w_R$ is contained in $C$ if and only if $R$ is adapted to $(N,\beta_N)$.
\end{proof}

\begin{ex}
    Consider the markings from \Cref{ex:first-look-markings}. Let us compute the cone associated to $\{123_+,12_{+-\bullet},1_{+-\bullet}\}$. The causal regions adapted to this marking are
    $$\{123\}, \{123, 12\}, \{123, 1\}, \{12\}, \{12, 1\} \text{ and } \{1\}.$$
    Consequently, the associated cone is
    $\langle 123^+, 123^+ + 12^-, 123^+ + 1^-, 12^+, 12^+ + 1^-, 1^+\rangle.$
    For $\{123_+,12_{+-},1_{\bullet-}\}$, we obtain the cone $\langle123^+ + 12^-, 12^+, 12^+ + 1^-\rangle$, and $\{12_{+},1_{-}\}$ corresponds to the one-dimensional cone spanned by $12^+ + 1^-$.
\end{ex}

\begin{thm}\label{thm:universe-cones-and-markings}
    The cones of the universe fan are one-to-one with marked nested sets via \Cref{prop:cone-to-marking}.
\end{thm}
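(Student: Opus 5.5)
We will show that the two constructions are mutually inverse: a cone $C$ goes to the marked nested set $(N(C),\beta(C))$ of \Cref{prop:cone-to-marking}, and a marking $(N,\beta_N)$ goes to the cone $C(N,\beta_N)$ cut out in $\mathbb R^{\dbl N}$ by the equalities $p_g=0$ ($g\notin\beta_N^+$), $m_g=0$ ($g\notin\beta_N^-$), $F_g=0$ ($g\in N\setminus\beta_N^\bullet$) and the remaining inequalities $p_g,m_g,F_g\ge 0$. Both maps are well defined: \Cref{prop:cone-to-marking} covers the first, and the second yields a face of $U_{N'}$ for any maximal $N'\supseteq N$, since it is obtained by setting some of the defining inequalities to equalities. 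The approach is to use \Cref{prop:universe-cone-rays}, which identifies $C(N,\beta_N)$ with the span of the rays $w_R$ for causal regions $R$ adapted to $\beta_N$. We may take as known that every cone of $\mathcal U_{\mathcal G}$ is spanned by some of the rays $w_R$, by \Cref{prop:universe-rays}.

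\emph{Step 1: $C\mapsto\beta(C)\mapsto C$ is the identity.} Every cone $C$ is a face of some $U_N$, so it is cut out by making some of the inequalities $p_g,m_g,F_g\ge 0$ tight. By construction, $\beta(C)$ records exactly which of these functionals vanish identically on $C$. Therefore $C(N(C),\beta(C))\supseteq C$, and it is the smallest face of $U_N$ on which the same set of functionals vanishes. Since a face of a polyhedral cone is determined by the set of defining inequalities tight on it, the two cones coincide.

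\emph{Step 2: $\beta\mapsto C(\beta)\mapsto\beta(C(\beta))$ is the identity.} Let $C=C(N,\beta_N)$. By definition $\beta^+(C)\subseteq\beta_N^+$, and similarly for $-$ and $\bullet$, so it remains to prove equality. For this, we exhibit adapted causal regions:
\begin{itemize}
\item For $g\in\beta_N^+$, take $R=\{g\}\cup\{h: h\text{ covered by }g\text{ in }N\}$. Condition (iii) guarantees that each such $h$ lies in $\beta_N^-\cup\beta_N^\bullet$. Whenever an $h$ is not in $\beta_N^-$, it is in $\beta^\bullet_N$, and we instead descend below it, replacing $h$ by the elements it covers; this is allowed by condition (v). Iterating, we reach a region whose bottom elements lie in $\beta_N^-$ and all of whose intermediate elements lie in $\beta^\bullet_N$. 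The leaves of the tree $T_N$ cause no trouble, because such an $h$ may simply be included in $\beta^\bullet$ with nothing below. Hence this $R$ is adapted, and $p_g(w_R)=1$.
\item For $g\in\beta_N^-$, go upward to the covering element $f$, which lies in $\beta_N^+\cup\beta_N^\bullet$ by (iv). We keep climbing through $\beta^\bullet$-elements, using (v), until we reach an element of $\beta^+$; this must happen by (ii), since the maxima of $N$ lie in $\beta^+$. We then complete the region downward as in the previous case, keeping $g$ among the bottom elements.
\item For $g\in\beta^\bullet_N$, the same climb-then-descend construction produces an adapted region $R$ with $g$ strictly inside it (below $r_*$ and not below any $r_i$). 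We then check that $F_g(w_R)=1$, by evaluating $F_g$ along the chain.
\end{itemize}
Since $N=\beta^+\cup\beta^-$ by (i), this also recovers $N$.

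The main obstacle is Step 2: making sure the climb/descend construction always yields a genuine causal region (a nested set, which holds automatically since it is a subset of $N$) that is adapted, and verifying the nonvanishing of $F_g$. The latter requires care with signs: we must compute $F_g(w_R)$ using the chain formula $F_g=F_f+p_f-m_g$ from the proof of \Cref{prop:cone-to-marking}, and confirm that the contributions telescope to $1$ when $g$ lies strictly between $r_*$ and the $r_i$, and to $0$ otherwise.
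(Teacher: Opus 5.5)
Your proof is correct and follows the same route as the paper. Step 1 is the observation the paper calls clear. In Step 2 you build, for each element of $\beta_N^+$, $\beta_N^-$ and $\beta_N^\bullet$, an adapted causal region $R$ with $p_g(w_R)$, $m_g(w_R)$ or $F_g(w_R)$ nonzero. You do this by descending through $\beta^\bullet$-elements and stopping at the first $\beta^-$-element, which is exactly the paper's construction for $\beta_N^+$; your explicit climb-then-descend for the $-$ and $\bullet$ cases, and your reading of adaptedness with $g$ strictly below $r_*$, correctly fill in what the paper dismisses as ``analogous arguments''.
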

\begin{proof}
    Starting with a cone $C$, it is clear that the cone of the marking associated to $C$ is $C$. Now let $(N,\beta_N)$ and $C$ the associated cone. Pick $g \in \beta^+_N$. Let $R$ be the union of all maximal chains of coverings $h_1 \leq \ldots \leq h_k \leq g$ such that $h_2,\ldots,h_k \notin \beta^-_N$. By definition of markings, for every such chain we must have $h_2,\ldots,h_k \in \beta_N^\bullet$ and either $h_1 \in \beta^-_N$, or $h_1$ is minimal and $h_1 \in \beta^\bullet_N$. Let $R^-$ be the set of all $h_1$ with $h_1 \in \beta^-_N$. Then $R:=\{g\} \cup R^-$ is a causal region adapted to $\beta_N$. By \Cref{prop:universe-cone-rays}, $w_R \in C$, and since $p_g(w_R) \not=0$, $g \in \beta^+(C)$. Analogous arguments show that $\beta_N^- = \beta^-(C)$ and $\beta_N^\bullet = \beta^\bullet(C)$.
\end{proof}

\begin{rmk}
    Recall from \Cref{prop:cosmological-polytopes} that the maximal cones of the universe fan are dual to cosmological polytopes. In particular, we recover precisely the markings from \cite{ArkaniHamed17} after restricting to subsets of a fixed nested set $N$.
\end{rmk}

The face lattice of the fan is the inclusion lattice of rays, and so we obtain a full description of the face lattice in terms of marked nested sets. Indeed, let us define a partial order on marked nested sets by letting $(N,\beta_N) \leq (M,\beta_M)$ if $N\subseteq M$ and every causal region adapted to $\beta_N$ is also adapted to $\beta_M$. This is the case if and only if $\beta^+_N \subseteq \beta^+_M$, $\beta^-_N \subseteq \beta^-_M$, $\beta_N^\bullet \subseteq \beta^\bullet_M$ and for every causal region $\{r_*,r_1,\ldots,r_k\}$ adapted to $\beta_N$ and for every $g \in M\backslash N$ with $g\leq r_*$, $g\not\leq r_1,\ldots,r_k$, we have $g \in \beta^\bullet_M$.

\begin{ex}
    Consider the boolean lattice $L$ on the set $\{1,2\}$, with the maximal building set $L \backslash \{\hat 0\}$. Using marked nested sets, we can draw the face lattice of the associated universe fan $\mathcal U_{\mathcal G}$.

    \begin{small}
    \[\begin{tikzcd}[sep=small]
	& {\{12_+,1_{+-\bullet}\}} &&&& {\{12_+,2_{+-\bullet}\}} \\
	{\{12_+,1_{+-}\}} & {\{12_+,1_{+\bullet}\}} & {\{12_+,1_{-\bullet}\}} && {\{12_+,2_{-\bullet}\}} & {\{12_+,2_{+\bullet}\}} & {\{12_+,2_{+-}\}} \\
	& {\{1_+\}} & {\{12_+,1_{-}\}} & {\{12_+\}} & {\{12_+,2_{-}\}} & {\{2_+\}} \\
	&&& \emptyset
	\arrow[no head, from=1-2, to=2-1]
	\arrow[no head, from=1-2, to=2-2]
	\arrow[no head, from=1-2, to=2-3]
	\arrow[no head, from=1-6, to=2-5]
	\arrow[no head, from=1-6, to=2-6]
	\arrow[no head, from=1-6, to=2-7]
	\arrow[no head, from=2-1, to=3-2]
	\arrow[no head, from=2-1, to=3-3]
	\arrow[no head, from=2-2, to=3-2]
	\arrow[no head, from=2-2, to=3-4]
	\arrow[no head, from=2-3, to=3-3]
	\arrow[no head, from=2-3, to=3-4]
	\arrow[no head, from=2-5, to=3-4]
	\arrow[no head, from=2-5, to=3-5]
	\arrow[no head, from=2-6, to=3-4]
	\arrow[no head, from=2-6, to=3-6]
	\arrow[no head, from=2-7, to=3-5]
	\arrow[no head, from=2-7, to=3-6]
	\arrow[no head, from=3-2, to=4-4]
	\arrow[no head, from=3-3, to=4-4]
	\arrow[no head, from=3-4, to=4-4]
	\arrow[no head, from=3-5, to=4-4]
	\arrow[no head, from=3-6, to=4-4]
    \end{tikzcd}\]
    \end{small}

\end{ex}

%%%%%%%%%%%%%%%%%%%%%%
%%%%%%%%%%%%%%%%%%%%%%
%%%%%%%%%%%%%%%%%%%%%%
%%%%%%%%%%%%%%%%%%%%%%
\section{Trees, tubes, and boolean building sets}\label{sec:park}
The Universe Fan was defined in Section~\ref{sec:universe} inside the \emph{doubled} vector space $\mathbb{R}^{2\mc{G}}$, with two generators $f^+$ and $f^-$ for each $f\in\mc{G}$. The goal of this section is to explain a simple graph interpretation of these doubled generators after restricting to a fixed cone, i.e. to a fixed nested set $N\subseteq \mc{G}$. Each $f^\pm$ corresponds to a subset of $N$ obtained by cutting a single edge in the Hasse diagram of $N$ to produce two connected components. This viewpoint leads us to study subdivisions of $\mathbb{R}_+^N$ defined by piecewise linear functions. The nested set fan is a special case.  In Section~\ref{sec:refinements} we pull back these subdivisions to obtain global refinements of the universe fan. One such refinement is used to complete the proof of Theorem~\ref{prop:universe-rays}. Finally, these subdivisions also serve as a combinatorial input to the results on polytopes and normal fans in Section~\ref{sec:cosmohedra}.

%%%%%%%%%%%%%%%%%%%%%%
\subsection{Cuts of $T_N$ and the canonical projection}
\label{sec:cuts-projection}
Fix a nested set $N\subseteq \mc G$. Since $f\wedge g=\hat 0$ for incomparable $f,g\in N$, the Hasse diagram of $N$ is a forest. Let $\overline T_N$ denote the connected rooted tree obtained from the Hasse diagram of $N$ by adjoining a common root vertex $\star$ (Figure \ref{fig:rooA}). We regard the root as the `top' of the tree, and each edge of $\overline T_N$ is uniquely labelled by its lower endpoint. In other words, we identify its edge and vertex sets as
\[
E(\overline T_N)\;\cong\;N, 	\qquad 	V(\overline T_N)\;=\;N\cup\{*\}.
\]

\begin{figure}[h]
\begin{subfigure}{0.4\textwidth}
\centering
\begin{tikzpicture}[every node/.style={inner sep=2pt}]
  \node (one) at (-1,0) {$1$};
  \node (thr) at ( 1,0) {$3$};
  \node (abc) at ( 0,1.6) {$123$};
  \node (six) at (3,0) {$6$};
  \node (fsx) at (3,1.6) {$56$};
  \node[red] (roo) at (1.5,2.4) {$\star$};
{
\small
  \draw (one) -- node[midway,blue,left=2pt]{$1$} (abc);
  \draw (thr) -- node[midway,blue,right=2pt]{$3$} (abc);
  \draw (six) --node[midway,blue,right=2pt]{$6$} (fsx);
  \draw[red] (abc) --node[midway,blue,above left=1pt]{$123$} (roo) --node[midway,blue,above right=1pt]{$56$}  (fsx);
}
\end{tikzpicture}
\caption{}
\label{fig:rooA}
\end{subfigure}
\begin{subfigure}{0.4\textwidth}
\centering
\tikzset{
  treenode/.style={inner sep=1.5pt},
  amoeba/.style={draw, thick, fill opacity=0.15, draw opacity=0.8}
}
\newcommand{\Amoeba}[3][]{%
  \begin{scope}[on background layer]
    \path[amoeba, #1]
      plot[smooth cycle, tension=0.4]
      coordinates {#3};
  \end{scope}%
}
\begin{tikzpicture}[every node/.style={inner sep=2pt}]
  \node (one) at (-1,0) {$1$};
  \node (thr) at ( 1,0) {$3$};
  \node (abc) at ( 0,1.6) {$123$};
  \node (six) at (3,0) {$6$};
  \node (fsx) at (3,1.6) {$56$};
  \node (roo) at (1.5,2.4) {$\star$};
{
\small
  \draw (one) -- (abc);
  \draw (thr) -- (abc);
  \draw (six) -- (fsx);
  \draw (abc) -- (roo) -- (fsx);
}
\path
%  coordinate (p1) at ($(abc.north)$)
  coordinate (p2) at ($(abc.north east)$)
  coordinate (p3) at ($(thr.north east)$)
  coordinate (p4) at ($(thr.south)$)
  coordinate (p5) at ($(one.south)$)
  coordinate (p6) at ($(one.north west)$)
  coordinate (p7) at ($(abc.north west)$);
\Amoeba[fill=red, draw=red]{H}{(p2)(p3)(p4)(p5)(p6)(p7)}
\path
  coordinate (p1) at ($(abc.south east)$)
  coordinate (p2) at ($(one.south east)$)
  coordinate (p3) at ($(one.west)$)
%  coordinate (p4) at ($(one.west)$)
  coordinate (p5) at ($(abc.north west)$)
  coordinate (p6) at ($(roo.north)$)
%  coordinate (p7) at ($(roo.north east)$)
  coordinate (p8) at ($(fsx.north east)$)
  coordinate (p9) at ($(six.south east)$)
  coordinate (p10) at ($(six.south west)$)
%  coordinate (p11) at ($(six.west)$)
  coordinate (p12) at ($(fsx.south west)$)
  coordinate (p13) at ($(roo.south)$);
\Amoeba[fill=blue, draw=blue]{H}{(p1)(p2)(p3)(p5)(p6)(p8)(p9)(p10)(p12)(p13)}
\end{tikzpicture}
\caption{}
\label{fig:rooB}
\end{subfigure}
\caption{(A) The Hasse diagram for $N = \{1,3,123,6,56\}$ (black) and the connected tree $\overline{T}_N$ obtained by adding a root $\star$. Edge labels are shown in blue. (B) The subtree $123^+$ (red) and the subtree $3^-$ (blue) intersect in a subtree with edge set $E(123^+) \cap E(3^-) = \{1\}$.}
\label{fig:roo}
\end{figure}

Now take any edge $f\in N$ of $\overline T_N$. Removing this edge produces two connected trees: the tree below $f$, and the complementary component. The edge sets of the two trees are
\[
E({f^+}) =\{\, g\in N \,|\, g < f \,\},
\qquad
E({f^-}) =\{\, g\in N \,|\,  g>f \ \text{or}\ g\wedge f=\hat 0 \,\}.
\]
Also write $V(f^\pm)\subseteq V(\overline T_N)$ for the vertex sets of these components.

As suggested by the notation, we identify the generators $f^\pm$ of $\mathbb{R}^{2N}$ with these subtrees of $\overline T_N$. In fact, we can view positive integer linear combinations of these generators as being subgraphs of $\overline T_N$ and vice versa. Consider the edge sets $E(f^+)$ and $E(g^-)$ for some $g < f$ in $N$, as the associated subtrees. The union of these two subtrees is the whole tree, $E(f^+) \cup E(g^-) = N$. Their intersection of these two subtrees is the subtree below $f$ but not below $g$, $E(f^+) \cap E(g^-) = \{ h\,|\, h< f, \, h\not < g\}$ (see Figure \ref{fig:rooB}). Motivated by this, define a map
\[
\phi:\mathbb{R}^{\dbl N}\longrightarrow \mathbb{R}^N/\langle u_{\mathrm{tot}} \rangle,\qquad \phi(f^\pm)\;:=\;\sum_{g\in N_{f^\pm}} g,
\]
where $u_{\mathrm{tot}}$ is the vector corresponding to the whole tree, $u_{\mathrm{tot}} = \sum_{g \in N} g$. $\phi$ is a surjection. Moreover, if $N$ has a unique maximal element, $f_{\max}$, note that $\phi(f_{\max}^-) = 0$, so that $\phi$ is well defined on the quotient space $\RR^{2N} / \langle f_{\max}^- \rangle$.

In Section \ref{sec:lightcone}, we will see that subdivisions of $\RR_+^N$ pullback under $\phi$ to give subdivisions of $\RR_+^{2N}$. To this end, we collect some simple statements about subdivisions of $\RR_+^N$ in Sections \ref{sec:boole} and \ref{sec:boole2}, below.

\begin{ex}\label{ex:phi}
For a first example of how we use $\phi$, take $N = \{g,f\}$ with $f > g$. Then $\overline{T}_N$ is the linear graph with two edges. The map $\phi$ acts on generators as
\[
\phi: \RR^{2N}/\langle f^-\rangle \rightarrow \RR^N / \langle f+g\rangle,\qquad \phi:~ f^+ \mapsto g,\qquad g^+ \mapsto 0,\qquad g^- \mapsto f.
\]
Then consider the subdivision of $\RR^N_+$ into the two cones $\langle f, f+g\rangle$ and $\langle g,f+g\rangle$. These pullback under $\phi$ to give a subdivision of $\RR_+^{2N}/\langle f^-\rangle$ into two simplicial cones: $\langle g^+, g^-, g^-+f^+\rangle$ and $\langle g^+, f^+, g^-+f^+\rangle$. See Figure \ref{fig:proj}.
\end{ex}
\begin{figure}
\centering
\begin{tikzpicture}[every node/.style={inner sep=3pt}]
  \coordinate (pg) at (-0.7,0); \node[left] at (pg) {$g^+$};
  \coordinate (mg) at (1,1); \node[above] at (mg) {$g^-$};
  \coordinate (pf) at (1,-1); \node[below] at (pf) {$f^+$};
  \draw[->] (1.5,0) --node[midway,above]{$\phi$} (2.5,0);
  \coordinate (f) at (3,1); \node[above] at (f) {$f$};
  \coordinate (g) at (3,-1); \node[below] at (g) {$g$};
  
   \draw (pg) -- (mg) -- (pf) -- (pg);
  \draw (f) -- (g);

  \coordinate (m) at (3,0); \node[red] at (m) {$\bullet$};
  \coordinate (mpre) at (1,0); \node[red] at (mpre) {$\bullet$};
  \draw[red] (pg) -- (mpre);
\end{tikzpicture}
\caption{Cartoon of the map $\phi$ for $N = \{f,g\}$ ($f>g$) in Example \ref{ex:phi}. The pullback of a subdivision of $\mathbb{R}_+^N$ defines a subdivision of $\mathbb{R}_+^{2N}$ (shown in red).}
\label{fig:proj}
\end{figure}
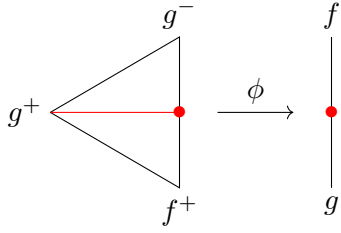

\begin{rmk}
The surjection $\phi$ factors through a natural isomorphism which records subtrees by both its edges \emph{and} vertices. Let $\mathbb{R}^{E(\overline T_N)}$ and $\mathbb{R}^{V(\overline T_N)}$ be the vector spaces with bases the edges and vertices of $\overline T_N$, and define
\[
\hat{u}_{\mathrm{tot}}\;:=\;\sum_{e\in E(\overline T_N)} e \;+\; \sum_{v\in V(\overline T_N)} v.
\]
Define
\begin{equation}\label{eq:cut-iso}
\psi:\mathbb{R}^{\dbl N} \longrightarrow \bigl(\mathbb{R}^{E(\overline T_N)}\oplus \mathbb{R}^{V(\overline T_N)}\bigr)/\langle \hat{u}_{\mathrm{tot}}\rangle,\qquad
\psi(f^\pm) :=\; \sum_{e\in E(f^\pm)} e \;+\; \sum_{v\in V(f^\pm)} v.
\end{equation}
This is a linear isomorphism. The inverse map $\psi^{-1}$ can be described explicitly:
\begin{itemize}
\item If $v\in V(\overline T_N)\setminus\{*\}$, let $f$ be the unique edge above $v$ (toward the root) and let $g_1,\dots,g_k$ be the edges below $v$.
Then
\[
\psi^{-1}(v)\;=\; f^+ \;+\; g_1^- \;+\;\cdots\;+\; g_k^-.
\]
\item For an edge $e\in E(\overline T_N)$, one has
\[
\psi^{-1}(e)\;=\;-\,(e^+ + e^-).
\]
\end{itemize}
(We view $e$ and $v$ as basis vectors in $\mathbb{R}^{E(\overline T_N)}$ and $\mathbb{R}^{V(\overline T_N)}$, and we implicitly pass to the quotient.) We emphasize that this isomorphism is \emph{not} the same as the cut-related identification appearing in Section~\ref{sec:cp}; the two maps encode different data and play different roles in the paper.
\end{rmk}

%%%%%%%%%%%%%%%%%%%%%%
\subsection{Subdivisions from Boolean building sets}
\label{sec:boole}
We have seen that (integer) points of $\RR_+^{2N}$ are naturally associated to subtrees of $\overline T_N$ (via the map $\phi$). Since $E(\overline T_N)=N$, any subtree determines a subset of $N$ by its edge set. These have a natural poset structure, ordered by inclusion in the Boolean lattice $2^N$. The edge sets of connected subtrees of $\overline T_N$ form a distinguished family of subsets in $2^N$. This is an example of a \emph{Boolean building set}, which we now review. Boolean building sets encode the nesting properties of the subtrees of $\overline T_N$. It is this combinatorics of nested subtrees that allows us to define the fan refinements and polytopes we study in Sections~\ref{sec:refinements} and~\ref{sec:cosmohedra}.

\smallskip
A Boolean building set on $N$ is a collection of subsets $\mc B \;\subseteq\; 2^N$ containing all singletons, and such that whenever $S,S'\in\mc B$ intersect, their union again lies in $\mc B$:
\[
S\cap S'\neq\emptyset \ \Longrightarrow\ S\cup S' \in \mc B
\qquad
\text{for all }S,S'\in \mc B.
\]
In other words, if two subgraphs in our collection share a common edge, then their union should also be in our collection.

\smallskip
An important special case is the \emph{graphical} building set of the tree $\overline T_N$, given by all the connected subgraphs:
\[
\mc B(\overline T_N) :=
\{\, S\subseteq N : \text{$S$ is the edge set of a connected subgraph of $\overline T_N$} \,\}.
\]
This choice will be used in Section~\ref{sec:refinements} to produce a simplicial refinement of the constructions of Section~\ref{sec:universe}, culminating in Proposition~\ref{prop:max-lightcone-refine}.

Now take $\mc B$ an arbitrary Boolean building set on $N$. Let $x=(x_g)_{g\in N}$ denote coordinates on $\RR^N$, and consider the positive orthant, $x_g \geq 0$. Consider the following two piecewise linear functions on $\RR_+^N$:
\[
\alpha^{\max}_{\mc B}(x)\;:=\;\sum_{S\in\mc B}\max_{g\in S} x_g,
\qquad
\alpha^{\min}_{\mc B}(x)\;:=\;\sum_{S\in\mc B}\min_{g\in S} x_g.
\]
The domains of linearity of $\alpha^{\max}_{\mc B}$ and $\alpha^{\min}_{\mc B}$ both define polyhedral subdivisions of $\RR_+^N$. We denote the corresponding fans as
\[
\Sigma^{\max / \min}_{\mc B} \ =\;
\text{the fan of domains of linearity of $\alpha^{\max/\min}_{\mc B}$}.
\]
Adding a constant to all coordinates shifts both $\max_{g\in S}x_g$ and $\min_{g\in S}x_g$ by the same constant, so the domains of linearity are invariant under translation by $u_{\mathrm tot} = \langle(1,\dots,1)\rangle$. Consequently, both of these fans descend to the quotient and define subdivisions of $\RR_+^N/\langle u_{\mathrm tot} \rangle$.

\begin{ex}[Barycentric subdivision]
Let $\mc{B} = 2^N \backslash \emptyset$ be the building set consisting of \emph{all} subsets of $N$. Then each of the domains of linearity of $\alpha^{\max}_{\mc B}$ (and similarly of $\alpha^{\min}_{\mc B}$) is given by the cones cut out by imposing a total order on $N$:
\[
x_{g_1} \geq x_{g_2} \geq \cdots \geq x_{g_n}.
\]
In other words, $\Sigma^{\max}_{\mc B}$ is the (cone over the) barycentric subdivision of the simplex (also known as the braid or permutohedral fan).
\end{ex}

%%%%%%%%%%%%%%%%%%%%%%
\subsection{Combinatorial model}\label{sec:boole2}
A domain of linearity of $\alpha^{\max}_{\mc B}$ is given by a consistent choice of a maximal element for each $S \in \mc{B}$. Such a choice defines a tree order on $N$ that we write as a rooted tree $T$, with vertex set $N$. This is a directed tree, where the root is the source (i.e. the greatest element). The rooted trees that arise in this way are known as \textit{$\mc{B}$-trees}:
\begin{dfn}[{\cite[Def.~7.7]{Pos09}}]
A $\mc B$-tree is a rooted tree $T$ with vertex set $N$ that satisfies:
\begin{enumerate}
\item For each vertex $i$, the set $T_i = \{ j \in N \, | \, j \leq_T i\}$ of vertices below $i$ (with respect to the tree order) is an element of $\mc{B}$, and
\item there are no subsets of pairwise incomparable vertices $i,j,\ldots,k$ such that $T_i \cup T_j \cup \cdots \cup T_k \in \mc{B}$.
\end{enumerate}
\end{dfn}

For any rooted tree $T$ on $N$, write the associated cones in $\RR_+^N$ as
 \[
 C^T = \{ x_i \geq x_j \text{ for each edge } i \rightarrow j \in T \},\qquad C_T = \{ x_i \leq x_j \text{ for each edge } i \rightarrow j \in T \}.
 \]
Dually, the generators of $C^T$ are given by the subtrees of $T$ that contain the root,
\[
C^T = \left\langle \sum_{i \in T'} e_i \ \big| \ T' \text{ a rooted subtree of } T \right\rangle.
\]
On the other hand, the generators of $C_T$ are
\[
C_T = \left\langle \sum_{j \in T_i} e_j \ \big| \ i \in N \right\rangle,
\]
where $T_i$ denotes the descendant set of $i$ (including $i$ itself).
We summarize this discussion as:
 \begin{prop}
For a Boolean building set $\mc{B}$, the maximal cones of $\Sigma^{\max}_{\mc B}$ are given by the cones $C^T$ and the maximal cones of $\Sigma^{\min}_{\mc B}$ are given by the cones $C_T$, for all $\mc{B}$-trees $T$.
\end{prop}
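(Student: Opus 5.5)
The plan is to reduce to the classical statement of Postnikov that $\mc B$-trees index the maximal cones of the nested set fan / normal fan of the nestohedron $P_{\mc B}=\sum_{S\in\mc B}\Delta_S$, and handle $\alpha^{\max}$ and $\alpha^{\min}$ by the sign symmetry $x\mapsto -x$ (which exchanges $\max$ and $\min$ and reverses all inequalities, turning $C^T$ into $C_T$). I will organize the argument in three steps.

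\textbf{Step 1 (local structure).} Fix a generic $x\in\RR^N_+$ with distinct coordinates. For each $S\in\mc B$ let $m(S)=\arg\max_{g\in S}x_g$. The function $\alpha^{\max}_{\mc B}$ is linear near $x$ and equal to $\sum_S x_{m(S)}$; its domain of linearity containing $x$ is $\{y: y_{m(S)}\ge y_g \ \forall g\in S,\ S\in\mc B\}$. Define $T$ recursively: the root is $m(N)$ (note $N\in\mc B$ in our application, since $\overline T_N$ is connected; in general I would work componentwise on the connected components of $\mc B$, giving a forest, which I will treat as a tree after adjoining these as separate roots). Removing $r=m(S)$ from $S$, the maximal elements of $\mc B$ contained in $S\setminus\{r\}$ are disjoint by the union axiom, and they partition $S\setminus\{r\}$ because singletons lie in $\mc B$; recurse on each, making their maxima children of $r$. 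I will verify that this $T$ satisfies conditions (1) and (2) of a $\mc B$-tree: (1) holds by construction, and (2) holds because if $T_i\cup\dots\cup T_k\in\mc B$ for incomparable $i,\dots,k$, then that union lies inside a single block in the recursion, contradicting maximality of the blocks.

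\textbf{Step 2 (the cone equals $C^T$).} I will show that the domain of linearity is exactly $C^T$. The inclusion "$\subseteq$" holds since each edge $i\to j$ of $T$ has $j\in T_i\in\mc B$ with $m(T_i)=i$. For "$\supseteq$", take $y\in C^T$ and $S\in\mc B$. I claim $S$ has a unique $T$-maximal element $t$ with $S\subseteq T_t$. If not, the $T$-maximal elements of $S$ are pairwise incomparable and $S\subseteq\bigcup T_{i}$. Using the union axiom iteratively, $S\cup\bigcup T_i\in\mc B$ (each $T_i$ meets $S$), and this equals $\bigcup T_i$, violating (2). Then $y_t\ge y_g$ for all $g\in S$ along tree paths, so $y$ satisfies the defining inequalities. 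Conversely, every $\mc B$-tree arises from a generic point of the interior of $C^T$ (which is full dimensional), by the same uniqueness-of-top argument. Distinct $\mc B$-trees give distinct cones because their interiors yield different $m$-functions, which completes the bijection.

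\textbf{Step 3 (generators and the min case).} The ray descriptions of $C^T$ (indicators of rooted, i.e. upward closed, subtrees) and of $C_T$ (indicators of descendant sets) are standard order-cone facts: $C^T$ is the order cone of the poset $T$, so its generators are indicators of up-sets. For $\alpha^{\min}$, apply Step 2 to $-x$ or repeat it with $\arg\min$, obtaining the same $\mc B$-trees with reversed inequalities, i.e. the cones $C_T$.

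The main obstacle is the "$\supseteq$" direction of Step 2: showing that every $S\in\mc B$ has a unique top in $T$. This is exactly where condition (2) and the building-set union axiom must be combined. Restricting to the orthant $\RR^N_+$ is harmless, since both fans are invariant under translation by $u_{\mathrm{tot}}$.
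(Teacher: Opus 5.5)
Your proposal is correct and follows the same route as the paper. The paper likewise identifies each domain of linearity of $\alpha^{\max}_{\mc B}$ with a consistent choice of a maximal element in every $S\in\mc B$, encodes that choice as a $\mc B$-tree $T$, and reads off the order cone $C^T$, with $C_T$ for the $\min$ case. The paper only sketches this and defers to Postnikov for the fact that the trees arising are exactly the $\mc B$-trees. Your recursive construction, your unique-top argument for each $S\in\mc B$, and your converse that every $\mc B$-tree occurs supply exactly the details the paper leaves implicit. To close the inclusion you should also say explicitly that the top $t$ of $S$ equals $m(S)$; this holds because $x\in C^T$ has distinct coordinates.
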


\begin{figure}
\begin{center}
\newcommand{\TreeA}[1][1]{
  \tikz[scale=0.7, transform shape]{
    \node (1) at (1,1) {1};
    \node (2) at (0.5,0) {2};
    \node (3) at (1.5,0) {3};
    \draw[->] (1) -- (2);
    \draw[->] (1) -- (3);
  }
}
\newcommand{\TreeB}[1][1]{
  \tikz[scale=0.7, transform shape]{
                \node (2) at (1,1) {2};
                \node (1) at (0.5,0) {1};
                \node (3) at (1.5,0) {3};
                \draw[->] (2) -- (1);
                \draw[->] (2) -- (3);
  }
}
\newcommand{\TreeC}[1][1]{
  \tikz[scale=0.7, transform shape]{
                \node (3) at (1,1) {3};
                \node (1) at (0.5,0) {1};
                \node (2) at (1.5,0) {2};
                \draw[->] (3) -- (2);
                \draw[->] (2) -- (1);
  }
}
\newcommand{\TreeD}[1][1]{
  \tikz[scale=0.7, transform shape]{
                \node (3) at (1,1) {3};
                \node (1) at (0.5,0) {1};
                \node (2) at (1.5,0) {2};
                \draw[->] (3) -- (1);
                \draw[->] (1) -- (2);
  }
}

\begin{tikzpicture}[scale=2.5]
  \coordinate (A) at (0,0);
  \node at (A) [below left] {$1$};
  \coordinate (B) at (2,0);
  \node at (B) [below right] {$2$};
  \coordinate (C) at (1,{sqrt(3)});
  \node at (C) [above] {$3$};
  \coordinate (Mab) at ($(A)!0.5!(B)$);
  \coordinate (Mbc) at ($(B)!0.5!(C)$);
  \coordinate (Mca) at ($(C)!0.5!(A)$);
  \coordinate (G) at (1,{sqrt(3)/3});

  \draw[thick] (A) -- (B) -- (C) -- cycle;
  \draw[thick] (Mca) -- (G);
  \draw[thick] (G) -- (Mbc);
  \draw[thick] (Mab) -- (C);

  \coordinate (TA) at ($ (A)!0.6!(G) $);
  \coordinate (TB) at ($ (B)!0.6!(G) $);
  \coordinate (TC) at (1.2,1.05);
  \coordinate (TD) at (0.75,1.05);
  
  \node at (TA) {\TreeA};
  \node at (TB) {\TreeB};
  \node at (TC) {\TreeC};
  \node at (TD) {\TreeD};
\end{tikzpicture}
\qquad
\begin{tikzpicture}[scale=2.5]
  \coordinate (A) at (0,0);
  \node at (A) [below left] {$1$};
  \coordinate (B) at (2,0);
  \node at (B) [below right] {$2$};
  \coordinate (C) at (1,{sqrt(3)});
  \node at (C) [above] {$3$};
  \coordinate (Mab) at ($(A)!0.5!(B)$);
  \coordinate (Mbc) at ($(B)!0.5!(C)$);
  \coordinate (Mca) at ($(C)!0.5!(A)$);
  \coordinate (G) at (1,0.65);

  \draw[thick] (A) -- (B) -- (C) -- cycle;
  \draw[thick] (A) -- (G);
  \draw[thick] (G) -- (B);
  \draw[thick] (Mab) -- (C);

  \coordinate (TA) at ($ (G)!0.7!(Mbc) $);
  \coordinate (TB) at ($ (G)!0.7!(Mca) $);
  \coordinate (TD) at (1.25,0.22);
  \coordinate (TC) at (0.7,0.22);
  
  \node at (TA) {\TreeA};
  \node at (TB) {\TreeB};
  \node at (TC) {\TreeC};
  \node at (TD) {\TreeD};
\end{tikzpicture}
\caption{The max-subdivision $\Sigma^{\max}_{\mc B}$ (left) and min-subdivision $\Sigma^{\min}_{\mc B}$ (right) induced by the building set $\mc{B} = \{1,2,3,12,123\}$. In both cases, the cones are labelled by $\mc{B}$-trees. The min-subdivision is simplicial, but the max-subdivision is not.}
\label{fig:subdiv}
\end{center}
\end{figure}
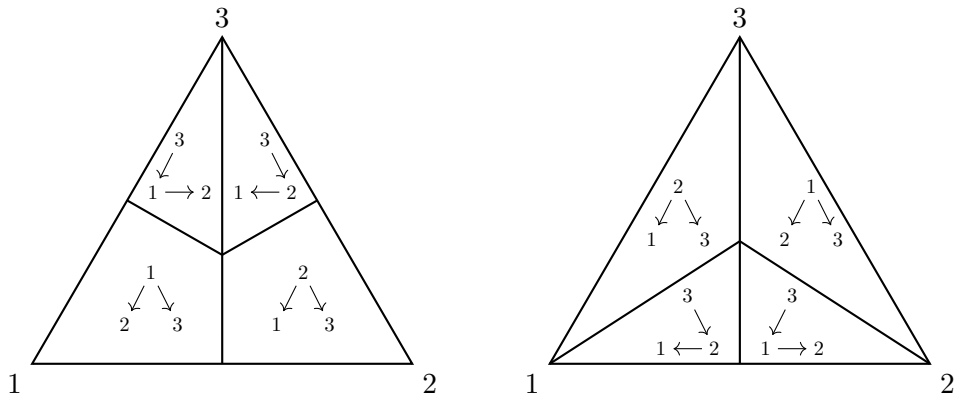

\begin{ex}
    Let $\mathcal B = \{1,2,3,12,123\}$. The associated causal and cocausal subdivisions of $\Delta_3$ are the domains of linearity of
\[
\max(x_1,x_2,x_3) + \max(x_1,x_2),\qquad \text{and} \qquad \min(x_1,x_2,x_3) + \min(x_1,x_2),
\]
respectively. In both cases, we can label the maximal cones by the following rooted trees:
\[
\newcommand{\TreeA}[1][1]{
  \tikz[scale=0.7, transform shape, baseline=5]{
    \node (1) at (1,1) {1};
    \node (2) at (0.5,0) {2};
    \node (3) at (1.5,0) {3};
    \draw[->] (1) -- (2);
    \draw[->] (1) -- (3);
  }
}
\newcommand{\TreeB}[1][1]{
  \tikz[scale=0.7, transform shape, baseline=5]{
                \node (2) at (1,1) {2};
                \node (1) at (0.5,0) {1};
                \node (3) at (1.5,0) {3};
                \draw[->] (2) -- (1);
                \draw[->] (2) -- (3);
  }
}
\newcommand{\TreeC}[1][1]{
  \tikz[scale=0.7, transform shape, baseline=5]{
                \node (3) at (1,1) {3};
                \node (1) at (0.5,0) {1};
                \node (2) at (1.5,0) {2};
                \draw[->] (3) -- (2);
                \draw[->] (2) -- (1);
  }
}
\newcommand{\TreeD}[1][1]{
  \tikz[scale=0.7, transform shape, baseline=5]{
                \node (3) at (1,1) {3};
                \node (1) at (0.5,0) {1};
                \node (2) at (1.5,0) {2};
                \draw[->] (3) -- (1);
                \draw[->] (1) -- (2);
  }
}
T_1 = \TreeA,\qquad T_2 = \TreeB,\qquad T_3 = \TreeC, \qquad T_4 = \TreeD.
\]
These are precisely the $\mc B$-trees for this $\mc B$. For the max-subdivision we read an arrow $i \rightarrow j$ in one of these trees, $T$, as an inequality $x_i \geq x_j$ defining the cone $C^T$. Whereas for the min-subdivision we read an arrow $i \rightarrow j$ as $x_i \leq x_j$, defining the cone $C_T$. The two subdivisions are given in Figure \ref{fig:subdiv}. In the max-subdivision, the generators of each cone $C^T$ are given by rooted subtrees of $T$ that contain the root. For example,
 \[
 C^{T_1} = \langle e_1, e_1 + e_2, e_1 + e_3, e_1 + e_2 + e_3 \rangle.
 \]
In the min-subdivision, the generators of each cone $C_T$ are given by descendant sets $T_i$. For example,
 \[
 C_{T_1} = \langle e_2, e_3, e_1 + e_2 + e_3 \rangle.
 \]
Here $T_2=\{2\}$, $T_3=\{3\}$, and $T_1=\{1,2,3\}$. Note that the min-subdivision is \emph{simplicial}, whereas the max-subdivision is not.
\end{ex}

\begin{rmk}
The sets $T_i$ of a $\mc{B}$-tree form a \emph{nested set} of $\mc{B}$, and the min-subdivision $\Sigma^{\min}_{\mc B}$ is the nested set fan associated to $\mc B$ (e.g. \cite{Pos09}). More broadly, nested set complexes and their realizations as simplicial fans occur throughout combinatorics and tropical geometry; for context see e.g.\ \cite{FeiStu04,ArdKli06}.
\end{rmk}

%%%%%%%%%%%%%%%%%%%%%%
%%%%%%%%%%%%%%%%%%%%%%
%%%%%%%%%%%%%%%%%%%%%%
%%%%%%%%%%%%%%%%%%%%%%
\section{Lightcone Refinements}\label{sec:refinements}

In this section we show how the considerations of the previous section lead to refinements of the doubled free nested set fan $\Sigma^{\dbl \mathcal G}$, which we will call \textit{lightcone refinements}. We will see that the universe fan $\mathcal U_{\mathcal G}$ arises naturally as a subfan of the minimal such refinement (\Cref{prop:universe-as-subfan}); more generally, every lightcone refinement contains a subfan that refines $\mathcal U_{\mathcal G}$. In a special case, this yields a unimodular triangulation of $\mathcal U_{\mathcal G}$ indexed by \textit{tubings} of the Hasse diagrams of nested sets (\Cref{prop:max-lightcone-refine}).
%\footnote{In view of \Cref{prop:cosmological-polytopes} this will not be a surprise to those familiar with cosmological polytopes.}

Recall that the cones of the doubled free nested set fan $\Sigma^{\dbl \mathcal G}$ are the simplices $\mathbb R_+^{\dbl N}$ and their faces. We start with introducing \textit{lightcone subdivisions} of these simplices.

%%%%%%%%%%%%%%%%%%%%%%
\subsection{Lightcone subdivisions}\label{sec:lightcone}

 Let us fix a nested set $N$ and consider the associated cone $\mathbb R_+^{\dbl N}$, spanned by $f^+,f^-$ for $f\in N$.

Let $\overline T_N$ be the tree associated to $N$ as defined in \Cref{sec:cuts-projection}, and let $\mathcal B_N\subseteq N = E(\overline T_N)$ be a boolean building set on $N$. We denote the associated nested set fan by $\Sigma_{\mathcal B_N} = \Sigma^{\mathrm{min}}_{\mathcal B_N}$. As explained in \Cref{sec:cuts-projection} we can interpret the symbols $f^+,f^-$ as cuts in the tree $\overline T_N$. Let
\begin{align*}
    \phi: \mathbb R^{\dbl N} &\to \mathbb R^{N}/\langle u_{\mathrm{tot}} \rangle  \\
    c &\mapsto \sum_{e \in E(c)} e
\end{align*}
be the map sending a cut to its edges. Here we have again $u_{\mathrm{tot}} := \sum_{e\in \overline T_N} e = \sum_{f\in N} f$.

\begin{dfn}\label{dfn:lightcone-subdivision}
    Let $\Gamma$ denote the pullback of $\Sigma_{\mathcal B_N}/u_{\mathrm{tot}}$ under $\phi$. We call $\Gamma \cap \mathbb R^{\dbl N}_+$ the \textit{lightcone subdivision} associated to $\mathcal B_N$.
\end{dfn}

Let us explore these subdivisions in more detail. For this, recall the isomorphism \eqref{eq:cut-iso}
\begin{align*}
        \psi: \mathbb R^{\dbl N} &\to (\mathbb R^{E(\overline T_N)} \oplus \mathbb R^{V(\overline T_N)})/\langle \hat u_{\mathrm{tot}} \rangle  \\
        c &\mapsto \sum_{e \in E(c)} e + \sum_{v \in V(c)} v,
\end{align*}
that sends a cut to its edges and vertices.

We can write $\phi = p_E \circ \psi$ where $p_E$ is the projection to $\mathbb R^{E(\overline T_N)}/u_{\mathrm{tot}}$.

We introduce the following terminology: a subgraph $s$ of $\overline T_N$ is \textit{spanned} by a nested set $M\subseteq \mathcal B_N$ if it is a vertex or $s = t_1 \cup \ldots \cup t_k$ for $t_i \in M$ such that there is some $t_i$ with at least as many connected components as $s$. In other words, the $t_i$ form a chain of vertex-overlapping subgraphs.
 
For a subgraph $s$ that is spanned by $M$, let us set
\begin{equation}\label{eq:ws-def}
    w_s := \sum_{e \in \delta^+(s)} e^+ + \sum_{e \in \delta^-(s)} e^-
\end{equation}
where $\delta^+(s),\delta^-(s)$ are the sets of edges in $\overline T_N$ entering or leaving $s$, respectively. Note that $w_s =  \psi^{-1} \left(\sum_{e\in s} e + \sum_{v\in s} v\right)$.

Let us describe the maximal cones of the lightcone subdivision.

\begin{prop}\leavevmode \label{prop:lightcone-subdivision}
    Let $\Gamma$ denote the lightcone subdivision of $\mathbb R^{\dbl N}_+$ associated to $\mathcal B_N$.
    \begin{enumerate}
        \item The maximal cones of $\Gamma$ are indexed by maximal nested sets $M \subseteq \mathcal B_N$. The cone associated to $M$ is spanned by the vectors $w_s$ for subgraphs $s\not=\overline T_N$ spanned by $M$.
        \item The maximal cones of $\Gamma$ are the domains of linearity of
        $$\sum_{t \in \mathcal B_N} \min_{e \in t}(\lambda_e)$$
        on $\mathbb R^{\dbl N}_+$, where $$\lambda_e := \sum_{\substack{f \in N \\ e \in f^+}} p_f + \sum_{\substack{f \in N \\ e \in f^-}} m_f.$$
    \end{enumerate}
\end{prop}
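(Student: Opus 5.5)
Both parts rest on the factorization $\phi = p_E\circ\psi$ with $\psi$ a linear isomorphism; I would move everything to the edge–vertex coordinates of $\overline T_N$ and pull back. Under $\psi$, the orthant $\mathbb R^{\dbl N}_+$ becomes the cone $K$ spanned by the images $\psi(f^\pm)$ of the $2|N|$ cut subtrees. The fan $\Sigma_{\mathcal B_N}/u_{\mathrm{tot}}$ is pulled back along $p_E$, so it only constrains the edge coordinates. The plan is to prove (2) first, since it is a direct computation, and then read off (1) from it.

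For (2), take coordinates $x_e$ on $\mathbb R^N$. The function $\alpha^{\min}_{\mathcal B_N}(x)=\sum_{t}\min_{e\in t}x_e$ has the fan $\Sigma_{\mathcal B_N}$ as its domains of linearity, and this is compatible with the quotient by $u_{\mathrm{tot}}$. I would compute the pullback $x_e\circ\phi$ on a general point $\sum_f (a_f f^+ + b_f f^-)$. Since $\phi(f^\pm)$ is the edge set $E(f^\pm)$, the pullback equals $\sum_{e\in E(f^+)}a_f+\sum_{e\in E(f^-)}b_f$, that is, exactly $\lambda_e$. Hence the domains of linearity of $\alpha^{\min}\circ\phi$ are the preimages of the cones of $\Sigma_{\mathcal B_N}$. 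Intersecting with $\mathbb R^{\dbl N}_+$ gives $\Gamma$, which proves (2).

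For (1), a maximal cone of $\Gamma$ is $\phi^{-1}(C_M)\cap \mathbb R^{\dbl N}_+$ for a maximal nested set $M$, where $C_M$ is the simplicial cone on the indicator vectors of $t\in M$. I would do the following in order.

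\begin{enumerate}
\item Show that each $w_s$ with $s$ spanned by $M$ lies in this cone. The vector $w_s$ is a nonnegative combination of $f^\pm$, so it lies in the orthant. Its image is $\phi(w_s)=\sum_{e\in s}e$ (using $w_s=\psi^{-1}(\mathbf 1_s)$ and $p_E$). I must check that this is a nonnegative combination of the $\mathbf 1_t$, $t\in M$, modulo $u_{\mathrm{tot}}$.

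The spanning condition is used here: $s$ is a union of overlapping members of $M$, and one member already carries all components of $s$. Together with the fact that $M$ is nested, this should force the $t_i$ to form a chain under inclusion after reordering, so $\mathbf 1_s=\mathbf 1_{t_{\max}}$. I expect the vertex case to give $0$, which also lies in the cone.

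\item Prove the converse: every point of the cone is a nonnegative combination of these $w_s$. I would work in $\psi$-coordinates. A point of $K$ is a function on edges and vertices satisfying the cut inequalities $m_f\ge0$, $p_f\ge0$. Membership in $\phi^{-1}(C_M)$ means the edge part is constant-ordered in the manner of the $\mathcal B_M$-tree, namely the min-tree ordering $C_T$.

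I would then peel off generators greedily, as in the retract argument of \Cref{prop:nested-projection}. Take the current level set, meaning the minimal $t$ in the tree order together with the vertices attached to it. This level set is a subgraph spanned by $M$. Subtract the largest multiple of its $w_s$ that keeps the point in the cone, and induct on the size of the support.

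\item Check the dimension of the result to confirm maximality. Each maximal cone should have dimension $2|N|-1$ in the quotient.
\end{enumerate}

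The main obstacle is step (2). It requires showing that the vertex coordinates, which $\phi$ forgets, never obstruct the peeling. In other words, after subtracting $w_s$, the remaining $\psi$-coordinates must still satisfy the orthant inequalities $p_f,m_f\ge0$. This is where the condition ``some $t_i$ has at least as many components as $s$'' must enter. My first attempt would be to translate $p_f\ge0$ and $m_f\ge0$ into local inequalities comparing a vertex value with the values of its adjacent edges. I would then verify these inequalities case by case at the boundary of $s$, splitting into the cases of an edge entering $s$ and an edge leaving $s$.
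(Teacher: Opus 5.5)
Part (2) is fine and matches the paper: $\lambda_e$ is the pullback of $e^*$ under $\phi$, and domains of linearity pull back along the surjection $\phi$. Part (1), however, has a genuine gap, and it sits exactly where the content of the proposition lies.

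\textbf{Step 1.} Your claim that the $t_i$ composing a spanned subgraph form a chain is false, and the error matters. In general a spanned subgraph is a union of several pairwise edge-disjoint members of $M$ glued at common vertices. For instance, for the minimal building set with $M=\{N\}\cup\{\{g\}\mid g\neq f\}$, every connected subgraph avoiding the edge $f$ is spanned (\Cref{prop:cones-of-min-lightsub}). Membership of $w_s$ in the cone still holds, because $\mathbf 1_s$ is the sum of $\mathbf 1_t$ over the inclusion-maximal pieces $t$, which are edge-disjoint. But under your reading the cone would be generated by the $w_t$ for $t\in M$ together with the vertex vectors, and that is wrong. In \Cref{ex:star-min-lightsub}, write $a,b$ for the single-edge subgraphs and $a\cup b$ for their union. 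Then $\hat 1^+=w_{a\cup b}$ is a ray of $C_{\hat 1}$. It is not a nonnegative combination of $w_a=b^+$, $w_b=\hat 1^++a^-$ and the vertex vectors $\hat 1^-,\ \hat 1^++b^-,\ b^++a^-,\ a^+$: every listed vector involving $\hat 1^+$ also carries a positive $a^-$ or $b^-$ coordinate.

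\textbf{Step 2.} Your sketch does not supply the spanning argument, and the greedy scheme you describe fails, since peeling off one $t$ (with its vertices) at a time gets stuck. In the same example, the point $5\,\hat 1^++4\,a^-=w_{a\cup b}+4\,w_b$ lies in $C_{\hat 1}$. No positive multiple of $w_a=b^+$ can be subtracted from it. Subtracting $w_b$ as far as possible leaves $\hat 1^+$, which, as just noted, cannot be reached. In $\psi$-coordinates the obstruction is the inequality $v^*\ge e^*$ at the shared vertex $v$ for the \emph{other} edge.

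The missing idea, which is the paper's argument, is to peel along the whole support at once.
\begin{enumerate}
\item Choose the representative modulo $\hat u_{\mathrm{tot}}$ in which the coefficient of $\mathbf 1_N$ vanishes, and induct on the size of the edge support of $x\in C_M$.
\item The support is the disjoint union of the inclusion-maximal $t_i\in M$ with positive coefficient.
\item Group these $t_i$ into connected components $s_j$ of their vertex-overlap graph. The $s_j$ are precisely subgraphs spanned by $M$; this is where the spanning condition enters.
\item Subtract $\lambda\sum_j w_{s_j}$, where $\lambda=\min_{x_e>0}x_e$.
\end{enumerate}
The edge part stays in $C_M$ by the retract argument of \Cref{prop:nested-projection}. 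The inequalities $e^*\le v^*$ survive for two reasons. Every edge adjacent to a vertex of $s_j$ but not in $s_j$ lies outside the support, so its coordinate is $0$. Every vertex of $s_j$ is an endpoint of a support edge, so its coordinate is at least $\lambda$.

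\textbf{Minor.} The maximal cones of $\Gamma$ are full-dimensional in $\mathbb R^{\dbl N}$, so they have dimension $2|N|$, not $2|N|-1$.
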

\begin{proof}
    By definition, the lightcone subdivision is the pullback of $\Sigma_{\mathcal B_N}/\langle u_{\mathrm{tot}}\rangle$ under $\phi = p_E \circ \psi$.
    Pulling back $\Sigma_{\mathcal B_N}$ along $p_E$ yields the complete fan $(\Sigma_{\mathcal B_N} \times \mathbb R^{V(T_N)})/\langle \hat u_{\mathrm{tot}}\rangle$. A maximal cone is given by $(D_M \times \mathbb R^{V(\overline T_N)})/\langle \hat u_{\mathrm{tot}}\rangle$, where $D_M$ is the cone associated to a maximal nested set $M$ in $\mathcal B_N$. Its rays are the vectors $\sum_{e \in E(t)} e$ for $t \in M, t \not= \overline T_N$ (Section \ref{sec:park}) and the vectors $v,-v$ for $v \in V(\overline T_N)$.

    The image of the positive orthant $\mathbb R_+^{\dbl N}$ under $\psi$ is the cone $P$ cut out by all inequalities $e^* \leq v^*$ where $v$ is a vertex of $e$. In particular, under the isomorphism, the maximal cones of the lightcone subdivision are the intersections $C_M := (D_M \times \mathbb R^{V(\overline T_N)}/\langle \hat u_{\mathrm{tot}}\rangle) \cap P$. Thus, we need to prove that its rays are the vectors $w'_s := \sum_{e\in s} e + \sum_{v\in s} v$ for $s$ spanned by $M$.

Clearly, the $w'_s$ are contained in $C_M$. Let us prove that they are spanning. We can write any element of $C_M$ as $x + y$ where $x \in D_M$ and $y \in \mathbb R^{V(T_N)}$ with $v^*(y) \geq e^*(x)$ whenever $v$ is a vertex of $e$.

Certainly $\{0\} \times \mathbb R^{V(\overline T_N)}_+$ is in the span. If there is some $x+y \in D_M$ which is not in the span of the $v$ and $w'_s$, we can choose it such that the support of $x$ is minimal. As in the proof of \Cref{prop:nested-projection} one argues that the support of $x$ is given by a union of edge-disjoint flats $t_i \in M$; however, they might overlap in vertices. Consider the connected components of the graph with vertices $t_i$ and edges $\{t_i,t_j\}$ whenever $t_i$ and $t_j$ overlap in a vertex. For every such connected component $C$, we obtain a subgraph $s_j$ of $\overline T_N$ that is spanned by $M$ by taking the union of all $t_i$ contained in $C$.

Let $\lambda:= \min_{e^*(x)>0} e^*(x)$. Then the support of $x - \lambda \sum_i \sum_{e\in s_i} e$ is strictly smaller than the support of $x$. Moreover, by construction we have $e^*(\sum_i w'_{s_i}) = v^*(\sum_i w'_{s_i})$ for every edge $e$ in the support and vertex $v$ of $e$. Thus, $x+y-\lambda \sum_i w'_{s_i} \in D_M$. Now by assumption this implies that $x+y$ is in the span of the $v$ and $w'_s$, contradicting the choice of $x+y$. Thus, $D_M$ is spanned by the $v$ and $w'_s$, as desired.

   A similar argument shows that none of the $w'_s$ can be expressed as a positive linear combination of other $w'_s$ and $v$, and so we conclude.

    For $ii)$, just note that the fan $\Sigma_{\mathcal B_N}$ is given by the domains of linearity of
    $$\sum_{t \in\mathcal B_N} \min_{e \in t}(e^*)$$
    on the positive orthant (Section \ref{sec:park}). We conclude since $\lambda_e$ is by definition just the image of $e^*$ under the dual of $\phi$.
\end{proof}

Two choices for the building set $\mathcal B_N$ are of special interest for us. The first one is the \textit{minimal building set}, given by $\mathcal B_N := \{N\} \cup \{f \ | \ f\in N\}$. We call the associated subdivision the \textit{minimal lightcone subdivision}.

\begin{prop}\label{prop:cosmo-as-subcone}
   $U_N$ is a cone of the minimal lightcone subdivision of $\mathbb R_+^{\dbl N}$.
   \end{prop}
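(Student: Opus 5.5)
The plan is to read off the minimal lightcone subdivision directly from \Cref{prop:lightcone-subdivision}(ii), and then to recognise the inequalities $F_g\ge 0$ as comparisons between the linear forms $\lambda_e$. For the minimal building set $\mathcal B_N=\{N\}\cup\{\{f\}\mid f\in N\}$, the function $\sum_{t\in\mathcal B_N}\min_{e\in t}\lambda_e$ is $\sum_{f\in N}\lambda_f+\min_{e\in N}\lambda_e$. The first sum is linear, so the domains of linearity on $\mathbb R^{\dbl N}_+$ are the cones
\[
\Gamma_e=\{x\in\mathbb R^{\dbl N}_+ \mid \lambda_e(x)\le\lambda_{e'}(x)\ \text{for all } e'\in N\},\qquad e\in N,
\]
together with their faces. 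So it suffices to show that $U_N$ is a face of one of these cones.

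First I treat $N$ with a unique maximal element $\hat 1$ (in particular, maximal nested sets), and the claim is that the relevant cone is $\Gamma_{\hat 1}$. I compute $\lambda_e$ from the edge sets $E(f^+)=\{g\in N\mid g<f\}$ and $E(f^-)=\{g\in N\mid g>f \text{ or } g\wedge f=\hat 0\}$. The edge $f$ itself lies in neither $E(f^+)$ nor $E(f^-)$. For $e=\hat 1$ this gives $\lambda_{\hat 1}=\sum_{f<\hat 1}m_f$. For $g<\hat 1$, the elements $f$ with $g\in E(f^+)$ are exactly those with $f>g$, and $g\in E(f^-)$ holds exactly when $f<g$ or $f$ is incomparable to $g$. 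Hence
\[
\lambda_g=\sum_{f>g}p_f+\sum_{f<g \text{ or } f\wedge g=\hat 0}m_f .
\]
Subtracting, the $m_f$ with $f<g$ or $f$ incomparable to $g$ cancel, and what remains is
\[
\lambda_g-\lambda_{\hat 1}=p_{\hat 1}+\sum_{g<f<\hat 1}(p_f-m_f)-m_g=F_g .
\]
This is the restriction of $F_g$ to $\mathbb R^{\dbl N}$ described after \Cref{dfn:UG}. I would double-check this cancellation carefully against Figure~\ref{fig:roo}, since sign and inclusion conventions for $E(f^\pm)$ are easy to get wrong; this is the step I expect to be the main (though mechanical) obstacle.

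With this identity in hand, $\Gamma_{\hat 1}$ is cut out inside $\mathbb R^{\dbl N}_+$ by $F_g\ge 0$ for all $g\in N$. Also $F_{\hat 1}=-m_{\hat 1}$, so on $\Gamma_{\hat 1}$ we get $m_{\hat 1}=0$ automatically. Comparing with the description of $U_N$ as the subset of $\mathbb R^{\dbl N}_+$ cut out by $m_{\hat 1}=0$ and $F_g\ge 0$, we get $U_N=\Gamma_{\hat 1}$ exactly. Thus $U_N$ is a (maximal) cone of the minimal lightcone subdivision. If one prefers not to rely on the automatic vanishing of $m_{\hat 1}$, note that $m_{\hat 1}\ge 0$ is valid on the orthant. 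Hence $\{m_{\hat 1}=0\}\cap\Gamma_{\hat 1}$ is a face of $\Gamma_{\hat 1}$, and so a cone of the subdivision in any case.

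For a general nested set $N$ with several maximal elements $h_1,\dots,h_r$, I run the same computation edge by edge, now with the added root $\star$. For $g\le h_i$ the same cancellation gives $\lambda_g-\lambda_{h_i}=F_g+(\text{terms } m_{h_j})$; the leftover $m_{h_j}$ terms come from the maxima, because $h_j\in E(f^-)$ for $f$ in other components. Likewise each $\lambda_{h_i}-\lambda_{h_j}$ is a combination of the $m_{h_k}$. On the face $\bigcap_k\{m_{h_k}=0\}$ of the orthant, which is where $U_N$ lives by marking condition (ii), all $\lambda_{h_i}$ coincide and $\lambda_g-\lambda_{h_i}=F_g$. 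The same argument as above then shows that $U_N$ is a face of $\Gamma_{h_1}$ and hence a cone of the subdivision.
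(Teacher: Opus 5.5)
Your argument is essentially the paper's: you identify $\lambda_g-\lambda_h$ with $F_g$ along the chain from $g$ to its maximal element $h$, and realize $U_N$ as the part of the boundary face $\{m_{h_1}=\dots=m_{h_\ell}=0\}$ of $\mathbb R^{2N}_+$ where $\lambda_{h_1}$ is minimal.

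One correction to your main line: the claim $U_N=\Gamma_{\hat 1}$, with $U_N$ a maximal cone, is false. By definition $F_{\hat 1}=p_{\hat 1}-m_{\hat 1}$, not $-m_{\hat 1}$, and the comparison $\lambda_{\hat 1}\le\lambda_{\hat 1}$ imposes nothing, so $\Gamma_{\hat 1}$ does not force $m_{\hat 1}=0$. For instance, $E(\hat 1^-)=\emptyset$ gives $\lambda_e(\hat 1^-)=0$ for all $e$, so $\hat 1^-\in\Gamma_{\hat 1}$ but $\hat 1^-\notin U_N$. Hence $U_N$ is the proper face $\Gamma_{\hat 1}\cap\{m_{\hat 1}=0\}$, on which $F_{\hat 1}=p_{\hat 1}\ge 0$ holds automatically. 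That is exactly your fallback argument and the paper's ``on the boundary'' formulation, so the conclusion stands once you drop the equality claim.
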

   
    Recall that $U_N$ denotes the intersection of $\mathbb R_+^{\dbl N}$ with the causal cone (\Cref{dfn:causal-cone}), and can be identified with the dual of a cosmological polytope (see \Cref{prop:cosmological-polytopes}). Here, we view $\mathbb R_{\ge 0}^{\dbl N}$ as the coordinate subspace of $\mathbb R_{\ge 0}^{\dbl \mathcal G}$ with $p_f=m_f=0$ for $f\notin N$.

\begin{proof}
    Let $h_1,\ldots,h_\ell$ denote the maximal elements of $N$.
    We claim that $U_N$ is the domain of linearity
    $$\{x \in V(p_{h_1},\ldots,p_{h_\ell}) \cap \mathbb R_+^{\dbl N} \ | \ \mathrm{min}(\lambda_f(x) \ | \ f \in N) = \lambda_{h_1}(x)\}$$
    of $\alpha_N$ on the boundary of $\mathbb R_+^{\dbl N}$. Note that on this boundary we have $\lambda_{h_1} = \ldots = \lambda_{h_\ell}$.

    Indeed, we have $\lambda_f \leq \lambda_g$ for $f \geq g$ in $N$ if and only if $p_f + \sum_{f > h > g} (p_h - m_h) - m_g \geq 0$. It follows that the constraints $F_g\geq 0$ in the definition of the causal cone are equivalent to the constraints $\lambda_{h_i} \leq \lambda_g$ for $g\leq h_i$. Thus, the intersection of the causal cone with $\mathbb R_+^{\dbl N}$ is precisely the region in $V(p_{h_1},\ldots,p_{h_\ell}) \cap \mathbb R_+^{\dbl N}$ where $\lambda_{h_1} \leq \lambda_g$ for all $g \in N$.
\end{proof}

As any building set contains the minimal one, and containment of building sets corresponds to refinement, we see that all lightcone subdivisions induce subdivisions of $U_N$.

Let us determine the rays of maximal cones in the minimal lightcone subdivision. As a corollary, we will obtain the rays of $U_N$.

\begin{prop}\label{prop:cones-of-min-lightsub}
    The maximal cones of the minimal lightcone subdivision are indexed by the elements of $N$. The cone associated to $f \in N$ is spanned by the vectors $w_s$ from \eqref{eq:ws-def} for all connected subgraphs $s$ of $\overline T_N$ that do not contain the edge $f$.
\end{prop}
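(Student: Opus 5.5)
This is a direct specialisation of \Cref{prop:lightcone-subdivision}(i) to the minimal building set $\mathcal B_N = \{N\}\cup\{\{f\} \mid f\in N\}$. The plan is to enumerate the maximal nested sets of $\mathcal B_N$ and then, for each, list the subgraphs it spans.

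\textbf{Maximal nested sets.} For a Boolean building set, a collection is nested if any two members are nested or disjoint, and no union of two or more pairwise disjoint members lies in $\mathcal B_N$. Here the elements are the singletons and the whole set $N$. A union of singletons lies in $\mathcal B_N$ only if it equals $N$. Hence a maximal nested set must contain $N$ together with all singletons except exactly one. So the maximal nested sets are
\[
M_f := \{N\}\cup\{\{g\} \mid g\in N,\ g\neq f\}, \qquad f\in N,
\]
which gives the indexing by elements of $N$.

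\textbf{Subgraphs spanned by $M_f$.} Recall that the singleton $\{g\}$ is the edge $g$ of $\overline T_N$, viewed as a subgraph with its two endpoints. Unions of chains of vertex-overlapping edges are exactly the connected subgraphs of $\overline T_N$ that have at least one edge. The component-count condition is automatic: each $t_i$ has one component, and so does $s$.

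So the subgraphs $s\neq\overline T_N$ spanned by $M_f$ are of three kinds:
\begin{enumerate}
\item single vertices;
\item connected subgraphs with at least one edge, all of whose edges lie in $N\setminus\{f\}$;
\item unions involving the element $N$ itself, which give $\overline T_N$ and are therefore excluded.
\end{enumerate}
A vertex is a connected subgraph without edges, so cases (i) and (ii) together are exactly the connected subgraphs of $\overline T_N$ not containing the edge $f$.

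\textbf{Main obstacle.} The delicate point is the edge case where a connected subgraph containing no edge other than $f$'s complement could coincide with $\overline T_N$. This cannot happen, because it would have to omit the edge $f$. We should also check that when $|N|=1$ the statement degenerates consistently: the only spanned subgraphs are the two vertices. Finally, applying \Cref{prop:lightcone-subdivision}(i) gives that the cone of $M_f$ is spanned by the $w_s$ for these $s$, which is the claim.
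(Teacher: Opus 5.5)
Your proposal is correct and follows the paper's own argument. You specialize \Cref{prop:lightcone-subdivision}(i) to the minimal building set, note that the maximal nested sets are $\{N\}$ together with all singletons but one (the paper writes this loosely as $N\setminus\{f\}$), and identify the spanned subgraphs $s\neq\overline T_N$ as exactly the connected subgraphs of $\overline T_N$ avoiding the edge $f$.

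One small precision: the component-count clause in the definition of ``spanned'' is not automatic. It is what forces $s$ to be connected, since every $t_i$ other than $N$ is a single edge.
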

\begin{proof}
    A maximal nested set in $\mathcal B_N$ is of the form $M:= N \backslash \{f\}$ for some $f \in N$. According to \Cref{prop:lightcone-subdivision}, there is one cone in the lightcone subdivision of $\mathbb R_+^{\dbl N}$ for each such set $M$. Its rays are the vectors $w_s$ for subgraphs $s$ of $\overline T_N$ spanned by $M$. These are precisely the connected subgraphs $s$ of $\overline T_N$ that do not contain the edge $f$.
\end{proof}

\begin{cor}\label{cor:rays-of-cosmo}
    The rays of $U_N$ are the vectors $w_s$ for connected subgraphs $s$ of the Hasse diagram $F_N$ of $N$.
\end{cor}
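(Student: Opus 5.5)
Since $U_N$ is a cone of the minimal lightcone subdivision of $\mathbb R_+^{\dbl N}$ (\Cref{prop:cosmo-as-subcone}), it is a face of some maximal cone of that subdivision. By \Cref{prop:cones-of-min-lightsub}, that maximal cone is spanned by the vectors $w_s$ attached to connected subgraphs of $\overline T_N$ avoiding a fixed edge. So the rays of $U_N$ are exactly those generators $w_s$ that satisfy the defining equalities of the face $U_N$. The plan is to identify which $w_s$ these are.

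First I pin down the face. From the proof of \Cref{prop:cosmo-as-subcone}, $U_N$ is the face of the maximal cone indexed by a maximal element $h_1\in N$, cut out by the equalities $p_{h_1}=\cdots=p_{h_\ell}=0$. On this face the remaining condition $\lambda_{h_1}\le\lambda_g$ holds automatically. The edges $h_i$ are exactly the edges of $\overline T_N$ incident to the root $\star$.

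Next I compute $p_{h_i}(w_s)$ using \eqref{eq:ws-def}. The vector $w_s$ contains $h_i^+$ exactly when $h_i\in\delta^+(s)$, i.e.\ when $h_i$ enters $s$ from above. Since $h_i$ is a root edge, this means $s$ contains the lower vertex $h_i$ but not $\star$.

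The ray condition is that $p_{h_i}(w_s)=0$ for all $i$. For a connected $s$, this holds precisely when either $s\ni\star$, or $s$ contains no vertex of $N$ at all. The second case cannot happen unless $s=\{\star\}$, since $s$ is connected and nonempty.

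I then handle the case $s\ni\star$. Such an $s$ uses the vertex $\star$, so I have to match it against the subgraphs of $F_N$. The natural correspondence is by complement: cutting all edges crossing between $s$ and its complement identifies $w_s$ with the causal-region vector. Concretely, a rooted connected $s$ corresponds to a connected subgraph of the forest $F_N$. If $s$ contains the root edge of the component containing $h_i$, take the piece of that component inside $s$; the vertex $\star$ itself corresponds to the empty/trivial case, which is killed in the quotient by $u_{\mathrm{tot}}$ or by $m_{\hat 1}$.

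Cleaner is to reverse the roles. A connected subgraph $s$ of $F_N$ (not containing $\star$) has a unique top vertex $r_*$, with entering edge $r_*$ and leaving edges $r_1,\dots,r_k$. Then $w_s=r_*^++\sum r_i^-=w_R$, and $p_{h_i}(w_s)\neq0$ only when $r_*=h_i$. I will need to recheck the orientation convention of $\delta^\pm$ here. My expectation is that, after the identification $\psi$ and the quotient, $p_{h_i}(w_s)=0$ is the correct condition on the relevant side, and that the subgraphs containing $\star$ give vectors involving $h^-$ terms, which vanish on $U_N$ because $m_{h}=0$ for maximal $h$ (marking condition (ii)). I would verify this orientation first on the two-element chain of \Cref{ex:phi}.

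Finally I check that each such $w_s$ is actually extremal in $U_N$ and not just a generator. This follows because it is a ray of the simplicial-type maximal cone from \Cref{prop:lightcone-subdivision}, where no $w'_s$ is a positive combination of the others, and a face inherits the rays lying on it.

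The main obstacle is the sign and orientation bookkeeping: deciding which side of each cut is $+$ and which is $-$, and confirming that the surviving $w_s$ are exactly the $w_R$ for causal regions $R$, which are the connected subgraphs of $F_N$. I would settle this by checking against the rays listed in \Cref{ex:star-wavefunction}.
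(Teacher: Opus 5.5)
Your overall strategy matches the paper's: view $U_N$ as a face of the maximal cone $C_{h_1}$ of the minimal lightcone subdivision, then keep the generators $w_s$ that lie on it. However, the decisive selection step is wrong as written, and the proof is not complete.

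\textbf{The gap.} The face is not cut out by $p_{h_1}=\cdots=p_{h_\ell}=0$. It is cut out by $m_{h_1}=\cdots=m_{h_\ell}=0$.
\begin{itemize}
\item On $\mathbb R^{\dbl N}$ the causal cone imposes $m_{\hat 1}=0$ directly when $\hat 1\in N$. Otherwise $F_h$ restricts to $-m_h$ for each maximal $h$, which forces $m_h=0$.
\item Moreover $\lambda_{h_i}=\sum_{f\in N,\,f\neq h_i} m_f$, so the $\lambda_{h_i}$ agree exactly on $V(m_{h_1},\ldots,m_{h_\ell})$.
\item So the $V(p_{h_1},\ldots,p_{h_\ell})$ in the proof of \Cref{prop:cosmo-as-subcone} must be read with $m$ in place of $p$. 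A quick check: $\hat 1^+\in U_N$ has $p_{\hat 1}=1$.
\end{itemize}
With your condition you select the wrong generators. For the chain $a<b<\hat 1$ of \Cref{ex:star-min-lightsub}, it keeps $\hat 1^-$ and discards $\hat 1^+$, $\hat 1^++b^-$ and $\hat 1^++a^-$.

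Even on its own terms, your description of when $p_{h_i}(w_s)=0$ for all $i$ is inaccurate. A connected $s\not\ni\star$ satisfies it whenever its top vertex is not maximal, not only when $s$ has no vertex of $N$. Your ``cleaner'' paragraph then contradicts what you derived, and it leaves the identification to an expectation to be checked on examples.

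\textbf{The repair.} It is short and is essentially the paper's argument. First fix the orientation, using $\psi^{-1}(v)=f^++g_1^-+\cdots+g_k^-$: $\delta^+(s)$ is the edge entering $s$ from above, and $\delta^-(s)$ consists of the edges leaving $s$ downwards.
\begin{itemize}
\item The generators of $C_{h_1}$ are the $w_s$ for connected $s\subseteq\overline T_N$ not containing the edge $h_1$.
\item By \eqref{eq:ws-def}, $m_{h_i}(w_s)\neq 0$ if and only if the root edge $h_i$ leaves $s$. That is, $\star\in s$ and the lower endpoint of $h_i$ is not in $s$.
\item If $\star\in s$, then $s$ avoids the edge $h_1$ and hence, being a connected subgraph of a tree, its lower endpoint. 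So $m_{h_1}(w_s)=1$ and $w_s\notin U_N$.
\item If $\star\notin s$, no root edge leaves $s$, so $w_s$ lies on the face.
\end{itemize}
Hence the rays of $U_N$ are the $w_s$ for connected subgraphs $s$ of $F_N$. For such $s$, with top vertex $r_*$ and leaving edges $r_1,\ldots,r_k$, one has $w_s=r_*^++\sum_i r_i^-=w_R$. The paper phrases the same selection as ``which generators lie in the causal cone.'' Your closing remark, that the rays of a face are the rays of the cone lying on it, then finishes the proof.
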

\begin{proof}
    We claim that the vectors $w_s$ for connected subgraphs of $\overline T_N$ that do not contain any maximal element of $N$ are precisely those that lie in the causal cone, which implies the statement by \Cref{prop:cosmo-as-subcone} and \Cref{prop:cones-of-min-lightsub}. This is true because there is an edge entering $s$ in $\overline T_N$ if and only if $s$ does not contain the edge $g$ for a maximal element $g \in N$.
\end{proof}

\begin{ex}\label{ex:star-min-lightsub}
    Recall the star graph from \Cref{ex:star-wavefunction}. A maximal nested set $N$ is a chain $a \subset b \subset \hat 1$. The tree $\overline T_N$ is simply the linear graph
    \[\begin{tikzpicture}
        \node (1) at (0,0) {$\bullet$};
        \node (2) at (1,0) {$\bullet$};
        \node (3) at (2,0) {$\bullet$};
        \node (4) at (3,0) {$\bullet$};

        \draw (1) -- (2) node [midway, above] {$\hat 1$};
        \draw (2) -- (3) node [midway, above] {$b$};
        \draw (3) -- (4) node [midway, above] {$a$};
    \end{tikzpicture}\]
    The associated minimal lightcone subdivision of $\mathbb R_+^{\dbl N}$ has $3$ cones, given by
    \begin{align*}
        C_{\hat 1} &= \langle \hat 1^-, \hat 1^+, \hat 1^+ + b^-, \hat 1^+ + a^-, b^+, b^+ + a^-, a^+ + b^-, a^+\rangle \\
        C_{b} &= \langle \hat 1^-, b^-, \hat 1^+ + b^-, b^+, b^+ + a^-, a^+ \rangle \\
        C_{a} &= \langle \hat 1^-, b^-, a^-, \hat 1^+ + b^-, \hat 1^+ + a^-, b^+ + a^-, a^+ \rangle
    \end{align*}
    Intersecting with the causal cone, we see that $U_N$ is indeed a face of $C_{\hat 1}$:
    $$U_N = \langle \hat 1^+, \hat 1^+ + b^-, \hat 1^+ + a^-, b^+, b^+ + a^-, a^+ + b^-, a^+\rangle$$
\end{ex}

The second interesting special choice of $\mathcal B_N$ is the \textit{graphical building set}, given by all tubes (that is, connected induced subgraphs) in $\overline T_N$.

\begin{prop}\label{prop:max-lightsub}\leavevmode
    \begin{enumerate}
        \item The lightcone refinement associated to the graphical building set is a unimodular triangulation of $\mathbb R_+^{\dbl N}$. Its simplices are given by
        $$\cone(w_s \ | \ s \in \mathcal T)$$
        for tubings $\mathcal T$ of $\overline T_N$ that do not use the maximal tube.
        
        \item The induced triangulation of $U_N$ is given by the simplices $\cone(w_s \ | \ s \in \mathcal T)$ where $\mathcal T$ is a tubing of $F_N \subseteq \overline T_N$, i.e., where no tube in $\mathcal T$ contains the root vertex.
    \end{enumerate}
\end{prop}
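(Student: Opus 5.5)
Both parts follow from \Cref{prop:lightcone-subdivision} applied to the graphical building set $\mathcal B_N=\mathcal B(\overline T_N)$, combined with the description of $U_N$ via the minimal lightcone subdivision in \Cref{prop:cosmo-as-subcone} and \Cref{cor:rays-of-cosmo}.

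\textbf{Part (1).} I would first identify, for a maximal nested set $M\subseteq\mathcal B(\overline T_N)$, the subgraphs $s\neq\overline T_N$ spanned by $M$. Since $M$ is a tubing of the tree $\overline T_N$ (tubes here being edge sets of connected subtrees), any union of vertex-overlapping tubes of $M$ that has as many components as one of them is again connected. I therefore expect the spanned subgraphs to be exactly the tubes of $M$ together with the single vertices. It then has to be checked that every vertex vector $w_v=\psi^{-1}(v)$ is already a ray of such a cone: for a maximal tubing $M$, each vertex arises as the ``difference'' of a tube and its children, so the cone generated by the $w_t$ ($t\in M$, $t\neq\overline T_N$) together with the vertex vectors should simplify to a simplicial cone.

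To make this precise I would count dimensions. We have $\dim\mathbb R^{2N}=2|N|$. A maximal tubing of $\overline T_N$, counted in the form relevant here (tubes as vertex sets of connected subgraphs, excluding the maximal tube, and including singleton vertices when $\mathcal B$ contains them), should have exactly $2|N|$ elements, because $\overline T_N$ has $|N|+1$ vertices and $|N|$ edges. So the plan is to show the $w_s$ with $s\in\mathcal T$ form a basis of the lattice $\mathbb Z^{2N}$. For unimodularity I would use $\psi$: it is a lattice isomorphism onto $(\mathbb Z^{E}\oplus\mathbb Z^{V})/\hat u_{\mathrm{tot}}$, and $w_s=\psi^{-1}(\mathbf 1_{E(s)}+\mathbf 1_{V(s)})$. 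Ordering the tubes of a maximal tubing by inclusion gives a unitriangular change of basis to the vertex-and-edge indicator basis: each tube minus its maximal subtubes is a single vertex, possibly with an adjacent edge. Hence the determinant is $\pm1$.

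\textbf{Part (2).} The graphical building set contains the minimal one, so this triangulation refines the minimal lightcone subdivision and, by \Cref{prop:cosmo-as-subcone}, restricts to a triangulation of the face $U_N$. A simplex $\cone(w_s\mid s\in\mathcal T)$ lies in $U_N$ exactly when all its rays do. By the argument in \Cref{cor:rays-of-cosmo}, $w_s$ lies in the causal cone iff $s$ has an entering edge, i.e.\ iff $s$ avoids the root $\star$. These tubings are exactly the tubings of $F_N$. Since $U_N$ is a union of simplices of the triangulation (being a face of a cone of a coarser subdivision), it is covered by precisely these simplices.

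\textbf{Main obstacle.} The hard step is part (1): matching ``maximal nested set $M$ of $\mathcal B(\overline T_N)$ plus spanned subgraphs'' with ``tubings of $\overline T_N$ in the vertex-set sense,'' and checking that all vertex vectors $\pm v$ from the pullback collapse correctly inside $\mathbb R_+^{2N}$. The risk is that cones of the pullback become non-simplicial after intersecting with $P$. I would test this first on the path in \Cref{ex:star-min-lightsub}.
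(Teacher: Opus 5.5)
Your argument is correct, and it follows the paper for the identification of the cones and for part (2). The paper likewise reads off from \Cref{prop:lightcone-subdivision} that the subgraphs spanned by a maximal nested set $M$ of $\mathcal B(\overline T_N)$ are its tubes together with all vertices. It gets simpliciality from the count $|\mathcal T\setminus\{\overline T_N\}|=2|N|$, and it deduces (2) from \Cref{cor:rays-of-cosmo}, as you do. The two routes differ only for unimodularity. The paper observes that every singleton vertex lies in $\mathcal T$, so modulo the vertex vectors it suffices that $\cone(\sum_{e\in s}e\mid s\in\mathcal T)$ be unimodular. This is a cone of the nested set fan $\Sigma_{\mathcal B_N}$, and the paper cites Feichtner--Yuzvinsky. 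Your unitriangular change of basis is a self-contained substitute for that citation. It also gives linear independence directly, so the non-simpliciality you list as the main risk cannot occur.

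The combinatorial fact you invoke for unitriangularity is misstated, though. Take a maximal tubing of $\overline T_N$, with singletons included and no non-adjacency condition. A non-singleton tube $t$ has exactly two maximal subtubes $c_1,c_2$; if there were three or more, two adjacent ones could be merged. These partition $V(t)$, and $t\setminus(c_1\cup c_2)$ is exactly the single edge joining them, with no vertex. Only the singleton tubes contribute a vertex.

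With your description (``a single vertex, possibly with an adjacent edge''), the map $t\mapsto t\setminus(\text{children})$ could not be a bijection onto $E(\overline T_N)\cup V(\overline T_N)$. That is incompatible with your own count of $2|N|+1=|E(\overline T_N)|+|V(\overline T_N)|$ tubes, including $\overline T_N$. With the correct description it is a bijection, and $\mathbf 1_{E(t)}+\mathbf 1_{V(t)}$ is the sum of these differences over all tubes contained in $t$. So the matrix is the zeta matrix of $(\mathcal T,\subseteq)$ and is unitriangular. Since $\overline T_N$ itself maps to $\hat u_{\mathrm{tot}}$, the remaining $\psi(w_t)$ form a $\mathbb Z$-basis of the quotient lattice.

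Similarly, your claim that the spanned subgraphs are only the tubes of $M$ and the vertices needs one more observation. Incomparable members of $M$ cannot share a vertex, since their union would be connected and hence in $\mathcal B(\overline T_N)$. So a vertex-overlapping family from $M$ is a chain, and its union is its largest member. Connectivity of the union alone does not put it in $M$.
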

\begin{proof}
    We show $i)$, which implies $ii)$ by \Cref{cor:rays-of-cosmo}.

    Recall from \Cref{prop:lightcone-subdivision} that if $M$ is a maximal nested set in $\mathcal B_N$, then the associated maximal cone in the lightcone subdivision of $\mathbb R^{\dbl N}$ is spanned by the vectors $w_s$ for $s\not= \overline T_N$ spanned by $M$. But since $\mathcal B_N$ contains precisely the connected subgraphs, $s$ is spanned by $M$ if and only if $s$ is a vertex or $s \in M$. The nested sets for the graphical building set $\mathcal B_N$ are precisely the vertex-free tubings of $\overline T_N$, and so the subgraphs spanned by $M$ are precisely the elements of a maximal tubing $\mathcal T$. The cardinality of $\mathcal T - \overline T_N$ is $|E(\overline T_N)| + |V(\overline T_N)| - 1 = 2|N|$. Thus, $\cone(w_s \ | \ s \in \mathcal T)$ is a simplex.

    To show unimodularity, we use that the isomorphism $\psi$ from \eqref{eq:cut-iso} is defined over $\mathbb Z$, so it suffices to show unimodularity of the cone $\cone(\psi(w_s) \ | \ s \in \mathcal T)$. By definition, we have $\psi_{w_s} = \sum_{e \in s} e + \sum_{v \in s} v$. Since a maximal tubing $\mathcal T$ contains all vertices, it suffices to show that $\cone(\sum_{e \in s} e \ | \ s \in \mathcal T)$ is unimodular. But this is a cone of the nested set fan $\Sigma_{\mathcal B_N}$ which is unimodular by \cite[Proposition 2]{feichtner2004chow}.
\end{proof}

\begin{rmk}
Due to \Cref{prop:cosmological-polytopes} the triangulation from \Cref{prop:max-lightsub} corresponds to a triangulation of the dual of the cosmological polytope. Indeed, this is precisely the triangulation from \cite{ArkaniHamed17} corresponding to old fashioned perturbation theory.
\end{rmk}

\begin{ex}\label{ex:star-max-lightsub}
    Continuing with \Cref{ex:star-min-lightsub}, let us compute the associated maximal lightcone subdivision. The graph
        \[\begin{tikzpicture}
        \node (1) at (0,0) {$\bullet$};
        \node (2) at (1,0) {$\bullet$};
        \node (3) at (2,0) {$\bullet$};
        \node (4) at (3,0) {$\bullet$};

        \draw (1) -- (2) node [midway, above] {$\hat 1$};
        \draw (2) -- (3) node [midway, above] {$b$};
        \draw (3) -- (4) node [midway, above] {$a$};
    \end{tikzpicture}\]
    has $5$ maximal tubings. The associated simplices are
    \begin{align*}
        D_1 &= \langle a^-, b^-, \hat 1^-, \hat 1^+ + b^-, b^+ + a^-, a^+\rangle \\
        D_2 &= \langle a^-, \hat 1^+ + a^-, \hat 1^-, \hat 1^+ + b^-, b^+ + a^-, a^+\rangle \\
        D_3 &= \langle \hat 1^+, \hat 1^+ + a^-, \hat 1^-, \hat 1^+ + b^-, b^+ + a^-, a^+\rangle \\
        D_4 &= \langle \hat 1^+, b^+, \hat 1^-, \hat 1^+ + b^-, b^+ + a^-, a^+\rangle\\
        D_5 &= \langle b^-, b^+, \hat 1^-, \hat 1^+ + b^-, b^+ + a^-, a^+\rangle
    \end{align*}
    Comparing with \Cref{ex:star-min-lightsub} we see that $C_{\hat 1} = D_3 \cup D_4$, $C_{b} = D_5$ and $C_{a} = D_1 \cup D_2$. In particular we obtain the triangulation
    $$U_N = \langle \hat 1^+, \hat 1^+ + a^-, \hat 1^+ + b^-, b^+ + a^-, a^+ \rangle \cup \langle \hat 1^+, b^+, \hat 1^+ + b^-, b^+ + a^-, a^+\rangle.$$
\end{ex}

%%%%%%%%%%%%%%%%%%%%%%
\subsection{Lightcone refinements}

A crucial question is when the lightcone subdivisions of simplices $\mathbb R_+^{\dbl N}$ defined in the previous section are compatible in the doubled nested set fan $\Sigma^{\dbl \mathcal G}$ and thus the universe fan $\mathcal U_{\mathcal G}$. The next result gives a sufficient condition.

\begin{lem}\label{lem:lightcone-refinement}
    For every nested set, choose a boolean building set $\mathcal B_N$ on $N$ such that
    \begin{enumerate}
        \item each $t \in \mathcal B_N$ is a connected subgraph of $\overline T_N$, and
        \item for $N' \subseteq N$ we have $\mathcal B_{N'} = \{t \cap N' \ | \ t \in \mathcal B_N\}$.
    \end{enumerate}
    Then the lightcone subdivisions of simplices $\mathbb R^{\dbl N}$ associated to the collection $(\mathcal B_N)_{N}$ assemble into a fan in $\mathbb R^{\dbl \mathcal G}$.
\end{lem}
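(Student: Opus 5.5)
The doubled nested set fan $\Sigma^{\dbl\mathcal G}$ is a simplicial fan whose cones are the orthants $\mathbb R_+^{\dbl N}$, and two such cones meet exactly in $\mathbb R_+^{\dbl (N\cap N')}$. So the lightcone subdivisions assemble into a fan as soon as, for every face $\mathbb R_+^{\dbl N'}$ of $\mathbb R_+^{\dbl N}$ with $N'\subseteq N$, the subdivision $\Gamma_N$ of $\mathbb R_+^{\dbl N}$ restricts on that face to $\Gamma_{N'}$. Once this holds, two cones from different orthants $\mathbb R_+^{\dbl N_1}$ and $\mathbb R_+^{\dbl N_2}$ intersect inside $\mathbb R_+^{\dbl (N_1\cap N_2)}$. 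There both restrict to cones of the same subdivision $\Gamma_{N_1\cap N_2}$, so their intersection is a common face. The plan is therefore to prove this restriction compatibility, and the natural tool is part $ii)$ of \Cref{prop:lightcone-subdivision}: $\Gamma_N$ is the subdivision of $\mathbb R_+^{\dbl N}$ into domains of linearity of
\[
\alpha_N:=\sum_{t\in\mathcal B_N}\min_{e\in t}\lambda^N_e .
\]

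The first step is to compute $\lambda^N_e$ on the face $\mathbb R^{\dbl N'}$, where $p_f=m_f=0$ for $f\in N\setminus N'$. For $e\in N'$ this gives $\lambda^N_e|=\sum_{f\in N',\,e\in E_N(f^+)}p_f+\sum_{f\in N',\,e\in E_N(f^-)}m_f$. I would check that for $e,f\in N'$ we have $e\in E_N(f^\pm)$ if and only if $e\in E_{N'}(f^\pm)$. This holds because both conditions are order conditions in $\mathcal G$ ($e<f$, respectively $e>f$ or $e\wedge f=\hat 0$), and these do not depend on the ambient nested set. Hence $\lambda^N_e|=\lambda^{N'}_e$ for $e\in N'$. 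For an edge $e\in N\setminus N'$, one has to compare $\lambda^N_e|$ with $\lambda^{N'}_{e'}$ for suitable $e'\in N'$. Removing edges of $N\setminus N'$ amounts to contracting them in $\overline T_N$, which yields $\overline T_{N'}$. I expect that $\lambda^N_e|$ then equals $\lambda^{N'}$ evaluated at the vertex of $\overline T_{N'}$ into which $e$ is contracted. Concretely, this should be the $\lambda$ of a neighbouring edge, or a sum of such terms, and the root/$u_{\mathrm{tot}}$ normalisation handles the case of contraction into $\star$.

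The second step uses the hypotheses. By ii), each $t\in\mathcal B_N$ restricts to $t\cap N'\in\mathcal B_{N'}$, and every element of $\mathcal B_{N'}$ arises this way. By i), $t$ is connected, so each contracted edge of $t$ sits next to edges of $t\cap N'$ inside $t$. The aim is to show that on the face, $\min_{e\in t}\lambda^N_e$ differs from $\min_{e\in t\cap N'}\lambda^{N'}_e$ only by a linear function, or equals it. The cleanest version would be an identity $\lambda^N_e|\ge \min_{e'\in t\cap N'}\lambda^{N'}_{e'}$ for contracted $e\in t$, coming from positivity of $p,m$ and monotonicity of $\lambda$ along the tree. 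Then $\alpha_N|$ and $\alpha_{N'}$ agree up to repetition of summands, and repeated summands do not change the domains of linearity. The restricted subdivision $\Gamma_N\cap\mathbb R^{\dbl N'}$ then coincides with $\Gamma_{N'}$.

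I expect this comparison for contracted edges to be the main obstacle. An alternative that may be cleaner is to work with $\phi$ directly: show that the projection $\mathbb R^{N}\to\mathbb R^{N'}$ forgetting coordinates commutes with $\phi_N$ restricted to $\mathbb R^{\dbl N'}$ and $\phi_{N'}$, and that the nested set fan $\Sigma_{\mathcal B_N}$ pulled back along this relationship restricts to $\Sigma_{\mathcal B_{N'}}$, using hypothesis ii) together with the known behaviour of building-set fans under restriction to the subsets $t\cap N'$.
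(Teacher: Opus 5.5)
You have the paper's reduction right: compatibility only needs checking on faces $\mathbb R_+^{2N'}\subseteq\mathbb R_+^{2N}$, the subdivisions are the domains of linearity of $\alpha_N$, and $\lambda^N_e$ restricts to $\lambda^{N'}_e$ for $e\in N'$. But the step that carries all the content is only announced, and the reason you give for it does not hold. That step is showing that, on the face, a removed edge never strictly attains $\min_{e\in t}\lambda_e$. There is no monotonicity of $\lambda$ along $\overline T_N$: for $a$ covering $f$ in $N$ one has $\lambda_f-\lambda_a=p_a-m_f$, which takes both signs on the orthant. Its sign is controlled only because $m_f=0$ on the face. Your adjacency claim also fails when several edges are removed at once. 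A removed edge of a connected $t$ need not touch $t\cap N'$: take a chain $e_1<e_2<e_3$ in $t$ with $e_1,e_2\notin N'$. Your alternative via $\phi$ is the same computation, since $\lambda_e=e^*\circ\phi_N$, so it needs the same missing inequality.

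The missing idea is a set of local identities, best used after reducing, as the paper does, to $N'=N\setminus\{f\}$ by induction on $|N\setminus N'|$. Write $\lambda_e=\sum_{g>e}p_g+\sum_{g\not\ge e}m_g$, with sums over $g\in N$. Then one checks
\[
\lambda_f=\lambda_a+p_a-m_f,\qquad \lambda_f=\lambda_b+m_b-p_f,\qquad \lambda_f=\lambda_{f'}+m_{f'}-m_f.
\]
Here $a$ is the edge above $f$, $b$ is any edge below $f$, and $f'$ is any other edge at the upper endpoint of $f$ (a sibling, or another maximal element). On $p_f=m_f=0$ each identity says $\lambda_f$ is a neighbour's $\lambda$ plus a nonnegative coordinate. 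By (i), every non-singleton $t\ni f$ contains a neighbour of $f$. Hence on the face $\min_{e\in t}\lambda_e=\min_{e\in t\cap N'}\lambda^{N'}_e$, while $t=\{f\}$ only adds the linear form $\lambda_f$. By (ii) and concavity, the domains of linearity are then those of $\alpha_{N'}$.

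The sibling identity is genuinely needed. The paper's proof lists only $a$ and $b$, but a connected $t$ such as $\{f,f'\}$ contains neither. With these identities your vertex picture also becomes correct. For the vertex $v$ of $\overline T_{N'}$ into which $e$ collapses, $\lambda^N_e|$ equals $\sum_{g\ge v}p_g+\sum_{g\not\ge v}m_g$, with sums over $g\in N'$. This exceeds $\lambda^{N'}$ of every edge at $v$ by a single nonnegative coordinate, and no $u_{\mathrm{tot}}$ normalisation enters.
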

\begin{proof}
   By \Cref{prop:lightcone-subdivision}, the lightcone subdivision on each cone $\mathbb R_+^{\dbl N}$ is given by the domains of linearity of
    $$\alpha_N := \sum_{t \in \mathcal B_N} \min(\lambda_e \ | \ e \in t).$$
    Let $N$ and $M$ be two nested sets. Then $\mathbb R_+^{\dbl N}$ and $\mathbb R_+^{\dbl M}$ intersect in $\mathbb R_+^{\dbl (N\cap M)}$. Since $N\cap M$ is again nested, it suffices to show that for $M \subseteq N$ the subdivision of $\mathbb R_+^{\dbl M}$ is the restriction of the subdivision of $\mathbb R_+^{\dbl N}$. Wlog.\ we can assume that $M = N \backslash \{f\}$ for some $f\in N$. We claim that $\alpha_N$ and $\alpha_M$ agree on $p_f = m_f = 0$. Note that the linear forms $\lambda_e$ restrict correctly for $e \not=f$. Moreover we have the identities $\lambda_f = \lambda_a + a^+ - f^- = \lambda_b + b^- - f^+$ for the edge $a$ above $f$ or an edge $b$ below $f$. By assumption $i)$, at least one of these is contained in every non-singleton $t \in \mathcal B_N$ that contains $f$. It follows that on the positive orthant $\lambda_f \geq \lambda_a, \lambda_b$ if $p_f=m_f = 0$. Thus, on $\mathbb R_+^{\dbl M}$ the function $\alpha_N$ simplifies to
    $$\sum_{t \in \mathcal B_N} \min(\lambda_e \ | \ e \in t\cap M)$$
    and by assumption $ii)$ this has the same domains of linearity as $\alpha_M$.
\end{proof}

We call the resulting fan the \textit{lightcone refinement} of $\Sigma^{\dbl \mathcal G}$ associated to the collection $(\mathcal B_N)_N$.

The two special cases of building sets $\mathcal B_N$ discussed in the previous subsection are both examples of collections satisfying \Cref{lem:lightcone-refinement}; in fact they are the minimal and maximal examples. Let us study the corresponding refinements.

We call the refinement obtained by choosing for each nested set the \textit{minimal building set} $\mathcal B_N := \{N\} \cup \{f \ | \ f\in N\}$ on $\overline T_N$ the \textit{minimal lightcone refinement}.

\begin{thm}\label{prop:universe-as-subfan}
    The universe fan $\mathcal U_{\mathcal G}$ is a subfan of the minimal lightcone refinement of $\Sigma^{\dbl \mathcal G}$.
\end{thm}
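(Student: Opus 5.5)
The plan is to reduce the statement to the cone-by-cone result \Cref{prop:cosmo-as-subcone} and then use the gluing lemma \Cref{lem:lightcone-refinement} to pass from individual simplices to the whole fan. The minimal building sets $\mathcal B_N = \{N\}\cup\{\{f\}\mid f\in N\}$ satisfy both hypotheses of that lemma. Each element is a connected subgraph of $\overline T_N$: singletons trivially, and $N$ is the full edge set of the connected tree. For $N'\subseteq N$, restricting gives $\{N'\}\cup\{\{f\}\mid f\in N'\}$, up to empty sets, which we discard. Hence the minimal lightcone refinement $\Gamma^{\min}$ is a well-defined fan in $\mathbb R^{\dbl\mathcal G}$ refining $\dblfan{\mathcal G}$, and on each simplex $\mathbb R_+^{\dbl N}$ it restricts to the minimal lightcone subdivision of that simplex.

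Next, \Cref{prop:cosmo-as-subcone} shows that for every nested set $N$ the cone $U_N=\mathcal U_{\mathcal G}\cap\mathbb R^{\dbl N}$ is a cone of the minimal lightcone subdivision of $\mathbb R_+^{\dbl N}$, hence a cone of $\Gamma^{\min}$. One point to check is that the proof there did not use maximality of $N$. It only used the chain description of $F_g$ restricted to $\mathbb R^{\dbl N}$ and the maximal elements $h_1,\dots,h_\ell$ of $N$, so it applies verbatim to every nested set. The identification is therefore with the face of $\Gamma^{\min}$ cut out by $p_{h_i}=0$ and $\lambda_{h_1}=\min_f\lambda_f$.

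A minor subtlety is the condition $m_{\hat 1}=0$ and the fact that $U_N$ may lie on the boundary of $\mathbb R_+^{\dbl N}$. We handle this by noting that faces of cones of the subdivision are again cones of it. Every cone of $\mathcal U_{\mathcal G}$ is a face of some $U_N$: it is cut out by setting some of $p_g,m_g,F_g$ to zero, and these are valid inequalities on $U_N$. Such a face is therefore a face of a cone of $\Gamma^{\min}$, hence itself a cone of $\Gamma^{\min}$.

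Combining these steps, every cone of $\mathcal U_{\mathcal G}$ is a cone of $\Gamma^{\min}$, and the set of cones of $\mathcal U_{\mathcal G}$ is closed under faces by definition. So $\mathcal U_{\mathcal G}$ is a subfan. I expect the main obstacle to be one consistency check: that the face description of $U_N$ inside $\mathbb R_+^{\dbl N}$ agrees with its description inside $\mathbb R_+^{\dbl N'}$ for a larger nested set $N'\supseteq N$. This should follow from the restriction property $\alpha_{N'}|_{\mathbb R^{\dbl N}}\sim\alpha_N$ established in the proof of \Cref{lem:lightcone-refinement}, since the domains of linearity, and hence the cones, are compatible.
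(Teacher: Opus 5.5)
Your proposal is correct and follows the paper's route: the paper's proof is only the one-line reduction to \Cref{prop:cosmo-as-subcone}, and your gluing via \Cref{lem:lightcone-refinement}, your passage to faces, and your remark that non-maximal $N$ are covered fill in exactly the implicit details. The consistency check you flag as the main obstacle is already the content of that lemma, and you can avoid it entirely because every cone of $\mathcal U_{\mathcal G}$ is a face of $U_N$ for some maximal $N$. One slip, mirroring a typo in the proof of \Cref{prop:cosmo-as-subcone}: the face is cut out by $m_{h_i}=0$, not $p_{h_i}=0$, since $\lambda_{h_i}=\sum_{f\in N\setminus\{h_i\}} m_f$ and $h_i^+\in U_N$.
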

\begin{proof}
    This follows from \Cref{prop:cosmo-as-subcone}.
\end{proof}

In particular, there is a subfan of every lightcone refinement that refines $\mathcal U_{\mathcal G}$.

Next, let us describe the rays of the minimal lightcone refinement. In particular, this yields the generators of cones in the universe fan.

\begin{dfn}
    A \textit{lightcone region} in $\mathcal G$ is a pair $(R^+,R^-)$ of disjoint subsets of $\mathcal G$ with $|R^+|\leq 1$ such that $R:=R^+ \cup R^-$ is nested, $R^+ \subseteq \max R$ and $R^- \subseteq \min R$. We say that $(R^+,R^-)$ \textit{contains} $f \in \mathcal G$ if for $a \in R^+$ we have $f < a$ (strictly) and for $b \in R^-$ we have $f \not\leq b$. We define
$$w_{(R^+,R^-)} := \sum_{f \in R^+} f^+ + \sum_{f\in R^-} f^-.$$
\end{dfn}

Note that if $|R^+| = 1$ then $R^+ \cup R^-$ is a causal region.

\begin{cor}\label{cor:rays-of-universe}\leavevmode
    \begin{enumerate}
        \item The maximal cones of the minimal lightcone refinement of $\Sigma^{\mathcal G}$ are indexed by pairs $(N,f)$ of maximal nested sets $N$ and $f\in N$. The cone associated to $(N,f)$ is spanned by the vectors $w_{(R^+,R^-)}$ for all lightcone regions $(R^+,R^-)$ with $R^+, R^- \subseteq N$ that do \emph{not} contain $f$.
        \item The maximal cones of the minimal lightcone refinement of $\mathcal U_{\mathcal G}$ are indexed by maximal nested sets $N$. The cone associated to $N$ is spanned by the vectors $w_R$ for causal regions $R \subseteq N$.
    \end{enumerate}
\end{cor}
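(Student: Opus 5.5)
The plan is to deduce both parts from \Cref{prop:cones-of-min-lightsub}, \Cref{cor:rays-of-cosmo} and \Cref{lem:lightcone-refinement}. The work is to translate connected subgraphs of $\overline T_N$ into lightcone regions. By \Cref{lem:lightcone-refinement}, the minimal lightcone refinement is glued from the minimal lightcone subdivisions of the simplices $\mathbb R_+^{\dbl N}$. Its maximal cones are therefore the maximal cones of these subdivisions for maximal nested sets $N$. By \Cref{prop:cones-of-min-lightsub}, these are indexed by pairs $(N,f)$ with $f \in N$, and the cone for $(N,f)$ is spanned by the $w_s$, where $s$ runs over connected subgraphs of $\overline T_N$ avoiding the edge $f$. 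Distinct pairs give distinct cones, since the cones for different $f$ are distinct full-dimensional cones of one subdivision, and different $N$ give different supports.

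The key step is a dictionary between connected subgraphs $s$ of $\overline T_N$ (including single vertices) and lightcone regions $(R^+,R^-)$ with $R^\pm \subseteq N$. A connected subgraph of a rooted tree has at most one top vertex. If $s$ does not contain the root $\star$, exactly one edge enters $s$ from above, namely the edge $r_*$ whose lower endpoint is the top vertex of $s$; set $R^+=\{r_*\}$. If $s$ contains $\star$, set $R^+=\emptyset$. The edges leaving $s$ downward are edges $r_i$ whose upper endpoint lies in $s$ but which are not in $s$; set $R^-=\{r_i\}$. By \eqref{eq:ws-def}, $w_s = w_{(R^+,R^-)}$.

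Next I check that $(R^+,R^-)$ is a lightcone region. The set $R^+\cup R^-$ is a subset of $N$, hence nested. The $r_i$ are pairwise incomparable, because they hang off $s$ in different branches, and they lie below $r_*$, so $R^+ \subseteq \max R$ and $R^- \subseteq \min R$. Conversely, given a lightcone region, $s$ is recovered as the part of $\overline T_N$ below $r_*$ (or everything, if $R^+=\emptyset$) with the subtrees below the $r_i$ removed. This part is connected. For that, I need that each $r_i$ lies below $r_*$ when $R^+\neq\emptyset$, or at least that $r_i$ not below $r_*$ can be discarded. I expect this to be the delicate point, so I would read the definition so that only pairs compatible with a subgraph count. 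Under this correspondence, the edges of $s$ are exactly the $e$ that $(R^+,R^-)$ "contains": $e<r_*$ and $e\not\le r_i$. Hence "$s$ avoids edge $f$" becomes "$(R^+,R^-)$ does not contain $f$", which proves (i).

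For (ii), combine \Cref{prop:cosmo-as-subcone} with \Cref{cor:rays-of-cosmo}. The cone $U_N$ is a face of the cone for $(N,\hat 1)$, and its rays are the $w_s$ for connected subgraphs $s$ of the Hasse diagram $F_N$, i.e.\ those not containing $\star$. These are exactly the lightcone regions with $|R^+|=1$, which are the causal regions $R\subseteq N$. So $U_N=\cone(w_R \mid R\subseteq N \text{ causal})$. This also completes \Cref{prop:universe-rays}, since each $w_R$ is a ray of some $U_N$ and distinct $R$ give distinct vectors.

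The main obstacle is getting the bijection right at the boundary cases. These are single-vertex subgraphs (for example $s=\{v\}$, giving the region $\{f\}\cup\{\text{edges below }v\}$), the root vertex (giving $R^+=\emptyset$, with the vectors $f^-$ and sums of them), and the edge-containment convention with strict versus non-strict inequalities. I would verify these against \Cref{ex:star-min-lightsub} before writing the general argument.
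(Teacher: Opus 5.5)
Your proposal is correct and is essentially the paper's argument. The cone for $(N,f)$ is read off from \Cref{prop:cones-of-min-lightsub}, lightcone regions not containing $f$ are identified with the sets of edges entering and leaving connected subgraphs of $\overline T_N$ that avoid the edge $f$, and (ii) then follows from \Cref{prop:cosmo-as-subcone} and \Cref{cor:rays-of-cosmo}.

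Your reading of the definition (when $R^+=\{r_*\}$, every element of $R^-$ lies strictly below $r_*$) is the intended one, and it is genuinely needed. For incomparable $g,h\in N$, the literal definition admits $(\{g\},\{h\})$, which does not contain $\hat 1$, yet $g^++h^-$ lies outside the cone for $(N,\hat 1)$ because $\lambda_h(g^++h^-)=0<1=\lambda_{\hat 1}(g^++h^-)$. When you check against \Cref{ex:star-min-lightsub}, note that the vector $a^++b^-$ listed there in $C_{\hat 1}$ and $U_N$ is a typo.
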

\begin{proof}
    For a fixed nested set $N$, the cones in the subdivision of $\mathbb R_+^{\dbl N}$ correspond to $f\in N$, see \Cref{prop:cones-of-min-lightsub}. More precisely, the associated cone is spanned by the $w_s$ defined in \eqref{eq:ws-def} for connected subgraphs $s$ of $\overline T_N$ that do not contain the edge $f$. The lightcone regions $(R^+,R^-)$ with $R^+,R^- \subseteq N$ that do not contain $f$ are precisely the sets of edges that enter resp.\ leave such a connected subgraph $s$. This implies $i)$ and by \Cref{cor:rays-of-cosmo} also $ii)$.
\end{proof}

\begin{ex}
    Take again the star graph from \Cref{ex:star-wavefunction}. If the edges are labeled $1,2,3$, then the poset $\mathcal G$ is
     \[ \begin{tikzpicture}[scale=0.65,node distance=20mm]
  	\node (o) at (2.5,-1.2) {$\emptyset$};
	\node (a) at (0,0)   {1};
  	\node (b) at (2.5,0) {2};
  	\node (c) at (5,0)   {3};
  	\node (d) at (1.25,1.8)   {12};
  	\node (e) at (3.75,1.8) {23};
  	\node (f) at (6.25,1.8)   {13};
  	\node (t) at (2.5,3.4) {$123$};
	  \draw (d) -- (a);
	  \draw (d) -- (b);
	  \draw (e) -- (b);
	  \draw (e) -- (c);
	  \draw (f) -- (c);
	  \draw[black!40] (f) -- (a);
	  \draw (t) -- (d);
 	  \draw (t) -- (e);
  	  \draw (t) -- (f);
	  \draw (o) -- (a);
	  \draw (o) -- (b);
	  \draw (o) -- (c);
	\end{tikzpicture}
  \]
  Let us determine the cones indexed by $(N,12)$ for maximal nested sets $N$ with $12 \in N$. These are the two chains $\{1,12,\hat 1\}$ and $\{2,12,\hat 1\}$. The associated lightcone regions that do not contain $12$ are
  $$\{(\emptyset,\{\hat 1\}), (\{\hat 1\}, \{12\}),(\emptyset,\{12\}),(\{12\},\emptyset),(\{12\},\{1\}),(\{1\},\emptyset)\}$$
  and
  $$\{(\emptyset,\{\hat 1\}), (\{\hat 1\}, \{12\}),(\emptyset,\{12\}),(\{12\},\emptyset),(\{12\},\{2\}),(\{2\},\emptyset)\}$$
  They have the lightcone regions $\{(\emptyset,\{\hat 1\}), (\{\hat 1\}, \{12\}),(\emptyset,\{12\}),(\{12\},\emptyset)\}$ in common. In particular, the associated maximal cones in the refinement intersect in
  $$\langle \hat 1^-,\hat 1^+ + 12^-, 12^-, 12^+\rangle.$$
\end{ex}

Our second special choice of $(\mathcal B_N)_N$ is the set of graphical building sets on the trees $\overline T_N$. This is the maximal choice of building sets $\mathcal B_N$ satisfying the assumptions of \Cref{lem:lightcone-refinement}, and we refer to it as the \textit{maximal lightcone refinement}.
\vspace{1em}
\begin{prop}\label{prop:max-lightcone-refine}\leavevmode
    \begin{enumerate}
        \item The maximal lightcone refinement of $\Sigma^{\dbl \mathcal G}$ is a unimodular simplicial fan. Its simplices are given by
        $$\cone(w_t \ | \ t \in \mathcal T)$$
        where $\mathcal T$ is a tubing of $\overline T_N$ for a nested set $N$ such that $\mathcal T$ does not contain $\overline T_N$ and crosses each edge of $\overline T_N$ at least once. Here, $w_t$ denotes again the vectors from \eqref{eq:ws-def}.
        
        \item The simplices of the induced triangulation of $\mathcal U_{\mathcal G}$ correspond to those $\mathcal T$ in $i)$ that define tubings of the Hasse diagram $F_N$, i.e., that do not contain the root of $\overline T_N$.
    \end{enumerate}
\end{prop}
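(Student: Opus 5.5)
The plan is to assemble the global statement from the local one, \Cref{prop:max-lightsub}, using the compatibility result \Cref{lem:lightcone-refinement}. First I verify that the family of graphical building sets $\mathcal B_N = \mathcal B(\overline T_N)$ satisfies the hypotheses of \Cref{lem:lightcone-refinement}. Condition (i) is immediate. For (ii), let $N' = N\setminus\{f\}$. Then $\overline T_{N'}$ is obtained from $\overline T_N$ by contracting the edge $f$: the children of $f$ are reattached to the parent of $f$. Intersecting a connected edge set $t$ with $N'$ is the same as contracting $f$ inside $t$, so the result is again connected. Conversely, every connected subgraph of $\overline T_{N'}$ lifts to a connected subgraph of $\overline T_N$, possibly after adding the edge $f$. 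Hence $\mathcal B_{N'} = \{t\cap N'\}$, and iterating handles arbitrary $N'\subseteq N$. So the local triangulations glue to a fan refining $\Sigma^{\dbl\mathcal G}$.

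Next I identify the cones. Every cone of the glued fan is a face of a maximal simplex $\cone(w_s\mid s\in\mathcal T)$, with $\mathcal T$ a maximal tubing of $\overline T_N$ for a maximal nested set $N$; these simplices are unimodular by \Cref{prop:max-lightsub}(i). A face is given by a subset $\mathcal T'\subseteq\mathcal T$. The minimal coordinate simplex $\mathbb R_+^{\dbl N''}$ containing it is determined by the edges $e$ such that $e^\pm$ occurs in some $w_s$, $s\in\mathcal T'$, that is, by the edges crossing the boundary of some tube. Taking $N''$ to be that set of edges and contracting the remaining edges, the tubes of $\mathcal T'$ become a tubing of $\overline T_{N''}$. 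This tubing does not contain the whole tree and crosses every edge of $N''$ at least once.

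Conversely, given a tubing $\mathcal T$ of $\overline T_N$ not containing $\overline T_N$ and crossing every edge, I extend it to a maximal tubing of $\overline T_N$. Then $\cone(w_t\mid t\in\mathcal T)$ is a face of a maximal simplex of the local triangulation. The crossing condition guarantees that the relative interior of this face lies in the relative interior of $\mathbb R_+^{\dbl N}$, so $N$ is recovered from $\mathcal T$. This gives the claimed bijection. Unimodularity of the faces is inherited from the maximal simplices.

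For (ii), I use that $\mathcal U_{\mathcal G}$ is a subfan of the minimal lightcone refinement (\Cref{prop:universe-as-subfan}). The maximal refinement refines the minimal one, since graphical building sets contain minimal ones, so it restricts to a triangulation of $\mathcal U_{\mathcal G}$. A simplex lies in $U_N$ exactly when all its rays do. By \Cref{cor:rays-of-cosmo} and \Cref{prop:max-lightsub}(ii), this happens precisely when no tube contains the root.

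I expect the main obstacle to be the bookkeeping in the second step. Specifically, I must show that a tube of $\overline T_N$ that some edge of $N\setminus N''$ fails to cross maps correctly under contraction, and that $w_s$ computed in $\overline T_N$ equals $w_{s'}$ computed in $\overline T_{N''}$. This is checked directly from \eqref{eq:ws-def}, since both only involve the boundary edges.
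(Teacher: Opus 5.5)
Your proposal is correct and is essentially the paper's argument: the local triangulations of \Cref{prop:max-lightsub} glue by \Cref{lem:lightcone-refinement}, and each simplex is read as a tubing of $\overline T_N$ for the minimal nested set $N$ with the simplex contained in $\mathbb R^{2N}_+$. The paper takes this minimal $N$ directly, since the glued fan restricts to the local triangulation there, and so avoids your contraction bookkeeping.

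One claim needs correcting. The crossing condition does not place the relative interior of the cone inside the relative interior of $\mathbb R^{2N}_+$: for $N=\{f\}$, the tubing $\{\star\}$ gives only the ray $f^-$. What the condition does guarantee is that every $f\in N$ contributes $f^+$ or $f^-$ to the support, and that is exactly what you need to recover $N$ as the minimal support.
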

\begin{proof}
    Both $i)$ and $ii)$ follow from \Cref{prop:max-lightsub}. If $C$ is a simplex in the refinement and $N$ minimal with $C \subseteq \mathbb R_+^{\dbl N}$ then $C$ corresponds to a tubing $\mathcal T$ of $\overline T_N$, and since $N$ is minimal each edge of $\overline T_N$ is crossed at least once by $\mathcal T$.
\end{proof}

We will also refer to this triangulation as the \textit{tubing refinement} of the universe fan.

\begin{ex}
    Let us compute two simplices in the maximal lightcone refinement of $\Sigma^{\dbl N}$ and their intersection, again for the star graph from \Cref{ex:star-wavefunction}. Consider the tubings (A) and (B) in \Cref{fig:tubing-ex}. They correspond to the simplices
    $$C_1 = \langle \hat 1^+,\hat 1^+ + 2^-, \hat 1^-, \hat 1^+ + 12^-, 12^+ + 2^-, 2^+ \rangle$$
    and
    $$C_2 = \langle \hat 1^+,\hat 1^+ + 2^-, \hat 1^-, \hat 1^+ + 23^-, 23^+ + 2^-, 2^+ \rangle$$
    which intersect in
    $$C_3 = \langle \hat 1^+, \hat 1^-, \hat 1^+ + 2^-, 2^+\rangle,$$
    the simplex corresponding to the third tubing (C) in \Cref{fig:tubing-ex}. Accordingly, the faces
    $$\langle \hat 1^+,\hat 1^+ + 2^-, \hat 1^+ + 12^-, 12^+ + 2^-, 2^+ \rangle \qquad \text{and} \qquad \langle \hat 1^+,\hat 1^+ + 2^-, \hat 1^+ + 23^-, 23^+ + 2^-, 2^+ \rangle$$ of $C_1$ and $C_2$ are maximal cones of the universe fan, intersecting in $\langle \hat 1^+, \hat 1^+ + 2^-, 2^+\rangle$.

    \begin{figure}
        \centering
        \scalebox{0.9}{
        \begin{subfigure}{0.25\linewidth}
            \centering
            \definecolor{c1a1a1a}{RGB}{26,26,26}
\definecolor{cff3737}{RGB}{255,100,100}
\definecolor{c333333}{RGB}{51,51,51}

\def \globalscale {1.000000}
\begin{tikzpicture}[y=1pt, x=1pt, yscale=\globalscale,xscale=\globalscale, every node/.append style={scale=\globalscale}, inner sep=0pt, outer sep=0pt]
\path[draw=c1a1a1a,fill=cff3737,fill opacity=0.4,line width=1.5pt] (67.9, 
  182.7)arc(-0.0:90.0:14.7 and -14.8)arc(90.0:180.0:14.7 and 
  -14.8)arc(180.0:270.0:14.7 and -14.8)arc(270.0:360.0:14.7 and -14.8) -- cycle;

\path[draw=c1a1a1a,fill=cff3737,fill opacity=0.4,line width=1.7pt] (80.9, 
  108.3).. controls (80.9, 87.2) and (76.6, 57.3) .. (55.8, 54.9).. controls 
  (30.7, 52.1) and (2.6, 83.7) .. (2.6, 109.1).. controls (2.6, 134.5) and 
  (30.7, 166.2) .. (55.8, 163.3).. controls (76.8, 160.9) and (81.0, 129.7) .. 
  (80.9, 108.3) -- cycle;

\path[draw=c1a1a1a,fill=cff3737,fill opacity=0.4,line width=1.5pt] (41.0, 
  153.2).. controls (32.2, 147.2) and (16.9, 135.3) .. (10.5, 124.5).. controls 
  (5.4, 116.0) and (5.2, 101.8) .. (12.6, 95.0).. controls (20.3, 88.0) and 
  (30.7, 89.4) .. (39.9, 94.4).. controls (49.6, 99.6) and (65.9, 116.1) .. 
  (70.6, 125.9).. controls (74.6, 134.5) and (73.8, 147.2) .. (66.8, 153.7).. 
  controls (59.5, 160.6) and (49.4, 158.9) .. (41.0, 153.2) -- cycle;

\path[draw=c1a1a1a,fill=cff3737,fill opacity=0.4,line width=1.5pt] (38.8, 
  109.4)arc(0.0:90.0:14.7 and -14.8)arc(90.0:180.0:14.7 and 
  -14.8)arc(180.0:270.0:14.7 and -14.8)arc(-90.0:0.0:14.7 and -14.8) -- cycle;

\path[draw=c1a1a1a,fill=cff3737,fill opacity=0.4,line width=1.5pt] (67.9, 
  138.2)arc(-0.0:90.0:14.7 and -14.8)arc(90.0:180.0:14.7 and 
  -14.8)arc(180.0:270.0:14.7 and -14.8)arc(-90.0:0.0:14.7 and -14.8) -- cycle;

\path[draw=c1a1a1a,fill=cff3737,fill opacity=0.4,line width=1.5pt] (67.9, 
  79.4)arc(-0.0:90.0:14.7 and -14.8)arc(90.0:180.0:14.7 and 
  -14.8)arc(180.0:270.0:14.7 and -14.8)arc(270.0:360.0:14.7 and -14.8) -- cycle;

\path[fill,line width=1.2pt] (58.4, 182.7)arc(0.0:90.0:5.2 and 
  -5.2)arc(90.0:180.0:5.2 and -5.2)arc(180.0:270.0:5.2 and 
  -5.2)arc(270.0:360.0:5.2 and -5.2) -- cycle;

\path[fill,line width=1.2pt] (58.4, 138.3)arc(0.0:90.0:5.2 and 
  -5.2)arc(90.0:180.0:5.2 and -5.2)arc(180.0:270.0:5.2 and 
  -5.2)arc(270.0:360.0:5.2 and -5.2) -- cycle;

\path[draw=black,fill opacity=0.4,line width=1.5pt] (53.2, 182.7) -- (53.2, 
  138.2);

\path[fill,line width=1.2pt] (29.2, 109.4)arc(0.0:90.0:5.2 and 
  -5.2)arc(90.0:180.0:5.2 and -5.2)arc(180.0:270.0:5.2 and 
  -5.2)arc(270.0:360.0:5.2 and -5.2) -- cycle;

\path[draw=black,fill=c333333,line width=1.5pt] (53.2, 138.2) -- (23.8, 108.9);

\path[fill,line width=1.2pt] (58.4, 79.4)arc(-0.0:90.0:5.2 and 
  -5.2)arc(90.0:180.0:5.2 and -5.2)arc(180.0:270.0:5.2 and 
  -5.2)arc(270.0:360.0:5.2 and -5.2) -- cycle;

\path[draw=black,fill=c333333,line width=1.5pt] (53.2, 79.6) -- (23.8, 108.9);

\node[text=black,line width=0.8pt,scale=1.0,anchor=south west] (text1) at (66.5,
   162.0){$\hat 1$};

\node[text=black,line width=0.8pt,scale=1.0,anchor=south west] (text1-3) at 
  (42.8, 112.9){$12$};

\node[text=black,line width=0.8pt,scale=1.0,anchor=south west] (text1-3-8) at 
  (25.7, 78.8){$2$};

\end{tikzpicture}
            \vspace{-20pt}
            \caption{}
        \end{subfigure}
        \hspace{20pt}
        \begin{subfigure}{0.25\linewidth}
            \centering
            \definecolor{c1a1a1a}{RGB}{26,26,26}
\definecolor{cff3737}{RGB}{255,100,100}
\definecolor{c333333}{RGB}{51,51,51}

\def \globalscale {1.000000}
\begin{tikzpicture}[y=1pt, x=1pt, yscale=\globalscale,xscale=\globalscale, every node/.append style={scale=\globalscale}, inner sep=0pt, outer sep=0pt]
\path[draw=c1a1a1a,fill=cff3737,fill opacity=0.4,line width=1.5pt] (44.1, 
  182.7)arc(180.0:90.0:14.7 and -14.8)arc(90.0:0.0:14.7 and 
  -14.8)arc(360.0:270.0:14.7 and -14.8)arc(270.0:180.0:14.7 and -14.8) -- cycle;

\path[draw=c1a1a1a,fill=cff3737,fill opacity=0.4,line width=1.7pt] (31.1, 
  108.3).. controls (31.2, 87.2) and (35.4, 57.3) .. (56.3, 54.9).. controls 
  (81.3, 52.1) and (109.4, 83.7) .. (109.4, 109.1).. controls (109.4, 134.5) and
   (81.3, 166.2) .. (56.3, 163.3).. controls (35.2, 160.9) and (31.1, 129.7) .. 
  (31.1, 108.3) -- cycle;

\path[draw=c1a1a1a,fill=cff3737,fill opacity=0.4,line width=1.5pt] (71.0, 
  153.2).. controls (79.8, 147.2) and (95.2, 135.3) .. (101.5, 124.5).. controls
   (106.6, 116.0) and (106.8, 101.8) .. (99.4, 95.0).. controls (91.7, 88.0) and
   (81.3, 89.4) .. (72.1, 94.4).. controls (62.4, 99.6) and (46.1, 116.1) .. 
  (41.5, 125.9).. controls (37.4, 134.5) and (38.3, 147.2) .. (45.2, 153.7).. 
  controls (52.5, 160.6) and (62.6, 158.9) .. (71.0, 153.2) -- cycle;

\path[draw=c1a1a1a,fill=cff3737,fill opacity=0.4,line width=1.5pt] (73.2, 
  109.4)arc(180.0:90.0:14.7 and -14.8)arc(90.0:0.0:14.7 and 
  -14.8)arc(0.0:-90.0:14.7 and -14.8)arc(270.0:180.0:14.7 and -14.8) -- cycle;

\path[draw=c1a1a1a,fill=cff3737,fill opacity=0.4,line width=1.5pt] (44.1, 
  138.2)arc(180.0:90.0:14.7 and -14.8)arc(90.0:0.0:14.7 and 
  -14.8)arc(360.0:270.0:14.7 and -14.8)arc(270.0:180.0:14.7 and -14.8) -- cycle;

\path[draw=c1a1a1a,fill=cff3737,fill opacity=0.4,line width=1.5pt] (44.1, 
  79.4)arc(180.0:90.0:14.7 and -14.8)arc(90.0:-0.0:14.7 and 
  -14.8)arc(360.0:270.0:14.7 and -14.8)arc(270.0:180.0:14.7 and -14.8) -- cycle;

\path[fill,line width=1.2pt] (53.6, 182.7)arc(180.0:90.0:5.2 and 
  -5.2)arc(90.0:0.0:5.2 and -5.2)arc(360.0:270.0:5.2 and 
  -5.2)arc(270.0:180.0:5.2 and -5.2) -- cycle;

\path[fill,line width=1.2pt] (53.6, 138.3)arc(180.0:90.0:5.2 and 
  -5.2)arc(90.0:0.0:5.2 and -5.2)arc(360.0:270.0:5.2 and 
  -5.2)arc(270.0:180.0:5.2 and -5.2) -- cycle;

\path[draw=black,fill opacity=0.4,line width=1.5pt] (58.8, 182.7) -- (58.8, 
  138.2);

\path[fill,line width=1.2pt] (82.8, 109.4)arc(180.0:90.0:5.2 and 
  -5.2)arc(90.0:0.0:5.2 and -5.2)arc(360.0:270.0:5.2 and 
  -5.2)arc(270.0:180.0:5.2 and -5.2) -- cycle;

\path[draw=black,fill=c333333,line width=1.5pt] (58.8, 138.2) -- (88.2, 108.9);

\path[fill,line width=1.2pt] (53.6, 79.4)arc(180.0:90.0:5.2 and 
  -5.2)arc(90.0:-0.0:5.2 and -5.2)arc(360.0:270.0:5.2 and 
  -5.2)arc(270.0:180.0:5.2 and -5.2) -- cycle;

\path[draw=black,fill=c333333,line width=1.5pt] (58.8, 79.6) -- (88.2, 108.9);

\node[text=black,line width=0.8pt,scale=1.0,anchor=south west] (text1) at (76.5,
   163.4){$\hat 1$};

\node[text=black,line width=0.8pt,scale=1.0,anchor=south west] (text1-3) at 
  (56.3, 112.9){$23$};

\node[text=black,line width=0.8pt,scale=1.0,anchor=south west] (text1-3-8) at 
  (75.8, 79.8){$2$};

\end{tikzpicture}
            \vspace{-20pt}
            \caption{}
        \end{subfigure}
        \hspace{20pt}
        \begin{subfigure}{0.25\linewidth}
            \centering
            \definecolor{c1a1a1a}{RGB}{26,26,26}
\definecolor{cff3737}{RGB}{255,100,100}

\def \globalscale {1}
\begin{tikzpicture}[y=1pt, x=1pt, yscale=\globalscale,xscale=\globalscale, every node/.append style={scale=\globalscale}, inner sep=0pt, outer sep=0pt]
\path[draw=c1a1a1a,fill=cff3737,fill opacity=0.4,line width=1.5pt] (15.6, 
  182.7)arc(180.0:90.0:14.7 and -14.8)arc(90.0:0.0:14.7 and 
  -14.8)arc(360.0:270.0:14.7 and -14.8)arc(270.0:180.0:14.7 and -14.8) -- cycle;

\path[draw=c1a1a1a,fill=cff3737,fill opacity=0.4,line width=1.5pt] (10.6, 
  69.5).. controls (14.1, 62.9) and (21.8, 57.2) .. (29.2, 57.1).. controls 
  (37.1, 56.9) and (45.5, 62.6) .. (49.4, 69.5).. controls (62.0, 92.1) and 
  (60.6, 123.8) .. (49.4, 147.1).. controls (46.1, 153.9) and (39.3, 160.9) .. 
  (31.8, 161.2).. controls (23.3, 161.6) and (14.6, 154.6) .. (10.6, 147.1).. 
  controls (-1.6, 124.3) and (-1.6, 92.3) .. (10.6, 69.5) -- cycle;

\path[draw=c1a1a1a,fill=cff3737,fill opacity=0.4,line width=1.5pt] (15.6, 
  138.2)arc(180.0:90.0:14.7 and -14.8)arc(90.0:0.0:14.7 and 
  -14.8)arc(360.0:270.0:14.7 and -14.8)arc(270.0:180.0:14.7 and -14.8) -- cycle;

\path[draw=c1a1a1a,fill=cff3737,fill opacity=0.4,line width=1.5pt] (15.6, 
  79.4)arc(180.0:90.0:14.7 and -14.8)arc(90.0:-0.0:14.7 and 
  -14.8)arc(360.0:270.0:14.7 and -14.8)arc(270.0:180.0:14.7 and -14.8) -- cycle;

\path[fill,line width=1.2pt] (25.2, 182.7)arc(180.0:90.0:5.2 and 
  -5.2)arc(90.0:0.0:5.2 and -5.2)arc(360.0:270.0:5.2 and 
  -5.2)arc(270.0:180.0:5.2 and -5.2) -- cycle;

\path[fill,line width=1.2pt] (25.2, 138.3)arc(180.0:90.0:5.2 and 
  -5.2)arc(90.0:0.0:5.2 and -5.2)arc(360.0:270.0:5.2 and 
  -5.2)arc(270.0:180.0:5.2 and -5.2) -- cycle;

\path[draw=black,fill opacity=0.4,line width=1.5pt] (30.3, 182.7) -- (30.3, 
  138.2);

\path[fill,line width=1.2pt] (25.2, 79.4)arc(180.0:90.0:5.2 and 
  -5.2)arc(90.0:-0.0:5.2 and -5.2)arc(360.0:270.0:5.2 and 
  -5.2)arc(270.0:180.0:5.2 and -5.2) -- cycle;

\path[draw=c1a1a1a,fill=black,line width=1.5pt] (30.0, 138.3) -- (30.3, 79.4) --
   (30.3, 79.4) -- (30.3, 79.4);

\node[text=black,line width=1.5pt,anchor=south west] (text4) at (45.9, 
  161.2){$\hat 1$};

\node[text=black,line width=1.5pt,anchor=south west] (text4-0) at (35.2, 
  105.5){$2$};

\path[line width=0.9pt] (17.1, 52) rectangle (41.8, 51.1);

\end{tikzpicture}
            \vspace{-20pt}
            \caption{}
        \end{subfigure}
        }
        \caption{Tubings of $\overline T_N$ for different $N$.}\label{fig:tubing-ex}
    \end{figure}

\end{ex}

%%%%%%%%%%%%%%%%%%%%%%

%%%%%%%%%%%%%%%%%%%%%%
%%%%%%%%%%%%%%%%%%%%%%
%%%%%%%%%%%%%%%%%%%%%%
%%%%%%%%%%%%%%%%%%%%%%
\section{Generalized Cosmohedra from nested set fans}\label{sec:cosmohedra}

It was observed in \cite{ArkFigVaz24} that there is a `blowup' of the associahedron, the \textit{Cosmohedron}, that has the same combinatorics as the `old-fashioned perturbation theory' of the cosmological wavefunction. Dually, this `blowup' corresponds to a refinement of normal fans.

In this section, we explain how this and other refinements are obtained from the lightcone refinements of the universe fan. In fact, our construction works for a general atomic lattice $L$ and building set $\mathcal G$. We denote the set of atoms by $E$.

As in \cref{sec:refinements}, our strategy is to consider first the doubled free nested set fan $\Sigma^{\dbl \mathcal G}$ and then restrict to the universe fan. Let $\Sigma_{\mathcal G}$ denote the nested set fan that we recalled in \Cref{dfn:nested-set-fan}. We will see that refinements of $\Sigma^{\dbl \mathcal G}$ induce refinements of $\Sigma_{\mathcal G}$, while subfans of these refinements are induced by restriction to $\mathcal U_{\mathcal G}$, which follows from \Cref{prop:universe-as-subfan}. Of particular interest is the link $\overline \Sigma_{\mathcal G} := \Sigma_{\mathcal G}/\langle u_{\mathrm{tot}} \rangle$ (where again $u_{\mathrm{tot}} = \sum_{e\in E} e$) and its induced refinements. If $L$ is a boolean lattice, $\overline \Sigma_{\mathcal G}$ is the normal fan of a polytope (a \textit{generalized permutahedron}, see \cite{Pos09}).

The heart of the construction is the following diagram:
\[\begin{tikzcd}
	& {\mathbb R^{\mathcal G}} \\
	{\mathbb R^{\dbl {\mathcal G}}} && {\mathbb R^E}
	\arrow["p",from=1-2, to=2-3]
	\arrow[swap,"\Delta_0",from=1-2, to=2-1]
    \end{tikzcd}\]
Here, $\Delta_0$ is the embedding sending $\hat 1$ to $\hat 1^+$ and $\hat 1 \not= g$ to $g^+ + g^-$. The map $p$ is the projection $\mathbb R^{\mathcal G} \to \mathbb R^E, \ f \mapsto \sum_{e\in f} e$ from \Cref{prop:nested-projection}. Let us write $\overline p:\mathbb R^{\mathcal G} \to \mathbb R^E/\langle u_{\mathrm{tot}} \rangle$ for its composition with the quotient map $\mathbb R^E \to \mathbb R^E/\langle u_{\mathrm{tot}}\rangle$.

\begin{lem}\label{lem:induced-refinement}
    We have $p(\Delta_0^{-1}(\Sigma^{\dbl \mathcal G})) = \Sigma_{\mathcal G}$ and $\overline p(\Delta_0^{-1}(\mathcal U_{\mathcal G})) = \overline \Sigma_{\mathcal G} := \Sigma_{\mathcal G}/\langle u_{\mathrm{tot}} \rangle$. In particular, refinements of $\Sigma^{\dbl \mathcal G}$ resp.\ $\mathcal U_{\mathcal G}$ induce refinements of $\Sigma_{\mathcal G}$ resp.\ $\overline \Sigma_{\mathcal G}$.
\end{lem}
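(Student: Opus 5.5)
The plan is to compute $\Delta_0^{-1}$ conewise and then apply $p$ and \Cref{prop:nested-projection}. Since $\Delta_0$ is a linear injection that sends the coordinate vector $g$ into $\mathbb R^{\dbl\{g\}}$, a point $x=\sum_g \mu_g g$ lies in $\Delta_0^{-1}(\mathbb R_+^{\dbl N})$ exactly when its support is contained in $N$ and all $\mu_g\ge 0$. The coefficients of $g^+$ and $g^-$ in $\Delta_0(x)$ are $\mu_g$ for $g\neq\hat 1$, and those of $\hat 1^+,\hat 1^-$ are $\mu_{\hat 1},0$. Hence $\Delta_0^{-1}(\mathbb R_+^{\dbl N})=\cone N$, and the fan $\Delta_0^{-1}(\Sigma^{\dbl\mathcal G})$ is precisely the free nested set fan $\Sigma^{\mathcal G}$. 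By \Cref{dfn:nested-set-fan}, applying $p$ gives $\Sigma_{\mathcal G}$, which proves the first identity.

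For the second identity, I will compute $\Delta_0^{-1}(U_N)$. On the image of $\Delta_0$ we have $p_g=m_g=\mu_g$ for $g\neq\hat 1$, together with $p_{\hat 1}=\mu_{\hat 1}$ and $m_{\hat 1}=0$. The sum $\sum_{g<f<\hat 1}(p_f-m_f)$ therefore vanishes, and $F_g$ pulls back to $\mu_{\hat 1}-\mu_g$. So $\Delta_0^{-1}(U_N)$ is the cone in $\cone N$ cut out by $\mu_g\le\mu_{\hat 1}$ for all $g\in N$ (if $\hat 1\notin N$, this forces the cone to be $0$). Its image under $\overline p$ must then be identified with the image of $\cone N$ in $\mathbb R^E/\langle u_{\mathrm{tot}}\rangle$. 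Since $p(\hat 1)=u_{\mathrm{tot}}$, modding out shifts $\mu_{\hat 1}$ freely. Given any $x\in\cone N$, I replace $\mu_{\hat 1}$ by $\max(\mu_{\hat 1},\max_g \mu_g)$; this does not change $\overline p(x)$ and lands in $\Delta_0^{-1}(U_N)$. Conversely, $\Delta_0^{-1}(U_N)\subseteq\cone N$. It follows that $\overline p(\Delta_0^{-1}(U_N))=\overline p(\cone N)$, and taking unions over maximal $N$ (which all contain $\hat 1$) yields $\overline\Sigma_{\mathcal G}$.

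To check that $\overline\Sigma_{\mathcal G}$ is genuinely the quotient fan $\Sigma_{\mathcal G}/\langle u_{\mathrm{tot}}\rangle$, I use that every maximal cone of $\Sigma_{\mathcal G}$ contains $u_{\mathrm{tot}}=p(\hat 1)$. Injectivity from \Cref{prop:nested-projection} then makes the images of the cones $\cone N$ form a fan, and passing to the quotient by a lineality-type ray common to all maximal cones preserves the fan structure.

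For the ``in particular'', let $\Gamma$ refine $\Sigma^{\dbl\mathcal G}$. Each cone of $\Delta_0^{-1}(\Gamma)$ lies in some $\cone N$, and these pullback cones subdivide $\Sigma^{\mathcal G}$. Since $p$ is injective and linear on each $\cone N$, the images of these cones subdivide the cones of $\Sigma_{\mathcal G}$. The same reasoning applies to refinements of $\mathcal U_{\mathcal G}$, using that $\overline p$ restricted to $\Delta_0^{-1}(U_N)$ maps onto $\overline p(\cone N)$. I expect the main obstacle to be this last step for $\overline p$: the map is no longer injective on $\Delta_0^{-1}(U_N)$, since it collapses the $\hat 1$ direction. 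One has to argue that images of the subdividing cones still form a fan rather than overlapping. I would handle this by noting that $\Delta_0^{-1}(U_N)$ modulo $\hat 1$ maps bijectively onto $\overline p(\cone N)$ by the injectivity of $p$, so that images of faces intersect in images of faces.
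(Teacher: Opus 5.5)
Your derivation of the two identities is the paper's argument, spelled out in more detail. You show $\Delta_0^{-1}(\mathbb R^{\dbl N}_+)=\cone N$, that the causal cone pulls back to $\mu_g\le\mu_{\hat 1}$, and that raising $\mu_{\hat 1}$ is invisible modulo $\hat 1$. Your treatment of refinements of $\dblfan{\mathcal G}$ via injectivity of $p$ also matches the paper. In your argument, as in the paper's, the identity for $\mathcal U_{\mathcal G}$ holds only at the level of maximal cones (see the face example below).

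You correctly locate the obstacle for refinements of $\mathcal U_{\mathcal G}$, but your proposed fix does not work, and this is a genuine gap. The bijection $\Delta_0^{-1}(U_N)/\langle\hat 1\rangle\to\overline p(\cone N)$ is true but beside the point. The collapse happens in the quotient $\Delta_0^{-1}(U_N)\to\Delta_0^{-1}(U_N)/\langle\hat 1\rangle$, whose fibres are the half-lines $\mu_{\hat 1}\ge\max_g\mu_g$. Cones stacked along the $\hat 1$ direction therefore get overlapping images.

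Concretely, take the boolean lattice on $\{1,2,3\}$ with $\mathcal G=L\setminus\{\hat 0\}$ (\Cref{ex:star-wavefunction}), and $N=\{a,b,\hat 1\}$ with $a=\{1\}$, $b=\{1,2\}$. Then $\Delta_0^{-1}(U_N)=\{\mu\ge 0:\mu_a,\mu_b\le\mu_{\hat 1}\}$.

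Already without refining, the face $U_N\cap\{F_a=0\}$ pulls back to $\cone(\hat 1+a,\ \hat 1+a+b)$. Its image $\cone(\overline p(a),\ \overline p(a)+\overline p(b))$ is two-dimensional but is not a face of $\overline p(\cone N)=\cone(\overline p(a),\overline p(b))$. So the claim that images of faces intersect in images of faces fails.

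Worse, refine $\mathcal U_{\mathcal G}$ by the hyperplane $p_{\hat 1}=2p_a$, taking the common refinement with its two half-spaces. On the diagonal this reads $\mu_{\hat 1}=2\mu_a$. The piece $\mu_{\hat 1}\ge 2\mu_a$ maps onto all of $\overline p(\cone N)$. The piece $\mu_{\hat 1}\le 2\mu_a$ maps onto $\cone(\overline p(a),\ \overline p(a)+2\overline p(b))$, a full-dimensional proper subcone.

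So an arbitrary refinement of $\mathcal U_{\mathcal G}$ does not induce a refinement of $\overline\Sigma_{\mathcal G}$, and no argument can close this step as stated. The paper's proof only cites injectivity of $p$ on $\Sigma^{\mathcal G}$ here, so it passes over the same point.

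What does hold, and what the paper actually uses later, is the statement for a restricted class of refinements. These are the refinements whose walls inside each $\Delta_0^{-1}(U_N)$ do not depend on $\mu_{\hat 1}$. Equivalently, their maximal cones there have the form $\{\mu\in\Delta_0^{-1}(U_N):\pi(\mu)\in D'\}$, where $\pi:\mathbb R^N\to\mathbb R^N/\langle\hat 1\rangle$ and $D'$ runs over a subdivision of $\pi(\cone N)$.

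For these refinements, your bijectivity modulo $\hat 1$ does the job. The images of the maximal cones are the images of the $D'$ under the injective map $\mathbb R^N/\langle\hat 1\rangle\to\mathbb R^E/\langle u_{\mathrm{tot}}\rangle$ induced by $\overline p$, so they subdivide $\overline p(\cone N)$.

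Lightcone refinements are of this kind by \Cref{cor:local-nest-refinement}. Each tube $t$ containing the edge $\hat 1$ contributes $\max_{e\in t}\mu_e=\mu_{\hat 1}$, which is linear on $\Delta_0^{-1}(U_N)$. The remaining terms do not involve $\mu_{\hat 1}$.
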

\begin{proof}
    The first statement holds by definition and \Cref{prop:nested-projection}. For the second assertion, recall that $\mathcal U_{\mathcal G}$ is by definition the intersection of $\Sigma^{\dbl \mathcal G}$ with the causal cone (\Cref{dfn:causal-cone}). Under $\Delta_0$, the causal cone pulls back to the cone cut out in $\mathbb R^{\mathcal G}$ by $\hat 1^* \geq g^*$ for all $g<\hat 1$. Thus, the projection of $\Delta^{-1}(\mathcal U_{\mathcal G})$ to $\mathbb R^{\mathcal G}/\langle \hat 1 \rangle$ agrees with $\Sigma^{\mathcal G}/\langle \hat 1 \rangle$. Applying $p$, this shows the statement. Finally, the third statement follows since the projection $\mathbb R^{\mathcal G} \to \mathbb R^E$ is injective on $\Sigma^{\mathcal G}$ by \Cref{prop:nested-projection}.
\end{proof}

\begin{dfn}
    A \textit{lightcone refinement} of the nested set fan $\Sigma_{\mathcal G}$ is the fan $p(\Delta_0^{-1}\Gamma)$ for a lightcone refinement $\Gamma$ of $\Sigma^{\dbl \mathcal G}$ (see \Cref{lem:lightcone-refinement}).
\end{dfn}

Note that the induced refinements of $\overline \Sigma_{\mathcal G}$ agree with $\overline p(\Delta_0^{-1}\Gamma')$ where $\Gamma'$ is the subfan of $\Gamma$ refining $\mathcal U_{\mathcal G}$, cf.\ \Cref{prop:universe-as-subfan}.

Let us study the lightcone refinements of nested set fans. As a first observation, let us show that they are `good' lower-dimensional representations of the lightcone refinements of $\Sigma^{\dbl \mathcal G}$ and $\mathcal U_{\mathcal G}$.

\begin{prop}\label{prop:coolbijection}
    Let $\Gamma$ denote a lightcone refinement of $\Sigma^{\dbl \mathcal G}$. Then $p\circ \Delta_0^{-1}$ induces a bijection between the maximal cones in $\Gamma$ and the maximal cones in $p(\Delta_0^{-1}\Gamma)$. The same is true if $\Gamma$ is a lightcone refinement of $\mathcal U_{\mathcal G}$.
\end{prop}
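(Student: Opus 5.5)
We need to show that $p\circ\Delta_0^{-1}$ matches the maximal cones of $\Gamma$ bijectively with those of $p(\Delta_0^{-1}\Gamma)$. The approach is to localize to a single maximal nested set $N$, where everything reduces to the map $\phi$ of \Cref{sec:cuts-projection}, and then glue using \Cref{prop:nested-projection}.

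\textbf{Step 1: localize.} Fix a maximal nested set $N$. Restricted to $\mathbb R^{\mathcal G}$, the preimage $\Delta_0^{-1}(\mathbb R^{\dbl N}_+)$ is $\mathbb R^N_+$, since $\Delta_0(\sum_{g} x_g g) = x_{\hat 1}\hat 1^+ + \sum_{g\neq\hat 1} x_g(g^++g^-)$ has nonnegative coordinates exactly when all $x_g\geq 0$. Hence every cone of $\Delta_0^{-1}\Gamma$ lies in some $\mathbb R^N_+$. By \Cref{prop:nested-projection}, $p$ is injective on $\Sigma^{\mathcal G}$. So it suffices to show two things for each $N$:
\begin{enumerate}
\item $\Delta_0^{-1}$ maps the maximal cones of $\Gamma\cap\mathbb R^{\dbl N}_+$ bijectively onto the full-dimensional cones of the induced subdivision of $\mathbb R^N_+$;
\item maximal cones of $\Gamma$ lying in different $\mathbb R^{\dbl N}_+$ do not collapse to the same cone.
\end{enumerate}
Point (ii) is automatic, because the images lie in the distinct maximal cones $\mathbb R^N_+$ of $\Sigma^{\mathcal G}$, and $p$ is injective there.

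\textbf{Step 2: compose with $\phi$.} Compute $\phi\circ\Delta_0$ on $\mathbb R^N$. An element $g\neq\hat 1$ maps to $\phi(g^+)+\phi(g^-)=u_{\mathrm{tot}}-g\equiv -g$. If $N$ has a unique top, $\hat 1\mapsto \phi(\hat 1^+) = u_{\mathrm{tot}}-\hat 1\equiv-\hat 1$. Thus, up to sign and modulo $u_{\mathrm{tot}}$, $\phi\circ\Delta_0$ is the identity on $\mathbb R^N/\langle u_{\mathrm{tot}}\rangle$.

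Equivalently, in terms of the forms of \Cref{prop:lightcone-subdivision}: a point $x=\sum x_g g$ has $\Delta_0^*\lambda_e(x)=\sum_{f\neq e}x_f + x_{\hat 1}\cdot[\,e<\hat 1\,]$, which equals $\mathrm{const}-x_e$. Hence the pulled-back function
$$\sum_{t\in\mathcal B_N}\min_{e\in t}\lambda_e$$
becomes $-\sum_t\max_{e\in t} x_e$ up to an affine-linear term. The induced subdivision of $\mathbb R^N_+$ is therefore $\Sigma^{\max}_{\mathcal B_N}$, with maximal cones $C^T$ indexed by $\mathcal B_N$-trees $T$.

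\textbf{Step 3: the bijection.} The maximal cones of $\Gamma\cap\mathbb R^{\dbl N}_+$ are indexed by maximal nested sets $M$ of $\mathcal B_N$ (\Cref{prop:lightcone-subdivision}). Maximal nested sets of $\mathcal B_N$ correspond bijectively to $\mathcal B_N$-trees, via $M=\{T_i\}$. So I need to check that the domain of linearity indexed by $M$ pulls back to exactly $C^{T}$ for the tree $T$ of $M$, and that this pullback is full-dimensional. Full-dimensionality follows because $\Delta_0(\mathbb R^N_{>0})$ lies in the relative interior of $\mathbb R^{\dbl N}_+$ intersected with an $|N|$-dimensional subspace, and there the linear forms $\min$-selecting each $t$ can be realized with strict inequalities, namely by choosing $x$ with $x_e$ strictly ordered along $T$. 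Thus each maximal cone meets $\Delta_0(\mathbb R^N)$ in a full-dimensional cone, and distinct maximal cones give distinct tree orders, hence distinct cones.

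\textbf{Step 4: the universe fan.} For the $\mathcal U_{\mathcal G}$ version, restrict to the causal cone, whose pullback is $\hat 1^*\geq g^*$. Note that $\hat 1\in t$ for the relevant trees, namely those rooted at $\hat 1$, which are the maximal cones of the subfan refining $U_N$. Then quotient by $\hat 1$ as in \Cref{lem:induced-refinement} and apply $\overline p$.

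The main obstacle is Step 3: verifying that every maximal cone of the lightcone subdivision actually meets the $|N|$-dimensional slice $\Delta_0(\mathbb R^N_+)$ in full dimension, since a priori a $2|N|$-dimensional cone could meet it only in lower dimension. The identity $\Delta_0^*\lambda_e=\mathrm{const}-x_e$ settles this: the pullback of the piecewise linear function is a genuine $\max$-type function with the same combinatorial type as $\alpha^{\min}$, so each $M$ is realized.
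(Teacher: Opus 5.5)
Your proof is correct, but it takes a different route from the paper.

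\textbf{The two routes.} The paper first reduces to the maximal (tubing) lightcone refinement, which refines every other one. For each maximal tubing $\mathcal T$ of $\overline T_N$ it then exhibits an explicit point of the cone on the diagonal. It peels $\mathcal T$ into successive layers $\mathcal T_0,\mathcal T_1,\dots$ of minimal tubes, each layer covering all vertices, so that $\sum_{t\in\mathcal T_i}w_t$ is a sum of terms $f^++f^-$ (plus a multiple of $\hat 1^+$ in the universe-fan case). Summing the layers gives an interior point.

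You instead use $\phi\circ\Delta_0\equiv-\mathrm{id}$ modulo $u_{\mathrm{tot}}$. This gives $\Delta_0^{-1}\sigma_T=C^T\cap\mathbb R^N_+$ for every maximal cone $\sigma_T=\{y\in\mathbb R^{2N}_+ : \lambda_i(y)\le\lambda_j(y) \text{ for } i\to j\in T\}$. That identity is exactly \Cref{cor:local-nest-refinement}(i), which the paper proves only afterwards, so you have effectively reversed the order of the two results.

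Your route treats every admissible $\mathcal B_N$ uniformly. It needs neither the simplicial description of \Cref{prop:max-lightcone-refine} nor the reduction to the finest refinement, and it identifies the image cones explicitly. The paper's route instead produces concrete diagonal points tied to the tubing (Russian doll) picture.

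\textbf{Details to fix.}
\begin{enumerate}
\item The set $\Delta_0(\mathbb R^N_{>0})$ is not in the relative interior of $\mathbb R^{2N}_+$; it lies in the facet $m_{\hat 1}=0$. Since $\hat 1^-=w_{\{\star\}}$ is a ray of every maximal cone, no interior point of a maximal cone of $\Gamma$ lies in $\Delta_0(\mathbb R^{\mathcal G})$. The interior-point formulation has the same caveat: the layered vector contains $\hat 1^++\hat 1^-$ and has to be projected to $m_{\hat 1}=0$.
\item This is harmless, because $\phi(\hat 1^-)=0$. So no $\lambda_e$ involves $m_{\hat 1}$, and each $\sigma_T$ is a product with $\mathbb R_+\hat 1^-$. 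Full-dimensionality then follows directly from $\Delta_0^{-1}\sigma_T=C^T\cap\mathbb R^N_+$, using points that are positive and strictly decreasing along $T$. Make this cone-by-cone identity the actual argument of Step 3, rather than a statement about domains of linearity of the restricted function.
\item Your formula for $\Delta_0^*\lambda_e$ counts $x_{\hat 1}$ twice. It should read $\sum_{f\in N\setminus\{e\}}x_f$.
\item Step 4 needs no quotient by $\hat 1$ and no $\overline p$ for the statement as given. On $\sigma_T$ the minimum of the $\lambda_e$ is attained at the root of $T$. By \Cref{prop:cosmo-as-subcone}, the maximal cones of the refinement of $U_N$ are therefore the facets $\sigma_T\cap\{m_{\hat 1}=0\}$ with $T$ rooted at $\hat 1$. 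Their pullbacks $C^T\cap\mathbb R^N_+$ are again distinct and full-dimensional, and they automatically satisfy $x_{\hat 1}\ge x_g$.
\end{enumerate}
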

\begin{proof}
    It suffices to show this for the maximal lightcone refinement, since it refines every other one. By \Cref{prop:max-lightcone-refine}, a maximal cone of $\Gamma$ is spanned by $w_t$ for $t \in \mathcal T$ where $\mathcal T$ is a maximal tubing of the tree $\overline T_N$ associated to a maximal nested set $N$. It suffices to construct a vector in the interior of the cone that lies on the diagonal $\Delta_0$. For this, we inductively define subsets of $\mathcal T$ as follows: let $\mathcal T_0$ denote the (inclusion-wise) minimal elements of $\mathcal T$ and let $\mathcal T'_0$ be those elements of $\mathcal T_0$ that are contained in another tube in $\mathcal T$. For $i>0$ let $\mathcal T_i$ be the minimal elements in $\mathcal T \backslash \bigcup_{j<i} \mathcal T'_j$ and let $\mathcal T'_i$ be those elements of $\mathcal T_i$ that are contained in another tube in $\mathcal T$. Note that there is some $i_0$ with $\mathcal T'_{i_0} = \emptyset$.

    Since $\mathcal T$ is maximal, we have $\bigcup_{t \in \mathcal T_i} V(t) = V(\overline T_N)$ for all $i$ and by definition the tubes in $\mathcal T_i$ are disjoint. Thus, $\sum_{t \in \mathcal T_i} w_{t_i} = \sum_{f} f^+ + f^-$ where the second sum runs over those $f$ that enter or leave a tube in $\mathcal T_i$. In particular, $\sum_{i \leq i_0} \sum_{t \in \mathcal T_i} w_{t_i}$ is a vector in $\Delta_0$, and since $\bigcup_{i\leq i_0} \mathcal T_i = \mathcal T$ it is also a vector in the interior of the cone.

    Finally, we see that this construction restricts to the maximal cones of the universe fan: if $\mathcal T$ is a maximal tubing without a tube containing the root vertex $*$ of $\overline T_N$, define the sets $\mathcal T_i$ in the same way as above; now $\bigcup_{t \in \mathcal T_i} V(t) = V(\overline T_N) \backslash \{*\}$, implying $\sum_{t \in \mathcal T_i} w_{t_i} = k\cdot \hat 1^+ + \sum_{f} f^+ + f^-$ for some $k \in \mathbb N$, where now the second sum runs over $f\not=\hat 1$ entering or leaving a tube in $\mathcal T_i$. Thus, we conclude.
\end{proof}

Let us now study the lightcone refinements of $\Sigma_{\mathcal G}$ in more detail. The following lemma will be helpful.

\begin{lem}\label{lem:nest-local-inv}
    For every $N \in \mathcal N(L,\mathcal G)$ there is a linear map $s_N: \mathbb R^E \to \mathbb R^N$ with $s_N \circ p|_{\mathbb R^N} = \mathrm{id}_{\mathbb R^N}$.
\end{lem}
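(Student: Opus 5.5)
Since $N$ is nested and $\mathcal G$ is a building set (in particular nestable), incomparable elements of $N$ satisfy $f\land g=\hat 0$. So the Hasse diagram of $N$ is a forest, and for each $f\in N$ the elements of $N$ strictly below $f$ that are maximal with that property, its children $c_1,\ldots,c_k$, are pairwise meet-disjoint. The plan is to show that $p|_{\mathbb R^N}$ is injective with an explicit left inverse. The natural candidate is the Möbius-type formula for the forest:
\[
\mu_f \;=\; x_{e_f} - \sum_{\text{children } c \text{ of } f} x_{e_c}
\quad\text{on } p(x)=\sum_{g\in N}\mu_g\, p(g),
\]
where for each $f\in N$ we pick an atom $e_f\le f$ lying in no child of $f$. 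Then $s_N$ is the linear map $\mathbb R^E\to\mathbb R^N$ defined by
\[
s_N(y) := \sum_{f\in N}\Bigl(y_{e_f} - \sum_{c \text{ child of } f} y_{e_c}\Bigr) f,
\]
where $y_e$ denotes the $e$-coordinate.

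The first step is to show that such a \emph{private atom} $e_f$ exists. Suppose every atom below $f$ lies in some child. Since $L$ is atomic, $f$ is the join of its atoms, so $f\le c_1\lor\cdots\lor c_k$, and hence $f=c_1\lor\cdots\lor c_k$. If $k\ge 2$, the children are pairwise incomparable elements of $N$ whose join $f$ lies in $\mathcal G$, contradicting nestedness. If $k=1$, then $f=c_1<f$, which is absurd. If $k=0$, then $f$ has no atoms, which is impossible for $f>\hat 0$. Hence $e_f$ exists.

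The second step is to verify $s_N\circ p(g)=g$ for each $g\in N$. The $e$-coordinate of $p(g)$ equals $1$ if $e\le g$ and $0$ otherwise. For $f\in N$ we compute $[e_f\le g]-\sum_{c}[e_c\le g]$. The claim is that $e_f\le g$ holds iff $f\le g$. If $f\le g$, this is clear. Conversely, suppose $e_f\le g$ with $f\not\le g$. If $g<f$, then $g$ lies below some child (the forest structure), so $e_f$ would lie in a child, a contradiction. If $g$ and $f$ are incomparable, then $f\land g=\hat 0$, contradicting $e_f\le f\land g$. Therefore the coefficient of $f$ equals $[f\le g]-\sum_c[c\le g]$. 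If $f=g$, this is $1-0=1$. If $f<g$, it is $1-1=0$: the children of $f$ are pairwise disjoint, and at most one of them lies below $g$, since $g$ is above $f$ and so above every child. This needs correction: if $f<g$ then all children $c$ of $f$ satisfy $c\le g$. The map must therefore be adjusted.

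The corrected third step uses the standard forest Möbius inversion. Set $\tilde\mu_f := y_{e_f}$; on $p(x)$ this equals $\sum_{g\ge f}\mu_g$. Then recover $\mu_f=\tilde\mu_f-\tilde\mu_{\mathrm{parent}(f)}$, with $\tilde\mu_{\mathrm{parent}}=0$ for maximal $f$. This relies on the second step's fact that $e_f\le g$ iff $f\le g$, and on the elements above $f$ in $N$ forming a chain. So we define
\[
s_N(y)=\sum_{f\in N}\bigl(y_{e_f}-y_{e_{\pi(f)}}\bigr)f,
\]
where $\pi(f)$ is the parent of $f$ and the second term is dropped when $f$ is maximal. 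This is linear, and on $p(g)$ it gives $[f\le g]-[\pi(f)\le g]$, which equals $1$ exactly when $f=g$. The main obstacle is the existence of private atoms in the first step; it uses atomicity together with nestedness, and the rest is bookkeeping.
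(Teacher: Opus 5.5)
Your final argument is correct and essentially the paper's proof. For each $f\in N$ you pick an atom of $f$ lying in none of its children; existence follows from atomicity plus nestedness, and your case split on the number of children is more careful than the paper's one-line justification. Your corrected map $s_N(y)=\sum_{f}(y_{e_f}-y_{e_{\pi(f)}})f$ is exactly the map $e_f\mapsto f-\sum_{c\text{ child of }f}c$ (other atoms $\mapsto 0$) that the paper's telescoping computation uses. Your coefficient check $[f\le g]-[\pi(f)\le g]$ is the dual form of that telescoping. In a write-up, delete the abandoned formula $x_{e_f}-\sum_c x_{e_c}$ from your second step and keep only the fact $e_f\le g \iff f\le g$ established there.
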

\begin{proof}
    For a nested set $N$ we construct the map $s_N: \mathbb R^E \to \mathbb R^N$ as follows: for every $f \in N$, let $M_f := \max {g \in N \ | \ g < f}$ and choose $h_f \in f \backslash \bigcup_{g \in M_f} g$. Since $\bigcup_{g \in M} g \subseteq \bigvee_{g \in M} g \notin \mathcal G$, this is always possible. Now we define the map $s_N: \mathbb R^E \to \mathbb R^N$ by mapping $h_f$ to $f$ and all other $e$ to $0$.

    For $f \in N$, consider $s_N\circ p(f) = s_N(\sum_{e\in f} e)$. By definition, this agrees with
    $$\sum_{h_g \in f} s(g) = \sum_{h_g \in f} \left(g - \sum_{g' \in M_g} g'\right).$$
    This is a telescoping sum which simplifies to $f$: since for $a,b \in N$ we either have $a\leq b$ or $a\land b = \hat 0$, the $g\in N$ with $h_g \in f$ are precisely the $g$ with $g \leq f$. The Hasse diagram of the poset of such $g$ is a tree, in which the children of $g$ are precisely the set $M_g$. This proves the claim.
\end{proof}

\begin{prop}\label{cor:local-nest-refinement}
    Fix a nested set $N$ and a building set $\mathcal B_N$ on $\overline T_N$. Let $\Gamma$ denote the lightcone subdivision of $\mathbb R^{\dbl N}_+$ associated to $\mathcal B_N$ (see \Cref{dfn:lightcone-subdivision}).
    \begin{enumerate}
        \item The subdivision $\Delta_0^{-1} \Gamma$ of $\mathbb R^N_+$ is given by the domains of linearity of the function
        $$\sum_{t \in \mathcal B_N} \max_{e \in t}(e^*).$$
        In other words, it is the fan $\Sigma_{\mathcal B_N}^{\mathrm{max}}$ from \Cref{sec:boole}.
        \item Let $C_N := p(\mathbb R^N_+)$ and consider the map $s_N$ from \Cref{lem:nest-local-inv}. The lightcone subdivision of $C_N$ associated to $\mathcal B_N$ is given by the domains of linearity of
        $$\sum_{t \in \mathcal B_N} \max_{e \in t}(e^*\circ s_N)$$
        on $C_N$.
    \end{enumerate}
\end{prop}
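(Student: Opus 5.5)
The plan is to compute the pullback of the forms $\lambda_e$ along $\Delta_0$, and then transport everything to $C_N$ via the left inverse $s_N$.

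\textbf{Pulling back $\lambda_e$.} By \Cref{prop:lightcone-subdivision}(ii), the cones of $\Gamma$ are the domains of linearity of $\sum_{t\in\mathcal B_N}\min_{e\in t}\lambda_e$ on $\mathbb R^{\dbl N}_+$. So the first step is to compute $\Delta_0^*\lambda_e$. For $x=\sum_{g\in N} x_g g\in\mathbb R^N_+$ we have $p_g(\Delta_0 x)=x_g$, and $m_g(\Delta_0x)=x_g$ for $g$ not a maximal element of $N$. (If $N$ has several maximal elements, I would work with the forest version of $\overline T_N$, where $\Delta_0$ sends each maximal element $h$ to $h^+$; alternatively one passes to the quotient by $u_{\mathrm{tot}}$.) Hence
\[
\Delta_0^*\lambda_e=\sum_{g:\,e\in E(g^+)\cup E(g^-)} x_g \;-\; \sum_{g \text{ maximal},\, e\in E(g^-)} x_g .
\]
Since $E(g^+)\cup E(g^-)=N\setminus\{g\}$, the first sum equals $\sum_{g\neq e}x_g$. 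The maximal elements contribute only through $p$; the element $e$ itself lies in neither cut, which is what should give the $-x_e$. So I expect
\[
\Delta_0^*\lambda_e \equiv c(x)-x_e
\]
with $c(x)$ independent of $e$. Concretely, $c(x)=\sum_{g\in N}x_g$ up to the root correction, which is itself independent of $e$ because the root edge never lies in a $g^-$ cut for the relevant $g$. I would check this carefully on the star example.

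\textbf{Part (i).} Given that formula, $\min_{e\in t}\Delta_0^*\lambda_e=c(x)-\max_{e\in t}x_e$. The pulled-back function is therefore $|\mathcal B_N|\,c(x)-\sum_t\max_{e\in t}e^*$. Since $c$ is linear, its domains of linearity coincide with those of $\sum_t\max_{e\in t}e^*$, which is $\Sigma^{\max}_{\mathcal B_N}$. One also needs that the pullback of the fan $\Gamma$ along the linear injection $\Delta_0$ is the fan of domains of linearity of the pulled-back function. This holds because $\Delta_0(\mathbb R^N_+)\subseteq\mathbb R^{\dbl N}_+$, so restricting a piecewise linear function restricts its domains of linearity.

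\textbf{Part (ii).} The lightcone subdivision of $C_N$ is by definition $p(\Delta_0^{-1}\Gamma)$ restricted to $C_N$. By \Cref{prop:nested-projection} and \Cref{lem:nest-local-inv}, $p|_{\mathbb R^N}$ is a linear isomorphism onto its image, with inverse $s_N$ on $C_N$. A cone $D$ of $\Sigma^{\max}_{\mathcal B_N}$ maps to $p(D)=\{y\in C_N : s_N(y)\in D\}$. Hence the cones of the image are the domains of linearity of $\alpha\circ s_N$, where $\alpha=\sum_t\max_{e\in t}e^*$, and $\alpha\circ s_N=\sum_t\max_{e\in t}(e^*\circ s_N)$.

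\textbf{Main obstacle.} The main obstacle is the sign and bookkeeping computation $\Delta_0^*\lambda_e=c-x_e$, in particular the treatment of the root edge(s), where $\Delta_0$ omits the minus component. I would verify it directly from the explicit formulas for $E(f^\pm)$.
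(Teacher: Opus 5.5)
Your proposal is correct and is the paper's argument: on the image of $\Delta_0$ one has $p_f=m_f=f^*$, so $\lambda_e$ pulls back to $\sum_{f\in N\setminus\{e\}}f^*$. Subtracting $\sum_{f\in N}f^*$ then turns $\min_{e\in t}\lambda_e$ into $-\max_{e\in t}e^*$, and (ii) is transported along the mutually inverse maps $p|_{\mathbb R^N_+}$ and $s_N|_{C_N}$.

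The root bookkeeping you flag as the main obstacle needs no forest modification. $\Delta_0$ sends every $g\neq\hat 1$, including any other maximal element of $N$, to $g^++g^-$. Moreover, $m_{\hat 1}$ occurs in no $\lambda_e$ because $E(\hat 1^-)=\emptyset$, so $\lambda_e\circ\Delta_0=\sum_{f\neq e}f^*$ exactly. The reason you gave, that the root edge lies in no $g^-$ cut, is false, since $\hat 1\in E(g^-)$ for every $g<\hat 1$; but it is not needed.
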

\begin{proof}
    The second assertion follows from $i)$, \Cref{prop:nested-projection} and the definition. For $i)$, note that on the diagonal we have $p_f = m_f =: f^*$. Thus, $\lambda_e$ from \Cref{prop:lightcone-subdivision} becomes $\sum_{f \in N \backslash \{e\}} f^*$. Subtracting $\sum_{f \in N} f^*$ everywhere does not change the domains of linearity, and thus we can replace $\min_{e \in t}(\lambda_e)$ in \Cref{prop:lightcone-subdivision} by $\min_{e \in t}(-e^*) = -\max_{e \in t}(e^*)$.
\end{proof}

Note that while the map $s_N$ is not unique, its restriction to $C_N$ is: it is the inverse of the isomorphism $p|_{\mathbb R^N_+}:\mathbb R^N_+ \to C_N$ (see \Cref{prop:nested-projection}).

\begin{ex}\label{ex:123-bool-lc-refine}
    Consider the boolean lattice $L$ on the set $\{1,2,3\}$ and the graphical building set $\mathcal G$ with edges $12$ and $23$. The five maximal nested sets are
    $$\{123,12,1\},\{123,12,2\},\{123,23,2\},\{123,23,3\},\{123,1,3\}.$$
    The associated nested set fan is a subdivision of the positive orthant $\mathbb R^{\{1,2,3\}}_+$, shown in \Cref{fig:nested-set-fan}.
    Let us describe the maximal lightcone refinement of the nested set fan. We start with the cone associated to $\{123,12,1\}$, spanned by the vectors $1+2+3,\ 1+2$ and $1$. On this cone, the projection $p$ is inverted by mapping $1 \mapsto 1, 2\mapsto 12-1$ and $3\mapsto 123-12$. Dually, we see that $1^* \mapsto 1^* - 2^*, 12^* \mapsto 2^* - 3^*$ and $123^* \mapsto 3^*$. By \Cref{cor:local-nest-refinement}, the subdivision on our cone is induced by the domains of linearity of the function
    $$\max(123^*,12^*,1^*) + \max(123^*,12^*) + \max(12^*, 1^*)$$
    on $\mathbb R_+^N$. Thus, it is given by the domains of linearity of
    $$\max(3^*,2^*-3^*,1^* - 2^*) + \max(3^*,2^*-3^*) + \max(2^*-3^*,1^* - 2^*)$$
    on the cone $\langle 1,1+2,1+2+3\rangle$. Similarly, we obtain the subdivisions of the other cones associated to chains. For $\{123,1,3\}$, we obtain the function
    $$\max(2^*,1^*-2^*,3^*-2^*) + \max(2^*,1^*-2^*) + \max(2^*,3^*-2^*).$$
    Putting everything together, we obtain the subdivision shown in \Cref{fig:max-lightcone-sub}.
    \begin{figure}
        \begin{subfigure}[t]{0.25\linewidth}
            \includegraphics[width=\linewidth]{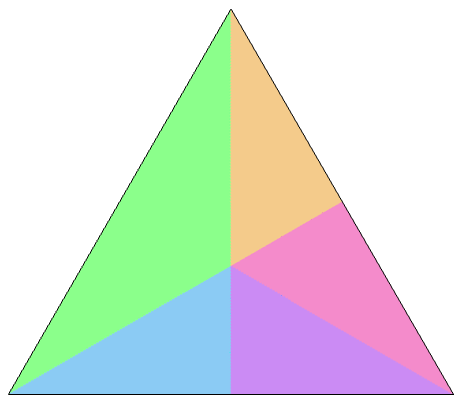}
            \subcaption{The nested set fan.}\label{fig:nested-set-fan}
        \end{subfigure}
        \hspace*{20pt}
        \begin{subfigure}[t]{0.25\linewidth}
            \includegraphics[width=\linewidth]{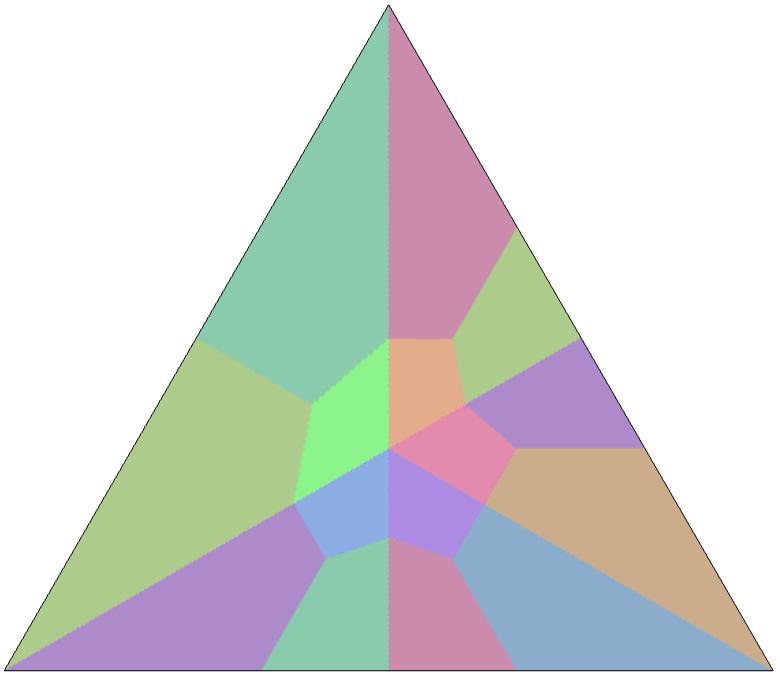}
            \subcaption{The minimal lightcone refinement.}\label{fig:min-lightcone-sub}
        \end{subfigure}
        \hspace*{20pt}
        \begin{subfigure}[t]{0.25\linewidth}
            \includegraphics[width=\linewidth]{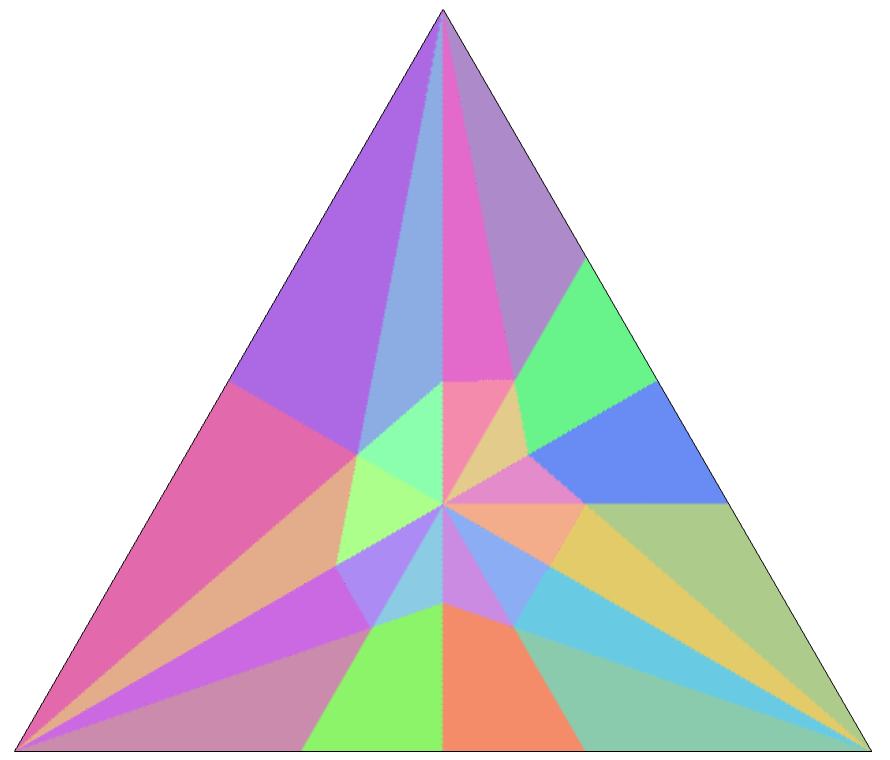}
            \subcaption{The maximal lightcone refinement.}\label{fig:max-lightcone-sub}
        \end{subfigure}
        \caption{The nested set fan and its subdivisions from \Cref{ex:123-bool-lc-refine}. All fans are contained in the positive orthant of $\mathbb R^3$ and we draw them projectively. Colors correspond to the gradient of the defining piecewise linear functions.}
    \end{figure}

\Cref{fig:min-lightcone-sub} shows the minimal lightcone refinement of $\Sigma_{\mathcal G}$. The five inner cones form the fan $p(\Delta_0^{-1}(\mathcal U_{\mathcal G}))$, visualizing \Cref{prop:universe-as-subfan}. Also note that indeed $\overline p(\Delta_0^{-1}(\mathcal U_{\mathcal G})) = \overline \Sigma_{\mathcal G}$.

\end{ex}

As already mentioned above, in the case of the boolean lattice, $\overline \Sigma_{\mathcal G}$ is a complete fan and in fact the normal fan of a generalized permutahedron \cite{Pos09}. The lightcone subdivisions are candidates for normal fans of blowups of these polytopes. They have the combinatorics of normal fans of \textit{acyclonesto-cosmohedra}, defined in \cite{ForGleKim25}.

\begin{ex}
    Continuing with \Cref{ex:123-bool-lc-refine}, let us consider the fan $\overline \Sigma_{\mathcal G}$. It is the normal fan of a pentagon, the $4$-associahedron.
    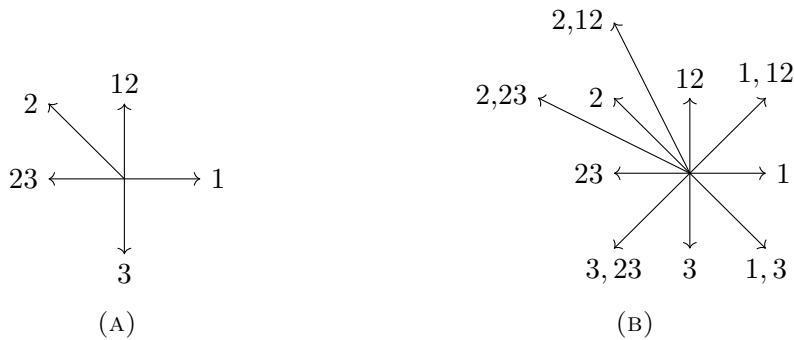
\begin{figure}[t]
        \centering
        \begin{subfigure}{0.3\linewidth}
            \centering
            \begin{tikzpicture}[->]
                \draw (0,0) -- (1,0) node[right] {$1$};
                \draw (0,0) -- (0,-1) node[below] {$3$};
                \draw (0,0) -- (0,1) node[above] {$12$};
                \draw (0,0) -- (-1,0) node[left] {$23$};
                \draw (0,0) -- (-1,1) node[above,left] {$2$};
            \end{tikzpicture}
            \caption{}
        \end{subfigure}
        \hspace{50pt}
        \begin{subfigure}{0.3\linewidth}
            \centering
            \begin{tikzpicture}[->]
                \draw (0,0) -- (1,0) node[right] {$1$};
                \draw (0,0) -- (0,-1) node[below] {$3$};
                \draw (0,0) -- (1,-1) node[right,below] {$1,3$};
                \draw (0,0) -- (-1,-1) node[left,below] {$3,23$};
                \draw (0,0) -- (0,1) node[above] {12};
                \draw (0,0) -- (1,1) node[right,above] {$1,12$};
                \draw (0,0) -- (-1,0) node[left] {23};
                \draw (0,0) -- (-1,1) node[above,left] {2};
                \draw (0,0) -- (-2,1) node[left] {2,23};
                \draw (0,0) -- (-1,2) node[left] {2,12};
            \end{tikzpicture}
            \caption{}
        \end{subfigure}
        \caption{The fan $\overline \Sigma_{\mathcal G}$ for $\mathcal G$ from \Cref{ex:123-bool-lc-refine} and its maximal lightcone refinement.}
    \end{figure}
    In the quotient, the maximal lightcone refinement that we computed in \Cref{ex:123-bool-lc-refine} just becomes a barycentric subdivision of each maximal cone of $\overline \Sigma_{\mathcal G}$. This is the normal fan of a decagon, the $4$-cosmohedron.

    The situation becomes more interesting in higher dimensions. If we take $L$ to be the boolean lattice on $\{1,2,3,4\}$ and the set of all intervals as $\mathcal G$, the nested set fan from \Cref{ex:123-bool-lc-refine}, shown in \Cref{fig:nested-set-fan}, becomes a subfan of the complete fan $\overline \Sigma_{\mathcal G}$ in $\mathbb R^{\{1,2,3\}} \cong \mathbb R^E / \langle u_{\mathrm{tot}}\rangle$. The whole fan $\overline \Sigma_{\mathcal G}$ is normal to the $5$-associahedron. The subdivision shown in \Cref{fig:max-lightcone-sub} extends to the maximal lightcone subdivision of $\overline \Sigma_{\mathcal G}$ which is normal to the $5$-cosmohedron. Note that this subdivision is \textit{not} simplicial, which corresponds to the fact that cosmohedra are not simple polytopes.
\end{ex}

%%%%%%%%%%%%%%%%%%%%%%
%%%%%%%%%%%%%%%%%%%%%%
%%%%%%%%%%%%%%%%%%%%%%
%%%%%%%%%%%%%%%%%%%%%%
\section{The Physical Case}\label{sec:physics}
Our results specialize to the case of the tree-level wavefunction of the universe, in physics. This corresponds to taking the building set $\mc{G}$ of (non-singleton) intervals in $[n]=\{1,2,\dots,n\}$, which we identify with chords of an $(n+1)$-gon. In this case, the wavefunction $\Psi(L,\mc{G})$ becomes the tree-level cosmological wavefunction $\Psi_{n+1}$, and the total-energy residue gives the tree-level colored scalar amplitude $A_{n+1}$.

The cosmological wavefunction $\Psi$ is a key step in the computation of cosmological correlators, which control the statistics of density fluctuations in the early universe, which is important for linking theories of the beginning of the universe to observations (e.g. \cite{maldacena03}). In \cite{ArkaniHamed17}, it was found that contributions to $\Psi$ are given by the canonical forms (or dually volumes) of \emph{cosmological polytopes}, for each graph. As emphasized there, this raises the question of how to understand the geometry of the full function: different graphs come with different variable sets, so it was not clear how the cosmological polytopes could be glued together. Our results in the present paper suggest that it is natural to view the (dual) cosmological polytopes as living in a common ambient space, $\RR^{2\mc{G}}$, where they are cut out by simple linear inequalities. In more recent work, \cite{ArkFigVaz24} introduced the \emph{cosmohedron}, a blow-up of the associahedron. The canonical form of the cosmohedron does not directly equal the wavefunction, but the wavefunction can be extracted from the canonical form by discarding certain terms. This raises the question of whether there is a convex geometry whose canonical form is precisely $\Psi_{n+1}$. Our paper does not give such a geometry, but does exhibit the cosmohedron as a shadow of the universe fan, whose Laplace transform is directly $\Psi_{n+1}$. Moreover, it is suggestive that the universe fan is cut out by simple linear functions, related to `causal diamond' pictures in the poset $\mc{G}$ (which is the \emph{kinematic mesh} of \cite{ArkFigVaz24}), which might offer clues for finding a stringy/worldsheet picture for cosmological observables.

For the convenience of readers interested in the physics application, we give in this section an extended overview of how the definitions and results of the paper apply to the physical case.

%%%%%%%%%%%%%%%%%%%%%%
\subsection{Polygons and energies}\label{sec:physics-intervals}
We take $\mc{G}_n$ to be the graphical building set of the path graph on $[n]$, i.e.
\[
\mc{G}_n \;:=\; \{\, I\subseteq[n] \mid I \text{ is a nonempty interval}, \ |I| \geq 2 \,\}.
\]
In particular, the maximal element in $\mc{G}_n$ is $\hat{1} = [n]$. $\mc{G}_n$ is a building set for a Boolean lattice, $L$: meet and join are just the intersection and union of sets.\footnote{We take $L$ to be the boolean lattice $2^E$ where $E$ is the set of edges of the path graph, i.e. $|E| = n-1$.} The intervals $I \in \mc{G}_n$ are naturally identified with chords of an $(n{+}1)$-gon, with $\hat{1}$ corresponding to a distinguished side. Label the sides cyclically by $1,2,\dots,n,\star$, where $\star$ denotes the distinguished side. A chord partitions $\{1,2,\dots,n,\star\}$ into two parts, and the part not containing $\star$ is an interval $I\subset[n]$ (with $|I| \geq 2$). This gives a bijection between chords (plus the side $\star$) and elements of $\mc{G}_n$. The remaining sides $1,\ldots, n$ are not elements of $\mc{G}_n$.

\smallskip
Two chords $I,J$ cross if and only if $I$ and $J$ overlap without containment, i.e.\ $I\cap J\neq\emptyset$ and neither $I\subseteq J$ nor $J\subseteq I$. So nested sets $N\subseteq \mc{G}_n$ are in bijection with collections of pairwise noncrossing chords, i.e.\ polyangulations of the $(n+1)$-gon, and maximal nested sets are triangulations. A causal region $R=\{I,J_1,\dots,J_k\}\subseteq N$ (as in \Cref{sec:universe}) has one maximal element $I$ and all other elements incomparable. In the chord model, such an $R$ determines a subpolygon of the $(n+1)\text{-gon}$ bounded by the chords $I, J_1,\ldots, J_k$ together with the sides $a$ of the big polygon such that $a \in I$ and $a \not \in J_i$. For example, the two chords $I = 1234$ and $J = 12$ bound an interior square with sides $12,3,4,1234$.

\begin{figure}
    \centering
      \begin{subfigure}{0.45\textwidth}
    \centering
\begin{tikzpicture}[scale=1.7, every node/.style={font=\small}]
  \foreach \i in {0,...,6}{
    \coordinate (P\i) at ({90+51.42*\i}:1);
  }
  \draw[thick]
    (P0) -- node[midway, above] {$1$} (P1) --
    node[midway, left] {$2$} (P2) --
    node[midway, below left] {$3$} (P3) --
    node[midway, below right] {$4$} (P4) --
    node[midway, right] {$5$} (P5) --
    node[midway, above right]{$6$} (P6) --
    node[red, midway, above right]{$\star$} (P0);
{\tiny
  \draw[thick, red] (P1) -- node[midway,above]{$23456$} (P6);
  \draw[thick, red] (P2) -- node[midway,above left]{$3456$} (P6);
  \draw[thick, red] (P2) -- node[midway,above right]{$34$} (P4);
  \draw[thick, red] (P4) -- node[midway,above left]{$56$} (P6);
}
\end{tikzpicture}
    \caption{}
    \label{fig:physics-polygon}
  \end{subfigure}
     \begin{subfigure}{0.45\textwidth}
    \centering
    \begin{tikzpicture}[scale=1.2,>=stealth]
  \def\h{0.6}
  % Row 1
  \coordinate (P00) at ( 0,        0        ); 
  % Row 2
  \coordinate (P10) at (-0.5,     -\h       );
  \coordinate (P11) at ( 0.5,     -\h       );
  % Row 3 
   \coordinate (P20) at (-1.0,     -2*\h     );
  \coordinate (P21) at ( 0.0,     -2*\h     );
  \coordinate (P22) at ( 1.0,     -2*\h     );
  % Row 4
  \coordinate (P30) at (-1.5,     -3*\h     );
  \coordinate (P31) at (-0.5,     -3*\h     );
  \coordinate (P32) at ( 0.5,     -3*\h     );
  \coordinate (P33) at ( 1.5,     -3*\h     );
  % Row 5
  \coordinate (P40) at (-2.0,     -4*\h     );
  \coordinate (P41) at (-1.0,     -4*\h     );
  \coordinate (P42) at ( 0.0,     -4*\h     );
  \coordinate (P43) at ( 1.0,     -4*\h     );
  \coordinate (P44) at ( 2.0,     -4*\h     );
  
{\small
\node[red] at (P00) {$\hat{1}$};
\node at (P10) {$12345$};
\node[red] at (P11) {$23456$};
\node at (P20) {$1234$};
\node at (P21) {$2345$};
\node[red] at (P22) {$3456$};
\node at (P30) {$123$};
\node at (P31) {$234$};
\node at (P32) {$345$};
\node at (P33) {$456$};
\node at (P40) {$12$};
\node at (P41) {$23$};
\node[red] at (P42) {$34$};
\node at (P43) {$45$};
\node[red] at (P44) {$56$};
}
\end{tikzpicture}
    \caption{}
    \label{fig:physics-mesh}
  \end{subfigure}
    \caption{(A) Chords in a heptagon labelled by subsets of the sides. (B) The set of all chords forms a poset $\mc{G}_6$, a building set for the Boolean lattice, sometimes called the `kinematic mesh'.}
    \label{fig:physics-causal}
\end{figure}
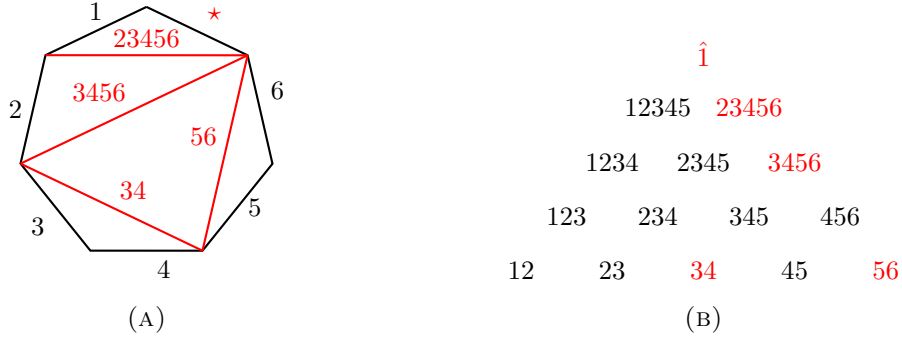

\smallskip
We now introduce the physical variables that appear in the wavefunction. For each side $i\in[n]$ introduce a physical energy variable $E_i$, and also introduce an energy $E_\star$ for the distinguished side $\star$. For each chord/interval $I\in \mc{G}_n$ introduce a variable $E_I$, and in particular we identify $E_{\hat{1}} \;=\; E_\star$. The wavefunction $\Psi(L,\mc{G}_n)$ defined in this paper is naturally written in the doubled chord variables $E_I^\pm$ (one pair for each chord $I\in\mc{G}_n$). There are $n(n-3)+1$ such variables (where we include $E_{\hat{1}}^+$). We pass to the physical variables by the substitution
\[
E_I^\pm \;\longmapsto\; E_I \pm \sum_{i\in I} E_i, \qquad I\in\mc{G}_n.
\]
For the top element $\hat{1}=[n]$ this gives $E_{\hat{1}}^+ \mapsto E_\star + E_1+E_2+\cdots+E_n$,
the \emph{total energy}, and we will write $E_{\hat{1}}^+ = E_{\mathrm{total}}$. Finally, the products $E_I^+E_I^-$ become Mandelstam variables under this substitution:
\[
E_I^+E_I^- \;\longmapsto\; E_I^2-\Bigl(\sum_{i\in I}E_i\Bigr)^2 = s_I,
\]
for each $I \neq \hat{1}$. Indeed, viewing the energy $E_i$ as the $0$-component of a 4-momentum vector $k_i = (E_i, \vec{k_i})$, and taking $E_I$ to be the norm of the 3-momentum $\sum_{i \in I} \vec{k_i}$, the Mandelstam variable is
\[
s_I = \big( \sum_{i \in I} k_i \big)^2 = - \big( \sum_{i\in I} E_i \big)^2 + \big\| \sum_{i \in I} \vec{k_i} \big\|^2.
\]
Identifying $E^+_I E^-_I$ with $s_I$ is important in the $E_{\mathrm{total}} = 0$ limit.

\smallskip
The substitution by physical energies also has a natural polygon interpretation. Recall that to each causal region $R=\{I, J_1,\dots,J_k\}$, we have an associated sum of variables
\[
E_R :=\ E_{I}^+ + \sum_{i=1}^k E_{J_i}^- .
\]
Substituting with physical energies, this $E_R$ becomes sum of energies around the perimeter of the corresponding subpolygon, which we identify with $R$:
\[
E_R \;\longmapsto\; \sum_{e\in \partial R} E_e.
\]
%Indeed, since $J_i \subset I$ (for each $i$), the substitution gives
%\[
%E_R \;\longmapsto\; E_I + \sum_{i=1}^k E_{J_i} + \sum_{\substack{a \in I\\ a \not\in \,\bigcup J_i}} E_a,
%\]
%which are precisely the sides of the subpolygon $R$.

\begin{rmk}
The same interval/chord building set discussed here can also be formulated as a building set for a lattice arising from the oriented matroid of the complete graph, as discussed for example in the final subsection of \cite{Lam24}.
\end{rmk}

\subsection{Wavefunction and factorization}\label{sec:physics-wavefunction}
We now recall the definition of the universe fan, $\mc{U}_{\mc{G}_n}$, but for this polygonal setting. The fan lives in $\RR_+^{\dbl \mc{G}_n}$, with two generators $I^\pm$ for every chord. Write a vector $w\in \RR_+^{\dbl\mc{G}_n}$ as
\[
w \;=\; p_{\hat{1}}\,\hat{1}^+ \;+\; \sum_{I\neq \hat{1}} p_I\, I^+ \;+\; \sum_{I\neq \hat{1}} m_I\, I^-, \qquad p_I,m_I\geq 0.
\]
Note that do not include a variable for $\hat{1}^-$ (or we restrict to the coordinate hyperplane $m_{\hat{1}} = 0$). For each chord $I\in \mc{G}_n$, consider the sum
\[
F_I \;=\; p_{\hat 1}+\sum_{I\subset J \subset \hat 1}(p_J-m_J)-m_I.
\]
The \emph{causal cone} (Definition~\ref{dfn:causal-cone}) is cut out by the positivity of the $F_I$:
\[
F_I\ge 0\qquad (I\in\mc{G}_n).
\]
The $F_I$ can be viewed as sums over \emph{causal diamonds} in the kinematic mesh: the intervals containing a chord $I$ form a diamond-shaped subposet (\Cref{fig:diamond}). A polyangulation $N$ restricts to a chain or ``causal path'' in each such diamond (\Cref{fig:diamondpath}). In particular, for each $I \in N$, the elements of $N$ strictly containing $I$ form a chain,
\[
I \subset J_1 \subset \ldots \subset J_k \subset \star.
\]
The restriction of $F_I$ to the $N$‑subspace ($p_I,m_I = 0$ for $I \not\in N$) is then a sum along this chain,
\[
\left.F_I\right|_N = - m_I + \sum_{i=1}^k (p_{J_i} - m_{J_i}) + p_\star.
\]
The fan $\mc{U}_{\mc{G}_n}$ is the intersection of the causal cone (given by $F_I \geq 0$) with union of all the $N$-subspaces. I.e. the maximal cones $U_N$ are indexed by the triangulations $N$ of the $(n+1)$-gon, and are the intersection with the coordinate hyperplane $p_I, m_I = 0$ for $I \not\in N$. Let us write $\Psi_{n+1} = \Psi(L, \mc{G}_n)$ for the wavefunction, given by the Laplace transform of $\mc{U}_{\mc{G}_n}$.

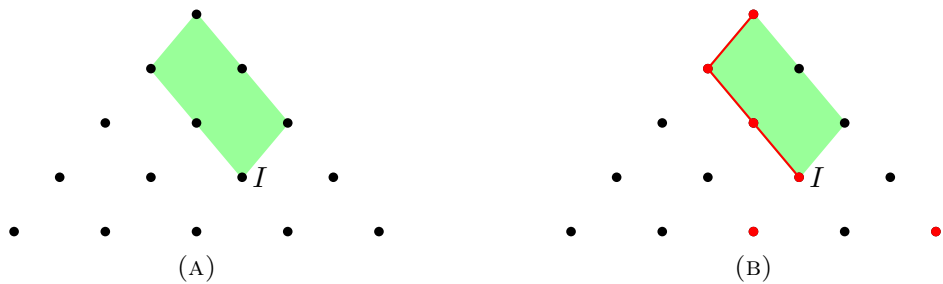
\begin{figure}
    \centering
\begin{subfigure}{0.45\textwidth}
\centering
\begin{tikzpicture}[scale=1.2,>=stealth]
  \def\h{0.6}
  % Row 1
  \coordinate (P00) at ( 0,        0        );
  % Row 2
  \coordinate (P10) at (-0.5,     -\h       );
  \coordinate (P11) at ( 0.5,     -\h       );
  % Row 3 
   \coordinate (P20) at (-1.0,     -2*\h     );
  \coordinate (P21) at ( 0.0,     -2*\h     );
  \coordinate (P22) at ( 1.0,     -2*\h     );
  % Row 4
  \coordinate (P30) at (-1.5,     -3*\h     );
  \coordinate (P31) at (-0.5,     -3*\h     );
  \coordinate (P32) at ( 0.5,     -3*\h     );
  \coordinate (P33) at ( 1.5,     -3*\h     );
  % Row 5
  \coordinate (P40) at (-2.0,     -4*\h     );
  \coordinate (P41) at (-1.0,     -4*\h     );
  \coordinate (P42) at ( 0.0,     -4*\h     );
  \coordinate (P43) at ( 1.0,     -4*\h     );
  \coordinate (P44) at ( 2.0,     -4*\h     );
  \filldraw[green, thick, opacity=0.4] (P00) -- (P10) -- (P32) -- (P22) -- (P00);
  \node at (P32)[right]{$I$};
  \foreach \p in {
    P00,
    P10,P11,
    P20,P21,P22,
    P30,P31,P32,P33,
    P40,P41,P42,P43,P44}
    \fill (\p) circle (1.5pt);
\end{tikzpicture}
\caption{}
\label{fig:diamond}
\end{subfigure}
\begin{subfigure}{0.45\textwidth}
\centering
\begin{tikzpicture}[scale=1.2,>=stealth]
  \def\h{0.6}
  % Row 1
  \coordinate (P00) at ( 0,        0        ); 
    % Row 2
  \coordinate (P10) at (-0.5,     -\h       );
  \coordinate (P11) at ( 0.5,     -\h       );
  % Row 3 
   \coordinate (P20) at (-1.0,     -2*\h     );
  \coordinate (P21) at ( 0.0,     -2*\h     );
  \coordinate (P22) at ( 1.0,     -2*\h     );
  % Row 4
  \coordinate (P30) at (-1.5,     -3*\h     );
  \coordinate (P31) at (-0.5,     -3*\h     );
  \coordinate (P32) at ( 0.5,     -3*\h     );
  \coordinate (P33) at ( 1.5,     -3*\h     );
  % Row 5
  \coordinate (P40) at (-2.0,     -4*\h     );
  \coordinate (P41) at (-1.0,     -4*\h     );
  \coordinate (P42) at ( 0.0,     -4*\h     );
  \coordinate (P43) at ( 1.0,     -4*\h     );
  \coordinate (P44) at ( 2.0,     -4*\h     );
  \filldraw[green, thick, opacity=0.4] (P00) -- (P10) -- (P32) -- (P22) -- (P00);
  \node at (P32)[right]{$I$};
  \foreach \p in {
    P00,
    P10,P11,
    P20,P21,P22,
    P30,P31,P32,P33,
    P40,P41,P42,P43,P44}
    \fill (\p) circle (1.5pt);
     \foreach \p in {
    P00,
    P10,
    P21,
    P32,
    P42,P44}
    \fill[red] (\p) circle (1.5pt);
      \draw[red, thick] (P00) -- (P10) -- (P21) -- (P32);
\end{tikzpicture}
\caption{}
\label{fig:diamondpath}
\end{subfigure}
    \caption{(A) The chords containing an interval $I$ form a causal diamond. (B) A triangulation/polyangulation $N$ (red) restricted to a causal diamond (green) forms a totally ordered chain, or causal path.}
    \label{fig:physics-diamond}
\end{figure}

\begin{ex}
Consider $n=3$. The kinematic mesh is $\mathcal{G}_4 = \{12, 23, \hat{1}\}$. The universe fan $\mc{U}_4$ lives in a five dimensional orthant $\RR_+^5$ generated by $12^+, 12^-, 23^+, 23^-, \hat{1}^+$, with coordinates
\[
p_{12}, m_{12}, p_{23}, m_{23}, p_{\hat{1}} \geq 0.
\]
The causal cone in $\RR_+^5$ is given by
\[
p_{\hat{1}} - m_{12} \geq 0,\qquad p_{\hat{1}} - m_{23} \geq 0,
\]
and it has 6 facets and 6 generators. The universe fan $\mc{U}_4$ then has two maximal cones in $\RR_+^5$, both of which are simplices:
\[
\{p_{\hat{1}} - m_{12} \geq 0, p_{23} = m_{23} = 0 \} \cup \{ p_{\hat{1}} - m_{23} \geq 0, p_{12} = m_{12} = 0 \}.
\]
The Laplace transform of these cones gives
\[
\Psi_4 = \frac{1}{E_{\hat{1}}^+} \frac{1}{E_{\hat{1}}^+ + E_{12}^-} \frac{1}{E_{12}^+} + \frac{1}{E_{\hat{1}}^+} \frac{1}{E_{\hat{1}}^+ + E_{23}^-} \frac{1}{E_{23}^+}.
\]
\end{ex}

\smallskip
A key property of $\Psi_{n+1}$ is that the total-energy residue recovers the \emph{tree-level amplitude}. This follows immediately from the causal path/diamond inequalities. Indeed, the total-energy variable $E_{\hat{1}}^+=E_{\mathrm{total}}$ is dual to the ray $\hat{1}^+$ in $\RR_+^{2\mc{G}_n}$. Taking the $E_{\mathrm{total}} = 0$ residue corresponds to projecting the link of $\hat{1}^+$ in the fan. 
Under this projection, the condition $F_I \geq 0$ is trivialized by taking $p_{\hat{1}}$ large. So the projection of a maximal cone $U_N$ is cut out by $p_I, m_I \geq 0$ (for $I \in N$, $I \neq \hat{1}$), which has Laplace transform
\[
\prod_{I \in N, I \neq \hat{1}} \frac{1}{E_I^+ E_I^-}. 
\]
Recalling the identification of $E_I^+ E_I^-$ with the Mandelstam variable $s_I$, we find (Corollary~\ref{cor:tot-energy-res})
\[
\Res_{E_{\mathrm{total}}=0}\Psi_{n+1} = \sum_{N\ \mathrm{triangulation}}
\ \prod_{I\in N}
\frac{1}{s_I} =: A_{n+1},
\]
where $A_{n+1}$ is the tree amplitude.

\smallskip
Similar arguments compute the other residues of $\Psi_{n+1}$ (\Cref{cor:wavefunction-res}). The poles of  $\Psi_{n+1}$ are $E_R=0$, for each subpolygon $R$. Its residues on these poles factorize into a product of two factors. In polygon pictures, one factor corresponds to the subpolygon $R$ itself, and the other to what remains after removing the interior of $R$ from the $(n+1)$-gon.

Let $R = \{ I, J_1, \ldots, J_m\}$, with $I$ the maximal chord. The chords of the subpolygon $R$ (together with its maximal side, $I$) forms a subset $\mc{G}^R_\text{in}$ of $\mc{G}_n$. We can identify $\mc{G}^{\dstream R}$ with $\mc{G}_m$, where $m+1$ is the number of sides of $R$. The chords that are not in the interior of $R$ form another subset $\mc{G}^R_\text{out}$ of $\mc{G}_n$. $\mc{G}^R_\text{out}$ is the set of chords in the polygons formed by removing $R$, together with $I$. Then
%\[
%\mc{G}^R_\text{in} = \mc{G}^{\dstream R} = \{ K \ | \ K \subseteq I, \text{ and } K \supset J_i \text{ or } K \cap J_i = \emptyset ~ \forall i \}.
%\]
%\[
%\mc{G}^R_\text{out} = \{I\} \cup \mc{G}^{\ustream R} = \{ K \ | \ K \supseteq I \text{ or } K\cap I = \emptyset \text{ or } K \subseteq J_i \}. 
%\]
\[
\frac{1}{E_R} \Res_{E_R=0} \Psi(\mathcal G_n) = A^{sh}( \mc{G}^R_\text{in} ) \cdot \Psi(\mc{G}^R_\text{out}).
\]
Here, $A^{\mathrm{sh}}$ is the $m+1$-point amplitude with variables labelled by the chords in $\mc{G}^R_\text{in}$, but with variables shifted by
\[
E_K^+ \mapsto E_K^+ + \sum_{J_i \subset K} E_{J_i}^-, \qquad E_K^- \mapsto E_K^- - \sum_{J_i \subset K} E_{J_i}^-.
\]
This makes sense from the point of view of the energy substitutions: the energy variables $E_{J_i}$ now become, under this shift, the energies of the sides $J_1,\ldots, J_m$ of the $R$ sub-polygon. $\Psi(\mc{G}^R_\text{out})$ is a wavefunction in the sense of this paper (associated to a lattice $L$ and building set $\mc{G}$), but it cannot be identified with $\Psi_{m+1}$ for some $m$. 

\begin{figure}
\centering
\begin{subfigure}[c]{0.48\textwidth}
  \centering
  \begin{subfigure}[c]{0.48\linewidth}
    \centering
 \begin{tikzpicture}[scale=1.2,>=stealth]
  \def\h{0.6}
  % Row 1
  \coordinate (P00) at ( 0,        0        );
  % Row 2
  \coordinate (P10) at (-0.5,     -\h       );
  \coordinate (P11) at ( 0.5,     -\h       );
  
  \node at (P00) {$3456$};
  \node at (P10) {$345$}; \node at (P10)[above=8pt, left=15pt]{$\mc{G}_{\text{in}}^R$:};
  \node at (P11) {$456$};
\end{tikzpicture}
  \end{subfigure}\hfill
  \begin{subfigure}[c]{0.48\linewidth}
    \centering
\begin{tikzpicture}[scale=1.6, every node/.style={font=\small}]
  \foreach \i in {0,...,6}{
    \coordinate (P\i) at ({90+51.42*\i}:1);
  }
  {\tiny
  \filldraw[opacity=0.1, green] (P2) -- (P6) -- (P5) -- (P3) -- (P2);
  \node[green!30!black] at (0,0)[below=1pt] {$R$};
  \draw[thick, red] (P2) -- node[midway,above=3pt]{$3456$} (P6);
  \draw[thick, blue] (P3) -- node[midway,below]{$45$} (P5);
  \draw[thick, black] (P2) -- (P3);
    \draw[thick, black] (P5) -- (P6);
 }
  \draw[gray, thick]
    (P0) -- node[midway, above] {$1$} (P1) --
    node[midway, left] {$2$} (P2) --
    node[black, midway, below left] {$3$} (P3) --
    node[midway, below right] {$4$} (P4) --
    node[midway, right] {$5$} (P5) --
    node[black, midway, above right]{$6$} (P6) --
    node[red, midway, above right]{$\star$} (P0);
\end{tikzpicture}
  \end{subfigure}
  \caption{}
\end{subfigure}\hfill
\begin{subfigure}[c]{0.48\textwidth}
  \centering
  \begin{subfigure}[c]{0.48\linewidth}
    \centering
    \begin{tikzpicture}[scale=1.6, every node/.style={font=\small}]
  \foreach \i in {0,...,6}{
    \coordinate (P\i) at ({90+51.42*\i}:1);
  }
    \filldraw[opacity=0.1,green] (P2) -- (P6) -- (P0) -- (P1) -- (P2);
    \filldraw[opacity=0.1,green] (P3) -- (P4) -- (P5) -- (P3);
  \draw[black, thick]
    (P0) -- node[midway, above] {$1$} (P1) --
    node[midway, left] {$2$} (P2) --
    node[gray, midway, below left] {$3$} (P3) --
    node[midway, below right] {$4$} (P4) --
    node[midway, right] {$5$} (P5) --
    node[gray, midway, above right]{$6$} (P6) --
    node[red, midway, above right]{$\star$} (P0);
      {\tiny
  \draw[thick, red] (P2) -- node[midway,below=3pt]{$3456$} (P6);
  \draw[thick, blue] (P3) -- node[midway,above]{$45$} (P5);
  \draw[thick, gray] (P2) -- (P3);
    \draw[thick, gray] (P5) -- (P6);
  }
\end{tikzpicture}
  \end{subfigure}\hfill
  \begin{subfigure}[c]{0.48\linewidth}
    \centering
\begin{tikzpicture}[scale=1,>=stealth]
  \def\h{0.6}
  % Row 1
  \coordinate (P00) at ( 0,        0        );
  % Row 2
  \coordinate (P10) at (-0.5,     -\h       );
  \coordinate (P11) at ( 0.5,     -\h       );
  % Row 3 
  \coordinate (P22) at ( 1.0,     -2*\h     );
  % Row 4
  \coordinate (P33) at ( 1.0,     -3*\h     );
 \node at (P00) {$\star$};
 \node at (P10) {$12$}; \node at (P10)[below=7pt, left=10pt]{$\mc{G}_{\text{out}}^R:$};
 \node at (P11) {$23456$};
 \node at (P22) {$3456$};
 \node at (P33) {$45$};
\end{tikzpicture}
  \end{subfigure}
  \caption{}
\end{subfigure}
\caption{The subpolygon $R$ bounded by chords $\{45,3456\}$ defines two subsets of the chords of the heptagon: the set of chords inside $R$ (left) and the set of chords not inside $R$ (right).}
    \label{fig:physics-fact}
\end{figure}

\begin{ex}[Factorization for $\Psi_5$]\label{ex:factorization}
Consider $n=4$. The wavefunction $\Psi_5$ has a pole at $E_R = E^+_{\hat{1}} + E_{12}^- = 0$, with $R = \{12, \hat{1}\}$ a square in the pentagon. The interior and exterior of $R$ correspond to the two new sets
\[
\mc{G}^R_\text{in} = \{123,34,\hat{1}\},\qquad \mc{G}^R_\text{out} = \{12,\hat{1}\}.
\]
The first term in the factorization formula, $A^{sh}( \mc{G}^R_\text{in} )$, is given by the amplitude
\[
A( \mc{G}^R_\text{in} ) =  \frac{1}{E_{123}^- E_{123}^+} + \frac{1}{E_{34}^- E_{34}^+},
\]
under the replacement $E_{123}^\pm \rightarrow E_{123}^\pm \pm E_{12}^-$. The second term is the wavefunction $\Psi(\mc{G}^R_\text{out})$. The set $\mc{G}^R_\text{out} = \{12,\hat{1}\}$ has just one nested set, namely $\{12,\hat{1}\}$, and so this wavefunction is given by the Laplace transform of the cone cut out by $\{p_{\hat{1}} - m_{12} \geq 0, p_{\hat{1}} \geq 0, p_{12} \geq 0\}$. This gives
\[
\Psi(\mc{G}^R_\text{out}) = \frac{1}{E_{\hat{1}}^+ E_{12}^+ (E_{\hat{1}}^+ + E_{12}^-)}.
\]
\end{ex}

\subsection{The Universe Fan and its refinements}\label{sec:physics-fan}
The generators of the universe fan $\mc{U}_{\mc{G}_n}$ are indexed by the subpolygons $R = \{I, J_1,\ldots, J_k\}$ of the $(n+1)$-gon and are given by the vectors (\Cref{prop:universe-rays})
\[
w_R = I^+ + \sum_{i=1}^k J_i^-.
\]
Moreover, the maximal cones $U_N$, for each triangulation $N$, are
\[
U_N \;=\; \cone\bigl(w_R \mid R\subseteq N \text{ a causal region}\bigr).
\]
In other words, $U_N$ is generated by all possible subpolygons that can be formed from the chords in the triangulation $N$. Note that $U_N$ is not a simplicial cone. In fact, it is the cone over the (dual) cosmological polytope of the dual graph of $N$.

\smallskip
However, Proposition~\ref{prop:max-lightcone-refine} gives a simplicial refinement of $\mc{U}_{\mc{G}_n}$, whose maximal cones are the simplices
\[
\cone(w_{R_1},\dots,w_{R_\ell}),
\]
indexed by complete nestings of subpolygons, $R_i$. Given this simplicial refinement, $\Psi_{n+1}$ decomposes term-by-term as a sum of Laplace transforms of simplicial cones, whose generators index complete nestings of sub-polygons. In other words, we recover the standard Russian doll formula \cite{ArkFigVaz24}
\[
\Psi_{n+1} \;=\;
\sum_{\substack{ N \\ \mathrm{triangulation} }} \sum_{\substack{ (R_1,\ldots,R_m)  \\ \mathrm{nested polygons} }} \ \prod_{i=1}^m \frac{1}{E_{R_i}},
\]
where for each triangulation $N$ we sum over all sets of complete nestings of sub-polygons $R_i \subseteq N$.

 \smallskip
The cones of the simplicial refinement are given by the linear domains of piecewise linear functions of the variables $(p_I, m_I)$. In fact, this is only one of a family of refinements of this type. For any polyangulation (i.e. nested set) $N$, and chord $I \in N$, let
\[
\lambda_{I} = \sum_{\substack{J\in N \\ I \subset J}} p_J + \sum_{\substack{J\in N \\ I \supset J \ | \ I \cap J = \emptyset }} m_J.
\]
Note that the two subsets appearing here are \emph{not} the subsets of chords that lie on either side of $I$. The subsets are natural to draw in the picture for $\mc{G}_n$, where they resemble lightcones. For any cone $U_N$, we can subdivide it by taking domains of linearity of
\[
\min(\lambda_{I} \ | \ I \in M)
\]
for some $M \subseteq N$, or mutual refinements for multiple subsets $M$. These refinements descend, after quotienting by the $\hat{1}^+$-direction and intersecting with the diagonal ($p_I = m_I$), to refinements of the associahedron fan, in particular recovering the cosmohedron fan of \cite{ArkFigVaz24} from the maximal lightcone refinement (\Cref{sec:cosmohedra}). We illustrate this in an example

\begin{ex}
Consider $n=4$, and fix a triangulation $N = \{12,34,\hat{1}\}$ of the pentagon. The cone $U_N$ is generated by all the subpolygons we can form in $N$:
\[
U_N = \langle \hat{1}^+, 12^+, 34^+, \hat{1}^+ + 12^- + 34^-, \hat{1}^+ + 12^-, \hat{1}^+ + 34^-\rangle.
\]
Equivalently, $U_N$ is cut out by the causal path conditions, $p_{\hat{1}} - m_{12} \geq 0$ and $p_{\hat{1}} - m_{34} \geq 0$.
In particular, note that $U_N$ is not a simplex. The functions $\lambda_I$ are
\[
\lambda_{12} = m_{34} + p_{\hat{1}},\qquad \lambda_{34} = m_{12} + p_{\hat{1}},\qquad \lambda_{\hat{1}} = m_{12} + m_{34}.
\]
We can subdivide $U_N$ by taking the domains of linearity of
\[
\alpha_N = \min(\lambda_{12}, \lambda_{34}, \lambda_{\hat{1}}) + \min(\lambda_{12}, \lambda_{34}) + \min(\lambda_{12}, \lambda_{\hat{1}}) + \min(\lambda_{34}, \lambda_{\hat{1}}) = 3 \lambda_{\hat{1}} + \min(\lambda_{12}, \lambda_{34}).
\]
This gives two simplices
\[
U_N = \langle \hat{1}^+, 12^+, 34^+, \hat{1}^+ + 12^- + 34^-, \hat{1}^+ + 12^-\rangle \cup \langle \hat{1}^+, 12^+, 34^+, \hat{1}^+ + 12^- + 34^-, \hat{1}^+ + 34^-\rangle
\]
Note that the generators of each of these simplices correspond to a complete nesting of subpolygons of the pentagon. Under the quotient by $\hat{1}^+$, these simplices project to (Figure \ref{fig:tetra})
\[
U_N / \langle \hat{1} \rangle = \langle 12^+, 34^+, 12^- + 34^-, 12^-\rangle \cup \langle 12^+, 34^+, 12^- + 34^-, 34^-\rangle
\]
The intersection of these simplices with the diagonal ($m_f = p_f$) gives two cones,
\[
\langle 12, 12+34\rangle,\qquad \langle 34,12+34\rangle,
\]
which is a refinement of the cone $\langle 12,34\rangle$ in the normal fan of the associahedron (\Cref{fig:deca}). Note that every maximal simplex in the refinement upstairs also becomes a maximal cone in the refinement of the associahedron: so the refinement captures the combinatorics of subpolygon nestings or Russian doll pictures.

\end{ex}

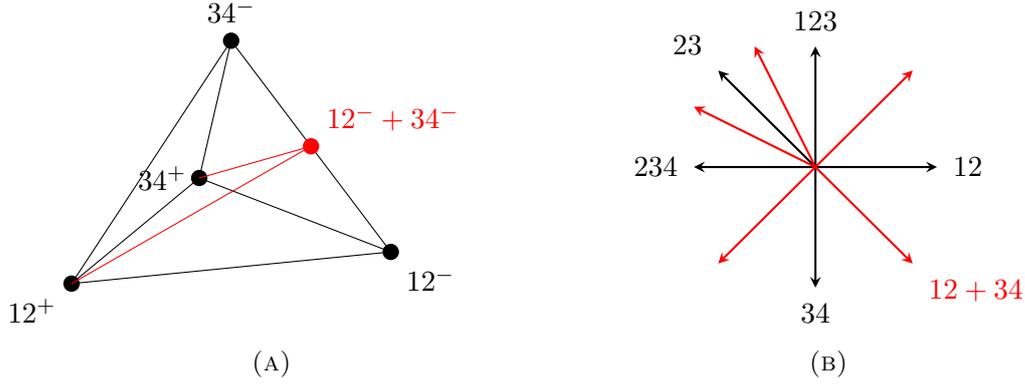
\begin{figure}
    \centering
\begin{subfigure}{0.45\textwidth}
\begin{tikzpicture}[scale=1.4]
  \coordinate (A) at (0,0);        % 12^+
  \coordinate (B) at (3,0.3);      % 12^-
  \coordinate (D) at (1.2,1.0);    % 123^-
  \coordinate (C) at (1.5,2.3);    % 123^+
  \draw (A) -- (B);
  \draw (A) -- (D);
  \draw (B) -- (D);
  \draw (A) -- (C);
  \draw (B) -- (C);
  \draw (D) -- (C);
  \fill (A) circle (2.2pt);
  \fill (B) circle (2.2pt);
  \fill (C) circle (2.2pt);
  \fill (D) circle (2.2pt);
  \node[below left=2pt]  at (A) {$12^{+}$};
  \node[below right=2pt] at (B) {$12^{-}$};
  \node[left=1pt]        at (D) {$34^{+}$};
  \node[above=3pt]       at (C) {$34^{-}$};
  \coordinate (M) at ($ (C)!0.5!(B) $);
  \fill[red] (M) circle (2.2pt);
  \node[red, above right=2pt] at (M) {$12^-+34^-$};
  \draw[red] (M) -- (A);
  \draw[red] (M) -- (D);
\end{tikzpicture}
    \caption{}
    \label{fig:tetra}
\end{subfigure}
\begin{subfigure}{0.45\textwidth}
\centering
 \begin{tikzpicture}[scale=0.8, >=stealth]
  \coordinate (O) at (0,0);
  \draw[thick,->] (O) -- (2,0) node[at end, right=2pt] {$12$};
  \draw[thick,->] (O) -- (0,2) node[at end, above=2pt] {$123$};
  \draw[thick,->] (O) -- (-1.6,1.6) node[at end, above left=2pt] {$23$};
  \draw[thick,->] (O) -- (-2,0) node[at end, left=2pt] {$234$};
  \draw[thick,->] (O) -- (0,-2) node[at end, below=2pt] {$34$};

  \draw[thick,red,->] (O) -- (1.6,1.6);
  \draw[thick,red,->] (O) -- (-1.0,2.0);
  \draw[thick,red,->] (O) -- (-2.0,1.0);
  \draw[thick,red,->] (O) -- (-1.6,-1.6);
  \draw[thick,red,->] (O) -- (1.6,-1.6) node[at end, below right=2pt]{$12+34$};
\end{tikzpicture}
\caption{}
\label{fig:deca}
\end{subfigure}
    \caption{The simplicial refinement of the universe fan induces a (non-simplicial) refinement of the associahedron fan, recovering the normal fan of the cosmohedron. (A) A cone $U_N$ in the $n=4$ universe fan, projected through $\hat{1}^+$, and its simplicial refinement (red). (B) Intersecting this cone on the diagonal ($p_I = m_I$) gives (after projecting further) a refinement of a cone in the associahedron fan.}
    \label{fig:physics-ref}
\end{figure}

%%%%%%%%%%%%%%%%%%%%%%
%%%%%%%%%%%%%%%%%%%%%%
%%%%%%%%%%%%%%%%%%%%%%
%%%%%%%%%%%%%%%%%%%%%%
%%%%%%%%%%%%%%%%%%%%%%
\newpage
\appendix
%%%%%%%%%%%%%%%%%%%%%%
%%%%%%%%%%%%%%%%%%%%%%
%%%%%%%%%%%%%%%%%%%%%%
%%%%%%%%%%%%%%%%%%%%%%
%%%%%%%%%%%%%%%%%%%%%%
\section{Oriented Matroids and Matroid Amplitudes}\label{app:matroid}

We recall some fundamentals on oriented matroids and the construction of associated matroid amplitudes in \cite{Lam24}.
%%%%%%%%%%%%%%%%%%%%%%
\subsection{Oriented matroids}\label{sec:matroid:oriented}
\begin{dfn}
    An \textit{oriented matroid} is a pair $(E,\mathcal V)$ of a finite set $E$ and a collection $\mathcal V$ of maps $v: E \to \{-,0,+\}$, called vectors, such that
    \begin{enumerate}
        \item $\mathcal V$ contains $v \equiv 0$,
        \item if $v \in \mathcal V$ then $-v \in \mathcal V$,
        \item if $v, w \in \mathcal V$, then $v \circ w \in \mathcal V$, where
        $$(v \circ w)(e) := \begin{cases}
            v(e) & \text{ if $v(e) \in \{-,+\}$} \\
            w(e) & \text{ if $v(e) = 0$}
        \end{cases}$$
        \item if $v, w \in \mathcal V$ and $v(e) = - w(e)$ then there is some $z \in \mathcal V$ with $z(e) = 0$ and $z(a) = (v \circ w)(a)$ whenever $v(a) = w(a)$.
    \end{enumerate}

    The vectors of $M$ form a lattice $\mathcal L(M)$ by letting $v \leq w$ if $v(e) = w(e)$ whenever $w(e) \not= 0$. A \textit{tope} is a minimal vector in this lattice.

\end{dfn}

\begin{ex}
    The terminology is motivated by the case where $E$ is a set of hyperplanes $H_e$ in a vector space $V$, cut out by covectors $f_e \in V^\vee$ for $e \in E$. Every vector $v \in V$ defines a map $E \to \{-,0,+\}$ via $e \mapsto \mathrm{sgn} \ f_e(v)$. Then axiom $i)$ is just the condition that $0 \in V$ and $ii)$ says that $-v \in V$ for $v \in V$. $iii)$ and $iv)$ hold since $v + \lambda w \in V$ for $v \in V$, $w \in V$ and $\lambda \geq 0$, where for $iii)$ we choose $\lambda$ sufficiently small and for $iv)$ such that $f_e(v) + \lambda f_e(w) = 0$. \par
    The topes of this oriented matroid are the connected components $C$ of the hyperplane arrangement complement $V - \bigcup_e H_e$. Note that the closures $\bar{C}$ are polyhedra. The vectors of the matroid are precisely the faces of these polyhedra.
\end{ex}

\begin{ex}
    As a special case of the previous example, any directed graph $G = (V,E)$ defines an oriented matroid by taking the hyperplanes in $\mathbb R^V$ cut out by covectors $w^* - v^*$ for $(v \to w) \in E$. An element of $\mathbb R^V$ can be viewed as assigning potentials to the vertices of $G$, and then a vector of the associated oriented matroid records the signs of potential differences along the edges of $G$. It follows that the topes of the matroid correspond to acyclic orientations of the undirected graph underlying $G$ and more generally the vectors of the matroid are one-to-one with directed acyclic graphs on $V$ obtained via transitive reduction and contraction from such graphs.
\end{ex}

\begin{ex}
    Consider the graph
 \[
        \begin{tikzpicture}
            \node (1) at (0,1) {1};
            \node (2) at (0,0) {2};
            \node (3) at (1,0) {3};

            \draw[->] (1) -- (2);
            \draw[->] (2) -- (3);
            \draw[->] (2) -- (3);
            \draw[->] (3) -- (1);
        \end{tikzpicture}
\]
There are six acyclic orientations of the graph, corresponding to topes $(v(12),v(23),v(13))$:
$$(++-) \quad (+--) \quad (-++) \quad (-+-) \quad (+-+) \quad (--+)$$
\end{ex}

\begin{dfn}
    Let $M = (E, \mathcal V)$ be an oriented matroid. 
    For $v \in \mathcal V$, the set $f_v := \{ e \in E \ | \ v(e) = 0 \}$ is called a \textit{flat}. The (inclusion) poset
    $$L(M) := \{ f_v \ | \ v \in \mathcal V\}$$
    is called the \textit{lattice of flats}. It is a geometric lattice and in particular defines a matroid with ground set $E$, the \textit{underlying} matroid of $M$.
\end{dfn}

Note that the map $\mathcal L(M) \to L(M), \ v \mapsto f_v$ is a morphism of lattices.

\begin{ex}
    Let $M$ be the oriented matroid associated to a set $E$ of hyperplanes $\{ f_e = 0 \}$ in a vector space $V$. Then the flats are one-to-one with the intersections of hyperplanes. In particular, the lattice $L(M)$ is the intersection lattice of the hyperplane arrangement and the underlying matroid is just the matroid of the hyperplane arrangement.
\end{ex}

\begin{ex}
    Given a directed graph, the flats of the associated oriented matroid are disjoint unions of connected induced subgraphs. The corresponding matroid is the graphical matroid associated to the undirected graph.
\end{ex}

\begin{dfn}
    Let $(E, \mathcal V)$ be an oriented matroid and let $P$ be a tope. The lattice $\mathcal L(P)$ given by the interval $[P, \hat{1}]$ in $\mathcal L(M)$ is called the \textit{Las Vergnas face lattice} of $P$. The map $\mathcal L(M) \to L(M), \ v \mapsto f_v$ is injective on $\mathcal L(P)$ and in particular, we can identify $\mathcal L(P)$ with its image $L(P)$.
\end{dfn}

\begin{ex}
    Recall that if $M$ is the oriented matroid associated to a set $E$ of hyperplanes $\{ f_e = 0 \}$, then $P$ corresponds to a region $C$ of the hyperplane arrangement complement. $\mathcal L(P) = L(P)$ is then just the (inverse) face lattice of the polyhedron $\bar{C}$.\par

    If $M$ is associated to a directed graph $G$, then recall that a tope $P$ is the same as an acyclic orientation of the underlying undirected graph. The Las Vergnas face lattice is then the contraction lattice of the directed acyclic graph obtained by transitive reduction.
\end{ex}

%%%%%%%%%%%%%%%%%%%%%%
\subsection{Nested sets and fans}\label{app:matroid:nested}
Let $\mathcal M$ be an oriented matroid with underlying matroid $M$. We write $L(\mathcal M) = L(M)$ for its lattice of flats. Fix a tope $P$ of $\mathcal M$.

\begin{dfn}
    A \textit{building set} for $L(M)$ is a set $\mathcal G \subseteq L - \{\hat 0\}$ such that for $f \in L(M) - \{\hat 0\}$ the join induces an isomorphism of posets
    $$\prod_{g \in I} [\hat 0, g] \to [\hat 0, f]$$
    where $I \subseteq \mathcal G$ is the set of maximal elements $g$ with $g \leq f$ in $L(M)$.
\end{dfn}

\begin{dfn}
    A ($\mathcal G$-)\textit{nested set} is a subset $N \subseteq \mathcal G$ such that for incomparable $f_1, \ldots, f_k \in N$ we have $f_1 \lor \ldots \lor f_k \notin \mathcal G$. We denote the poset of nested sets by $\mathcal N(L(M),\mathcal G)$.\par
    The \textit{positive nested set complex} associated to the tope $P$ is the subset
    $$\mathcal N(L(M), \mathcal G)_P := \{ N \in \mathcal N(L(M),\mathcal G) \ | \ N \subseteq L(P)\}.$$
    Recall that $L(P) \subseteq L(M)$ is the Las Vergnas face lattice of $P$.
\end{dfn}

\begin{dfn}\label{def:positive-nested-fan}
    Define $\Sigma^{\mathcal G,P}$ to be the fan in $\mathbb R^{\mathcal G}$ obtained by taking the cones
    $$C_N := \cone(f \ | \ f \in N)$$
    for all $N \in \mathcal N(L(M), \mathcal G)_P$. The projection of this fan along $$\mathbb R^{\mathcal G} \to \mathbb R^E, \ f \mapsto \sum_{e\in f} e$$
    is again a fan, called the \textit{positive nested set fan} $\Sigma_{\mathcal G,P}$.
\end{dfn}

This projection does indeed define a fan by \Cref{prop:nested-projection} (and \Cref{lem:tope-building}).

%%%%%%%%%%%%%%%%%%%%%%
\subsection{Matroid Amplitudes}\label{app:matroid:laplace}
The \textit{matroid amplitude} of a tope $P$ is the Laplace transform of the associated fan, $\Sigma_{\mc{G},P}$. Recall that the Laplace transform of a function $f$ on $\mathbb R^n$ is the function
\begin{align*}
    \mathcal L(f): \Gamma &\to \mathbb R \\
        y &\mapsto \int_{\mathbb R^n_{>0}} f(x) \exp(-y \cdot x) \mathrm{d}x
\end{align*}
where $\Gamma \subseteq \mathbb R^n$ is the domain of convergence of the integral. We are interested in the case where the function $f$ is the characteristic function of a cone $C$ in $\mathbb R^n$.
\begin{lem}\label{lem:cone-laplace}
    Let $C = \cone(x_1,\ldots,x_n)$ be an $n$-dimensional simplicial cone in $\mathbb R^n$. The integral $$\int_{C} \exp(-x\cdot y) \mathrm{d}x$$ converges absolutely for $y$ in the interior of the dual cone $\{ y \in \mathbb R^n \ | \ x\cdot y \geq 0 \text{ for all $x \in C$} \}$. It is given by the rational function
    \begin{align*}
        \mathcal L(C): \mathbb R^n &\to \mathbb R \\
    y &\mapsto |\mathrm{det}(x_1,\ldots,x_n)| \cdot \prod_{i=1}^{n}  \frac{1}{x_i\cdot y}
    \end{align*} 
    which we call the Laplace transform of the cone $C$, ignoring the domain of convergence.
\end{lem}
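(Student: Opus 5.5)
The plan is a change of variables that reduces everything to the standard orthant $\mathbb R^n_{>0}$. Write $A$ for the $n\times n$ matrix with columns $x_1,\ldots,x_n$. Since $C$ is $n$-dimensional and simplicial, the $x_i$ are linearly independent, so $A$ is invertible. The linear map $t\mapsto At$ is then a bijection from $\mathbb R^n_{\geq 0}$ onto $C$, and it carries the open orthant onto the interior of $C$; the boundary has measure zero and can be ignored. By the change of variables formula,
\[
\int_C \exp(-x\cdot y)\,\mathrm{d}x \;=\; |\det A| \int_{\mathbb R^n_{>0}} \exp\Bigl(-\sum_{i=1}^n t_i\,(x_i\cdot y)\Bigr)\,\mathrm{d}t ,
\]
because $(At)\cdot y = \sum_i t_i\,(x_i\cdot y)$.

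Next I would factor the integrand as $\prod_i \exp(-t_i\,(x_i\cdot y))$, which is nonnegative, and apply Tonelli's theorem. This turns the integral into a product of one-dimensional integrals $\int_0^\infty e^{-t c_i}\,\mathrm{d}t$ with $c_i = x_i\cdot y$. Each factor equals $1/c_i$ when $c_i>0$ and is $+\infty$ when $c_i\leq 0$. Taking the product gives the claimed value $|\det A|\prod_i 1/(x_i\cdot y)$. Since the integrand is positive, absolute convergence coincides with finiteness of this Tonelli computation.

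The last step identifies the region of convergence. The integral is finite exactly when $x_i\cdot y>0$ for every $i$. I would then check that this condition is the same as $y$ lying in the interior of the dual cone $C^\vee=\{y \mid x\cdot y\geq 0 \ \forall x\in C\}$. Because $C$ is generated by the $x_i$, we have $C^\vee=\{y \mid x_i\cdot y\geq 0 \ \forall i\}$. Since the $x_i$ are linearly independent, the map $y\mapsto (x_i\cdot y)_i$ is a linear isomorphism, so the interior of $C^\vee$ is precisely the set where all these inequalities are strict.

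I do not expect a genuine obstacle. The only point needing care is justifying the interchange of integrals, which Tonelli handles directly since the integrand is positive.
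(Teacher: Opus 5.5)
Your proposal is correct and follows the same route as the paper: substitute $x=\sum_i \lambda_i x_i$ to pull the integral back to the orthant with Jacobian $|\det(x_1,\ldots,x_n)|$, then factor it into one-dimensional exponential integrals. The paper leaves two details implicit that you supply: the Tonelli justification, and the identification of the convergence region with the interior of $C^\vee$. (The paper's displayed computation also drops the determinant factor in its middle line.)
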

\begin{proof}
    We have
    \begin{align*}
        \int_{C} \exp(-x(y)) \mathrm{d}x &= |\mathrm{det}(x_1,\ldots,x_n)| \cdot \int_{\mathbb R^n_+} \exp(-\lambda_1 x_1(y) - \ldots - \lambda_n x_n(y)) \mathrm{d} \lambda_1 \ldots \mathrm{d} \lambda_n \\
        &= \prod_{i=1}^n \int_0^\infty \exp(-\lambda_i x_i(y)) d\lambda_i
        = |\mathrm{det}(x_1,\ldots,x_n)| \cdot \prod_{i=1}^{n}  \frac{1}{x_i(y)}.
    \end{align*}
\end{proof}

More generally, the Laplace transform of a cone (that is, of its characteristic function) can be computed by first triangulating the cone and then applying \Cref{lem:cone-laplace}.

\begin{dfn}
    Let $\Sigma$ be a fan in $\mathbb R^n$ such that every of its cones $C$ is unimodular. In other words, if $V$ is the vector space spanned by $C$, then $V \cap \mathbb Z^n$ is spanned by parts of a $\mathbb Z$-basis of $\mathbb Z^n$. Define a measure on $V \cap \mathbb Z^n$ such that the unit cube has volume $1$. Then the Laplace transform of $C \subseteq V$ is well-defined with respect to that measure. We call the function
    $$\mathcal L(\Sigma) := \sum_{C} \mathcal L(C)$$
    where the sum is over the maximal cones of $\Sigma$, the \textit{Laplace transform} of $\Sigma$.
\end{dfn}

\begin{dfn}
    The \textit{matroid amplitude} $\mathcal A(P)$ of the tope $P$ is the Laplace transform $\mathcal L(\Sigma_{\mc{G},P})$ of the positive nested set fan. Note that it does not depend on the choice of building set, $\mc{G}$.
\end{dfn}

\begin{ex}
    Consider the oriented matroid $\mathcal M$ associated to the directed graph
    \[ \begin{tikzpicture}
        \node (1) at (0,0) {1};
        \node (2) at (1,0) {2};
        \node (3) at (1,-1) {3};

        \draw[->] (1) -- (2) node [midway,above, fill=white] {$e_{12}$};
        \draw[->] (2) -- (3) node [midway,right, fill=white] {$e_{23}$};
        \draw[->] (1) -- (3) node [near end,left=10pt, fill=white] {$e_{13}$};
    \end{tikzpicture}\]
    Its lattice of flats $L(\mathcal M)$ is
    \[ \begin{tikzpicture}
        \node (123) at (2,1) {$\{e_{12},e_{23},e_{13}\}$};
        \node (12) at (0,0) {$\{e_{12}\}$};
        \node (23) at (4,0) {$\{e_{23}\}$};
        \node (13) at (2,0) {$\{e_{13}\}$};
        \node (0) at (2,-1) {$\emptyset$};
        \draw (123) -- (12);
        \draw (123) -- (23);
        \draw (123) -- (13);
        \draw (13) -- (0);
        \draw (12) -- (0);
        \draw (23) -- (0);
    \end{tikzpicture}\]
    We choose the whole lattice as the building set $\mathcal G$. The graph itself defines a tope $P$ of $\mathcal M$ as it is acyclic. Let us compute the matroid amplitude associated to that tope. The Las Vergnas face lattice of $P$ is
    \[ \begin{tikzpicture}
        \node (123) at (2,1) {$\{e_{12},e_{23},e_{13}\}$};
        \node (12) at (1,0) {$\{e_{12}\}$};
        \node (23) at (3,0) {$\{e_{23}\}$};
        \node (0) at (2,-1) {$\emptyset$};
        \draw (123) -- (12);
        \draw (123) -- (23);
        \draw (12) -- (0);
        \draw (23) -- (0);
    \end{tikzpicture}\]
     Thus the associated positive nested set fan has two maximal cones,
     $$C_1 := \cone(e_{12}, e_{12} + e_{23} +e_{13}) \quad \text{and} \quad C_2 := \cone(e_{23}, e_{12} + e_{23} +e_{13}).$$
    and thus the matroid amplitude of $P$ is the rational function
    $$\mathcal A(P) := \mathcal L(\Sigma_{\mc{G},P})= \frac{1}{e_{12}^* \cdot (e_{12}^* + e_{23}^* + e_{13}^*)} + \frac{1}{e_{23}^* \cdot (e_{12}^* + e_{23}^* + e_{13}^*)}$$
    on $\mathbb R^{E}$.
\end{ex}

\printbibliography

\end{document}